\newcommand\al{\alpha}
\newcommand\Af{\mathfrak{A}}
\newcommand\AND{\quad\text{and}\quad}
\newcommand\Bb{\mathsf{B}}
\newcommand\Bf{\mathfrak{B}}
\newcommand\C{\mathbb C}
\newcommand\D{\mathbb D}
\newcommand\Ccal{\mathcal{C}}
\newcommand\Cq{\mathsf{C}}
\newcommand\de{\delta}
\newcommand\deb{\boldsymbol{\delta}}
\newcommand\ep{\varepsilon}
\newcommand\epb{\boldsymbol{\varepsilon}}
\newcommand\Ex{\mathsf{E}}
\newcommand\Gp{\mathfrak{G}}
\newcommand\HH{{\mathbb H}}
\newcommand\If{\mathfrak{I}}
\newcommand\im{\mathfrak{i}\,}
\newcommand\lb{\mathbf{s}}
\newcommand\lcd{\operatorname{lcd}}
\newcommand\ld{\boldsymbol{\ell}}
\newcommand\Ll{\mathsf{L}}
\newcommand\lp{\mathfrak{l}}
\newcommand\Lp{\mathfrak{L}}
\newcommand\msf{\mathsf m}
\newcommand\N{\mathbb N}
\newcommand\Nn{\mathbb K}
\newcommand\Prob{\mathsf{Pr}}
\newcommand\Q{\mathbb{Q}}
\newcommand\R{\mathbb R}
\newcommand\Sf{\mathfrak{S}}
\newcommand\supp{\operatorname{\sf supp}}
\newcommand\tb{\mathbf{t}}
\newcommand\uno{\mathbf{1}}
\newcommand\vb{\mathbf{v}} 
\newcommand\wh{\widehat}
\newcommand\wt{\widetilde}
\newcommand\xb{\mathbf{x}} 
\newcommand\Xx{\mathsf{X}}
\newcommand\Z{\mathbb Z}
\numberwithin{equation}{section}
\newtheoremstyle{mythm}
  {9pt}
  {9pt}
  {\itshape}
  {0pt}
  {\bfseries}
  {}
  { }
  {\thmnumber{(#2)}\thmname{ #1}\thmnote{ #3}}
\newtheoremstyle{mydef}
  {9pt}
  {9pt}
  {\normalfont}
  {0pt}
  {\bfseries}
  {}
  { }
  {\thmnumber{(#2)}\thmname{ #1}\thmnote{ #3}}
\theoremstyle{mythm}
\newtheorem{thm}[equation]{Theorem.}
\newtheorem{pro}[equation]{Proposition.}
\newtheorem{lem}[equation]{Lemma.}
\newtheorem{cor}[equation]{Corollary.}
\theoremstyle{mydef}
\newtheorem{dfn}[equation]{Definition.}
\newtheorem{exa}[equation]{Example.}
\newtheorem{rmk}[equation]{Remark.}
\newtheorem{rmks}[equation]{Remarks.}
\begin{document}$\,$ \vspace{-1truecm}
\title{Stochastic dynamical systems with weak contractivity properties}
\author{\bf Marc PEIGN\'E and Wolfgang WOESS \\ \ \\
With a chapter featuring results  of Martin BENDA}
\address{\parbox{.8\linewidth}{Laboratoire de Math\'ematiques
et Physique Th\'eorique\\
Universit\'e Francois-Rabelais Tours\\
F\'ed\'eration Denis Poisson -- CNRS\\
Parc de Grandmont, 37200 Tours, France\\}}
\email{peigne@lmpt.univ-tours.fr}
\address{\parbox{.8\linewidth}{Institut f\"ur Mathematische Strukturtheorie 
(Math C),\\ 
Technische Universit\"at Graz,\\
Steyrergasse 30, A-8010 Graz, Austria\\}}
\email{woess@TUGraz.at}
\date{\today} 
\thanks{The first author acknowledges support by a visiting professorship
at TU Graz. The second author acknowledges support by a visiting professorship
at Universit\'e de Tours and by the Austrian Science Fund project FWF-P19115-N18.}

\subjclass[2000] {60G50; 
                  60J05
                  }
\keywords{Stochastic iterated function system, local contractivity,
recurrence, invariant measure, ergodicity, affine stochastic recursion, 
reflected random walk, reflected affine stochastic recursion}
\begin{abstract}
Consider a proper metric space $\Xx$ and a sequence $(F_n)_{n\ge 0}$ of
i.i.d. random continuous mappings $\Xx \to \Xx$. It induces the stochastic 
dynamical system (SDS) $X_n^x = F_n \circ \dots \circ F_1(x)$ starting at 
$x \in \Xx$. In this paper, we study existence and uniqueness of invariant measures,
as well as recurrence and ergodicity of this process.

In the first part, we elaborate, improve and complete the
unpublished work of Martin Benda on local contractivity, which merits publicity
and provides an important tool for studying stochastic iterations.
We consider the case when the $F_n$ are contractions and, in particular, discuss
recurrence criteria and their sharpness for reflected random walk. 

In the second part, we consider the case where the $F_n$ are Lipschitz mappings.
The main results concern the case when the associated Lipschitz constants
are log-centered. Prinicpal tools are the Chacon-Ornstein theorem and
a hyperbolic extension of the space $\Xx$ as well as the process $(X_n^x)$.

The results are applied to the reflected affine stochastic recursion
given by $X_0^x=x \ge 0$ and $X_n^x = |A_nX_{n-1}^x - B_n|$, where
$(A_n,B_n)$ is a sequence of two-dimensional 
i.i.d. random variables with values in $\R^+_* \times \R^+_*$.
\end{abstract}


\maketitle

\markboth{{\sf M. Peign\'e and W. Woess}}
{{\sf Stochastic dynamical systems}}
\baselineskip 15pt

\vspace*{-.5cm}
\setcounter{tocdepth}{1}
\tableofcontents
\vspace*{-1cm}\newpage
\section{Introduction}\label{sec:intro}

We start by reviewing two well known models.  

First, let $(B_n)_{n \ge 0}$ be a sequence of i.i.d. real valued random variables.
Then \emph{reflected random walk} starting at $x \ge 0$ is the stochastic
dynamical system given recursively by $X_0^x = x$ and 
$X_n^x = |X_{n-1}^x - B_n|$. The absolute value becomes meaningful
when $B_n$ assumes positive values with positive probability; otherwise
we get an ordinary random walk on $\R$. 
Reflected random walk was described and studied by {\sc Feller~\cite{Fe}}; 
apparently, it was first considered by {\sc von Schelling~\cite{Sch}} 
in the context of telephone networks. 
In the case when $B_n \ge 0$, {\sc Feller~\cite{Fe}} and 
{\sc Knight~\cite{Kn}} have computed an invariant measure  for the 
process when the $Y_n$ are non-lattice random variables, while 
{\sc Boudiba~\cite{Bo1}}, \cite{Bo2} has provided such a measure
when the $Y_n$ are lattice variables. {\sc Leguesdron~\cite{Le}}, 
{\sc Boudiba~\cite{Bo2}} and {\sc Benda~\cite{Be}} have also studied its 
uniqueness (up to constant factors). When that invariant measure has
finite total mass -- which holds if and only if $\Ex(B_1) < \infty$ -- the 
process is (topologically) recurrent: with probability $1$, 
it returns infinitely often to each open set that is charged by the invariant
measure. Indeed, it is positive recurrent in the sense that the mean return
time is finite. More general recurrence criteria were provided by
{\sc Smirnov~\cite{Sm}} and {\sc Rabeherimanana~\cite{Rab}}, and also in
our unpublished paper \cite{PeWo}: basically, recurrence holds when
$\Ex\bigl(\sqrt{B_1\,}\bigr)$ or  quantities of more or less the same order
are finite. 
In the present paper, we shall briefly touch the situation when the
$B_n$ are not necessarily positive.

Second, let $(A_n,B_n)_{n \ge 0}$ be a sequence of i.i.d. random variables
in $\R^+_* \times \R$. (We shall always write  
$\R^+= [0\,,\,\infty)$ and $\R^+_* = (0\,,\,\infty)$, the latter usually 
seen as a multiplicative group.) 
The associated \emph{affine stochastic
recursion} on $\R$ is given by $Y_0^x = x \in \R$ and $Y_n^x = A_nY_{n-1}^x + B_n\,$.
There is an ample literature on this process, which can be interpreted
in terms of a random walk on the affine group. That is, one applies
products of affine matrices: 
$$
\begin{pmatrix} Y_n^x\\ 1 \end{pmatrix}=  
\begin{pmatrix} A_n&B_n\\ 0&1 \end{pmatrix}  
\begin{pmatrix} A_{n-1}&B_{n-1}\\ 0&1 \end{pmatrix} 
\cdots \begin{pmatrix} A_1&B_1\\ 0&1 \end{pmatrix}  
\begin{pmatrix} x\\ 1 \end{pmatrix}. 
$$
Products of affine transformations were one of the first 
examples of random walks on non-commutative groups, see 
{\sc Grenander~\cite{Gr}}.
Among the large body of further work, we mention 
{\sc Kesten}~\cite{Kes}, 
{\sc Grincevi\v cjus}~\cite{G1}, \cite{G2}, 
{\sc Elie}~\cite{E1}, \cite{E2}, \cite{E3}, 
 and in particular the papers by {\sc Babillot, Bougerol and Elie}~\cite{BBE}
and {\sc Brofferio}~\cite{Br}. See also the more recent work of 
{\sc Buraczewski}~\cite{Bu} and 
{\sc Buraczewski, Damek, Guivarc'h, Hulanicki and Urban}~\cite{BDGHU}.

\medskip

As an application of the results of the present paper, we shall study the 
synthesis of the above two processes.
This is the variant of the affine recursion which is forced to stay 
non-negative: whenever it reaches the negative half-axis, its sign is changed.
Thus, we have i.i.d. random variables $(A_n,B_n)_{n \ge 0}$ in $\R^+_* \times \R$,
and our process is
\begin{equation}\label{eq:aff-ref}
X_0^x = x \ge 0 \AND X_n^x = |A_nX_{n-1}^x - B_n|\,.
\end{equation}
We choose the minus sign in the recursion in order to underline the analogy
with reflected random walk. Here, we shall only consider the most typical 
situation, where $B_n > 0$. When $A_n \equiv 1$ then we are back at reflected 
random walk. 


In all those introductory examples, the hardest and most interesting case is the 
one when $A_n$ is \emph{log-centered,} that is, $\Ex(\log A_n) = 0$, and the 
development of
tools for handling this case is the main focus of the present work.
The easier and well-understood case is the \emph{contractive} one, where
 $\Ex(\log A_n) < 0$.

In this paper, stochastic dynamical systems are considered in
the following general setting. Let $(\Xx,d)$ be a proper metric space 
(i.e., closed balls are compact), and let $\Gp$ be the monoid of all 
continuous mappings $\Xx \to \Xx$.
It carries the topology of uniform convergence on compact sets.
Now let $\wt\mu$ be a regular probability measure on $\Gp$, and let 
$(F_n)_{n \ge 1}$ be a sequence of i.i.d. $\Gp$-valued random variables
(functions) with common distribution $\wt\mu$, defined on a suitable 
probability space $(\Omega, \Af, \Prob)$. 
The measure $\wt\mu$ gives rise to the \emph{stochastic dynamical system
(SDS)} $ \omega \mapsto X_n^x(\omega)$ defined by
\begin{equation}\label{eq:SDS}
X_0^x = x \in \Xx\,,\AND X_n^x = F_n(X_{n-1}^x)\,,\quad n \ge 1\,.
\end{equation}
There is an ample literature on processes of this type, see e.g. 
{\sc Arnold}~\cite{Ar} or {\sc Bhattacharya and Majumdar}~\cite{Bh-Ma}.
In the setting of our reflected affine recursion \eqref{eq:aff-ref}, 
we have $\Xx = \R^+$ with the standard distance, and $F_n(x) = |A_n x-B_n|$, so 
that the measure $\wt\mu$ is the image of the distribution $\mu$ of the 
two-dimensional i.i.d. random variables $(A_n,B_n)$ under the mapping
$\R \times \R^+_*\to \Gp\,$, $(a,b) \mapsto f_{a,b}\,$, where 
$f_{a,b}(x) = |ax-b|$. Any SDS \eqref{eq:SDS} is a Markov chain. The transition
kernel is
$$
P(x,U) = \Prob[X_1^x \in U] = \wt\mu (\{ f \in \Gp : f(x) \in U \})\,,
$$
where $U$ is a Borel set in $\Xx$. The associated transition operator is
given by 
$$
P\varphi(x) = \int_{\Xx} \varphi(y)\,P(x,dy) = \Ex\bigl(\varphi(X_1^x)\bigr)\,,
$$
where $\varphi:\Xx \to \R$ is a measurable function for which this integral
exists. The operator is Fellerian, that is, $P\varphi$ is continuous when
$\varphi$ is bounded and continuous. 
We shall write $\Ccal_c(\Xx)$ for the space of compactly supported continuous
functions $\Xx \to\R$.

The SDS is called \emph{transient,} if every compact set is visited only finitely
often, that is,
$$
\Prob[d(X_n^x,x) \to \infty] = 1 \quad\text{for every}\;x \in \Xx.
$$
We call it \emph{(topologically) recurrent,} if there is a non-empty, closed
set $\Ll \subset \Xx$ such that for every open set $U$ that intersects $\Ll$,
$$
\Prob[X_n^x \in U \;\text{infinitely often}] = 1 \quad\text{for every}\;x \in \Ll.   
$$
In our situation, we shall even have this for every starting point $x \in \Xx$, 
so that $\Ll$ is an \emph{attractor} for the SDS.
As an intermediate notion, we call the SDS \emph{conservative,} if 
$$
\Prob[\liminf\nolimits_n d(X_n^x,x) < \infty] = 1 \quad\text{for every}\;x \in \Xx.
$$
Besides the question whether the SDS is recurrent, we shall mainly be interested
in the question of existence and uniqueness (up to constant factors) 
of an \emph{invariant measure.} This is a Radon measure $\nu$ on $\Xx$
such that for any Borel set $U \subset \Xx$,
$$
\nu(U) = \int_{\Xx} \Prob[X_1^x \in U]\,d\nu(x)\,.
$$
We can construct the \emph{trajectory space} of the SDS starting at $x$. This is
$$
\bigl(\Xx^{\N_0}, \Bf(\Xx^{\N_0}), \Prob_x\bigr),
$$
where $\Bf(\Xx^{\N_0})$ is the product Borel $\sigma$-algebra on $\Xx^{\N_0}$, 
and $\Prob_x$ is the image of the measure $\Prob$ under the mapping
$$
\Omega \to \Xx^{\N_0}\,,\quad \omega \mapsto 
\bigl( X_n^x(\omega) \bigr)_{n \ge 0}\,.
$$
If we have an invariant Radon measure, then we can construct the measure
$$
\Prob_{\nu} = \int_{\Ll} \Prob_x \, d\nu(x)
$$
on the trajectory space.
It is a probability measure only when $\nu$ is a probability measure on
$\Xx$. In general, it is $\sigma$-finite and invariant with respect to the
time shift $T: \Xx^{\N_0} \to \Xx^{\N_0}$. Conservativity of the
SDS will be used to get conservativity of the shift. We shall study ergodicity
of $T$, which in turn will imply uniqueness of $\nu$ (up to multiplication
with constants).

As often in this field, ideas that were first developped 
by {\sc Furstenberg},  e.g. \cite{Fu}, play an important role at least in
the background. 

\begin{pro}[{[Furstenberg's contraction principle.]}]\label{pro:furst}
Let $(F_n)_{n \ge 1}$ be i.i.d. continuous random mappings $\Xx \to \Xx$,
and define the right process
$$
R_n^x = F_1 \circ \dots \circ F_n(x)\,.
$$
If there is an $\Xx$-valued random variable $Z$ such
that 
$$
\lim_{n\to \infty} R_n^x = Z \quad\text{almost surely for every }\;x \in \Xx\,,
$$
then the distribution $\nu$ of the limit $Z$ is the unique invariant 
probability measure for the SDS $X_n^x = F_n \circ \dots \circ F_1(x)$.
\end{pro}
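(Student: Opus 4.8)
The plan is to build everything on the \emph{reversal identity} relating the right process to the SDS. For each fixed $n$, the tuples $(F_1,\dots,F_n)$ and $(F_n,\dots,F_1)$ have the same law because the $F_k$ are i.i.d. (hence exchangeable); applying the measurable evaluation map $(g_1,\dots,g_n)\mapsto g_1\circ\cdots\circ g_n(x)$ to both shows that $R_n^x$ and $X_n^x=F_n\circ\cdots\circ F_1(x)$ have the same distribution on $\Xx$. Since by hypothesis $R_n^x \to Z$ almost surely, we get $R_n^x\to\nu$ and therefore $X_n^x\to\nu$ \emph{in distribution}, for every fixed starting point $x$, where $\nu$ denotes the law of $Z$. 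This asymmetry --- the backward iterates $R_n^x$ converge almost surely to a single random point, whereas the forward iterates $X_n^x$ converge only in law --- is exactly Furstenberg's mechanism and drives both halves of the proof.

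For \textbf{existence and invariance} I would use the backward recursion $R_{n+1}^x = F_1\bigl(F_2\circ\cdots\circ F_{n+1}(x)\bigr)$. Let $Z^{(1)}$ be the almost sure limit of the shifted right process $F_2\circ\cdots\circ F_{n+1}(x)$ as $n\to\infty$. Applying the hypothesis to the shifted i.i.d. sequence $(F_{k+1})_{k\ge 1}$ shows that this limit exists almost surely, is again independent of $x$, has the same law $\nu$ as $Z$, and is a function of $(F_2,F_3,\dots)$ alone, hence \emph{independent of} $F_1$. Letting $n\to\infty$ in the recursion and using the continuity of $F_1$ yields the fixed-point identity $Z = F_1(Z^{(1)})$ almost surely. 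Since $F_1\sim\wt\mu$ is independent of $Z^{(1)}\sim\nu$, the law of the right-hand side is $U\mapsto \int_{\Xx} P(y,U)\,d\nu(y)$; as this equals the law $\nu$ of $Z$, the measure $\nu$ is invariant. It is a probability measure because $Z$ is $\Xx$-valued.

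For \textbf{uniqueness} let $\nu'$ be any invariant probability measure and let $\varphi$ be bounded and continuous. Iterating invariance gives $\int_{\Xx} P^n\varphi\,d\nu' = \int_{\Xx}\varphi\,d\nu'$ for every $n$. On the other hand $P^n\varphi(x)=\Ex\bigl(\varphi(X_n^x)\bigr)\to \int_{\Xx}\varphi\,d\nu$ for each fixed $x$, by the weak convergence $X_n^x\to\nu$ established above. Since $|P^n\varphi|\le\|\varphi\|_\infty$ and $\nu'$ is a probability measure, dominated convergence allows passage to the limit under $\int\cdot\,d\nu'$, giving $\int_{\Xx}\varphi\,d\nu' = \int_{\Xx}\varphi\,d\nu$ for all such $\varphi$, and hence $\nu'=\nu$.

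The step I expect to be the \emph{main obstacle} is the careful handling of the shifted limit $Z^{(1)}$: one must verify that the hypothesis genuinely supplies a \emph{single} limit variable independent of the starting point, so that the shifted limit is well defined, has law $\nu$, and is independent of $F_1$ --- these properties are precisely what make the identity $Z=F_1(Z^{(1)})$ and its probabilistic interpretation legitimate. Once that is in place, the remaining arguments (weak convergence via the reversal identity, and dominated convergence for uniqueness) are routine, modulo the standard fact that on the proper --- in particular locally compact, second countable --- space $\Xx$, agreement of two probability measures on bounded continuous functions forces their equality.
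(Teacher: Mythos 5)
Your proposal is correct, and there is essentially nothing in the paper to compare it against: the paper gives no proof of this proposition but instead refers to {\sc Letac}~\cite{Let}, and your argument --- the exchangeability/reversal identity giving $X_n^x \overset{d}{=} R_n^x$, the backward fixed-point identity $Z = F_1(Z^{(1)})$ with the shifted limit $Z^{(1)}\sim\nu$ independent of $F_1$ yielding invariance, and uniqueness via $P^n\varphi(x) \to \int\varphi\,d\nu$ plus dominated convergence --- is precisely the standard proof from that reference. You also correctly identified the one delicate point (that the hypothesis, applied to the shifted i.i.d.\ sequence, produces a single limit variable independent of the starting point and of $F_1$), and your treatment of it is sound.
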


A proof can be found, e.g., in {\sc Letac}~\cite{Let} in a slightly
more general setting. 

While being ideally applicable to the contractive case,
this contraction principle is not the right tool for handling the log-centered
case mentioned above. In the context  of the affine stochastic recursion,
{\sc Babillot, Bougerol and Elie~\cite{BBE}} introduced the notion of 
\emph{local contractivity,}
see Definition \ref{def:loccont} below.
This was then exploited systematically by {\sc Benda} in interesting and useful 
work in his PhD thesis \cite{Be} (in German)  and the two 
subsequent preprints  \cite{Be1}, \cite{Be2} which were accepted for
publication, circulated (not very widely) in preprint version but have remained 
unpublished. In personal comunication, {\sc Benda}
also gives credit to unpublished work
of his late PhD advisor {\sc Kellerer}, compare with the posthumous
publication \cite{Ke}. 

We think that this material deserves to be documented in a
publication, whence we include -- with the consent of M. Benda whom we managed to
contact -- the next section on weak contractivity (\S \ref{sec:benda}). 
The proofs that we give are ``streamlined'', and new aspects and results are added,
such as, in particular, ergodicity of the shift on the trajectory space with
respect to $\Prob_{\nu}\,$ (Theorem \ref{thm:uniquemeasure}). 
Ergodicity yields uniqueness of the invariant measure.
Before that, we explain the alternative between recurrence and transience and
the limit set (attractor) $\Ll$, which is the support of the invariant measure
$\nu$.

We display briefly the classical results regarding the stochastic
affine recursion in \S \ref{sec:affine}. Then, in \S \ref{sec:contract}, 
we consider the situation when the $F_n$ are contractions with Lipschitz 
constants $A_n = \lp(F_n) \le 1$ (not necessarily assuming that 
$\Ex(\log A_n) < 0$). We provide a tool for getting strong contractivity
in the recurrent case (Theorem \ref{thm:contractive}).
A typical example is reflected random walk. In \S \ref{sec:reflected},
we discuss some of its properties, in particular sharpness of 
recurrence criteria. 

This concludes Part I of the paper. In Part II, we examine in detail the
iteration of general Lipschitz mappings. That is, the Lipschitz constants
$A_n = \lp(F_n)$ of the $F_n$ are positive, finite, i.i.d. random variables.
The emphasis is on the case when the $A_n$ are log-centered. We impose natural
non-degeneracy assumptions and suitable moment conditions on $A_n$ as
well as $B_n = d\bigl(F_n(o),o\bigr)$, where $o \in \Xx$ is a reference point. 
We first prove existence of a non-empty limit set $\Ll$ on which the SDS 
is recurrent (\S \ref{sec:lip}, Theorem \ref{thm:recur}).  

Then (\S \ref{sec:hyp}) we introduce a \emph{hyperbolic extension}
of the space $\Xx$ as well as of the SDS. The extended SDS turns out to be generated
by Lipschitz mappings with Lipschitz constants $=1$ (Lemma \ref{lem:lift-f}).
The hyperbolic extension appears to be interesting in its own right, and we intend
to come back to it in future work. It yields that the extended SDS is either
transient or conservative, although in general typically not locally contractive.

First, in \S \ref{sec:trans-ext}, we consider the case when the extended SDS
is transient. In this case, we can show (\ref{thm:lip-trans}) that the original SDS
is locally contractive, so that all results of \S \ref{sec:benda} apply.
In particular, we get uniqueness of the invariant Radon measure $\nu$ (up to
constant factors) and ergodicity of the shift on the associated trajectory
space. It is worth while to mention that the ``classical'' instance of this
situation is the affine stochastic recursion. Its hyperbolic extension is a
random walk on the affine group, which is well known to be transient.

The hardest case turns out to be the one when the extended SDS is conservative
(\S \ref{sec:cons-ext}). In this case, we are able to obtain a result only
under an additional assumption \eqref{eq:assume3} on the original SDS that 
resembles the criterion used in \S \ref{sec:contract} for SDS of contractions.
But then we even get ergodicity and uniqueness of the invariant Radon measure
for the extended SDS (Theorem \ref{thm:divide}).

In the final section (\S \ref{sec:ref-aff}), we explain how to apply 
all those results to the reflected affine stochastic recursion. 

Since we want to present a sufficiently comprehensive picture, we have
included -- mostly without proof -- a few known results, in particular on
cases where one has strong contractivity. 

\newpage

\addcontentsline{toc}{section}{\\[-8pt] \hspace*{.5cm}PART I. Strong and local 
contractivity and examples, including\\ \hspace*{.5cm}reflected random walk}
\begin{center}
{\large \bf PART I. Strong and local 
contractivity and examples, including reflected random walk}
\end{center}
\section{Local contractivity and the work of Benda}\label{sec:benda}

\begin{dfn}\label{def:loccont}
\emph{(i)} The SDS is called \emph{strongly contractive,} if for 
every $x \in \Xx$, 
$$
\Prob[d(X_n^x,X_n^y)\to 0 \quad\text{for all}\; y \in \Xx] = 1\,.
$$
\emph{(ii)}
The SDS is called \emph{locally contractive,} if for 
every $x\in \Xx$ and every compact $K \subset \Xx$, 
$$
\Prob[d(X_n^x,X_n^y)\, \cdot \uno_K(X_n^x)\to 0 
\quad\text{for all}\; y \in \Xx] = 1\,.
$$
\end{dfn}

Let $\Bb(r)$ and $\overline\Bb(r)$, $r \in \N$, be the open and
closed balls in $\Xx$ with radius $r$ and fixed center $o \in \Xx$,
respectively. $\overline\Bb(r)$ is compact by properness of $\Xx$.

Using Kolmogorov's 0-1 law, one gets the following alternative. 
\begin{lem}\label{lem:transient} For a locally contractive SDS,
$$
\begin{aligned}
\text{either}\quad &\Prob[d(X_n^x,x) \to \infty]=0 
\quad\text{for all}\;\ x \in \Xx\,,\\ 
\text{or}\qquad\;\; &\Prob[d(X_n^x,x) \to \infty]=1 
\quad\text{for all}\;\ x \in \Xx\,.
\end{aligned}
$$
\end{lem}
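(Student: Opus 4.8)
The plan is to combine two facts: that local contractivity forces the escape event to be independent of the starting point, and that this event is, up to null sets, a tail event of the driving sequence $(F_n)$, so that Kolmogorov's $0$--$1$ law applies. Throughout I write $E_x = \{d(X_n^x,x)\to\infty\}$, and I note first that, since $\Xx$ is proper and $|d(X_n^x,x)-d(X_n^x,o)|\le d(x,o)$, the event $E_x$ coincides with the event that $X_n^x$ eventually leaves every closed ball $\overline\Bb(r)$, i.e. every compact set; in particular escape does not depend on the reference point used to measure distance.

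\emph{Step 1: independence of the base point.} I would fix $x,y\in\Xx$ and show $E_x=E_y$ almost surely. Applying Definition \ref{def:loccont}(ii) with the compact sets $\overline\Bb(r)$, $r\in\N$, and intersecting over $r$, one has almost surely that $d(X_n^x,X_n^y)\,\uno_{\overline\Bb(r)}(X_n^x)\to 0$ for every $r$. On the complement $E_x^c$ there is, for each such $\omega$, a radius $r$ and a subsequence $(n_k)$ with $X_{n_k}^x\in\overline\Bb(r)$; along it the indicator equals $1$, so $d(X_{n_k}^x,X_{n_k}^y)\to 0$ and hence $d(X_{n_k}^y,o)\le r+1$ eventually. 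Thus $X_n^y$ does not escape, giving $E_x^c\subseteq E_y^c$ up to a null set; exchanging the roles of $x$ and $y$ (the definition holds for every base point) yields $E_x=E_y$ a.s., so $\Prob(E_x)$ is independent of $x$.

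\emph{Step 2: tail measurability.} For $0\le m<n$ write $X_{m,n}^z=F_n\circ\dots\circ F_{m+1}(z)$, so that $X_n^x=X_{m,n}^{X_m^x}$ and the escape event $E^{(m)}:=\{d(X_{m,n}^o,o)\to\infty\}$ of the shifted system lies in $\sigma(F_{m+1},F_{m+2},\dots)$. The shifted sequence $(F_{m+j})_{j\ge 1}$ is i.i.d. with the same law, hence again locally contractive, so Step 1 applies to it: for each fixed $z$ one has $\{d(X_{m,n}^z,z)\to\infty\}=E^{(m)}$ a.s. Since $X_m^x$ is $\sigma(F_1,\dots,F_m)$-measurable, and therefore independent of $\sigma(F_{m+1},\dots)$, a conditioning (Fubini) argument over the law of $X_m^x$ upgrades this to $E_x=\{d(X_{m,n}^{X_m^x},X_m^x)\to\infty\}=E^{(m)}$ a.s. As this holds for every $m$, the indicator $\uno_{E_x}$ equals its conditional expectation on each $\sigma(F_{>m})$, hence (by Lévy's downward theorem) on the tail $\sigma$-algebra, so $E_x$ is a tail event up to null sets.

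\emph{Conclusion.} By Kolmogorov's $0$--$1$ law $\Prob(E_x)\in\{0,1\}$, and by Step 1 this value is the same for all $x$, which is precisely the stated dichotomy. I expect the main obstacle to be the passage from a deterministic to a random starting point in Step 2: Step 1 produces, for each fixed $z$, an almost-sure identity whose exceptional null set depends on $z$, and one must invoke the independence of $X_m^x$ from the future increments to integrate these identities without the exceptional sets accumulating. A secondary point requiring care is matching the indicator placement in Definition \ref{def:loccont}(ii) — on $X_n^x$ rather than $X_n^y$ — to the subsequence chosen in Step 1.
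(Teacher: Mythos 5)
Your proof is correct and takes essentially the same route as the paper: both arguments use local contractivity of the shifted systems $X_{m,n}^{\,\cdot}$ to show that the escape/accumulation event agrees almost surely with a tail event of $(F_n)_{n\ge 1}$, and then invoke Kolmogorov's 0--1 law. The only real difference is bookkeeping: your explicit Fubini/independence step for the random starting point $X_m^x$ (followed by L\'evy's downward theorem) makes rigorous a passage that the paper handles informally inside its event $\Omega_0$, where the needed statement is the one with base point $X_m^x(\omega)$ rather than $x$.
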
 

\begin{proof}  Consider 
\begin{equation}\label{eq:Xmn}
X_{m,m}^x= x \AND X_{m,n}^x = F_n \circ F_{n-1} \circ \ldots \circ F_{m+1}(x)
\quad \text{for}\; n>m, 
\end{equation}
so that $X_n^x =X_{0,n}^x\,$. Then local contractivity
implies that for each $x \in \Xx$, we have $\Prob(\Omega_0)=1$ for the
event $\Omega_0$ consisting of all $\omega \in \Omega$ with
\begin{equation}\label{eq:Om0}
\lim_{n\to \infty} \uno_{\Bb(r)}\bigl(X_{m,n}^x(\omega)\bigr) \cdot 
d\bigl(X_{m,n}^x(\omega),X_{m,n}^y(\omega)\bigr) = 0
\quad\text{for each}\; r \in \N\,,\; m \in \N_0\,,\; y \in \Xx. 
\end{equation}
Clearly, $\Omega_0$ is invariant with respect to the shift of the sequence 
$(F_n)$. 
 
Let  $\omega \in \Omega_0$ be such that the sequence 
$\bigl( X_n^x(\omega)\bigr)_{n \ge 0}$ accumulates at some $z \in \Xx$.
Fix $m$ and set $v = X_m^x(\omega)$. Then also 
$\bigl(X_{m,n}^{v}(\omega)\bigr)_{n \ge m}$
accumulates at $z$. Now let $y \in \Xx$ be arbitrary. Then there is $r$ such
that $v, y, z \in \Bb(r)$. Therefore also 
$\bigl( X_{m,n}^y(\omega)\bigr)_{n \ge m}$ accumulates at $z$. 
In particular, the fact that $\bigl( X_n^x(\omega)\bigr)_{n \ge 0}$ accumulates
at some point does not depend on the initial trajectory, i.e., on the
specific realization of $F_1, \dots, F_m\,$. We infer that the set
$$
\bigl\{ \omega \in \Omega_0 : \bigl( X_n^x(\omega)\bigr)_{n \ge 0}
\; \text{accumulates in} \; \Xx \bigr\}
$$ 
is a tail event of $(F_n)_{n \ge 1}$. On its complement in $\Omega_0\,$,
we have $d(X_n^x,x) \to \infty\,$. 
\end{proof}

If $d(X_n^x,x) \to \infty$ almost surely, then we call the SDS \emph{transient.}

For $\omega \in \Omega$, let $\Ll^x(\omega)$ be the set of accumulation 
points of $\bigl( X_n^x(\omega)\bigr)$ in $\Xx$.  The following proof
is much simpler than the one in \cite{Be1}.

\begin{lem}\label{lem:attract} 
For any conservative, locally contractive SDS, there is a set 
$\Ll \subset \Xx$ -- the \emph{attractor} or \emph{limit set} -- such that
$$
\Prob[ \Ll^x(\cdot) = \Ll \;\ \text{for all}\;\ x \in \Xx] = 1\,,
$$
\end{lem}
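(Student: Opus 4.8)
The plan is to work on the shift-invariant full-measure event $\Omega_0$ from the proof of Lemma \ref{lem:transient}, on which the local contractivity relation \eqref{eq:Om0} holds for all parameters. First I would show that on $\Omega_0$ the accumulation set $\Ll^x(\omega)$ does not depend on the starting point $x$. Indeed, if $z\in\Ll^x(\omega)$, pick a subsequence $X_{n_k}^x(\omega)\to z$ and choose $r$ with $z\in\Bb(r)$; then eventually $\uno_{\Bb(r)}\bigl(X_{n_k}^x(\omega)\bigr)=1$, so \eqref{eq:Om0} with $m=0$ forces $d\bigl(X_{n_k}^x(\omega),X_{n_k}^y(\omega)\bigr)\to 0$, whence $X_{n_k}^y(\omega)\to z$ and $z\in\Ll^y(\omega)$. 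By symmetry $\Ll^x(\omega)=\Ll^y(\omega)$ for all $x,y$; I denote this common (closed) set by $\Ll(\omega)$. Conservativity together with Lemma \ref{lem:transient} guarantees that $(X_n^x(\omega))$ accumulates in $\Xx$ almost surely, so $\Ll(\omega)\neq\emptyset$ a.s.

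Next I would establish shift-invariance. Writing $T$ for the shift on the sequence $(F_n)$, one has $X_{n}^x(T\omega)=X_{1,n+1}^x(\omega)$ in the notation \eqref{eq:Xmn}, and $X_{0,n}^x(\omega)=X_{1,n}^{y}(\omega)$ with $y=F_1(x)$. Since $\Ll(\omega)$ is independent of the base point, the accumulation set of $\bigl(X_{1,n}^{y}(\omega)\bigr)_n$ is $\Ll(T\omega)$, while that of $\bigl(X_{0,n}^x(\omega)\bigr)_n$ is $\Ll(\omega)$; these sequences coincide, so $\Ll(\omega)=\Ll(T\omega)$ on $\Omega_0$. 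Now fix a countable base $\{U_i\}$ for $\Xx$ (which is separable, being proper). Each event $\{\Ll(\cdot)\cap U_i\neq\emptyset\}$ is measurable (it equals a countable union of events $\{X_n^x\in U'\text{ i.o.}\}$ over basic $U'$ with compact closure contained in $U_i$) and is shift-invariant by the previous identity. Since $(F_n)$ is i.i.d., the shift is ergodic, so $c_i:=\Prob[\Ll(\cdot)\cap U_i\neq\emptyset]\in\{0,1\}$, exactly as Kolmogorov's $0$--$1$ law was used for Lemma \ref{lem:transient}.

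Finally I would reconstruct a deterministic set and verify both inclusions. Put $\Ll:=\Xx\setminus\bigcup\{U_i:c_i=0\}$, a fixed closed set. For the inclusion $\Ll(\omega)\subseteq\Ll$: for each of the countably many $i$ with $c_i=0$ one has $\Ll(\omega)\cap U_i=\emptyset$ a.s., and intersecting over these $i$ gives $\Ll(\omega)\subseteq\Ll$ a.s. For the reverse inclusion, choose a countable dense subset $\{z_j\}$ of $\Ll$; for each $z_j$ and each $k$ pick a basic neighbourhood $U_{i(j,k)}\ni z_j$ of diameter at most $1/k$, necessarily with $c_{i(j,k)}=1$ (otherwise $z_j\notin\Ll$). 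Then a.s., simultaneously for all $j,k$, $\Ll(\omega)$ meets $U_{i(j,k)}$; letting $k\to\infty$ and using that $\Ll(\omega)$ is closed yields $z_j\in\Ll(\omega)$, hence $\Ll=\overline{\{z_j\}}\subseteq\Ll(\omega)$ a.s. Combining, $\Ll(\omega)=\Ll$ a.s., and since $\Ll(\omega)=\Ll^x(\omega)$ for every $x$ on $\Omega_0$, the claim follows.

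I expect the main obstacle to be this last step: passing from the pointwise dichotomy ``each basic open set is almost surely hit or almost surely missed'' to the almost sure identity of the \emph{random closed set} $\Ll(\omega)$ with a single deterministic set. The delicate point is that the reverse inclusion cannot be checked point-by-point over the (possibly uncountable) set $\Ll$; it must be reduced to a countable dense subset and a countable family of shrinking neighbourhoods, so that the countably many almost-sure events can be combined, with closedness of $\Ll(\omega)$ recovering the remaining points. Verifying measurability of the set-valued map $\omega\mapsto\Ll(\omega)$ is routine but should be stated explicitly.
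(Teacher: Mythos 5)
Your proof is correct and follows essentially the same route as the paper's: a zero--one law for accumulation in each element of a countable basis of balls, followed by reconstruction of the deterministic limit set (your $\Ll=\Xx\setminus\bigcup\{U_i: c_i=0\}$ is exactly the paper's set $\{y : k\in\Nn \text{ for every } k \text{ with } U_k\ni y\}$). The only differences are in execution: the paper obtains the zero--one law from the tail-event argument of Lemma \ref{lem:transient} rather than from shift-invariance and ergodicity of the Bernoulli shift, and it bypasses your dense-subset step in the reverse inclusion, since on the full-measure event where all countably many basis conditions hold, the characterization ``$y\in\Ll^x(\omega)$ if and only if every basic $U_k\ni y$ has $k\in\Nn$'' (closedness of $\Ll^x(\omega)$ giving the ``if'' direction) identifies $\Ll^x(\omega)$ for all $y$ simultaneously, so the uncountability concern you flag at the end does not actually arise.
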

\begin{proof}
The argument of the proof of Lemma \ref{lem:transient} also shows the
following. For every open $U \subset \Xx$, 
$$
\Prob[X_n^x \;\text{accumulates in}\; U \;\text{for all}\;\ x \in \Xx] \in 
\{0,1\}\,.
$$
$\Xx$ being proper, we can find a countable basis $\{ U_k : k \in \N \}$
of the topology of $\Xx$, where each $U_k$ is an open ball.
Let $\Nn \subset \N$ be the (deterministic) set of all $k$ such that the 
above probability is $1$ for $U=U_k\,$. Then there is 
$\Omega_0 \subset \Omega$ such that
$\Prob(\Omega_0)=1$, and for every $\omega \in \Omega_0\,$,
the sequence $\bigl(X_n^x(\omega)\bigr)_{n \ge 0}$ accumulates in $U_k$
for some and equivalently all $x$ precisely when $k \in \Nn$.  
Now, if $\omega \in \Omega_0\,,$ then $y \in \Ll^x(\omega)$ if and only if
when $k \in \Nn$ for every $k$ with $U_k \ni y$. We see that
$\Ll^x(\omega)$ is the same set for every $\omega \in \Omega_0\,$.
\end{proof} 

Thus, $(X_n^x)$ is \emph{(topologically) recurrent} on $\Ll$
when $\Prob[d(X_n^x,x) \to \infty]=0$, that is, every open set that
intersects $\Ll$ is visited infinitely often with probability $1$.

For a Radon measure $\nu$ on $\Xx$, its transform under $P$ is written as
$\nu P$, that is, for any Borel set $U \subset \Xx$,
$$
\nu P(U) = \int_{\Xx} P(x,U)\,d\nu(x)\,.
$$ 
Recall that $\nu$ is called \emph{excessive,} when $\nu P \le \nu$, and
\emph{invariant,} when $\nu P = \nu$.

For two transition kernels $P, Q$, their product is defined as 
$$
PQ(x,U) = \int_{\Xx} Q(y,U)\,P(x,dy)\,. 
$$
In particular, $P^k$ is the $k$-fold iterate.
The first part of the following is well-known; we outline the proof because
it is needed in the second part, regarding $\supp(\nu)$.

\begin{lem}\label{lem:exc-inv} 
If the locally contractive SDS is recurrent, then every excessive
measure $\nu$ is invariant. Furthermore, $\supp(\nu) = \Ll$.
\end{lem}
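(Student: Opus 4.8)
The plan is to handle both assertions with a single tool --- the Green kernel $G=\sum_{k\ge 0}P^{k}$ --- together with the recurrence hypothesis, which through Lemma~\ref{lem:attract} supplies a deterministic, non-empty limit set $\Ll$ (non-empty because conservativity produces at least one accumulation point of each trajectory). Throughout I assume $\nu\neq 0$.

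For the first assertion I would argue as follows. Since $P$ preserves positivity of measures, $\nu P\le\nu$ yields the decreasing chain $\nu\ge\nu P\ge\nu P^{2}\ge\cdots\ge 0$. Put $\sigma=\nu-\nu P\ge 0$; telescoping gives $\sum_{k=0}^{N-1}\sigma P^{k}=\nu-\nu P^{N}\le\nu$, so $\rho:=\sum_{k\ge 0}\sigma P^{k}\le\nu$ is again a Radon measure. For $\varphi\in\Ccal_c(\Xx)$ with $\varphi\ge 0$, Tonelli's theorem gives $\rho(\varphi)=\int_{\Xx}G\varphi\,d\sigma\le\nu(\varphi)<\infty$, where $G\varphi(x)=\Ex\bigl(\sum_{k}\varphi(X_{k}^{x})\bigr)$. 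Now I would choose $\varphi$ with $\varphi\ge 1$ on an open ball $V$ meeting $\Ll$: by recurrence $V$ is entered infinitely often from every starting point, so $G\varphi\equiv+\infty$, and the finiteness of $\int_{\Xx}G\varphi\,d\sigma$ forces $\sigma\equiv 0$, i.e. $\nu P=\nu$. This is the only place where recurrence is used for the first statement.

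For $\Ll\subseteq\supp\nu$ I would fix an open set $U$ meeting $\Ll$. By Lemma~\ref{lem:attract} the trajectory accumulates in $U$ from every starting point, hence $\bigcup_{n}\{x:P^{n}(x,U)>0\}=\Xx$; each of these sets is open because the Feller property makes $x\mapsto P^{n}(x,U)$ lower semicontinuous for open $U$. As $\nu\neq 0$, some $\{P^{n}(\cdot,U)>0\}$ carries positive $\nu$-mass, so by the (now established) invariance $\nu(U)=\nu P^{n}(U)\ge\int_{\{P^{n}(\cdot,U)>0\}}P^{n}(x,U)\,d\nu(x)>0$. Thus every point of $\Ll$ lies in $\supp\nu$.

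The reverse inclusion $\supp\nu\subseteq\Ll$ is the hard part. It suffices to prove $\nu(B)=0$ for every ball $B$ with $\overline{B}$ compact and $\overline{B}\cap\Ll=\emptyset$, since such balls cover $\Xx\setminus\Ll$; for such $B$, Lemma~\ref{lem:attract} gives $\Prob_{x}[X_{n}\in B\ \text{infinitely often}]=0$ for every $x$, and hence $\Prob_{\nu}[X_{n}\in B\ \text{i.o.}]=0$. The obstacle is that the naive computation is circular: invariance yields $\int G\uno_{B}\,d\nu=\sum_{k}\nu(B)$, so the resolvent identity collapses and gives no information once $\nu$ is infinite. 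I would therefore move to the trajectory space, where $\Prob_{\nu}$ is a $\sigma$-finite $T$-invariant measure and conservativity of the SDS upgrades to conservativity of the shift $T$. If $\nu(B)>0$, then $\{X_{0}\in B\}$ has positive $\Prob_{\nu}$-measure, and the Poincar\'e--Halmos recurrence theorem for conservative systems forces $\Prob_{\nu}$-almost every such trajectory to return to $\{X_{0}\in B\}$ infinitely often, contradicting $\Prob_{\nu}[X_{n}\in B\ \text{i.o.}]=0$; hence $\nu(B)=0$. Equivalently, one could invoke the Hopf/Chacon--Ornstein ratio ergodic theorem, comparing the bounded occupation of $B$ with the infinite occupation of a neighbourhood of $\Ll$. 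The genuine difficulty --- and the point where local contractivity does the real work, via the deterministic attractor of Lemma~\ref{lem:attract} --- is precisely the passage to conservativity of the shift; abstract recurrence together with invariance is not enough to keep an infinite invariant measure off the transient region.
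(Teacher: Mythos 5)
Your proposal is correct, but it takes a genuinely different route from the paper's proof on two of the three claims. For invariance, the paper argues by balayage with the induced return kernel $P^U$ of a relatively compact open $U$ meeting $\Ll$: excessivity gives $\nu_U \ge \nu_U P^U$, recurrence makes $P^U$ stochastic, equality of the finite total masses then forces $\nu_U = \nu_U P^U$, and taking $U = \Bb(r)$, $r \to \infty$, yields $\nu P = \nu$. Your Green-kernel argument --- summing the defect $\sigma = \nu - \nu P$ into a potential $\rho = \sum_{k} \sigma P^k \le \nu$ and killing $\sigma$ against the everywhere-infinite occupation function of a ball meeting $\Ll$ --- is shorter and stays entirely at the level of the kernel. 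For $\supp(\nu) \subset \Ll$, the paper again works with the induced chain: with $Q = P^U$ and $V$ open, relatively compact, disjoint from $\Ll$, it uses $\nu_U = \nu_U Q^n$, the fact that $Q^n(x,V) \to 0$, and dominated convergence against the finite measure $\nu_U$; you instead pass to the trajectory space and invoke conservativity of the shift together with the Poincar\'e--Halmos recurrence theorem. That route is sound and non-circular: the conservativity you need is exactly Lemma \ref{lem:conservative}, which appears after Lemma \ref{lem:exc-inv} in the paper but is proved independently of it, and its short proof works verbatim for any invariant Radon measure --- in particular for your $\nu$, whose invariance you establish first (your ordering of the steps respects this dependence). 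The trade-off: your invariance argument is more transparent, while the paper's treatment of $\supp(\nu) \subset \Ll$ is more elementary, needing only dominated convergence rather than the Hopf/Halmos machinery of infinite ergodic theory, and its induced-kernel computation doubles as the engine of the invariance proof. On the remaining inclusion $\Ll \subset \supp(\nu)$ the two arguments differ only mildly: the paper deduces from $\nu P = \nu$ that $f\bigl(\supp(\nu)\bigr) \subset \supp(\nu)$ for all $f \in \supp(\wt\mu)$ and applies Lemma \ref{lem:attract}, whereas you use Feller lower semicontinuity of $x \mapsto P^n(x,U)$ together with $\nu = \nu P^n$; both are correct.
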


\begin{proof}
For any pair of Borel sets $U, V \subset X$, define the transition
kernel $P_{U,V}$  and the measure $\nu_U$ by
$$
P_{U,V}(x,B) = \uno_U(x)\,P(x,B \cap V) \AND \nu_U(B) = \nu(U \cap B)\,,
$$
where $B \subset \Xx$ is a Borel set. We abbreviate $P_{U,U} = P_U$.
Also, consider the stopping time $\tau_x^U = \inf \{ n \ge 1: X_n^x \in U \}$,
and for $x \in U$ let
$$
P^U(x,B) = \Prob[ \tau_x^U < \infty\,,\; X_{\tau_x^U}^x \in B ]
$$
be the probability that the first return of $X_n^x$ to the set $U$ occurs in a
point of $B \subset X$. Then we have 
$$
\nu_U \ge \nu_U\,P_U + \nu_{U^c}\, P_{U^c,U}\,,
$$
and by a typical inductive (``balayage'') argument,
$$
\nu_U \ge 
\nu_U \left( P_U + \sum_{k=0}^{n-1} P_{U,U^c} \, P_{U^c}^k \,P_{U^c,U}\right)  
+ \nu_{U^c}\,P_{U^c}^n\, P_{U^c,U}\,.
$$
In the limit, 
$$
\nu_U \ge \nu_U 
\left( P_U + \sum_{k=0}^{\infty} P_{U,U^c} \, P_{U^c}^k \,P_{U^c,U}\right)
= \nu_U\, P^U\,.
$$
Now suppose that $U$ is open and relatively compact, and 
$U \cap \Ll \ne \emptyset$. Then, by
recurrence, for any $x \in U$, we have $\tau_x^U < \infty$ almost surely.
This means that $P^U$ is stochastic, that is, $P^U(x,U) =1$.
But then $\nu_U\,P^U(U) = \nu_U(U) = \nu(U) < \infty$. Therefore
$\nu_U = \nu_U\,P^U$.  
We now can set $U = \Bb(r)$ and let $r \to \infty$. Then monotone convergence
implies $\nu = \nu P$, and $P$ is invariant.

Let us next show that $\supp(\nu) \subset \Ll$. 

Take an open, relatively compact set $V$ such that $V \cap \Ll = \emptyset$.

Now choose $r$ large enough such that $U=\Bb(r)$ contains $V$ and intersects 
$\Ll$. Let $Q = P^U$. We know from the above that 
$\nu_U = \nu_U\,Q = \nu_U\,Q^n$. We get
$$
\nu(V) = \nu_U(V) = \int_{U} Q^n(x,V)\,d\nu_U(x)\,.
$$
Now $Q^n(x,V)$ is the probability that the SDS starting at $x$ visits
$V$ at the instant when it returns to $U$ for the $n$-th time.  
As
$$
\Prob[X_n^x \in V \;\text{for infinitely many}\;n] =0\,,
$$
it is an easy exercise to show that $Q^n(x,V) \to 0$. Since the measure
$\nu_U$ has finite total mass, we can use dominated convergence to
see that $\int_{U} Q^n(x,V)\,d\nu_U(x) \to 0$ as $n \to \infty$.

We conclude that $\nu(V) = 0$, and $\supp(\nu) \subset \Ll$.

Since $\nu P = \nu$, we have $f\bigl(\supp(\nu)\bigr) \subset \supp(\nu)$
for every $f \in \supp(\wt\mu)$, where (recall) $\wt \mu$ is the distribution 
of the random functions $F_n$ in $\Gp$. But then almost surely 
$X_n^x \in \supp(\nu)$ for all $x \in \supp(\nu)$ and all $n$,
that is, $\Ll^x(\omega) \subset \supp(\nu)$ for $\Prob$-almost every $\omega$.
Lemma \ref{lem:attract} yields that $\Ll \subset \supp(\nu)$. 
\end{proof} 

The following holds in more generality 
than just for recurrent locally contractive SDS.

\begin{pro}\label{pro:invmeasure} 
If the locally contractive SDS is recurrent, then it possesses an
invariant measure $\nu$.
\end{pro}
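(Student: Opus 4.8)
The plan is to realize $\nu$ as the expected occupation measure of a single excursion away from a fixed, relatively compact reference set -- the classical construction of an invariant measure for a recurrent Markov chain, which uses local contractivity only through recurrence, in accordance with the remark that the statement holds more generally. Fix an open, relatively compact set $U$ with $U\cap\Ll\neq\emptyset$, say $U=\Bb(r)$ for $r$ large. By recurrence, $\tau_x^U<\infty$ almost surely for every $x\in U$, exactly as argued in the proof of Lemma~\ref{lem:exc-inv}, so the first-return kernel $Q=P^U$ on $U$ is stochastic.

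First I would produce a $Q$-invariant probability measure $\sigma$ carried by $\overline U$. Since $\overline U$ is compact, the Cesàro averages $\mu_N=\frac1N\sum_{k=1}^N Q^k(x_0,\cdot\,)$, for a fixed $x_0\in U\cap\Ll$, form a tight family of probabilities, and any weak limit point $\sigma$ is again a probability on $\overline U$. The asymptotic invariance $\mu_N Q-\mu_N=\frac1N\bigl(Q^{N+1}(x_0,\cdot\,)-Q(x_0,\cdot\,)\bigr)$ has total mass $O(1/N)$, so $\sigma$ is ``almost'' $Q$-invariant; promoting this to $\sigma Q=\sigma$ is the first delicate point, because the return kernel $Q$ need not be Feller even though $P$ is. I would handle it by testing against continuous functions and controlling the discontinuities of $x\mapsto Q\varphi(x)$ on the recurrent part $U\cap\Ll$, where $\sigma$ is concentrated, or alternatively invoke the Chacon--Ornstein ratio ergodic theorem to obtain $\sigma$ directly.

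Given $\sigma$, I define the measure
\[
\nu(B)=\int_U \Ex_x\Bigl[\sum_{n=0}^{\tau_x^U-1}\uno_B(X_n^x)\Bigr]\,d\sigma(x),\qquad B\subset\Xx \ \text{Borel},
\]
the mean time spent in $B$ during one excursion issued from $\sigma$. Since $X_1^x,\dots,X_{\tau_x^U-1}^x\notin U$ while $X_0^x=x\in U$, one reads off that $\nu$ coincides with $\sigma$ on $U$; in particular $\nu\neq0$ and $\nu$ is finite on compact subsets of $U$. Invariance is the standard cycle computation: as $\{\tau_x^U>n\}$ is determined by $F_1,\dots,F_n$, the strong Markov property gives $\Ex_x\bigl[\sum_{n=0}^{\tau-1}P(X_n^x,B)\bigr]=\Ex_x\bigl[\sum_{m=1}^{\tau}\uno_B(X_m^x)\bigr]$, whence $\nu P(B)-\nu(B)=\int_U\bigl(Q(x,B\cap U)-\uno_B(x)\bigr)\,d\sigma(x)=\sigma Q(B\cap U)-\sigma(B\cap U)=0$ by $\sigma Q=\sigma$, all interchanges being justified by Tonelli since the integrands are nonnegative. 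Once $\nu$ is known to be invariant and locally finite, Lemma~\ref{lem:exc-inv} additionally gives $\supp(\nu)=\Ll$.

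The remaining, and main, obstacle is to verify that $\nu$ is genuinely Radon, i.e.\ $\nu(K)<\infty$ for every compact $K$; this is immediate for $K\subset U$ but not for excursions reaching a far compact set $K\setminus U$. I would bound the expected number of visits to $K\setminus U$ before the return to $U$ by a geometric comparison, using that from $K\setminus U$ the chain reaches $U$ before revisiting $K\setminus U$ with probability bounded below -- a uniform positivity I expect to extract from the Feller property of $P$, compactness, and recurrence. This local-finiteness estimate, together with the construction of the return-invariant probability $\sigma$ for the possibly non-Feller kernel $Q$, are the two steps that need genuine care; the invariance identity itself is routine.
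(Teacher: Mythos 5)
Your construction is a genuinely different route from the paper's. The paper's proof is two lines: fix $\psi \in \Ccal_c^+(\Xx)$ whose support meets $\Ll$; recurrence gives $\sum_{k} P^k\psi(x) = \infty$ for every $x$; then a theorem of Lin \cite[Thm.~5.1]{Li}, which applies to \emph{Feller} operators satisfying exactly this potential condition, delivers an invariant Radon measure. Your excursion (cycle) construction is more hands-on, and the invariance identity in it is indeed routine, as you say; but the two steps you yourself flag as delicate are genuine gaps, and the tools you propose for them do not close them.

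Gap (a): producing $\sigma$ with $\sigma Q = \sigma$. Krylov--Bogolyubov needs the Feller property, which the return kernel $Q = P^U$ lacks, and this is not a removable technicality: a stochastic kernel on a compact metric space with no Feller property may have \emph{no} invariant probability at all. For instance, on $[0\,,\,1]$ take $Q(x,\cdot) = \delta_{x/2}$ for $x>0$ and $Q(0,\cdot)=\delta_1$; any invariant probability must vanish on $(1/2\,,\,1]$, hence inductively on each $(2^{-n-1},2^{-n}]$ and on $\{0\}$, so none exists --- even though the Ces\`aro averages converge weakly (to $\delta_0$, which is not invariant). So the $O(1/N)$ ``almost invariance'' cannot be promoted to invariance without new input, and your fallback is circular: the Chacon--Ornstein theorem concerns positive contractions of $L^1(m)$ and \emph{presupposes} a $\sigma$-finite (sub)invariant reference measure $m$, which is precisely what you are trying to construct. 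Gap (b): the Radon property. The uniform positivity you invoke --- from every $x \in K\setminus U$ the chain reaches $U$ before revisiting $K \setminus U$ with probability bounded below --- does not follow from the Feller property, compactness and recurrence, and can simply fail: the chain may be forced to pass through $K\setminus U$ several times before entering $U$, making that probability zero at some points while recurrence holds, so the geometric domination breaks down as stated. Note also that both gaps sit exactly where the hypothesis of the proposition should be doing work: your argument never actually uses local contractivity (nor, in the end, the Feller property of $P$), whereas the paper's appeal to Lin's theorem uses the Feller property of $P$ itself and thereby avoids both the non-Feller return kernel and any separate local-finiteness estimate.
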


\begin{proof}
Fix $\psi \in \Ccal_c^+(\Xx)$ such that its support intersects $\Ll$.
Recurrence implies that
$$
\sum_{k=1}^{\infty} P^k\psi(x) = \infty \quad \text{for every}\; x\in \Xx. 
$$
The statement now follows from a result of {\sc Lin}~\cite[Thm. 5.1]{Li}.
\end{proof}

Thus we have an invariant Radon measure $\nu$ with $\nu P = \nu$ and
$\supp(\nu) = \Ll$. It is now easy to see that the attractor depends only 
on $\supp (\wt \mu) \subset \Gp$.

\begin{cor}\label{cor:support}
In the recurrent case, $\Ll$ is the smallest non-empty closed subset of 
$\Xx$ with the
property that $f(\Ll) \subset \Ll$ for every $f \in \supp (\wt \mu)$.
\end{cor}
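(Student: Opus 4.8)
The plan is to establish two things: that $\Ll$ is itself a non-empty closed set with $f(\Ll)\subset\Ll$ for every $f\in\supp(\wt\mu)$, and that any other such set must contain $\Ll$. The first is already available from the preceding results, and the second is where the actual work (though modest) lies.

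For the first point I would simply collect what has been proved. By Proposition \ref{pro:invmeasure} the recurrent locally contractive SDS has an invariant measure $\nu$, and Lemma \ref{lem:exc-inv} gives $\supp(\nu)=\Ll$; in particular $\Ll$ is non-empty and closed. The final paragraph of the proof of Lemma \ref{lem:exc-inv} records precisely that $f(\supp(\nu))\subset\supp(\nu)$ for every $f\in\supp(\wt\mu)$, i.e. $f(\Ll)\subset\Ll$. Hence $\Ll$ lies in the family of closed sets under consideration, and it remains only to prove minimality.

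For minimality, let $\Ll'$ be any non-empty closed set with $f(\Ll')\subset\Ll'$ for all $f\in\supp(\wt\mu)$, and fix a point $x\in\Ll'$. Since $\wt\mu\bigl(\supp(\wt\mu)\bigr)=1$, almost surely every $F_n$ lies in $\supp(\wt\mu)$; on this probability-one event, an immediate induction using $X_n^x=F_n(X_{n-1}^x)$ together with $x\in\Ll'$ yields $X_n^x\in\Ll'$ for all $n$. Because $\Ll'$ is closed, every accumulation point of $\bigl(X_n^x\bigr)$ then lies in $\Ll'$, so $\Ll^x(\omega)\subset\Ll'$ almost surely. On the other hand, in the recurrent case the SDS is conservative (the event $d(X_n^x,x)\to\infty$ is exactly the complement of $\liminf_n d(X_n^x,x)<\infty$), so Lemma \ref{lem:attract} applies and gives $\Ll^x(\omega)=\Ll$ almost surely. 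Intersecting the two almost-sure events and comparing, we obtain $\Ll\subset\Ll'$, which is the asserted minimality.

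I do not expect a genuine obstacle here, since the substantive fact $f(\Ll)\subset\Ll$ was already secured in Lemma \ref{lem:exc-inv}. The only points requiring a little care are bookkeeping: checking that recurrence legitimately licenses the use of the attractor Lemma \ref{lem:attract} via conservativity, and making sure the almost-sure inclusion $X_n^x\in\Ll'$ is intersected with the probability-one event of Lemma \ref{lem:attract} before the conclusion $\Ll\subset\Ll'$ is drawn.
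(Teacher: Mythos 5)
Your proposal is correct and follows essentially the same route as the paper's own proof: it cites the end of the proof of Lemma \ref{lem:exc-inv} for the fact that $\Ll$ is closed and satisfies $f(\Ll)\subset\Ll$, and proves minimality by noting that a trajectory started in an invariant closed set $\Ll'$ stays in $\Ll'$ almost surely, so that $\Ll = \Ll^x(\omega) \subset \Ll'$ by Lemma \ref{lem:attract}. The extra bookkeeping you supply (regularity giving $\wt\mu(\supp(\wt\mu))=1$, recurrence implying conservativity so that Lemma \ref{lem:attract} applies) is exactly what the paper leaves implicit.
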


\begin{proof} The reasoning at the end of the proof of Lemma \ref{lem:exc-inv} 
shows that $\Ll$ is indeed a closed
set with that property. On the other hand, if $C \subset \Xx$ is closed,
non-empty and such that $f(C) \subset C$ for all $f \in \supp(\wt\mu)$
then $\bigl(X_n^x(\omega)\bigr)$ evolves almost surely within $C$ 
when the starting point $x$ is in $C$. But then $\Ll^x(\omega) \subset C$ 
almost surely, and on the other hand $\Ll^x(\omega) =\Ll$ almost surely.
\end{proof}

\begin{rmk}\label{rem:non-contractive}
Suppose that the SDS induced by the probability measure $\wt \mu$ on $\Gp$
is not necessarily locally contractive, resp. recurrent, but that there is 
another probability measure $\wt \mu'$ on $\Gp$ which does induce a 
weakly contractive, recurrent SDS and
which 
satisfies $\supp(\wt\mu)=\supp(\wt\mu')$. 
Let $\Ll$ be the limit
set of this second SDS. Since it depends only on $\supp(\wt\mu')$,
the results that we have so far yield that also for the SDS
$(X_n^x)$ associated with $\wt \mu$, $\Ll$ is the unique ``essential class'' 
in the following sense: it is the unique minimal non-empty closed subset of 
$\Xx$ such that
\\[3pt]
(i) for every open set $U \subset \Xx$ that intersects $\Ll$
and every starting point $x \in \Xx$, the sequence $(X_n^x)$ visits
$U$ with positive probability, and
\\[3pt]
(ii) if $x \in \Ll$ then $X_n^x \in \Ll$ for all $n$. \hfill$\square$
\end{rmk}

For $\ell \ge 2$, we can lift each $f \in \Gp$ to a continuous mapping 
$$
f^{(\ell)}:\Xx^{\ell} \to \Xx^{\ell}\,,\quad 
f^{(\ell)}(x_1, \dots, x_{\ell}) = \bigl( x_2, \dots, x_d, f(x_{\ell})\bigr)\,.
$$
In this way, the random mappings $F_n$ induce the SDS 
$\bigl(F_n^{(\ell)}\circ \dots \circ F_1^{(\ell)}(x_1, \dots, x_{\ell})\bigr)_{n \ge 0}$
on $\Xx^{\ell}$. For $n \ge \ell-1$ this is just 
$\bigl(X_{n-\ell+1}^{x_\ell}, \dots, X_n^{x_{\ell}}\bigr)\,$.

\begin{lem}\label{lem:repeat} Let $x \in \Xx$, and let $U_0, \dots, U_{\ell-1}
\subset \Xx$ be Borel sets such that
$$
\begin{aligned}
&\Prob[X_n^x \in U_0 \;\;\text{for infinitely many}\;\, n] =1 \AND\\
&\Prob[X_1^y \in U_j] \ge \al > 0 \quad\text{for every}\;\; y \in U_{j-1}\,,
\; j=1, \dots, \ell-1.
\end{aligned}
$$
Then also 
$$ 
\Prob[X_n^x \in U_0\,,\; X_{n+1}^x \in U_1\,,\dots, X_{n+\ell-1}^x \in U_{\ell-1} 
\;\;\text{for infinitely many}\;\, n] =1.
$$
\end{lem}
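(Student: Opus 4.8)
The plan is to combine the recurrence to $U_0$ (first hypothesis) with the strong Markov property, turning the statement into a Borel--Cantelli estimate along a thinned sequence of visits to $U_0$. The engine is the one-step bound in the second hypothesis, iterated $\ell-1$ times.

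First I would record the deterministic consequence of the second hypothesis. Writing $\mathcal{F}_n = \sigma(F_1,\dots,F_n)$ and, for $y \in U_j$,
$$
h_j(y) = \Prob[X_1^y \in U_{j+1},\, X_2^y \in U_{j+2},\, \dots,\, X_{\ell-1-j}^y \in U_{\ell-1}] \qquad (0 \le j \le \ell-1),
$$
with the convention $h_{\ell-1}\equiv 1$, the Markov property gives $h_j(y) = \Ex\bigl[\uno_{U_{j+1}}(X_1^y)\,h_{j+1}(X_1^y)\bigr]$. A downward induction on $j$, using $\Prob[X_1^y \in U_{j+1}] \ge \al$ for $y \in U_j$, then yields $h_j(y) \ge \al^{\,\ell-1-j}$ on $U_j$, and in particular $h_0(y) \ge q := \al^{\,\ell-1} > 0$ for $y \in U_0$. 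By the strong Markov property this says: whenever the chain sits in $U_0$ at some (random) time, it completes the prescribed run $U_0 \to U_1 \to \dots \to U_{\ell-1}$ over the next $\ell-1$ steps with conditional probability at least $q$.

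Next I would arrange the visits so that the length-$\ell$ windows do not overlap. Let $\tau_1 < \tau_2 < \cdots$ enumerate the times $n$ with $X_n^x \in U_0$; by the first hypothesis there are infinitely many almost surely, so $\tau_k \to \infty$. Thin them greedily to stopping times $\sigma_1 < \sigma_2 < \cdots$ with $\sigma_1 = \tau_1$ and $\sigma_{i+1} = \min\{\tau_k : \tau_k \ge \sigma_i + \ell\}$, all finite on $\{\tau_k \to \infty\}$. Setting $B_i = \{X_{\sigma_i+1}^x \in U_1,\dots, X_{\sigma_i+\ell-1}^x \in U_{\ell-1}\}$, we have $B_i \in \mathcal{F}_{\sigma_i+\ell-1}$, and the spacing $\sigma_i + \ell - 1 < \sigma_{i+1} \le \sigma_M$ gives $B_i \in \mathcal{F}_{\sigma_M}$ for all $i < M$; moreover $\Prob[B_i \mid \mathcal{F}_{\sigma_i}] = h_0(X_{\sigma_i}^x) \ge q$ since $X_{\sigma_i}^x \in U_0$. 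Conditioning on $\mathcal{F}_{\sigma_M}$ and pulling out the $\mathcal{F}_{\sigma_M}$-measurable factor,
$$
\Prob\Bigl[\,\bigcap_{i=m}^{M} B_i^c\,\Bigr] = \Ex\Bigl[\uno_{\bigcap_{i=m}^{M-1}B_i^c}\,\Prob[B_M^c \mid \mathcal{F}_{\sigma_M}]\Bigr] \le (1-q)\,\Prob\Bigl[\,\bigcap_{i=m}^{M-1} B_i^c\,\Bigr],
$$
so $\Prob[\bigcap_{i=m}^{\infty} B_i^c] \le \lim_M (1-q)^{M-m+1} = 0$ for every $m$. Hence $B_i$ occurs for infinitely many $i$ almost surely, and taking $n = \sigma_i$ for these $i$ produces infinitely many $n$ with $X_n^x \in U_0,\, X_{n+1}^x \in U_1,\, \dots,\, X_{n+\ell-1}^x \in U_{\ell-1}$, as claimed.

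I expect the main obstacle to be the bookkeeping around the strong Markov property at the random times $\sigma_i$. The thinning $\sigma_{i+1} \ge \sigma_i + \ell$ is exactly what makes the earlier indicators $\uno_{B_i^c}$ ($i<M$) measurable with respect to $\mathcal{F}_{\sigma_M}$, so that the one-step factor $(1-q)$ can be extracted cleanly by the tower property; without disjoint windows the conditioning would fail and the events $B_i$ would be entangled. Everything else — the $h_j$ induction and the geometric decay — is routine once this spacing is in place.
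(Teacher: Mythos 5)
Your proof is correct and follows essentially the same route as the paper's: both thin the successive visit times to $U_0$ so that the length-$\ell$ windows are disjoint (the paper takes every $\ell$-th visit time $\tau(\ell n)$, you thin greedily), apply the strong Markov property at these stopping times to get a uniform lower bound on completing the run $U_1,\dots,U_{\ell-1}$, and conclude via the same geometric decay of $\Prob\bigl[\bigcap_i B_i^c\bigr]$. The only cosmetic differences are your tighter constant $\al^{\ell-1}$ in place of the paper's $\al^{\ell}$ and your tower-property bookkeeping in place of the paper's auxiliary measure $\sigma$ on $U_0$.
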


\begin{proof}
This is quite standard and true for general Markov chains and not just SDS.
Let $\tau(n)$, $n \ge 1$, be the stopping times of the successive visits
of $(X_n^x)$ in $U$. They are all a.s. finite by assumption. We consider
the events 
$$
\Lambda_n = [X_{\tau(\ell n)+1}^x \in U_1\,,\dots, 
X_{\tau(\ell n)+\ell-1}^x \in U_{\ell-1}]
\AND \Lambda_{k,m} = \textstyle{\bigcup_{n = k+1}^{m-1}} \Lambda_n\,,
$$
where $k < m$.
We need to show that $\Prob(\limsup_n \Lambda_n) =1$. 
By the strong Markov property, we have 
$$
\Prob(\Lambda_n \mid X_{\tau(\ell n)}^x = y) \ge \alpha^{\ell} \quad
\text{for every}\; y \in U_0\,.
$$ 
Let $k, m\in \N$ with $k < m$. Just for the purpose of the next lines of
the proof, consider the measure on $\Xx$ defined by
$$
\sigma(B) = \Prob\bigl([X_{\tau(\ell m)}^x\in B] 
\cap \Lambda_{k,m-1}^c\bigr).
$$
It is concentrated on $U$, and using the Markov property,
$$
\begin{aligned} 
\Prob(\Lambda_{k,m}^c)
&= \int_U \Prob(\Lambda_m^c \,|\, X_{\tau(\ell m)}^x = y) \,d\sigma(y) \\
&\le (1-\alpha^{\ell})\, \sigma(U)
=(1-\alpha^{\ell})\,\Prob(\Lambda_{k,m-1}^c) \le \dots \le
(1-\alpha^{\ell})^{m-k}\,.
\end{aligned}
$$
Letting $m \to \infty$, we see that  
$\;\Prob\bigl(\textstyle{\bigcap_{n > k}} \Lambda_n^c\bigr) = 0\;$ for every $k$,
so that
$$
\Prob\bigl(\textstyle{\bigcap_{k}\bigcup_{n > k}} \Lambda_n\bigr) = 1,
$$
as required.
\end{proof}

\begin{pro}\label{pro:l-process}
If the SDS is locally contractive and recurrent on $\Xx$, then so is the 
lifted process on $\Xx^{\ell}$. The limit set of the latter is
$$
\Ll^{(\ell)} = \bigl\{ \bigl(x, f_1(x), f_2\circ f_1(x), \dots, 
      f_{\ell-1}\circ \dots f_1(x)\bigr) : x \in \Ll, \; f_i 
      \in \supp (\wt\mu)\bigr\}^-\,,
$$        
and if the Radon measure $\nu$ is invariant for the original SDS on $\Xx$,
then the measure $\nu^{(\ell)}$ is invariant for the lifted SDS on $\Xx^{\ell}$,
where
$$
\int_{\Xx^{\ell}} f\,d\nu^{(\ell)} = \int_{\Xx}\cdots \int_{\Xx} 
f(x_1, \dots, x_{\ell})\, P(x_{\ell-1},dx_{\ell})\, 
P(x_{\ell-2},dx_{\ell-1}) \cdots P(x_1,dx_2)\,d\nu(x_1)\,.
$$  
\end{pro}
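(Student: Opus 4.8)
\emph{Set-up.} The plan is to equip $\Xx^\ell$ with the metric $d^{(\ell)}(\xb,\zb)=\max_{1\le i\le\ell}d(x_i,z_i)$, which keeps the space proper and forces every compact subset into a product $\overline\Bb(r)^\ell$. I would then establish the four assertions---local contractivity, recurrence, the claimed shape of the limit set, and invariance of $\nu^{(\ell)}$---one at a time.

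\emph{Local contractivity.} For $n\ge\ell-1$ the lift from $\xb=(x_1,\dots,x_\ell)$ is $(X_{n-\ell+1}^{x_\ell},\dots,X_n^{x_\ell})$ and depends only on $x_\ell$, so ``for all starting configurations $\xb'$'' reduces to ``for all last coordinates $x_\ell'$'', which is exactly what the original notion supplies. Given a compact $\tilde K\subset\overline\Bb(r)^\ell$, I would bound $\uno_{\tilde K}$ along the lift by $\prod_{j=0}^{\ell-1}\uno_{\overline\Bb(r)}(X_{n-j}^{x_\ell})$, hence for each $j$ by the single factor $\uno_{\overline\Bb(r)}(X_{n-j}^{x_\ell})$. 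This yields $d^{(\ell)}\cdot\uno_{\tilde K}\le\sum_{j=0}^{\ell-1}d(X_{n-j}^{x_\ell},X_{n-j}^{x_\ell'})\,\uno_{\overline\Bb(r)}(X_{n-j}^{x_\ell})$, each summand of which tends to $0$ almost surely by local contractivity of the original SDS with compact set $\overline\Bb(r)$. So the lift is locally contractive, and Lemma \ref{lem:transient} then supplies the transient/recurrent dichotomy for it.

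\emph{Recurrence and limit set.} Here Lemma \ref{lem:repeat} is the decisive tool. I would fix a generating point $\zb=(z_0,\dots,z_{\ell-1})$ with $z_0\in\Ll$ and $z_j=f_j\circ\dots\circ f_1(z_0)$, $f_i\in\supp(\wt\mu)$, and a product neighbourhood $U_0\times\dots\times U_{\ell-1}$ of $\zb$. Because $f_j$ is continuous with $f_j(z_{j-1})=z_j\in U_j$ and $f_j\in\supp(\wt\mu)$, there is a smaller neighbourhood of $z_{j-1}$ from which a random map lands in $U_j$ with probability $\ge\alpha_j>0$; shrinking each $U_{j-1}$ accordingly installs the one-step hypothesis of Lemma \ref{lem:repeat} with $\al=\min_j\alpha_j$. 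Since $z_0\in\Ll$, Lemma \ref{lem:attract} gives $\Prob[X_n^x\in U_0\ \text{i.o.}]=1$ for every $x$, so Lemma \ref{lem:repeat} yields $(X_n^x,\dots,X_{n+\ell-1}^x)\in U_0\times\dots\times U_{\ell-1}$ for infinitely many $n$; that is, the lift accumulates at $\zb$. This makes the lift conservative, hence recurrent, and shows $\Ll^{(\ell)}$ lies in its (closed) limit set. For the opposite inclusion I would invoke Corollary \ref{cor:support} for the lift: it is enough to check that $\Ll^{(\ell)}$ is non-empty, closed, and $f^{(\ell)}$-invariant for every $f\in\supp(\wt\mu)$. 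A direct computation shows $f^{(\ell)}$ carries the generating point attached to $(z_0;f_1,\dots,f_{\ell-1})$ to the one attached to $(f_1(z_0);f_2,\dots,f_{\ell-1},f)$, with $f_1(z_0)\in\Ll$ by Corollary \ref{cor:support} for the original SDS; passing to closures gives invariance, and minimality then forces the limit set to be contained in $\Ll^{(\ell)}$.

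\emph{Invariance of $\nu^{(\ell)}$ and the main difficulty.} The lifted kernel acts by $P^{(\ell)}g(\xb)=\int_\Xx g(x_2,\dots,x_\ell,y)\,P(x_\ell,dy)$, which is independent of $x_1$. Integrating against $\nu^{(\ell)}$, I would collapse the innermost pair $\int_\Xx P(x_1,dx_2)\,d\nu(x_1)$ to $d\nu(x_2)$ using $\nu P=\nu$, and then relabel $x_{i+1}\mapsto x_i$ together with $y\mapsto x_\ell$ to recognise the outcome as $\int g\,d\nu^{(\ell)}$; thus $\nu^{(\ell)}P^{(\ell)}=\nu^{(\ell)}$. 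I expect the recurrence step to be the genuine obstacle: one must choose the neighbourhoods carefully enough that the one-step estimates of Lemma \ref{lem:repeat} really hold (this is precisely where continuity of the $f_j$ and the support condition enter), and then argue that accumulation at the dense family of generating points propagates to the whole closed set $\Ll^{(\ell)}$. By comparison the local-contractivity reduction and the invariance computation are routine.
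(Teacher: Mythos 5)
Your proposal is correct and follows essentially the same route as the paper: local contractivity of the lift and invariance of $\nu^{(\ell)}$ by direct verification (which the paper dismisses as a straightforward exercise), recurrence via Lemma \ref{lem:repeat} applied to suitably chosen relatively compact open sets $U_0,\dots,U_{\ell-1}$ together with the dichotomy of Lemma \ref{lem:transient}, and identification of the limit set via Corollary \ref{cor:support}. The only difference is one of detail, not of method -- you make explicit the continuity/support argument behind the one-step estimates and the two inclusions for $\Ll^{(\ell)}$, which the paper leaves implicit.
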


\begin{proof} It is a straightforward exercise to verify that the lifted SDS is
locally contractive and has  $\nu^{(\ell)}$ as an invariant measure. We have to
prove that  it is recurrent. For this purpose, we just have to show that there is 
some relatively compact subset of $\Xx^{\ell}$ that is visited infinitely often
with positive probability. We can find relatively compact open subsets 
$U_0\,, \dots, U_{\ell-1}$ of $\Xx$ that intersect $\Ll$ such that 
$$
\Prob[ F_1(U_{j-1}) \subset U_j ] \ge \alpha > 0\quad
\text{for}\; j=1, \dots, \ell-1\,.
$$
We know that for arbitrary starting point $x \in \Xx$, with probability 1, the
SDS $(X_n^x)$ visits $U_0$ infinitely often. Lemma \ref{lem:repeat} implies that
the  lifted SDS on $\Xx^{\ell}$ visits 
$U_0 \times \dots \times U_{\ell-1}$ infinitely often with probability 1. 

By Lemma \ref{lem:transient},
the lifted SDS on $\Xx^{\ell}$ is recurrent. 
Now that we know this, it is clear from Corollary \ref{cor:support}
that its attractor is the set $\Ll^{\ell}$, as stated.
\end{proof}

%

As outlined in the introduction, we can equip the trajectory space $X^{\N_0}$
of our SDS with the infinite product $\sigma$-algebra and the
measure $\Prob_{\nu}\,$, which is in general $\sigma$-finite.

\begin{lem}\label{lem:conservative} If the SDS is locally contractive
and recurrent, then $T$ is conservative on 
$\bigl(\Xx^{\N_0}, \Bf(\Xx^{\N_0}), \Prob_{\nu}\bigr)$. 
\end{lem}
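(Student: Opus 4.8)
The plan is to produce a single set of finite positive measure along whose shift-orbit the occupation sums diverge almost everywhere, and then to kill the dissipative part via the Hopf decomposition. Throughout I use that $\Prob_\nu$ is $T$-invariant (because $\nu$ is $P$-invariant, as recorded in the introduction) and $\sigma$-finite: with $A_r = \{\omega = (x_n)_{n\ge 0} : x_0 \in \overline{\Bb}(r)\}$ one has $\Prob_\nu(A_r) = \nu\bigl(\overline{\Bb}(r)\bigr) < \infty$ since $\nu$ is Radon, and $A_r$ increases to the whole trajectory space $\Xx^{\N_0}$ as $r\to\infty$.

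First I would fix an open, relatively compact set $U \subset \Xx$ with $U \cap \Ll \ne \emptyset$ and set $A = \{\omega : x_0 \in U\}$. Under $\Prob_x$ the initial coordinate equals $x$ deterministically, so $\Prob_\nu(A) = \int_{\Ll} \uno_U(x)\,d\nu(x) = \nu(U\cap\Ll)$. By Lemma \ref{lem:exc-inv} we have $\supp(\nu) = \Ll$, whence $\nu(U) > 0$ because $U$ is open and meets $\Ll$, while $\nu(U) < \infty$ by relative compactness. Thus $0 < \Prob_\nu(A) < \infty$, so that $\uno_A \in L^1(\Prob_\nu)$.

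Next I would use recurrence to prove that $\sum_{n\ge 0}\uno_A\circ T^n = \infty$ holds $\Prob_\nu$-almost everywhere on $\Xx^{\N_0}$. Since $T^n\omega \in A$ exactly when $x_n \in U$, this is the assertion that the coordinate process visits $U$ infinitely often for $\Prob_\nu$-almost every trajectory. By Lemma \ref{lem:attract} the accumulation set $\Ll^x(\cdot)$ equals $\Ll$ almost surely, simultaneously for all starting points $x$; since $U$ is open and intersects $\Ll$, any point of $U\cap\Ll$ is an accumulation point of $\bigl(X_n^x\bigr)$, forcing $X_n^x \in U$ for infinitely many $n$. Hence $\Prob_x[\,x_n \in U \text{ i.o.}\,] = 1$ for every $x$, and integrating over $d\nu(x)$ shows that the complement of $E = \{x_n \in U \text{ i.o.}\} = \limsup_n T^{-n}A$ is $\Prob_\nu$-null. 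On $E$ the sum $\sum_{n\ge 0}\uno_A\circ T^n$ is infinite, which is the claimed divergence.

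Finally I would invoke the Hopf decomposition $\Xx^{\N_0} = C \sqcup D$ into the conservative and dissipative parts of the measure-preserving (here non-invertible, one-sided) shift $T$: for every $f \in L^1(\Prob_\nu)$ with $f \ge 0$ one has $\sum_{n\ge 0} f\circ T^n < \infty$ $\Prob_\nu$-a.e. on $D$. Applying this to $f = \uno_A$ and comparing with the divergence just established on the co-null set $E$ forces $\Prob_\nu(D) = 0$; equivalently, $C = \Xx^{\N_0}$ up to a null set, which is exactly the conservativity of $T$. The step I expect to be the main obstacle is the passage from the pointwise-in-$x$ recurrence statements (which live on the individual spaces $(\Omega,\Prob)$ through $\Prob_x$) to a genuinely $\Prob_\nu$-co-null statement on the trajectory space; this is what makes it essential to take $A$ to be determined by the first coordinate, so that $\Prob_\nu(A)$ is simply $\nu(U)$, and to use the \emph{uniform-in-$x$} conclusion of Lemma \ref{lem:attract}. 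One must also be slightly careful that $T$ is only one-sided, so the version of the Hopf decomposition invoked is the one valid for general (not necessarily invertible) measure-preserving transformations.
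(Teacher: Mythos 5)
Your proof is correct and takes essentially the same route as the paper: both lift the indicator $\uno_U$ of a relatively compact open set meeting $\Ll$ to the trajectory space via the first coordinate, use recurrence to show that $\sum_n \uno_U(x_n) = \infty$ holds $\Prob_{\nu}$-almost everywhere, and then conclude conservativity from the standard criterion for a $\sigma$-finite measure-preserving (one-sided) shift. The only cosmetic difference is that the paper cites Revuz (Thm.\ 5.3) for the last step, whereas you unpack that citation as the Hopf decomposition into conservative and dissipative parts; these are the same tool.
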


\begin{proof}
Let $\varphi = \uno_U\,$, where $U \subset \Xx$ is open, relatively compact, and
intersects $\Ll$. We can extend it to a strictly positive function in 
$L^1(\Xx^{\N_0},\Prob_{\nu})$ by setting $\varphi(\xb) = \varphi(x_0)$ for
$\xb = (x_n)_{n\ge 0}\,$. We know from recurrence that 
$$
\sum_n \varphi(X_n^x) = \infty \quad \Prob\text{-almost surely, for every}\;
x \in \Xx\,.
$$
This translates into
$$
\sum_n \varphi(T^n\xb) = \infty \quad \Prob_{\nu}\text{-almost surely, for every}\;
\xb \in \Xx^{\N_0}\,.
$$
Conservativity follows; see e.g. \cite[Thm. 5.3]{Rev}.
\end{proof}

The uniqueness part of the following theorem is contained in 
\cite{Be} and \cite{Be1}; see also 
{\sc Brofferio~\cite[Thm. 3]{Br}}, who considers SDS of affine mappings. 
We modify and extend the proof in order to be
able to conclude that our SDS is ergodic with respect to $T$. (This, as well as
Proposition \ref{pro:l-process}, is new with respect to Benda's work.)

\begin{thm}\label{thm:uniquemeasure}
For a recurrent locally contractive SDS, let $\nu$ be the measure 
of Proposition \ref{pro:invmeasure}. Then the 
shift $T$ on $\Xx^{\N_0}$is ergodic with respect to $\Prob_{\nu}\,$.

In particular, $\nu$ is the unique invariant Radon 
measure for the SDS up to multiplication with constants. 
\end{thm}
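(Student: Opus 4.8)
The plan is to establish ergodicity of $T$ first and then read off uniqueness as a corollary, the engine being the conservativity of $T$ (Lemma \ref{lem:conservative}) combined with the Hopf ratio ergodic theorem and a coupling argument fueled by local contractivity. I would reduce ergodicity to showing that the invariant $\sigma$-algebra $\mathcal{I}$ of $\bigl(\Xx^{\N_0},\Prob_\nu,T\bigr)$ is trivial. Since $T$ is conservative and $\Prob_\nu$-preserving, Hopf's theorem applies: for $\varphi,\psi\in\Ccal_c^+(\Xx)$ with $\supp\psi\cap\Ll\neq\emptyset$ we have $\varphi(x_0),\psi(x_0)\in L^1(\Prob_\nu)$ because $\nu$ is Radon, and the ratios $R_N=\bigl(\sum_{k<N}\varphi(X_k)\bigr)/\bigl(\sum_{k<N}\psi(X_k)\bigr)$ converge $\Prob_\nu$-a.e. to the $\mathcal{I}$-measurable function $\Ex_\nu[\varphi(x_0)\mid\mathcal{I}]/\Ex_\nu[\psi(x_0)\mid\mathcal{I}]$, the denominators tending to $\infty$ by recurrence.

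The heart of the matter is to show this limit is independent of the starting point. I would run two chains $X_n^x,X_n^y$ from the \emph{same} driving sequence $(F_n)$. Used in its two-sided form (i.e. also with base point $y$), local contractivity yields, for each compact $K$, that $d(X_n^x,X_n^y)\,\uno_K(X_n^x)\to0$ and $d(X_n^x,X_n^y)\,\uno_K(X_n^y)\to0$ almost surely. Taking $K$ a ball containing a neighborhood of $\supp\varphi$ and invoking uniform continuity of $\varphi$, a short case analysis gives $|\varphi(X_n^x)-\varphi(X_n^y)|\to0$ a.s. Choosing $\psi$ bounded below on a neighborhood of $\supp\varphi$, I would upgrade this to $\sum_{k<N}|\varphi(X_k^x)-\varphi(X_k^y)|=o\bigl(\sum_{k<N}\psi(X_k^x)\bigr)$, and similarly with $\psi$ in place of $\varphi$; together with divergence of the denominators this forces $R_N^x-R_N^y\to0$. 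Hence the pathwise limit $\Lambda(\omega)=\lim_N R_N^x(\omega)$ exists for every starting point and is the same for all of them.

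It remains to make $\Lambda$ constant and to descend to $\mathcal{I}$. Writing $X_k^o(\omega)=X_{k-1}^{F_1(o)}(\theta\omega)$ for the shift $\theta$ on the i.i.d. sequence, and applying starting-point independence to the driving-independent random point $F_1(o)$ (by conditioning on $F_1$), I obtain $\Lambda(\omega)=\Lambda(\theta\omega)$; as $\theta$ is the Bernoulli shift, Kolmogorov's $0$-$1$ law makes $\Lambda$ a.s. equal to a constant, which integration identifies as $\int\varphi\,d\nu/\int\psi\,d\nu$. This gives $\Ex_\nu[\varphi(x_0)\mid\mathcal{I}]=c_{\varphi,\psi}\,\Ex_\nu[\psi(x_0)\mid\mathcal{I}]$, so that for an invariant set $A$ the measure $\nu_A(\varphi):=\Ex_\nu[\varphi(x_0)\uno_A]$ satisfies $\nu_A=\lambda\nu$ on $\Xx$; therefore $\Prob_x(A)=\lambda$ for $\nu$-a.e.\ $x$, whence $\Prob_\nu[A\mid\mathcal F_N]=\Prob_{X_N}(A)=\lambda$ (since $\nu$ is invariant, $X_N\sim\nu$), and martingale convergence forces $\uno_A=\lambda\in\{0,1\}$. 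Thus $\mathcal{I}$ is trivial and $T$ is ergodic. For uniqueness, any invariant Radon measure $\nu'$ has $\supp\nu'=\Ll$ (Lemma \ref{lem:exc-inv}) and makes $T$ conservative (Lemma \ref{lem:conservative}); the same pathwise constant $\Lambda$, now read under $\Prob_{\nu'}$, gives $\int\varphi\,d\nu'/\int\psi\,d\nu'=\int\varphi\,d\nu/\int\psi\,d\nu$ for all admissible $\varphi,\psi$, so $\nu'=c\,\nu$.

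I expect the main obstacle to be the coupling step. Local contractivity only controls $d(X_n^x,X_n^y)$ while one of the two points sits in a prescribed compact set, so one must marry its two-sided form to the uniform continuity and compact support of the test functions, and — the delicate point — convert the pointwise decay $|\varphi(X_n^x)-\varphi(X_n^y)|\to0$ into control \emph{relative to} the possibly slowly growing denominator $\sum_{k<N}\psi(X_k^x)$. Securing this comparison, rather than settling for the weaker $o(N)$ bound, is exactly what makes the Hopf limit starting-point independent, and hence what makes the whole argument close.
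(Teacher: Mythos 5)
Your architecture matches the paper's in its two core steps: the Chacon--Ornstein/Hopf ratio theorem applied under the conservativity of Lemma \ref{lem:conservative}, then (i) starting-point independence of the ratio limit via coupling through the common driving sequence and local contractivity, and (ii) constancy of the common limit because it is invariant under the shift of the i.i.d.\ sequence $(F_n)$, hence constant by Kolmogorov's $0$--$1$ law --- these are exactly the paper's Steps 1 and 2, and your identification of the constant and your deduction of uniqueness coincide with the paper's use of \eqref{eq:ratio}. Where you genuinely differ is the descent from constancy to ergodicity. The paper needs the constancy for test functions of $\ell$ coordinates for every $\ell$, and gets it by lifting the SDS to $\Xx^\ell$ and invoking Proposition \ref{pro:l-process} (the lifted SDS is again locally contractive and recurrent); you stay with one-coordinate functions and argue instead through the Markov property: a $T$-invariant set $A$ gives $\Prob_x(A)=\lambda$ for $\nu$-a.e.\ $x$, so $\Ex_\nu(\uno_A\mid\mathfrak{F}_N)=\Prob_{X_N}(A)=\lambda$, and martingale convergence forces $\uno_A=\lambda\in\{0,1\}$. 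Your route bypasses Proposition \ref{pro:l-process} entirely, which is a genuine simplification; its cost is that conditional expectations and L\'evy convergence under the $\sigma$-finite measure $\Prob_\nu$ need justification (the paper itself warns, citing Revuz, that only quotients of such conditional expectations are canonically defined). This is repairable: $\Prob_\nu$ restricted to $\mathfrak{F}_N$ is $\sigma$-finite because $\nu$ is Radon, so $\Ex_\nu(\uno_A\mid\mathfrak{F}_N)$ exists as a Radon--Nikodym derivative; the martingale convergence can be localized to the sets $[x_0\in\Bb(r)]\in\mathfrak{F}_0$, which have finite $\Prob_\nu$-measure; and the identity $\Ex_\nu(\varphi(x_0)\uno_A)=c\,\Ex_\nu(\psi(x_0)\uno_A)$ should be read off from the rigorous form of the ratio theorem, namely $\int_A\varphi\,d\Prob_\nu=\int_A h\,\psi\,d\Prob_\nu$ for all $A\in\If$, where $h$ is the a.e.\ limit.

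There is one concrete soft spot in your coupling step. The device ``choose $\psi$ bounded below on a neighbourhood of $\supp\varphi$'' does control the $\varphi$-increments, since then $\sum_k\uno_K(X_k^x)\le\delta^{-1}\sum_k\psi(X_k^x)$; but it cannot deliver the clause ``and similarly with $\psi$ in place of $\varphi$'': a function in $\Ccal_c^+(\Xx)$ is never bounded below on a neighbourhood of its own support, so $\sum_k|\psi(X_k^x)-\psi(X_k^y)|$ cannot be compared to $\sum_k\psi(X_k^x)$ by this mechanism. The paper's cure is an anchoring you have already half set up: by \eqref{eq:ChacOrn-1}, pick one $\nu$-typical starting point $x$ at which the Chacon--Ornstein limits for the auxiliary pairs $(\uno_K,\varphi)$ and $(\uno_{K},\psi)$ --- note that $\uno_K(x_0)\in L^1(\Prob_\nu)$ because $\nu$ is Radon --- exist and are finite $\Prob$-almost surely; at that anchor $\limsup_n\sum_k\uno_K(X_k^x)\big/\sum_k\psi(X_k^x)<\infty$, hence both the $\varphi$- and the $\psi$-increments are $o\bigl(\sum_k\psi(X_k^x)\bigr)$, and local contractivity then transfers existence and value of the limit from the anchor to every other starting point $y$. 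With this replacement (which also frees $\psi$ from having to depend on $\varphi$), your argument closes.
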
  

\begin{proof} 
Let $\If$ be the $\sigma$-algebra of the
$T$-invariant sets in $\Bf(\Xx^{\N_0})$.
For $\varphi \in L^1(\Xx^{\N_0},\Prob_{\nu})$, we write
$\Ex_{\nu}(\varphi) = \int \varphi \,d\Prob_{\nu}$ and
$\Ex_{\nu}(\varphi\,|\,\If)$ for the conditional ``expectation'' of
$\varphi$ with respect to $\If$. The quotation marks refer to the fact that
it does not have the meaning of an expectation when $\nu$ is not a probability
measure. As a matter of fact, what is well defined in the latter case are quotients
$\Ex_{\nu}(\varphi\,|\,\If)/\Ex_{\nu}(\psi\,|\,\If)$ for suitable 
$\psi \ge 0$; compare with the explanations in 
{\sc Revuz}~\cite[pp. 133--134]{Rev}.

In view of Lemma \ref{lem:conservative}, we can apply
the ergodic theorem of  {\sc Chacon and Ornstein}~\cite{ChOr}, see also
\cite[Thm.3.3]{Rev}. 
Choosing an arbitrary function $\psi \in L^1(\Xx^{\N_0},\Prob_{\nu})$ 
with
\begin{equation}\label{eq:g-cond}
\Prob_{\nu}\Bigl(\Bigl\{ \xb \in \Xx^{\N_0}: 
\sum_{n=0}^{\infty} \psi(T^n\xb) < \infty\Bigr\}\Bigr) =0,
\end{equation}
one has for every $\varphi \in L^1(\Xx^{\N_0},\Prob_{\nu})$
\begin{equation}\label{eq:ChacOrn-1}
\lim_{n \to \infty} 
\frac{\sum_{k=0}^n \varphi(T^k\xb)}{\sum_{k=0}^n \psi(T^k\xb)} 
= \frac{\Ex_{\nu}(\varphi\,|\,\If)}{\Ex_{\nu}(\psi\,|\,\If)} \quad \text{for }\;
\Prob_{\nu}\text{-almost every }\; \xb \in \Xx^{\N_0}. 
\end{equation}
In order to show ergodicity of $T$, we need to show that
the right hand side is just 
$$
\frac{\Ex_{\nu}(\varphi)}{\Ex_{\nu}(\psi)}\,.
$$
It is sufficient to show this for non-negative functions that depend only on 
finitely many coordinates. For a function $\varphi$ on $\Xx^{\N_0}$, we also
write $\varphi$ for its extension to $\Xx^{\N_0}$, given
by $\varphi(\xb) = \varphi(x_0,\dots, x_{\ell-1})$.
   
That is, we need to show that for every $\ell \ge 1$ and
non-negative Borel functions $\varphi, \psi$ on $\Xx^{\ell}$,
with  $\psi$ satisfying \eqref{eq:g-cond},
\begin{equation}\label{eq:ChacOrn-2}
\begin{aligned}
&\lim_{n \to \infty} 
\frac{\sum_{k=0}^n \varphi\bigl(X_k^x(\omega), \dots, X_{k+\ell-1}^x(\omega)\bigr)}
{\sum_{k=0}^n \psi\bigl((X_k^x(\omega), \dots, X_{k+\ell-1}^x(\omega)\bigr))} 
= \frac{\int_{\Ll} \Ex\bigl(\varphi(X_0^y, \dots, X_{\ell-1}^y)\bigr)\,d\nu(y)}
{\int_{\Ll} \Ex\bigl(\psi(X_0^y, \dots, X_{\ell-1}^y)\bigr)\,d\nu(y)}\\[4pt]
&\text{for}\; 
\nu\text{-almost every}\; x \in \Xx\;\text{and}\; \Prob\text{-almost every}
\; \omega \in \Omega,
\end{aligned}
\end{equation}
when the integrals appearing in the right hand term are finite.

\smallskip

At this point, we observe that we need to prove \eqref{eq:ChacOrn-2} only 
for $\ell=1$. 
Indeed, once we have the proof for this case, we can reconsider our SDS 
on $\Xx^{\ell}$, and using Propostion \ref{pro:l-process}, our proof 
for $\ell=1$ applies to the new SDS as well.

\smallskip

So now let $\ell=1$. By regularity of $\nu$, we may assume that 
$\varphi$ and $\psi$ are non-negative, compactly supported, 
continuous functions on $\Ll$ that both are non-zero.

We consider the random variables 
$S_n^x\varphi(\omega) = \sum_{k=0}^n \varphi\bigl(X_k^x(\omega)\bigr)$ 
and $S_n^x\psi(\omega)$. Since the SDS is recurrent, both functions satisfy 
\eqref{eq:g-cond}, i.e., we have almost surely that $S_n^x\varphi$ and 
$S_n^x\psi > 0$ for all but finitely many $n$ and all $x$. We shall show that 
\begin{equation}\label{eq:ratio}
\lim_{n \to \infty} \frac{S_n^x\varphi}{S_n^x\psi} 
= \frac{\int_{\Ll} \varphi\,d\nu}{\int_{\Ll} \psi\,d\nu} \quad 
\Prob\text{-almost surely}\;\text{and for \emph{every}}\; x \in \Ll\,,
\end{equation}
which is more than what we need (namely that it just holds for
$\nu$-almost every $x$). 
We know from \eqref{eq:ChacOrn-1} that the limit exists in terms of
conditional expectations for $\nu$-almost every $x$, so that we only
have to show that that it is $\Prob \otimes \nu$-almost everywhere constant.
\\[8pt]
\emph{Step 1. Independence of $x$.} 
Let $K_0 \subset \Ll$ be compact such that the support of $\varphi$ is 
contained in $K_0$. Define $K = \{ x \in \Ll : d(x,K_0) \le 1 \}$.
Given $\ep > 0$, let $0 < \delta \le 1$ be such that 
$|\varphi(x)-\varphi(y)| < \ep$ whenever $d(x,y) < \delta$.

By \eqref{eq:ChacOrn-1}, there is $x$ such that the limits
$\lim_n S_n^x\uno_K\big/S_n^x\varphi$ 
and 
$Z_{\varphi,\psi}= \lim_n S_n^x\varphi\big/S_n^x\psi$ 
exist and are finite $\Prob$-almost surely. 

Local contractivity implies that for this specific $x$ and each $y \in \Xx$,
we have the following. $\Prob$-almost surely, there is a random $N \in \N$ 
such that 
$$
|\varphi(X_k^x) - \varphi(X_k^y)| \le \ep \cdot\uno_{K}(X_k^x) \quad
\text{for all}\; k \ge N.
$$
Therefore, for every $\ep > 0$ and $y \in \Xx$ 
$$
\limsup_{n \to \infty} \frac{|S_n^x\varphi - S_n^y\varphi|}{S_n^x\varphi} \le
\ep\cdot \lim_{n \to \infty} \frac{S_n^x\uno_K}{S_n^x\varphi}
\quad \Prob\text{-almost surely.}
$$
This yields that for every $y \in \Ll$,
$$
\lim_{n \to \infty} \frac{S_n^x\varphi - S_n^y\varphi}{S_n^x\varphi} = 0\,, 
\quad\text{that is,}\quad
\lim_{n \to \infty} \frac{S_n^y\varphi}{S_n^x\varphi} = 1 \quad  
\Prob\text{-almost surely.}
$$
The same applies to $\psi$ in the place of $\varphi$. We get that for all $y$,
$$
\frac{S_n^x\varphi}{S_n^x\psi}- \frac{S_n^y\varphi}{S_n^y\psi} = 
\frac{S_n^y\varphi}{S_n^y\psi}\left(\frac{S_n^x\varphi}{S_n^y\varphi}
\frac{S_n^y\psi}{S_n^x\psi} -1\right)
\to 0\quad\Prob\text{-almost surely.}
$$
In other terms, for the positive random variable $Z_{\varphi,\psi}$ given above
in terms of our $x$, 
$$
\lim_{n \to \infty}\frac{S_n^y\varphi}{S_n^y\psi} = Z_{\varphi,\psi} \quad 
\text{$\Prob$-almost surely, for every}\;y \in \Ll\,.
$$
\\[4pt]
\emph{Step 2. $Z_{\varphi,\psi}$ is a.s. constant.} Recall the random variables 
$X_{m,n}^x$ of \eqref{eq:Xmn} and set $S_{m,n}^x\varphi(\omega) 
= \sum_{k=m}^n \varphi\bigl(X_{m,k}^x(\omega)\bigr)$, $n > m$.
Then Step 1 also yields that for our given $x$ and each $m$,
\begin{equation}\label{eq:limit}
\lim_{n \to \infty}\frac{S_{m,n}^y\varphi}{S_{m,n}^y\psi} 
= \lim_{n \to \infty}\frac{S_{m,n}^x\varphi}{S_{m,n}^x\psi}
\quad  \text{$\Prob$-almost surely, for every}\;x \in \Ll\,.
\end{equation}
Let $\Omega_0 \subset \Omega$ 
be the set on which the convergence in \eqref{eq:limit} 
holds for all $m$, and both $S_n^x\varphi$ and $S_n^x\psi \to \infty$ on 
$\Omega_0\,$. We have $\Prob(\Omega_0)=1$. 
For fixed $\omega \in \Omega_0$ and $m \in \N$, let $y = X_m^x(\omega)$. Then 
(because in the ratio limit we can omit the first $m$ terms of the sums)
$$
Z_{\varphi,\psi}(\omega)= 
\lim_{n\to \infty}\frac{S_n^x\varphi(\omega)}{S_n^x\psi(\omega)}
=\lim_{n\to \infty}\frac{S_{m,n}^y\varphi(\omega)}{S_{m,n}^y\psi(\omega)} 
=\lim_{n\to \infty}\frac{S_{m,n}^x\varphi(\omega)}{S_{m,n}^x\psi(\omega)}.
$$
Thus, $Z_{\varphi,\psi}$ is independent of $F_1, \dots, F_m\,$, 
whence it is constant by Kolmogorov's 0-1 law. This 
completes the proof of ergodicity. It is immediate from
\eqref{eq:ratio} that $\nu$ is unique up to multiplication by constants. 
\end{proof}

\begin{cor}\label{cor:returntime}
Let the locally contractive SDS $(X_n^x)$ be recurrent with invariant
Radon measure $\nu$. For relatively compact, open $U \subset \Xx$
which intersects $\Ll$, consider the probability measure $\msf_U$ on $\Xx$ 
defined by $\msf_U(B) = \nu(B\cap U)/\nu(U)$. Consider the
SDS with initial distribution $\msf_U$, and let $\tau^U$ be its return
time to $U$.
\\[5pt]
\emph{(a)} If $\nu(\Ll) < \infty$ then the SDS is \emph{positive recurrent},
that is,
$$
\Ex(\tau^U) = \nu(\Ll)/\nu(U) < \infty\,.
$$
\\[5pt] 
\emph{(b)} If $\nu(\Ll) = \infty$ then the SDS is \emph{null recurrent},
that is,
$$
\Ex(\tau^U) = \infty\,.
$$
\end{cor}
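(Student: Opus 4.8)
The plan is to deduce both statements simultaneously from Kac's recurrence formula applied to the shift $T$ on the trajectory space $\bigl(\Xx^{\N_0},\Bf(\Xx^{\N_0}),\Prob_{\nu}\bigr)$, which we already know to be conservative (Lemma \ref{lem:conservative}) and ergodic (Theorem \ref{thm:uniquemeasure}). Since $\nu$ is a probability measure only exceptionally, $\Prob_{\nu}$ is the $\sigma$-finite $T$-invariant measure of this system, and its total mass is $\Prob_{\nu}(\Xx^{\N_0}) = \int_{\Ll} \Prob_x(\Xx^{\N_0})\,d\nu(x) = \nu(\Ll)$, because each $\Prob_x$ is a probability measure and $\supp(\nu)=\Ll$ by Lemma \ref{lem:exc-inv}.

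First I would introduce the cylinder set $A = \{\xb=(x_n)_{n\ge 0} \in \Xx^{\N_0} : x_0 \in U\}$. Since $X_0 = x$ is deterministic under $\Prob_x$, one has $\Prob_x(A)=\uno_U(x)$, so that $\Prob_{\nu}(\,\cdot\,\cap A) = \int_U \Prob_x(\,\cdot\,)\,d\nu(x)$; in particular $\Prob_{\nu}(A)=\nu(U)$, which is finite and positive because $U$ is relatively compact and meets $\Ll=\supp(\nu)$. The normalized restriction of $\Prob_{\nu}$ to $A$ is then exactly the law $\Prob_{\msf_U}=\int \Prob_x\,d\msf_U(x)$ of the SDS started from $\msf_U$. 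Moreover, for $\xb \in A$ the first return time $n_A(\xb)=\inf\{n\ge 1 : T^n\xb \in A\}$ coincides with the return time $\tau^U$ of the trajectory to $U$, and $\Prob_{\msf_U}$ is concentrated on $A$. Hence
\begin{equation*}
\Ex(\tau^U) = \int n_A\,d\Prob_{\msf_U} = \frac{1}{\nu(U)}\int_A n_A\,d\Prob_{\nu}.
\end{equation*}
Kac's formula now finishes the argument: for a conservative, ergodic, measure-preserving transformation of a $\sigma$-finite space and a set $A$ with $0 < \Prob_{\nu}(A) < \infty$, the induced return time satisfies $\int_A n_A\,d\Prob_{\nu} = \Prob_{\nu}(\Xx^{\N_0})$. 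Combining this with the display gives $\Ex(\tau^U) = \nu(\Ll)/\nu(U)$, which yields \emph{(a)} when $\nu(\Ll)<\infty$ and \emph{(b)} when $\nu(\Ll)=\infty$.

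The only content beyond this bookkeeping is the justification of Kac's formula in the merely $\sigma$-finite setting, which I regard as the main point to handle with care. I would either cite it (e.g.\ from \cite{Rev}) or derive it via the Kakutani skyscraper: partition $A$ according to the value of $n_A$ into $A_k=\{\xb\in A : n_A(\xb)=k\}$, observe that the sets $T^jA_k$ with $0\le j<k$ are pairwise disjoint, and compute
\begin{equation*}
\int_A n_A\,d\Prob_{\nu} = \sum_{k\ge 1} k\,\Prob_{\nu}(A_k) = \sum_{k\ge 1}\sum_{j=0}^{k-1}\Prob_{\nu}(T^jA_k),
\end{equation*}
which by $T$-invariance equals the $\Prob_{\nu}$-measure of the tower $\bigcup_{k}\bigcup_{j=0}^{k-1}T^jA_k$. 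Conservativity guarantees $n_A<\infty$ almost everywhere on $A$ (Poincar\'e recurrence), and ergodicity guarantees that this tower exhausts $\Xx^{\N_0}$ up to a $\Prob_{\nu}$-null set, so the measure of the tower is $\Prob_{\nu}(\Xx^{\N_0})=\nu(\Ll)$. The delicate point to keep in mind is that, when $\nu(\Ll)=\infty$, the conclusion $\Ex(\tau^U)=\infty$ is a genuine divergence statement rather than an artifact of an ill-defined expectation; this is precisely the case in which one must verify that the tower covers a set of infinite measure.
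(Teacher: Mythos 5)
Your proposal is correct and is essentially the paper's own argument: the paper proves this corollary by a one-line appeal to Kac's formula (citing Aaronson, Thm.~1.5.5), applied exactly as you set it up, to the conservative, ergodic shift $T$ on $\bigl(\Xx^{\N_0},\Bf(\Xx^{\N_0}),\Prob_{\nu}\bigr)$ with the cylinder set over $U$ playing the role of the base. The one caveat concerns your optional skyscraper derivation of Kac's formula: the shift $T$ here is not invertible, so the forward images $T^jA_k$ need not be measure-preserving copies of $A_k$ (invariance of $\Prob_{\nu}$ only controls preimages $T^{-1}B$), and one should either pass to the natural extension (the two-sided trajectory space) or simply cite the reference, as the paper does.
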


This follows from the well known formula of Kac, 
see e.g. {\sc Aaronson}~\cite[1.5.5., page~44]{Aa}.

\begin{lem}\label{lem:posrec} In the positive recurrent case,
let the invariant measure be normalised such that $\nu(\Ll)=1$.
Then, for every starting point $x \in X$, the sequence $(X_n^x)$
converges in law to $\nu$.
\end{lem}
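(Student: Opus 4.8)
The plan is to compare $(X_n^x)$ with a stationary copy of the chain through a coupling driven by the \emph{same} random maps, and to kill the discrepancy with local contractivity. Since we are in the positive recurrent case normalised so that $\nu(\Ll)=1$, the measure $\nu$ is an invariant \emph{probability} measure. Fix the reference point $x$ and introduce a random variable $Y_0$ of law $\nu$, defined on an enlarged probability space and independent of the driving sequence $(F_n)$. Set $Y_n = F_n\circ\dots\circ F_1(Y_0) = X_n^{Y_0}$; by invariance of $\nu$ the law of $Y_n$ is $\nu$ for every $n$. Because $\Xx$ is locally compact and separable and $\nu$ is a probability measure, it suffices to prove vague convergence, i.e. $\Ex\bigl(\varphi(X_n^x)\bigr)\to\int_{\Xx}\varphi\,d\nu$ for every $\varphi\in\Ccal_c(\Xx)$: a vague limit that is itself a probability measure is automatically a weak limit, since the total mass $1$ leaves no room for escape to infinity, so the laws of $X_n^x$ are tight and convergence in law follows.

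Fix $\varphi\in\Ccal_c(\Xx)$ with support in a compact set $K_0$, and put $K=\{z:d(z,K_0)\le 1\}$, which is compact by properness. Given $\ep>0$, uniform continuity of $\varphi$ provides $\de\in(0,1]$ with $|\varphi(u)-\varphi(v)|<\ep$ whenever $d(u,v)<\de$. The central step is to show that $|\varphi(X_n^x)-\varphi(Y_n)|\to 0$ almost surely. Here I invoke local contractivity twice: once with reference point $x$, giving $d(X_n^x,Y_n)\,\uno_K(X_n^x)\to 0$ after conditioning on $Y_0$ and integrating; and once with reference point $Y_0$ — legitimate because local contractivity holds for \emph{every} starting point and $Y_0$ is independent of $(F_n)$ — giving $d(X_n^x,Y_n)\,\uno_K(Y_n)\to 0$. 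Hence almost surely there is a random $N$ so that for $n\ge N$ both products are $<\de$. For such $n$ I distinguish three cases: if $X_n^x\in K$ the first bound forces $d(X_n^x,Y_n)<\de$, so $|\varphi(X_n^x)-\varphi(Y_n)|<\ep$; if $Y_n\in K$ the second bound gives the same; and if neither point lies in $K$ then neither lies in $K_0$, whence $\varphi(X_n^x)=\varphi(Y_n)=0$. In every case the difference is $<\ep$, which establishes the almost sure convergence.

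It then remains to conclude: since $\varphi$ is bounded, dominated convergence yields $\Ex\bigl|\varphi(X_n^x)-\varphi(Y_n)\bigr|\to 0$, while $\Ex\bigl(\varphi(Y_n)\bigr)=\int_{\Xx}\varphi\,d\nu$ for all $n$ by stationarity; combining the two gives $\Ex\bigl(\varphi(X_n^x)\bigr)\to\int_{\Xx}\varphi\,d\nu$, hence the vague and therefore the weak convergence. I expect the only genuine obstacle to be the ``tail'' regime in which $X_n^x$ wanders far from $\supp\varphi$: there the indicator $\uno_K(X_n^x)$ vanishes and the one-sided contraction estimate says nothing, so $Y_n$ could a priori sit inside $K_0$ and spoil the bound. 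The device that removes this difficulty is precisely the symmetric use of local contractivity with the stationary trajectory as reference point, which shows that whenever $Y_n$ enters $K$ it must drag $X_n^x$ into the $1$-neighbourhood $K$ as well, excluding the bad configuration for large $n$.
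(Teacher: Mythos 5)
Your proof is correct and follows essentially the same route as the paper: the paper fixes a deterministic $y$, uses local contractivity plus uniform continuity of $\varphi$ to get $\varphi(X_n^x)-\varphi(X_n^y)\to 0$ almost surely, and then integrates over $y$ with respect to the invariant probability measure $\nu$ using dominated convergence — your coupling with $Y_0\sim\nu$ is the same computation phrased via Fubini. The only added value is that you make explicit two points the paper glosses over: the symmetric, two-sided application of local contractivity (with the indicator at $X_n^x$ and at $Y_n$) needed to rule out the regime where one trajectory sits in $\supp\varphi$ while the other escapes, and the upgrade from vague to weak convergence because the limit is a probability measure.
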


\begin{proof} Let $\varphi:\Xx \to \R$ be continuous and compactly supported.
Since $\varphi$ is uniformly continuous, local contractivity yields 
for all $x,y \in X$ that $\varphi(X_n^x) - \varphi(X_n^y) \to 0$ almost surely. 
By dominated convergence, $\Ex\bigl(\varphi(X_n^x) - \varphi(X_n^y)\bigr) \to 0$.
Thus, 
$$
P^n\varphi(x) - \int \varphi \, d\nu 
= \int \bigl( P^n\varphi(x) - P^n\varphi(y) \bigr)\,d\nu(y) 
= \int \Ex\bigl(\varphi(X_n^x) - \varphi(X_n^y)\bigr)\,d\nu(y) \to 0
$$
\end{proof}

\section{Basic example: the affine stochastic recursion}\label{sec:affine}

Here we briefly review the main known results regarding the SDS on $\Xx=\R$ given by 
\begin{equation}\label{eq:affine}
Y_0^x = x\,,\quad Y_{n+1}^x = A_nY_n^x + B_{n+1}\,,
\end{equation}
where $(A_n,B_n)_{n \ge 0}$ is a sequence of i.i.d. random variables
in $\R^+_* \times \R$. The following results are known. 

\begin{pro}\label{pro:contract}
If $\Ex(\log^+ A_n) < \infty$ and 
$$
-\infty \le \Ex (\log A_n) < 0
$$
then $(Y_n^x)$ is strongly contractive on $\R$. 

If in addition
$\Ex(\log^+ |B_n|) < \infty$ then the affine SDS has a unique invariant
probability measure $\nu$, and is (positive) recurrent on $\Ll = \supp(\nu)$.
Furthermore, the shift on the trajectory space is ergodic with respect
to the probability measure $\Pr_{\nu}\,$.
\end{pro}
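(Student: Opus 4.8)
The plan is to handle the four assertions in turn, exploiting that for affine maps the difference of two trajectories factors explicitly. Writing the maps as $F_n(y)=A_ny+B_n$, one has $Y_n^x-Y_n^y=\bigl(\prod_{k=1}^n A_k\bigr)(x-y)$, so that
$$
d(Y_n^x,Y_n^y)=|x-y|\prod_{k=1}^n A_k .
$$
Since $\Ex(\log^+A_n)<\infty$, the mean $\Ex(\log A_n)\in[-\infty,0)$ is well defined, and the strong law of large numbers gives $\tfrac1n\sum_{k=1}^n\log A_k\to\Ex(\log A_1)<0$ almost surely. Hence $\prod_{k=1}^n A_k\to0$ on a single event of full probability, and on that event $d(Y_n^x,Y_n^y)\to0$ simultaneously for all $x,y$. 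This is exactly strong contractivity, and in particular the SDS is locally contractive.

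For the invariant probability measure I would apply Furstenberg's contraction principle (Proposition \ref{pro:furst}) to the right process. A direct computation gives
$$
R_n^x=F_1\circ\dots\circ F_n(x)=\Bigl(\prod_{k=1}^n A_k\Bigr)x+\sum_{j=1}^n\Bigl(\prod_{k=1}^{j-1}A_k\Bigr)B_j .
$$
The leading term tends to $0$ by the first part, so everything hinges on almost sure convergence of the series $Z=\sum_{j\ge1}\bigl(\prod_{k=1}^{j-1}A_k\bigr)B_j$. This is where the moment hypotheses combine: the law of large numbers forces $\prod_{k=1}^{j-1}A_k=e^{-(c+o(1))j}$ for some $c>0$ (faster, if $\Ex(\log A_1)=-\infty$), while $\Ex(\log^+|B_n|)<\infty$ yields, by a Borel--Cantelli argument, $\tfrac1j\log^+|B_j|\to0$ almost surely, that is $|B_j|=e^{o(j)}$. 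Consequently the general term decays geometrically and the series converges absolutely almost surely to a limit $Z$ independent of $x$. Furstenberg's principle then identifies the law $\nu$ of $Z$ as the unique invariant probability measure.

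To obtain recurrence I would rule out transience using convergence in law. Since $(F_1,\dots,F_n)$ and $(F_n,\dots,F_1)$ have the same joint distribution, $X_n^x$ and $R_n^x$ are equal in law, so $X_n^x\to\nu$ in distribution. If the SDS were transient then $|X_n^x|\to\infty$ almost surely, forcing $\Ex\bigl(\varphi(X_n^x)\bigr)\to0$ for every $\varphi\in\Ccal_c(\Xx)$ by dominated convergence; choosing $\varphi\ge0$ with $\int\varphi\,d\nu>0$ contradicts $X_n^x\to\nu$. By the dichotomy of Lemma \ref{lem:transient} we therefore have $\Prob[d(X_n^x,x)\to\infty]=0$, which is precisely conservativity, hence recurrence on the attractor $\Ll$ of Lemma \ref{lem:attract}. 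Lemma \ref{lem:exc-inv} gives $\supp(\nu)=\Ll$, and since $\nu(\Ll)=1<\infty$, Corollary \ref{cor:returntime}(a) upgrades this to positive recurrence.

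Finally, ergodicity of the shift $T$ with respect to $\Prob_\nu$, together with uniqueness of $\nu$ up to constants, is immediate from Theorem \ref{thm:uniquemeasure} applied to this recurrent, locally contractive SDS. The main obstacle is the second step: establishing almost sure convergence of the random series defining $Z$, where the delicate point is exactly the interplay between the geometric decay of the products $\prod A_k$ and the subexponential growth of $|B_j|$ guaranteed by the logarithmic moment condition on $B_n$.
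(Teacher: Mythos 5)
Your proof is correct and takes essentially the same route as the paper's (which is itself only an outline): Furstenberg's contraction principle applied to the right process, almost sure absolute convergence of $Z=\sum_{n\ge 1}A_1\cdots A_{n-1}B_n$ via the strong law of large numbers and a Borel--Cantelli bound on $\log^+|B_n|$, and then recurrence, positive recurrence, ergodicity and uniqueness from Lemmas \ref{lem:transient}, \ref{lem:attract}, \ref{lem:exc-inv}, Corollary \ref{cor:returntime} and Theorem \ref{thm:uniquemeasure}. The only minor divergence is in excluding transience: the paper uses invariance of $\nu$ (namely $\nu(U)=\nu P^n(U)\to 0$ by dominated convergence for relatively compact $U$), whereas you use the distributional identity $Y_n^x\overset{d}{=}R_n^x$ and convergence in law; these are interchangeable dominated-convergence arguments.
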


\begin{proof}[Proof (outline)]
This is \emph{the} classical application of Furstenberg's contraction
principle. One verifies that for the associated right process,
$$
R_n^x \to Z = \sum_{n=1}^{\infty} A_1 \cdots A_{n-1}B_n
$$
almost surely for every $x \in \R$. The series that defines $Z$
is almost surely abolutely convergent by the assumptions on
the two expectations. 
Recurrence is easily deduced via Lemma \ref{lem:transient}. Indeed, we cannot
have $|Y_n^x| \to \infty$ almost surely, because then by dominated convergence
$\nu(U) = \nu \,P^n(U) \to 0$ for every relatively compact set $U$.
Ergodicity now follows from strong contractivity.
\end{proof}


\begin{pro}\label{pro:centered}
Suppose that $\Prob[A_n = 1] < 1$ and $\Prob[A_n x + B_n = x] < 1$
for all $x \in \R$ (non-degeneracy).
If $\Ex(|\log A_n|) < \infty$ and $\Ex(\log^+ B_n) < \infty$, and if
$$
\Ex (\log A_n) = 0
$$
then $(Y_n^x)$ is locally contractive on $\R$. 

If in addition
$\Ex(|\log A_n|^2) < \infty$ and
$\Ex\bigl((\log^+ |B_n|)^{2+\ep}\bigr) < \infty$ for some $\ep > 0$
then the affine SDS has a unique invariant Radon
measure $\nu$ with infinite mass, and it is (null) recurrent on 
$\Ll = \supp(\nu)$.
\end{pro}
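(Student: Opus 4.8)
The plan is to establish the two assertions in turn: first local contractivity, using only the first-moment hypotheses, and then recurrence together with the properties of the invariant measure, the latter by feeding local contractivity and recurrence into the machinery of \S\ref{sec:benda}.

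Since $f_n(y)=A_ny+B_{n+1}$ is affine with Lipschitz constant $A_n$, two trajectories satisfy $|Y_n^x-Y_n^y|=e^{S_n}|x-y|$, where $S_n=\sum_{k=0}^{n-1}\log A_k$ is a random walk whose increments have mean $\Ex(\log A_1)=0$ and finite first moment. Local contractivity thus reduces to showing $e^{S_n}\,\uno_K(Y_n^x)\to 0$ almost surely for every compact $K$. First I would note that a centered, integrable random walk oscillates, so $\liminf_n S_n=-\infty$; consequently, at the descending ladder epochs $\theta_i$ (the times at which $S_n$ attains a new minimum) one has $S_{\theta_i}\to-\infty$, and contraction holds trivially along this subsequence. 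The substance is to upgrade this to \emph{all} return times to $K$. Writing $Y_n^x=e^{S_n}x+W_n$ with $W_n=\sum_{j=1}^n e^{S_n-S_j}B_j$, the issue is that if $S_n$ lies far above its running minimum $m_n=\min_{k\le n}S_k$, then $W_n$ carries the large contributions $e^{S_n-S_j}B_j$ from indices $j$ near the minimum; for $Y_n^x$ to remain in $K$ these would have to cancel, and the non-degeneracy hypotheses $\Prob[A_1=1]<1$ and $\Prob[A_1x+B_1=x]<1$ are exactly what rule out such persistent cancellation. I expect this no-cancellation step --- showing that a return to $K$ forces $S_n$ to stay within a bounded distance of $m_n$, whence $e^{S_n}\to 0$ along return times --- to be the main obstacle; it is the core of the analysis of Babillot--Bougerol--Elie~\cite{BBE}.

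For the second assertion I would first show the SDS is not transient. At the descending ladder epochs every exponent $S_{\theta_i}-S_j$ is non-positive, so $W_{\theta_i}=\sum_{j\le\theta_i}e^{S_{\theta_i}-S_j}B_j$ is a perpetuity-type sum; reversing the increments turns it into a series driven by the ascending ladder heights of a centered finite-variance walk, and the moment conditions $\Ex(|\log A_1|^2)<\infty$ and $\Ex\big((\log^+|B_1|)^{2+\ep}\big)<\infty$ are calibrated (via the $n^{-1/2}$ decay in the local limit theorem for $S_n$) precisely so that this series converges absolutely and the family $\{Y_{\theta_i}^x\}_i$ is tight. Tightness along $\theta_i$ gives $\liminf_n|Y_n^x|<\infty$ almost surely, so $\Prob[|Y_n^x|\to\infty]<1$; by the dichotomy of Lemma \ref{lem:transient} and the local contractivity just proved, the SDS is recurrent. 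Proposition \ref{pro:invmeasure} then supplies an invariant Radon measure $\nu$ with $\supp(\nu)=\Ll$, and Theorem \ref{thm:uniquemeasure} yields ergodicity of the shift and hence uniqueness of $\nu$ up to constants.

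Finally, to see that $\nu$ is infinite --- equivalently, by Corollary \ref{cor:returntime}, that the chain is null recurrent --- I would argue by contradiction. If $\nu$ were finite, normalise it to a probability measure; Lemma \ref{lem:posrec} then gives $Y_n^x\to\nu$ in law for every $x$, so the affine recursion admits a stationary probability law. By the classical theory of the perpetuity (Grincevi\v cjus~\cite{G1},~\cite{G2}), such a stationary law exists if and only if the backward series $\sum_{j\ge1}e^{S_{j-1}}\tilde B_j$ converges almost surely, which forces $A_1\cdots A_n\to 0$, i.e.\ $\Ex(\log A_1)<0$. This contradicts $\Ex(\log A_1)=0$ (non-degeneracy ensuring $\Prob[B_1\neq 0]>0$, so the recursion is genuinely affine). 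Hence $\nu(\Ll)=\infty$ and the recursion is null recurrent, as claimed.
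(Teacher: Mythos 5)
Your outline coincides with the paper's own treatment of this proposition, which in fact gives no self-contained proof: the paper cites \cite{BBE} and the ``geometric'' proof of \cite{Br} for local contractivity --- in exactly the form you reduce to, namely $A_1\cdots A_n\,\uno_K(Y_n^x)\to 0$ almost surely for every compact $K$ --- and \cite[Lemma 5.49]{E2} for the descending-ladder-epoch/perpetuity argument behind recurrence, after which (as in your proposal) recurrence, existence, uniqueness, ergodicity and null recurrence follow from Lemma \ref{lem:transient}, Proposition \ref{pro:invmeasure}, Theorem \ref{thm:uniquemeasure} and Corollary \ref{cor:returntime}. The single step you defer --- the ``no-cancellation'' core showing that returns to a compact set force $S_n$ to stay near its running minimum --- is precisely the step the paper also outsources to the literature, so your sketch is faithful to, and no less complete than, the paper's own proof.
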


This goes back to \cite{BBE}, with a small gap that was later filled in
\cite{Be1}. With the moment conditions as stated here, a nice and complete
``geometric'' proof is given in \cite{Br}: it is shown that under the stated
hypotheses,
$$
A_1 \cdots A_n \cdot \uno_K(Y_n) \to 0 \quad \text{almost surely}
$$
for very compact set $K$. Recurrence was shown earlier in
\cite[Lemma 5.49]{E2}.

\begin{pro}\label{pro:expanding}
If $\Ex(|\log A_n|) < \infty)$ and $\Ex(\log^+ B_n) < \infty$, and if
$$
\Ex (\log A_n) > 0
$$
then $(Y_n^x)$ is transient, that is, $|Y_n^x| \to \infty$
almost surely for every starting point $x \in \R$.
\end{pro}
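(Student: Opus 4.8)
The plan is to solve the recursion in closed form and reduce transience to the exponential growth of a single product, controlling the additive contribution by a convergent series. Iterating $Y_{n+1}^x = A_n Y_n^x + B_{n+1}$ yields
$$
Y_n^x = \Pi_n\,x + \sum_{k=1}^n \frac{\Pi_n}{\Pi_k}\,B_k
       = \Pi_n\Bigl(x + W_n\Bigr),
\quad\text{where}\quad
\Pi_n = \prod_{j=0}^{n-1} A_j \AND W_n = \sum_{k=1}^n \frac{B_k}{\Pi_k}.
$$
Everything then comes down to the multiplicative factor $\Pi_n$ and the ``perpetuity'' $W_n$.

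First I would treat $\Pi_n$. Since $\log \Pi_n = \sum_{j=0}^{n-1}\log A_j$ is a sum of i.i.d. terms with $\Ex(|\log A_n|)<\infty$, the strong law of large numbers gives $\tfrac1n\log\Pi_n \to \gamma := \Ex(\log A_n) > 0$ almost surely, so $\Pi_n\to\infty$ at exponential rate $e^{\gamma n}$. Next I would show that $W_n$ converges almost surely. From $\Ex(\log^+|B_n|)<\infty$ and the i.i.d. assumption, a Borel--Cantelli argument (the series $\sum_k \Prob[\log^+|B_k|>\ep k]$ converges for every $\ep>0$) gives $\tfrac1k\log^+|B_k|\to 0$ almost surely; together with $\tfrac1k\log\Pi_k\to\gamma$ this makes each term $B_k/\Pi_k$ decay exponentially, so $W_\infty = \sum_{k\ge1} B_k/\Pi_k$ converges absolutely almost surely. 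Hence $x+W_n\to x+W_\infty$, and on the event $\{x+W_\infty\ne0\}$ we obtain $|Y_n^x| = \Pi_n\,|x+W_n|\to\infty$, the desired transience.

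The main obstacle is thus to prove $\Prob[x+W_\infty=0]=0$ for every fixed $x$, i.e. that the law of $W_\infty$ has no atom at $-x$; some non-degeneracy is unavoidable here, since $B_n\equiv0$ would give $W_\infty\equiv0$ and leave the point $0$ fixed. I would establish non-atomicity by the classical perpetuity argument (going back to Grincevi\v cjus~\cite{G1}): splitting off the first factor gives the stationarity relation $W_\infty \overset{d}{=} (B_1+W_\infty')/A_0$ with $W_\infty'\overset{d}{=}W_\infty$ independent of $(A_0,B_1)$, so the atom weights $g(c)=\Prob[W_\infty=c]$ satisfy $g(c)=\int g(ac-b)\,d\mu(a,b)$. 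If $p=\sup_c g(c)>0$, the finitely many atoms of maximal weight must be permuted by each affine map $c\mapsto A_0c-B_1$; comparing the permutation-invariant centroid of this finite set then forces $\Prob[A_0y_0+B_1=y_0]=1$ for some deterministic $y_0$, a common fixed point. This is excluded by the natural non-degeneracy assumption $\Prob[A_ny+B_n=y]<1$ for all $y$ (as in Proposition \ref{pro:centered}; note $\gamma>0$ already forces $\Prob[A_n=1]<1$). Therefore $W_\infty$ is non-atomic, $\Prob[x+W_\infty=0]=0$ for every $x$, and transience holds for every starting point.
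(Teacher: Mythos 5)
Your proof is correct, but note first that the paper does not prove this proposition at all: it is quoted from Elie~\cite{E3}. So any proof is ``a different route''; what you have supplied is the classical self-contained perpetuity argument (going back to Grincevi\v cjus~\cite{G1}): the closed form $Y_n^x=\Pi_n(x+W_n)$, the strong law for $\log \Pi_n$, Borel--Cantelli to get almost sure absolute convergence $W_n\to W_\infty$, and non-atomicity of the law of $W_\infty$ via the maximal-atom permutation argument. All of these steps are sound, and what the route buys is transparency: transience holds at every starting point $x$ outside the random singleton $\{-W_\infty\}$, and the argument isolates exactly where non-degeneracy enters.

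That last point is worth stressing, because you have in fact spotted a genuine imprecision in the paper's statement rather than a gap in your own proof. As printed, the proposition is false in degenerate cases: if $\Prob[A_ny_0+B_n=y_0]=1$ for some $y_0\in\R$ (e.g.\ $B_n\equiv 0$ and $y_0=0$), then $Y_n^{y_0}=y_0$ for all $n$, so $|Y_n^x|\to\infty$ fails at that single starting point even though all stated hypotheses can be met. Hence the non-degeneracy condition \eqref{eq:nofix}, listed in Proposition \ref{pro:centered} but omitted here, must be read into the statement, exactly as you do; presumably it is implicit in the cited source. Two small pieces of bookkeeping to tighten. First, use the introduction's indexing $Y_n^x=A_nY_{n-1}^x+B_n$ rather than the crossed indexing of \eqref{eq:affine} (where $A_n$ multiplies at the step that adds $B_{n+1}$, evidently a typo): your stationarity relation $W_\infty \overset{d}{=} (B_1+W_\infty')/A_1$ with $W_\infty'$ independent of the \emph{pair} requires that the multiplier and summand of a single step form one of the i.i.d.\ pairs $(A_k,B_k)$. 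Second, mind the sign: the self-map of the set of maximal atoms is $c\mapsto ac-b$, and its common fixed point $\bar c$ (your centroid) corresponds to the fixed point $y_0=-\bar c$ of the forward map $y\mapsto ay+b$; the conclusion is then exactly the contradiction with \eqref{eq:nofix} that you state.
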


A proof is given, e.g., by {\sc Elie}~\cite{E3}.

\section{Iteration of random contractions}\label{sec:contract}

Let us now consider a more specific class of SDS:  
within $\Gp$, we consider the closed submonoid $\Lp_1$ of all 
\emph{contractions} of $\Xx$, i.e., mappings $f: \Xx \to \Xx$ with 
Lipschitz constant $\lp(f) \le 1$. We suppose that the probability measure
$\wt\mu$ that governs the SDS is supported by $\Lp_1$, that is, each 
random function $F_n$ of \eqref{eq:SDS} satisfies $\lp(F_n) \le 1$.
In this case, one does not need local contractivity in order to obtain
Lemma \ref{lem:transient}; 
this follows directly from properness of $\Xx$ and the inequality
$$
D_n(x,y) \le d(x,y)\,,\quad\text{where}\quad D_n(x,y) = d(X_n^x,X_n^y)\,.
$$
When $\Prob[d(X_n^x,x) \to \infty]=0$ for every $x$, we
can in general only speak of conservativity, since we do not
yet have an attractor on which the SDS is topologically recurrent.  
Let $\Sf(\wt\mu)$ be the closed sub-semigroup of $\Lp_1$ generated by 
$\supp(\wt\mu)$. 

\begin{rmk}\label{rmk:contr} For strong contractivity it is sufficient
that $\Prob[D_n(x,y)\to 0]=1$ pointwise for all $x,y \in \Xx$.

Indeed, by properness, $\Xx$  has a dense, countable subset $Y$. 
If $K \subset \Xx$ is compact and $\ep > 0$ then there is
a finite $W \subset Y$ such that $d(y, W) < \ep$ for every $y \in K$.
Therefore  
$$
\sup_{y \in K} D_n(x,y) \le 
\underbrace{\max_{w \in W} D_n(x,w)}_{\textstyle{\to 0 \;\text{a.s.}}} 
+ \ep\,,
$$ 
since $D_n(x,y) \le D_n(x,w)+D_n(w,y) \le  D_n(x,w) + d(w,y)$.
\end{rmk}

The following key result of \cite{Be} (whose statement and proof we have
slightly strengthened here) is inspired by \cite[Thm. 2.2]{Kn}, 
where reflected random walk is studied; see also \cite{Le}. 

\begin{thm}\label{thm:contractive} If the SDS of
contractions is conservative, then it is strongly contractive if and only if 
$\Sf(\wt\mu) \subset \Lp_1$ contains  a constant function.
\end{thm}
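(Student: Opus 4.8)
The plan is to work throughout with the non-increasing functional $D_n(x,y) = d(X_n^x,X_n^y)$. Since every $F_n$ is a contraction, $D_{n+1}(x,y) = d\bigl(F_{n+1}(X_n^x),F_{n+1}(X_n^y)\bigr) \le D_n(x,y)$, so for each fixed pair $x,y$ the sequence $D_n(x,y)$ decreases almost surely to a limit $D_\infty(x,y)\ge 0$. By Remark \ref{rmk:contr}, strong contractivity is equivalent to the statement that $D_\infty(x,y)=0$ almost surely for every fixed pair $x,y$. I would therefore reduce the whole theorem, under the standing assumption of conservativity, to the equivalence: $D_\infty(x,y)=0$ a.s. for all $x,y$ if and only if $\Sf(\wt\mu)$ contains a constant map.

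For the sufficiency direction, suppose $g\equiv c\in\Sf(\wt\mu)$ is constant. The first move is to record that the presence of $g$ makes long blocks of the driving sequence strongly contracting with positive probability. Fix $R>0$ and $\ep>0$; since $g$ is a uniform-on-compacta limit of finite compositions of elements of $\supp(\wt\mu)$, there is a word $h=g_m\circ\dots\circ g_1$ with $g_i\in\supp(\wt\mu)$ and $\operatorname{diam} h(\overline{\Bb}(R))<\ep/2$. Using that all maps are $1$-Lipschitz, a telescoping estimate shows that if each $F_i$ lies in a small enough uniform neighbourhood $V_i$ of $g_i$ on a fixed compact set, then $\operatorname{diam}\bigl(F_m\circ\dots\circ F_1(\overline{\Bb}(R))\bigr)<\ep$, and $\Prob[F_i\in V_i,\ i\le m]=\prod_i\wt\mu(V_i)>0$ because each $g_i$ is in the support. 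Thus an $m$-block contracts $\overline{\Bb}(R)$ below diameter $\ep$ with a fixed probability $p=p(R,\ep)>0$, and this event depends only on those $m$ maps. The second move combines this with conservativity. Since $D_n(x,y)\le d(x,y)$, whenever $d(X_n^x,x)\le r$ both trajectory points lie in a common ball $\overline{\Bb}(R)$ with $R=r+d(x,o)+d(x,y)$; and $\liminf_n d(X_n^x,x)<\infty$ a.s. means the sample space is covered, up to a null set, by the events $E_r=\{X_n^x\in\overline{\Bb}_x(r)\ \text{i.o.}\}$, where $\overline{\Bb}_x(r)=\{z:d(z,x)\le r\}$ and $r\in\N$. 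Fixing $r$, I would introduce on $E_r$ the successive return times $\sigma_1<\sigma_2<\dots$ to $\overline{\Bb}_x(r)$, spaced so that $\sigma_{k+1}>\sigma_k+m$. By the strong Markov property the contracting-block event $G_k$ attached to the $m$ maps following $\sigma_k$ has conditional probability $p$ given the past at $\sigma_k$, so the conditional (Lévy) Borel–Cantelli lemma forces infinitely many $G_k$ almost surely on $E_r$. Each occurring $G_k$ gives $D_{\sigma_k+m}(x,y)<\ep$, hence $D_\infty(x,y)<\ep$; letting $\ep\to 0$ over the countably many $r$ yields $D_\infty(x,y)=0$ a.s.

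For the necessity direction I would argue by compactness. Almost surely the forward products $\Phi_n=F_n\circ\dots\circ F_1$ lie in $\Sf(\wt\mu)$ and are $1$-Lipschitz, and strong contractivity in the compact-set form of Remark \ref{rmk:contr} gives $\operatorname{diam}\Phi_n(K)\to 0$ for every compact $K$. Conservativity applied at the starting point $o$ furnishes a subsequence along which $\Phi_n(o)$ stays in a fixed ball, whence Arzelà–Ascoli extracts a further subsequence converging uniformly on compacta to a map $g$, which belongs to $\Sf(\wt\mu)$ because that semigroup is closed. Passing $\operatorname{diam}\Phi_n(K)\to 0$ to the limit shows that $g$ has image of diameter zero on each $\overline{\Bb}(R)$, hence is constant on every ball and therefore globally constant, as required.

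The step I expect to be the crux is the sufficiency argument, and within it the clean handling of the interaction between the random return times produced by conservativity and the fresh i.i.d. blocks that perform the contraction: the returns themselves depend on the maps, so independence cannot be invoked directly, and the argument must be routed through the strong Markov property together with the conditional form of Borel–Cantelli. The preliminary bookkeeping—that a constant element of $\Sf(\wt\mu)$ genuinely furnishes, for every ball and every $\ep$, a positive-probability finite block contracting that ball below diameter $\ep$—is the other place where care with the topology of uniform convergence on compacta is needed.
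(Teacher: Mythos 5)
Your proposal is correct, and in its main (sufficiency) direction it takes a genuinely different route from the paper. Both arguments rest on the same two pillars: the reduction, via monotonicity of $D_n(x,y)$ and Remark \ref{rmk:contr}, to showing $D_\infty(x,y)=0$ a.s.\ for each fixed pair, and the observation that a constant map in the closed semigroup $\Sf(\wt\mu)$ furnishes, for every ball $\overline\Bb(R)$ and every $\ep>0$, a finite block of length $m$ that with positive probability $p=p(R,\ep)$ contracts $\overline\Bb(R)$ to diameter $<\ep$ (your neighbourhoods $V_i$ play exactly the role of the paper's event $\Gamma_{k,r}$). From there the paper proceeds analytically: it sets $w(x,y)=\Ex\bigl(D_\infty(x,y)\bigr)$, proves the martingale limit \eqref{eq:martingale}, establishes the uniform deficiency $w(u,v)\le d(u,v)-\de$ for pairs in $\Bb(r)$ at distance $\ge\ep$, and derives a contradiction on the event $\Lambda_r$ -- no stopping times or strong Markov property are invoked. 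You instead argue head-on: conservativity produces infinitely many stopping-time returns of the pair of trajectories to a fixed ball, each return is followed by a fresh i.i.d.\ $m$-block, and the strong Markov property together with L\'evy's conditional Borel--Cantelli lemma forces infinitely many contracting blocks, hence $D_\infty(x,y)<\ep$. This is the same mechanism the paper itself uses in Lemma \ref{lem:repeat}, and it is sound precisely because, as you stipulate, consecutive return times are spaced by more than $m$, so each block event is measurable before the next conditioning time; what your route buys is a more standard and transparent probabilistic argument, while the paper's martingale device buys brevity and dispenses with the strong-Markov bookkeeping altogether. For the converse, the two proofs are close in spirit but differ in machinery: the paper invokes the attractor $\Ll$ of Section \ref{sec:benda}, at whose points all trajectories accumulate a.s., uniformly on compacta, whereas you use conservativity at $o$, the bound $d\bigl(\Phi_n(z),\Phi_n(o)\bigr)\le d(z,o)$ for the products $\Phi_n=F_n\circ\dots\circ F_1$, properness, and Arzel\`a--Ascoli to extract a subsequential limit $g\in\Sf(\wt\mu)$, which $\operatorname{diam}\Phi_{n_k}(K)\to 0$ then forces to be constant; your version is more self-contained, since it bypasses the recurrence and attractor results of Section \ref{sec:benda} entirely.
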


\begin{proof} Keeping Remark \ref{rmk:contr} in mind, first assume that
$D_n(x,y) \to 0$ almost surely for all $x,y$. 
We can apply all previous results on (local) contractivity,
and the SDS has the non-empty attractor $\Ll$. If $x_0 \in \Ll$, then 
with probability $1$ there is a random subsequence $(n_k)$ such that
$X_{n_k}^x \to x_0$ for every $x \in \Xx$, and by the above, this convergence
is uniform on compact sets. Thus, the constant
mapping $x \mapsto x_0$ is in $\Sf(\wt\mu)$.

\smallskip

Conversely, assume that $\Sf(\wt\mu)$ contains a constant function.
Since $D_{n+1}(x,y) \le D_n(x,y)$, the limit $D_{\infty}(x,y) = \lim_n D_n(x,y)$
exists and is between $0$ and $d(x,y)$. We set 
$w(x,y) = \Ex\bigl( D_{\infty}(x,y)\bigr)$.
First of all, we claim that
\begin{equation}\label{eq:martingale}
\lim_{m \to \infty} w(X_m^x\,,X_m^y) = D_{\infty}(x,y)\quad\text{almost surely.}
\end{equation}
To see this, consider $X_{m,n}^x$ as in \eqref{eq:Xmn}.  
Then $D_{m,\infty}(x,y) = \lim_n d(X_{m,n}^x,X_{m,n}^y)$ has the same
distribution as $D_\infty(x,y)$, whence 
$\Ex\bigl(D_{m,\infty}(x,y)\bigr) = w(x,y)$. Therefore, we also have
$$
\Ex\bigl(D_{m,\infty}(X_m^x\,,X_m^y) \mid F_1, \ldots, F_m\bigr) = 
w(X_m^x\,,X_m^y)\,.
$$
On the other hand, $D_{m,\infty}(X_m^x\,,X_m^y) = D_{\infty}(x,y)$,
and the bounded martingale
$$
\Bigl(\Ex\bigl(D_{\infty}(x,y) \vert F_1, \ldots, F_m \bigr)
       \Bigr)_{m \ge 1}
$$       
converges  almost surely 
to $D_\infty(x, y)$. Statement \eqref{eq:martingale} follows.

\smallskip

Now let $\ep > 0$ be arbitrary, and fix $x, y \in X$.
We have to show that the event $\Lambda = [D_{\infty}(x,y) \ge \ep]$ has probability 
$0$. 

(i) By conservativity, 
$$
\Pr\left( \bigcup_{r \in \N\,} \bigcap_{\,m \in \N\,} \bigcup_{\,n \ge m\,} 
[X_n^x\,,\; X_n^y \in \Bb(r)] \right) = 1\,.
$$
On $A$, we have $D_n(x,y) \ge \ep$ for all $n$. Therefore we need to show
that $\Pr(\Lambda_r) = 0$ for each $r \in \N$, where 
$$
\Lambda_r = \bigcap_{m \in \N} \bigcup_{n \ge m} 
[X_n^x\,,\; X_n^y \in \Bb(r)\,,\;D_n(x,y) \ge \ep]\,.
$$
(ii) By assumption, there is $x_0 \in X$ which can be approximated
uniformly on compact sets by functions of the form $f_k \circ \dots \circ f_1$,
where $f_j \in \supp(\wt\mu)$. Therefore, given $r$ there is $k \in \N$ 
such that
$$
\Pr(\Gamma_{k,r}) > 0\,, \quad \text{where} \quad 
\Gamma_{k,r} = \left[\sup_{u \in \Bb(r)} d(X_k^u\,,x_0) \le \ep/4 \right]\,.
$$
On $\Gamma_{k,r}$ we have $D_{\infty}(u,v) \le D_k(u,v) \le \ep/2$ for all 
$u,v \in \Bb(r)$. Therefore, setting $\de=\Pr(\Gamma_{k,r})\cdot (\ep/2)$,
we have for all $u,v \in \Bb(r)$ with $d(u,v) \ge \ep$ that
$$
\begin{aligned}
w(u,v) &= \Ex\bigl( \uno_{\Gamma_{k,r}}\, D_{\infty}(u,v) \bigr) +
\Ex\bigl( \uno_{\Xx \setminus \Gamma_{k,r}}\, D_{\infty}(u,v) \bigr) \\
&\le \Pr(\Gamma_{k,r})\cdot (\ep/2) + \bigl(1-  \Pr(\Gamma_{k,r})\bigr)\cdot d(u,v)
\le d(u,v) - \de\,.
\end{aligned}
$$
We conclude that on $\Lambda_r$, there is a (random) sequence $(n_{\ell})$ such that 
$$
w(X_{n_{\ell}}^x\,,X_{n_{\ell}}^y) \le D_{n_{\ell}}(x,y) - \de\,.
$$
Passing to the limit on both sides, we see that \eqref{eq:martingale} is
violated on $\Lambda_r$, since $\de > 0$. Therefore $\Pr(\Lambda_r) = 0$ 
for each $r$.
\end{proof}

\begin{cor}\label{cor:loccontractive} 
If the semigroup $\Sf(\wt\mu) \subset \Lp_1$
contains  a constant function, then the SDS is locally contractive.
\end{cor}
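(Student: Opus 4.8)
The plan is to split into the two branches provided by the transience/conservativity alternative. Recall that for an SDS of contractions the conclusion of Lemma~\ref{lem:transient} holds \emph{without} presupposing local contractivity: as noted in the remark preceding this corollary, the $0$--$1$ dichotomy follows from the monotonicity $D_n(x,y) \le d(x,y)$ together with properness of $\Xx$. Hence there is no circularity in invoking it here. So either $\Prob[d(X_n^x,x) \to \infty] = 1$ for every $x$ (the transient case), or $\Prob[d(X_n^x,x) \to \infty] = 0$ for every $x$. In the latter case, since the event $[d(X_n^x,x) \to \infty]$ coincides with $[\liminf_n d(X_n^x,x) = \infty]$, its complement has full probability, so the SDS is conservative in the sense of the definition.

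In the conservative case I would simply invoke Theorem~\ref{thm:contractive}. By hypothesis $\Sf(\wt\mu)$ contains a constant function, so the theorem yields that the SDS is even \emph{strongly} contractive, i.e. $\Prob[d(X_n^x,X_n^y) \to 0 \text{ for all } y] = 1$ for every $x$. Introducing the bounded factor $\uno_K(X_n^x) \in \{0,1\}$ cannot destroy convergence to $0$, since $0 \le D_n(x,y)\,\uno_K(X_n^x) \le D_n(x,y)$. Thus strong contractivity immediately gives the local contractivity demanded by Definition~\ref{def:loccont}(ii).

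In the transient case I would show that the defining quantity vanishes for all large $n$. Fix $x$ and a compact $K \subset \Xx$. As $K$ is bounded, choose $r$ with $K \subset \overline\Bb(r)$; on the full-probability event where $d(X_n^x,x) \to \infty$, the trajectory $X_n^x$ eventually leaves $K$, so there is a random $N$ with $\uno_K(X_n^x) = 0$ for all $n \ge N$. Consequently $D_n(x,y)\,\uno_K(X_n^x) = 0$ for every $n \ge N$ and \emph{simultaneously} every $y \in \Xx$, the point being that $N$ depends only on the path of $X_n^x$ and not on $y$. Hence the product tends to $0$ for all $y$, and local contractivity holds trivially.

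There is no serious obstacle once the dichotomy is in place: each branch reduces in one line, to Theorem~\ref{thm:contractive} in the conservative case and to eventual escape from compacta in the transient case. The only two points deserving a moment's care are, first, confirming that the transience/conservativity alternative may be applied to contractions without already assuming local contractivity, so that the argument is not circular; and second, observing in the transient case that the escape time $N$ is uniform in $y$, which holds precisely because the indicator $\uno_K(X_n^x)$ does not involve $y$.
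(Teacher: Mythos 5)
Your proposal is correct and follows essentially the same route as the paper's own proof: split along the transience/conservativity dichotomy (valid for contractions without assuming local contractivity, exactly as you note), invoke Theorem \ref{thm:contractive} in the conservative case, and observe in the transient case that the indicator $\uno_K(X_n^x)$ vanishes for all large $n$ uniformly in $y$. The paper states this in two sentences; your version merely makes the non-circularity of the dichotomy and the uniformity in $y$ explicit.
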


\begin{proof} In the transient case, $X_n^x$ can visit any compact
$K$ only finitely often, whence $d(X_n^x, X_n^y)\cdot \uno_K(X_n^x) = 0$ for all
but finitely many $n$.
In the conservative case, we even have strong contractivity by
Proposition \ref{thm:contractive}.
\end{proof}

\section{Some remarks on reflected random walk}\label{sec:reflected}

As outlined in the introduction, the refleced random walk on $\R^+$
induced by a sequence $(B_n)_{n \ge 0}$ of i.i.d. real valued random variables
is given by
\begin{equation}\label{eq:refl}
X_0^x = x \ge 0\,,\quad X_{n+1}^x = |X_n^x - B_{n+1}|\,.
\end{equation}
Let $\mu$ be the distribution of the $B_n\,$, a probability measure on $\R$.
The transition probabilities of reflected random walk are 
$$
P(x,U) = \mu(\{ y : |x-y| \in U \})\,,
$$
where $U \subset \R^+$ is a Borel set.
When $B_n \le 0$ almost surely, then $(X_n^x)$ is an ordinary random
walk (resulting from a sum of i.i.d. random variables). We shall exclude this,
and we shall always assume to be in the \emph{non-lattice} situation. That is, 
\begin{equation}\label{eq:nonlattice} 
\supp(\mu) \cap (0\,,\,\infty) \ne \emptyset\,,\quad 
\text{and there is no}\;\kappa > 0\;\text{such that}\quad\supp(\mu) \subset 
\kappa\cdot\Z\,.
\end{equation}
For the lattice case, see \cite{PeWo}.

For $b \in \R$, consider $g_b \in \Lp_1\bigl(\R^+\bigr)$ given by 
$g_b(x) = |x-b|$.
Then our reflected random walk is the SDS on $\R^+$ induced by the 
random 
continuous contractions $F_n = g_{B_n}\,$, $n \ge 1$. The law $\wt \mu$ of the
$F_n$ is the image of $\mu$ under the mapping $b \mapsto g_b\,$.

In \cite[Prop. 3.2]{Le}, it is shown that $\Sf(\wt\mu)$ contains the constant 
function $x \mapsto 0$. Note that this statement
and its proof in \cite{Le} are completely deterministic, regarding 
topological properties of the set $\supp(\mu)$. In view of  
Theorem \ref{thm:contractive} and Corollary \ref{cor:loccontractive},
we get the following.

\begin{pro}\label{pro:reflcont}
Under the assumptions \eqref{eq:nonlattice}, reflected random walk on
$\R^+$ is locally contractive, and strongly contractive if it is recurrent.
\end{pro}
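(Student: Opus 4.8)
The plan is to read off both assertions from the results already assembled for SDS of contractions in \S\ref{sec:contract}; the only substantial input is the deterministic fact, quoted just above from \cite[Prop.~3.2]{Le}, that under the non-lattice hypothesis \eqref{eq:nonlattice} the semigroup $\Sf(\wt\mu) \subset \Lp_1$ contains the constant function $x \mapsto 0$. First I would record that reflected random walk really is an SDS of contractions: each $F_n = g_{B_n}$ satisfies $\lp(g_b) \le 1$, since $\bigl||x-b|-|y-b|\bigr| \le |x-y|$. Thus the entire framework of \S\ref{sec:contract} is available.

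Local contractivity then requires no further work. Corollary \ref{cor:loccontractive} asserts precisely that the presence of a constant function in $\Sf(\wt\mu)$ forces local contractivity, and we have such a function at our disposal; this settles the first claim unconditionally. For the second claim I would observe that recurrence implies conservativity --- if the trajectory visits some open set infinitely often almost surely, then $\liminf_n d(X_n^x,x) < \infty$ almost surely, which is exactly the definition of conservativity. With conservativity secured, Theorem \ref{thm:contractive} applies and states that for a conservative SDS of contractions strong contractivity is \emph{equivalent} to $\Sf(\wt\mu)$ containing a constant function; since our semigroup contains $x \mapsto 0$, strong contractivity follows.

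The main obstacle lies not in the proposition itself but in the result being cited: the genuine content is the topological argument of \cite{Le}, which shows that by composing the maps $g_b$ with $b \in \supp(\mu)$ one can drive every point of a compact set arbitrarily close to $0$, uniformly, so that the constant map $x \mapsto 0$ indeed belongs to $\Sf(\wt\mu)$. Granting that input, the proposition is an immediate consequence of Corollary \ref{cor:loccontractive} and Theorem \ref{thm:contractive}, the only remaining verification being the trivial implication recurrence $\Rightarrow$ conservativity needed to invoke the latter.
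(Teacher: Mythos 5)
Your proposal is correct and follows exactly the paper's route: the paper likewise obtains the proposition by citing \cite[Prop.~3.2]{Le} for the fact that the constant map $x \mapsto 0$ lies in $\Sf(\wt\mu)$, and then invoking Corollary \ref{cor:loccontractive} for local contractivity and Theorem \ref{thm:contractive} (via the trivial implication recurrence $\Rightarrow$ conservativity, immediate here since $d(X_n^x,X_n^y)\le d(x,y)$ for contractions) for strong contractivity in the recurrent case. The paper gives no further argument beyond this, so your reconstruction is essentially the intended proof.
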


\subsection*{A. Non-negative $B_n\,$}$\,$
\\[5pt]
We first consider the case when $\Prob[B_n \ge 0] = 1$. Let
$$
N = \sup \supp(\mu) \AND 
\Ll = \begin{cases} [0\,,\,N], &\text{if}\; N < \infty\,,\\
\R^+, &\text{if}\; N = \infty\,.
\end{cases}                      
$$               
The distribution function of $\mu$ is 
$$
F_{\mu}(x) = \Prob[B_n \le x] = \mu\bigl([0\,,\,x]\bigr),\; x \ge 0\,.
$$ 
We next subsume basic properties that are due to \cite{Fe}, \cite{Kn} and 
\cite{Le}; they do not depend on recurrence. 

\begin{lem}\label{lem:irreducible} Suppose that \eqref{eq:nonlattice} is
verified and that $\supp(\mu) \subset \R^+$. 
Then the following holds.\\[4pt]  
\emph{(a)} 
The reflected random walk with any starting
point is absorbed after finitely many steps by the interval $\Ll$.\\[4pt] 
{\rm (b)} It is topologically irreducible on $\Ll$, that is, 
for every $x \in \Ll$ and open set $U \subset \Ll$, there is $n$ such that 
$P^n(x,U) = \Prob[X_n^x \in U] > 0\,.$\\[4pt] 
{\rm (c)} The measure $\nu$ on $\Ll$ given by
$$
\nu(dx) = \bigl(1-F_{\mu}(x)\bigr)\,dx\,,
$$
where $dx$ is Lebesgue measure, is an invariant measure for the transition
kernel $P$. 
\end{lem}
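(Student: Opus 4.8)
The plan is to prove the three assertions separately; (a) and (c) are routine, while (b) carries the real content.

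For (a): First note that $\Ll$ is absorbing. If $x\in[0,N]$ and $b\in\supp(\mu)\subset[0,N]$, then $|x-b|\le\max(x,b)\le N$, so $g_b(x)\in[0,N]$; hence once the walk enters $[0,N]$ it never leaves (and when $N=\infty$ there is nothing to prove). It remains to show that from a start $x>N$ the walk enters $[0,N]$ in finitely many steps a.s. As long as $X_n^x>N$ we have $B_{n+1}\le N<X_n^x$ a.s., so $X_{n+1}^x=X_n^x-B_{n+1}$; thus, up to the first entry time $T$ of $[0,N]$, the walk agrees with the decreasing additive walk $x-\sum_{k\le n}B_k$. Since \eqref{eq:nonlattice} forces $\Prob[B_1>\delta]>0$ for some $\delta>0$, independence and Borel--Cantelli give $\sum_k B_k=\infty$ a.s., so $x-\sum_{k\le n}B_k\to-\infty$; as $X_n^x\ge0$ always, necessarily $T<\infty$ a.s. This proves (a).

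For (c): I would verify $\nu P=\nu$ directly against test functions. Writing $h=1-F_\mu$, so that $h(x)=\mu\bigl((x,\infty)\bigr)$, and using $P\varphi(x)=\int\varphi(|x-b|)\,d\mu(b)$, Fubini together with the substitutions $u=b-x$ on $\{x<b\}$ and $u=x-b$ on $\{x>b\}$ reduces the identity $\int P\varphi\,d\nu=\int\varphi\,d\nu$ to showing that, for a.e.\ $u\ge0$,
$$
K(u):=\int_{[u,\infty)}h(b-u)\,d\mu(b)+\int_{[0,\infty)}h(b+u)\,d\mu(b)=h(u).
$$
With $b,s$ independent, both distributed as $\mu$, this reads $\Prob[b\ge u,\ b-s<u]+\Prob[s-b>u]=\Prob[b>u]$. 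The clean step is to use the symmetry of $\mu\otimes\mu$ under $(b,s)\mapsto(s,b)$ to rewrite the second term as $\Prob[b-s>u]$; since $b-s>u$ already forces $b>u$, and $\{b-s<u\}$, $\{b-s>u\}$ are disjoint, the left side becomes $\Prob[b\ge u]-\Prob[b\ge u,\ b-s=u]$, which equals $\Prob[b>u]=h(u)$ off the countable set of $u$ that are atoms of $b$ or of $b-s$. A.e.\ equality of the densities is exactly what is needed for $\nu P=\nu$.

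For (b): this is the main point, and I would argue by reachability. Since $U$ is open and compositions are continuous, it suffices to produce, for each $x\in\Ll$, finitely many $b_1,\dots,b_n\in\supp(\mu)$ with $g_{b_n}\circ\dots\circ g_{b_1}(x)\in U$, because then a whole neighborhood of $(b_1,\dots,b_n)$ sends $x$ into $U$ and carries positive $\mu^{\otimes n}$-mass, whence $P^n(x,U)>0$. By part (a) and by \cite[Prop.~3.2]{Le} --- which asserts $0\in\Sf(\wt\mu)$, i.e.\ that the constant map $0$ is a uniform-on-compacta limit of such compositions --- I may first steer $x$ into an arbitrarily small neighborhood of $0$. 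The mechanism for then filling up $[0,N]$ is that on $[0,b]$ the map $g_b$ is the reflection $u\mapsto b-u$ about $b/2$, so that, in the appropriate ranges, $g_{b_2}\circ g_{b_1}$ acts as the translation $u\mapsto u+(b_2-b_1)$; starting from the points $b\in\supp(\mu)$, each reachable from $0$, and composing such translations produces, by the non-lattice hypothesis \eqref{eq:nonlattice} (which makes the closed group generated by $\supp(\mu)$ equal to $\R$), a reachable set dense in $[0,N]=\Ll$. The delicate bookkeeping --- keeping every intermediate point inside the ranges where two reflections genuinely compose to a translation, and checking that the resulting orbit is dense rather than merely a coset of a discrete subgroup --- is precisely the deterministic, topological content established in \cite{Fe,Kn,Le}, on which I would lean; this is the step I expect to be the main obstacle.
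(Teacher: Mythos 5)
First, a point of reference: the paper contains no proof of this lemma at all --- it is explicitly ``subsumed'' from \cite{Fe}, \cite{Kn} and \cite{Le} --- so your attempt can only be compared with that literature, not with an in-paper argument. Measured that way, your parts (a) and (c) are correct and self-contained, which is already more than the paper provides. In (a), the observation $|x-b|\le\max(x,b)\le N$ makes $[0\,,\,N]$ absorbing, and the comparison with the strictly decreasing additive walk (which tends to $-\infty$ a.s.\ because \eqref{eq:nonlattice} gives $\Prob[B_1>\delta]>0$ for some $\delta>0$) forces an a.s.\ finite entrance time. In (c), the Fubini/substitution reduction to the a.e.\ identity $K(u)=h(u)$, verified probabilistically via the exchange symmetry of $(b,s)$, is a correct reconstruction of Feller's computation, and your handling of the exceptional set (atoms of $b$ and of $b-s$, a countable hence Lebesgue-null set) is exactly the care needed for an identity between densities.

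Part (b) is where the gap lies, and while you flag it honestly, the mechanism you sketch is not merely incomplete but insufficient. Pairs of reflections only produce translations by differences $b_2-b_1$ of support points, and the group generated by these differences can be \emph{discrete} even under \eqref{eq:nonlattice}: for $\supp(\mu)=\{1,\sqrt{2}\}$ (a perfectly non-lattice measure) it is $(\sqrt{2}-1)\Z$, so ``composing such translations'' starting from the reachable points $0,1,\sqrt{2}$ confines you to $(\sqrt{2}-1)\Z\cup\bigl(1+(\sqrt{2}-1)\Z\bigr)$, a discrete set which is not dense in $[0\,,\,\sqrt{2}]=\Ll$. The density of the orbit really requires the \emph{other} regime of $g_b$, namely $g_b(u)=u-b$ for $u\ge b$: this makes translations by the support points themselves (not only their differences) available, so that the relevant group is the one generated by $\supp(\mu)$, which is indeed dense by \eqref{eq:nonlattice}; combining the two regimes while keeping all intermediate points in their valid ranges is precisely the non-trivial content of \cite[Prop.~3.2]{Le} and of Knight's argument. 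Since you explicitly defer this step to \cite{Fe}, \cite{Kn}, \cite{Le} --- the same references on which the paper rests the entire lemma --- your proposal is acceptable as a proof ``modulo the cited literature'', and in fact supplies more detail than the paper does; but as a self-contained argument (b) remains open in your write-up, and the sketched route would have to be repaired along the lines above before the deferral could be removed.
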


At this point Lemma \ref{lem:exc-inv} implies that in the recurrent case,
the above set is indeed the attractor, and $\nu$ is the unique invariant 
measure up to multiplication with constants.
We now want to understand when we have recurrence.

\begin{thm}\label{thm:refl-recurr} Suppose that \eqref{eq:nonlattice} is
verified and that $\supp(\mu) \subset \R^+$.
Then each of the following conditions implies the next one and is sufficient
for recurrence of the reflected random walk on $\Ll$.
\begin{gather}
\Ex(B_1) < \infty \tag{i}\label{cond1}\\[3pt]
\Ex\bigl(\sqrt{B_1}\,\bigr) < \infty \tag{ii}\label{cond2}\\
\int_{\R^+} \bigl(1-F_{\mu}(x)\bigr)^2\,dx < \infty
\tag{iii}\label{cond3}\\
\lim_{y \to \infty} \bigl(1-F_{\mu}(y)\bigr) 
\int_0^y \bigl(F_{\mu}(y)-F_{\mu}(x)\bigr)\,dx = 0 \tag{iv}\label{cond4}
\end{gather}
In particular, one has positive recurrence precisely when $\Ex(B_1) < \infty$.
\end{thm}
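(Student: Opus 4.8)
First I would dispose of the implications among the four hypotheses, all of which are elementary facts about the decreasing tail $G(x)=1-F_\mu(x)=\Prob[B_1>x]$. Condition (i)$\Rightarrow$(ii) follows from $\sqrt{b}\le 1+b$. For (ii)$\Rightarrow$(iii), write $\Ex(\sqrt{B_1})=\tfrac12\int_0^\infty G(x)\,x^{-1/2}\,dx$; since $G$ is decreasing, $2\sqrt{x}\,G(x)=G(x)\int_0^x t^{-1/2}\,dt\le\int_0^x G(t)\,t^{-1/2}\,dt\le 2\Ex(\sqrt{B_1})$, so $\sqrt{x}\,G(x)\le \Ex(\sqrt{B_1})=:C$; then $G(x)^2\le C\,G(x)\,x^{-1/2}$ and integration gives $\int_0^\infty G^2\le 2C^2<\infty$. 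For (iii)$\Rightarrow$(iv), note $\int_0^y\bigl(F_\mu(y)-F_\mu(x)\bigr)\,dx=\int_0^y\bigl(G(x)-G(y)\bigr)\,dx$ and split at a fixed $A$: over $[0,A]$ the contribution is at most $G(y)\,A\,G(0)\to0$ as $y\to\infty$, while over $[A,y]$ the bound $G(y)\bigl(G(x)-G(y)\bigr)\le G(x)^2$ gives at most $\int_A^\infty G^2$; letting $y\to\infty$ and then $A\to\infty$ yields (iv).

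The substantial point is that condition (iv) implies recurrence. By Proposition \ref{pro:reflcont} the walk is locally contractive, so by the dichotomy of Lemma \ref{lem:transient} it is enough to exclude the transient alternative $X_n^x\to\infty$. When $\Ex(B_1)<\infty$ this is immediate, exactly as in the proof of Proposition \ref{pro:contract}: the invariant measure $\nu(dx)=G(x)\,dx$ of Lemma \ref{lem:irreducible} is then finite, transience forces $P^n(x,U)=\Ex\bigl(\uno_U(X_n^x)\bigr)\to0$ for every relatively compact $U$, and dominated convergence gives $\nu(U)=\nu P^n(U)\to0$, contradicting $\nu\neq0$. When $\nu$ has infinite mass this argument fails, and I would instead produce a Lyapunov function: a $C^1$, increasing, concave $V$ with $V(x)\to\infty$ such that $PV\le V$ on $[R,\infty)$ for some $R$. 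Granting this, $V(X_{n\wedge\tau})$ with $\tau$ the hitting time of $\overline\Bb(R)$ is a nonnegative supermartingale, hence a.s.\ convergent; on $\{\tau=\infty\}$ transience would force $V(X_n)\to\infty$, a contradiction, so the walk returns to $\overline\Bb(R)$ a.s.\ from every starting point, which is incompatible with $X_n^x\to\infty$.

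The heart of the matter is the verification $PV\le V$. Writing $V(|x-b|)-V(x)=\int_0^b\tfrac{d}{ds}V(|x-s|)\,ds$ and applying Fubini gives the clean identity
\[
PV(x)-V(x)=\int_0^\infty V'(u)\,G(x+u)\,du-\int_0^x V'(u)\,G(x-u)\,du,
\]
and after the substitutions $w=x+u$ and $v=x-u$ the required sign $PV(x)-V(x)\le0$ becomes
\[
\int_x^\infty V'(w-x)\,G(w)\,dw\ \le\ \int_0^x V'(x-v)\,G(v)\,dv .
\]
Choosing $V$ concave makes $V'$ decreasing, so the two integrands are comparable near the diagonal $u\approx0$, and the balance between the tail on the left and the bulk on the right is governed precisely by the product $G(y)\int_0^y\bigl(G(x)-G(y)\bigr)\,dx=G(y)\,\Ex\bigl(B_1;\,B_1\le y\bigr)$ appearing in condition (iv). Selecting a concave $V$ adapted to $\mu$, so that $V'$ decays at the rate dictated by this truncated mean, and pushing the inequality through is the main obstacle of the proof.

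Finally, once recurrence is established, Lemma \ref{lem:exc-inv} identifies $\nu(dx)=G(x)\,dx$ as the unique invariant measure up to scalars, with $\supp(\nu)=\Ll$, and
\[
\nu(\Ll)=\int_0^\infty\bigl(1-F_\mu(x)\bigr)\,dx=\Ex(B_1).
\]
By Corollary \ref{cor:returntime} the walk is positive recurrent exactly when $\nu(\Ll)<\infty$, i.e.\ exactly when $\Ex(B_1)<\infty$; since condition (i) already yields recurrence, $\Ex(B_1)<\infty$ gives positive recurrence, and when $\Ex(B_1)=\infty$ any recurrence is null. This is the last assertion of the theorem.
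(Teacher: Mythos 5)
Your treatment of the peripheral parts is correct and in fact more explicit than the paper's. The chain \eqref{cond1} $\implies$ \eqref{cond2} $\implies$ \eqref{cond3} $\implies$ \eqref{cond4} (which the paper dismisses as a basic exercise) is proved correctly: the bound $\sqrt{x}\,\bigl(1-F_{\mu}(x)\bigr) \le \Ex\bigl(\sqrt{B_1}\bigr)$ via monotonicity of the tail, and the split at a fixed level $A$ for \eqref{cond3} $\implies$ \eqref{cond4}, are both fine. The argument that \eqref{cond1} implies recurrence — finiteness of the invariant measure $\nu(dx) = \bigl(1-F_{\mu}(x)\bigr)dx$ of Lemma \ref{lem:irreducible} plus the dichotomy of Lemma \ref{lem:transient} and dominated convergence — is valid, and it is the same device the paper uses in Proposition \ref{pro:contract}. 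The final bookkeeping, $\nu(\Ll) = \Ex(B_1)$ and the positive/null dichotomy via Corollary \ref{cor:returntime}, coincides with the paper's own reasoning.

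However, there is a genuine gap at the heart of the theorem: you never prove that \eqref{cond4} (or even \eqref{cond2} or \eqref{cond3}) implies recurrence when $\Ex(B_1) = \infty$. Your Lyapunov framework is sound as a framework — the dichotomy reduces the problem to excluding transience, the supermartingale argument for $V(X_{n\wedge\tau})$ is standard, and your drift identity $PV(x)-V(x) = \int_0^\infty V'(u)G(x+u)\,du - \int_0^x V'(u)G(x-u)\,du$ (with $G = 1-F_{\mu}$) is correctly derived. But the proof then stops exactly where the difficulty begins: no concave $V$ is exhibited, and the inequality $PV \le V$ outside a compact set is never verified from the limit condition \eqref{cond4}. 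Your own text concedes that "pushing the inequality through is the main obstacle of the proof," which means the central implication is asserted, not proved — and it is far from routine, since \eqref{cond4} is a limit condition rather than a moment condition, and constructing $V$ from the truncated mean $\int_0^y\bigl(F_{\mu}(y)-F_{\mu}(x)\bigr)dx$ so that the drift inequality closes is precisely the substance of the results the paper invokes. Indeed, the paper itself does not reprove this step: it cites Knight \cite{Kn} for \eqref{cond1}, Smirnov \cite{Sm} for \eqref{cond2}, the authors' preprint \cite{PeWo} for \eqref{cond3}, and Rabeherimanana \cite{Rab} for \eqref{cond4}. So either you complete the construction of $V$ and the verification of $PV\le V$ under \eqref{cond4}, or you must fall back on those references as the paper does; as written, the proposal does neither.
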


The proof of \eqref{cond1} $\implies$ \eqref{cond2} $\implies$ 
\eqref{cond3} $\implies$ \eqref{cond4} is a basic
exercise. For condition \eqref{cond1}, see \cite{Kn}. The implication
\eqref{cond2} $\implies$ recurrence is due to \cite{Sm}, while the recurrence
condition  \eqref{cond3} was proved by ourselves in \cite{PeWo}. However,
we had not been aware of \cite{Sm}, as well as of \cite{Rab}, where
it is proved that already \eqref{cond4} implies recurrence on $\Ll$.
Since $\nu$ has finite total mass precisely when $\Ex(B_1) < \infty$,
the statement on positive recurrence follows from Corollary 
\ref{cor:returntime}. In this case, also Lemma \ref{lem:posrec} applies and
yields that $X_n^x$ converges in law to $\frac{1}{\nu(\Ll)}\nu$. This
was already obtained by \cite{Kn}.

Note that the ``margin'' between conditions \eqref{cond2}, \eqref{cond3}
and \eqref{cond4} is quite narrow. 

\subsection*{B. General reflected random walk}$\,$
\\[5pt]
We now drop the restriction that the random variables $B_n$
are non-negative. Thus, the ``ordinary'' random walk 
$S_n = B_1 + \cdots + B_n$ on $\R$ may visit the positive
as well as the negative half-axis. 
Since we assume that $\mu$ is non-lattice, the closed group
generated by $\supp(\mu)$ is $\R$. 

We start with a simple observation (\cite{Be2} has a more complicated proof).

\begin{lem}\label{lem:symmetric}
If $\mu$ is symmetric, then reflected random walk is (topologically) recurrent 
if and only if the random walk $(S_n)$ is recurrent.
\end{lem}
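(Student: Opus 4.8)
The plan is to realise the reflected walk pathwise as the absolute value of an \emph{ordinary} random walk with the same increment law $\mu$, and then to invoke the recurrence/transience dichotomy on $\R$. Throughout write $\mathcal F_n = \sigma(B_1,\dots,B_n)$ and fix a starting point $x \ge 0$. By Proposition \ref{pro:reflcont} the reflected walk is locally contractive (we are in the non-lattice case), so by Lemma \ref{lem:transient} it is recurrent precisely when it is conservative, that is, when $\liminf_n X_n^x < \infty$ almost surely, and this alternative does not depend on $x$.

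First I would linearise the reflection. At each step write $X_n^x = \eta_n\,(X_{n-1}^x - B_n)$, where $\eta_n = +1$ if $X_{n-1}^x \ge B_n$ and $\eta_n = -1$ otherwise, so that $\eta_n \in \{\pm1\}$ is $\mathcal F_n$-measurable. Setting $C_n = \eta_1\cdots\eta_n$ (with $C_0=1$) and $Y_n = C_n X_n^x$, a one-line computation using $C_n\eta_n = C_{n-1}$ gives the linear recursion $Y_n = Y_{n-1} - C_{n-1}B_n$, whence
\[
Y_n = x - \sum_{j=1}^n C_{j-1}B_j \AND X_n^x = |Y_n|.
\]

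The key step, and the only place where symmetry of $\mu$ is used, is to show that the random variables $B_j' := C_{j-1}B_j$ are i.i.d.\ with law $\mu$. Here $C_{j-1}$ is $\mathcal F_{j-1}$-measurable and $\pm1$-valued, while $B_j$ is independent of $\mathcal F_{j-1}$ with law $\mu$; since $\mu$ is symmetric, $c\,B_j \sim \mu$ for both signs $c=\pm1$, so conditionally on $\mathcal F_{j-1}$ the variable $B_j'$ has law $\mu$ irrespective of the value of $C_{j-1}$. Thus $B_j'$ is independent of $\mathcal F_{j-1} \supseteq \sigma(B_1',\dots,B_{j-1}')$ and has law $\mu$, and induction yields that $(B_j')$ is i.i.d.\ with common law $\mu$. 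Consequently $S_n' := \sum_{j=1}^n B_j'$ is an ordinary random walk with the same distribution as $(S_n)$, and we obtain the pathwise identity $X_n^x = |x - S_n'|$. I expect this conditioning argument to be the main (though short) obstacle: one must check that the sign $C_{j-1}$ is genuinely predictable, and that multiplying a symmetric increment by an independent predictable sign leaves its law unchanged — which is exactly what forces $(B_j')$ to be i.i.d.

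Finally I would read off the equivalence from the identity. If $(S_n)$, equivalently $(S_n')$, is recurrent, then, being non-lattice, it comes arbitrarily close to $x$ infinitely often, so $\liminf_n |x - S_n'| = 0 < \infty$ and the reflected walk is conservative, hence recurrent. Conversely, if the reflected walk is recurrent then $\liminf_n |x - S_n'| < \infty$ with positive probability, so $(S_n')$ does not satisfy $|S_n'| \to \infty$; by the standard Chung--Fuchs dichotomy a random walk on $\R$ is either recurrent or has $|S_n'| \to \infty$ almost surely, so $(S_n')$, and hence $(S_n)$, is recurrent. This establishes both implications and completes the proof.
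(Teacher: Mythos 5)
Your proof is correct, and it takes a genuinely different route from the paper's. The paper argues distributionally, by \emph{folding} the ordinary walk: a one-line inclusion--exclusion computation shows that for symmetric $\mu$ the conditional probability $\Prob[\,|S_{n+1}|\in U \mid S_n=x\,]$ depends only on $|x|$ and equals the reflected kernel $\mu(\{y: |x-y|\in U\})$, so that $|S_n|$ is itself a Markov chain with the same transition probabilities as the reflected random walk; recurrence then transfers because the two processes have the same law. You argue pathwise, by \emph{unfolding} the reflected walk: the predictable signs $C_{j-1}$ give the identity $X_n^x=|x-S_n'|$ on the same probability space, with symmetry used exactly once to check that the sign-flipped increments $C_{j-1}B_j$ are still i.i.d.\ with law $\mu$ --- and your conditioning argument for this is sound, since $C_{j-1}$ is $\mathcal{F}_{j-1}$-measurable while $B_j$ is independent of $\mathcal{F}_{j-1}$. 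Your coupling is somewhat stronger than the paper's equality in law, as it exhibits the reflected walk as a deterministic functional of an honest random walk defined on the same probability space (note that this walk $S_n'$ depends on the starting point $x$ through the signs, which is harmless because only its law is used, for each fixed $x$). The price is extra machinery on the transfer step: you route through conservativity, Proposition \ref{pro:reflcont}, the dichotomy of Lemma \ref{lem:transient}, and the Chung--Fuchs alternative (recurrent versus $|S_n'|\to\infty$ a.s.), whereas the paper's kernel identity settles the matter in one stroke, leaving implicit the standard, non-lattice equivalence between recurrence of $S_n$ and of the chain $|S_n|$. Both arguments, of course, hinge on the same symmetry mechanism: the sign information discarded by the reflection is exactly what symmetry of $\mu$ makes irrelevant.
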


\begin{proof} If $\mu$ is symmetric, then also $|S_n|$ is a Markov chain.
Indeed, for a Borel set $U \subset \R^+$, 
$$
\begin{aligned}
\Prob[\,|S_{n+1}| \in U \mid S_n=x] 
&= \mu(-x+U) + \mu(-x-U) - \mu(-x)\, \de_0(U)\\
&=\Prob[\,|S_{n+1}| \in U \mid S_n=-x]\,,
\end{aligned}
$$ 
and we see that $|S_n|$ has the same transition probabilities as the 
reflected random walk governed by $\mu$. 
\end{proof}   

Recall the classical result that when $\Ex(|B_1|) <\infty$ and
$\Ex(B_1)=0$ then $(S_n)$ is recurrent; see {\sc Chung and Fuchs~\cite{ChFu}}. 
\begin{cor}\label{cor:symm}
If $\mu$ is 
symmetric and has finite first moment then reflected random walk is recurrent. 
\end{cor}

Let $B_n^+ = \max \{B_n, 0 \}$ and $B_n^- = \max \{-B_n, 0\}$, so that
$B_n = B_n^+ - B_n^-$. The following is well-known.
\begin{lem}\label{lem:infty} 
If \emph{(a)} $\;\Ex(B_1^-) < \Ex(B_1^+) \le \infty\,$, or if
\emph{(b)} $\;0 < \Ex(B_1^-) = \Ex(B_1^+) < \infty\,$, then
$\limsup S_n = \infty\,$ almost surely, so that there are infinitely many
reflections.
\end{lem}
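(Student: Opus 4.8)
The plan is to set the reflected process aside for the moment and prove the purely one–dimensional statement $\limsup_n S_n = \infty$ for the ordinary random walk $S_n = B_1 + \dots + B_n$; the assertion about infinitely many reflections will then follow from a short deterministic comparison. I would treat the two hypotheses separately, since they reflect different mechanisms: in case (a) the walk has strictly positive (possibly infinite) drift and in fact tends to $+\infty$, whereas in case (b) the walk is centered and merely oscillates.

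For case (a), note first that $\Ex(B_1^-) < \Ex(B_1^+)$ forces $\Ex(B_1^-) < \infty$, so the negative part is integrable. If also $\Ex(B_1^+) < \infty$, then $\Ex(B_1) = \Ex(B_1^+) - \Ex(B_1^-) > 0$ and the strong law of large numbers gives $S_n/n \to \Ex(B_1) > 0$, hence $S_n \to +\infty$. If $\Ex(B_1^+) = \infty$, I would truncate: set $B_k^{(M)} = \min\{B_k, M\}$, so that $S_n^{(M)} = \sum_{k \le n} B_k^{(M)} \le S_n$. Since $B_1^{(M)} \uparrow B_1$ and $B_1^{(M)} \ge -B_1^-$ is bounded below by an integrable variable, monotone convergence yields $\Ex(B_1^{(M)}) \uparrow \Ex(B_1) = +\infty$; fix $M$ with $\Ex(B_1^{(M)}) > 0$. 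The $B_k^{(M)}$ are i.i.d. with finite positive mean, so $S_n^{(M)} \to +\infty$ by the strong law, and therefore $S_n \ge S_n^{(M)} \to +\infty$ as well. Either way $\limsup_n S_n = \infty$.

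Case (b) is the substantive one. Here $\Ex(B_1) = 0$, and because $\Ex(B_1^\pm) > 0$ the variable $B_1$ is non-degenerate. I would argue in two steps. First, $\limsup_n S_n$ is invariant under finite permutations of the $B_k$ (permuting finitely many summands alters $S_n$ only by a fixed amount for large $n$), so by the Hewitt--Savage $0$--$1$ law it equals a deterministic constant $c \in [-\infty, +\infty]$ almost surely. Writing $S_n = B_1 + \sum_{k=2}^n B_k$ and observing that $\limsup_n \sum_{k=2}^n B_k$ is independent of $B_1$ and also equals $c$, we obtain $c = B_1 + c$ almost surely; were $c$ finite this would force $B_1 = 0$ a.s., contradicting non-degeneracy, so $c \in \{-\infty, +\infty\}$. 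Second, I would invoke recurrence of the centered walk (Chung--Fuchs \cite{ChFu}, already used above): for every $\ep > 0$ one has $S_n \in (-\ep, \ep)$ infinitely often almost surely, hence $\limsup_n S_n \ge -\ep$, and letting $\ep \to 0$ gives $\limsup_n S_n \ge 0$. Combining the two steps forces $c = +\infty$.

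Finally, for the clause about reflections: if along some trajectory only finitely many reflections occurred, say none after time $m$, then $X_n^x = X_m^x - (S_n - S_m)$ for all $n \ge m$, and non-negativity of the reflected walk gives $S_n \le S_m + X_m^x$ for all such $n$, i.e. $\limsup_n S_n < \infty$; since that event has probability $0$, there are almost surely infinitely many reflections. The main obstacle is concentrated entirely in case (b), specifically in excluding $c = -\infty$ (that is, $S_n \to -\infty$) for a centered walk with possibly infinite variance — precisely the point at which the recurrence input of Chung--Fuchs, rather than any moment estimate, does the essential work.
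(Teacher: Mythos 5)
Your proof is correct, but there is nothing in the paper to compare it against step by step: the authors state Lemma \ref{lem:infty} without proof, as ``well-known'', the implicit reference being the classical trichotomy for random walks (either $S_n \to +\infty$, or $S_n \to -\infty$, or the walk oscillates with $\limsup S_n = +\infty$ and $\liminf S_n = -\infty$), which they invoke elsewhere --- see the proof of Lemma \ref{lem:ladder}, citing \cite[Thm.~1 in \S XII.2, p.~395]{Fe} --- and which is classically proved via ladder epochs. Your argument is a legitimate self-contained substitute built from inputs that are already part of the paper's toolkit: in case (a), the strong law plus the truncation $B_k^{(M)} = \min\{B_k, M\}$, which is exactly the trick the authors themselves use in the proof of Theorem \ref{thm:sqrt}(a) to handle $\Ex(B_1^+) = \infty$; in case (b), the Hewitt--Savage zero-one law to force $\limsup S_n \in \{-\infty, +\infty\}$ for a nondegenerate centered walk, combined with Chung--Fuchs recurrence \cite{ChFu} to exclude $-\infty$, the latter being precisely the result the paper recalls just before Corollary \ref{cor:symm}. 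Your closing deterministic comparison (finitely many reflections would give $S_n \le S_m + X_m^x$ for all $n \ge m$, hence $\limsup S_n < \infty$) correctly settles the final clause. What your route buys is independence from ladder-variable theory and full self-containedness; what the citation route buys is the stronger classical statement (under (a) one gets $S_n \to +\infty$, which your truncation argument in fact also delivers, and under (b) one additionally gets $\liminf S_n = -\infty$, which the lemma does not require).
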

In general, we should exclude that $S_n \to -\infty$, since in that 
case there are only finitely many
reflections, and reflected random walk tends to $+\infty$ almost surely.
In the sequel, we assume that $\limsup S_n = \infty$ almost surely.
Then the (non-strictly) ascending  \emph{ladder epochs}
$$
\lb(0)  = 0\,,\quad  \lb(k+1) = \inf \{ n > \lb(k) : S_n \ge S_{\lb(k)} \}
$$ 
are all almost surely finite, and the random variables $\lb(k+1) - \lb(k)$ 
are i.i.d.
We can consider the \emph{embedded random walk} $S_{\lb(k)}\,$, $k \ge 0$, 
which tends to $\infty$ almost surely. Its increments 
$\overline B_k = S_{\lb(k)} - S_{\lb(k-1)}\,$, $k \ge 1$, are i.i.d. 
non-negative random variables with distribution denoted $\overline{\mu}$. 
Furthermore, if $\overline{\!X}_k^x$ denotes the reflected random
walk associated with the sequence $(\overline B_k)$, while $X_n^x$ is our original
reflected random walk associated with $(B_n)$, then 
$$
\overline{\!X}_k^x = X_{\lb(k)}^x\,,
$$ 
since no reflection can occur between times $\lb(k)$ and $\lb(k+1)$.
When $\Prob[B_n < 0] > 0$, one clearly has $\sup \supp(\overline{\mu}) 
= +\infty\,$.
Lemma \ref{lem:irreducible} implies the following.

\begin{cor}\label{cor:irreducible}
Suppose that \eqref{eq:nonlattice} is verified, $\Prob[B_n < 0] > 0$ and 
$\limsup S_n = \infty$. Then\\[4pt]  
\emph{(a)}  reflected random walk is topologically 
irreducible on $\Ll = \R^+$, and
\\[4pt]  
\emph{(b)} 
the embedded reflected random walk $\overline{\!X}_k^x$ is recurrent if and 
only the original reflected random walk is recurrent.
\end{cor}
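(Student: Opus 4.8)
The plan is to derive both parts from Lemma \ref{lem:irreducible}, applied not to the original walk but to the \emph{embedded} reflected random walk $\overline{\!X}_k^x = X_{\lb(k)}^x$, together with a block decomposition of the trajectory along the ladder epochs $\lb(k)$. The embedded walk is itself a reflected random walk, driven by the ladder-increment law $\overline\mu$, which is carried by $\R^+$ and satisfies $\sup\supp(\overline\mu) = \infty$; this is exactly the regime of Subsection A, so Lemma \ref{lem:irreducible} applies to it directly, and all that remains is to transfer its conclusions back to the original process.

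For part (a) I would argue as follows. Assuming $\overline\mu$ is non-lattice (see the obstacle below), condition \eqref{eq:nonlattice} holds for $\overline\mu$, and since $\sup\supp(\overline\mu)=\infty$ the corresponding limit set is $\overline\Ll=\R^+$. Lemma \ref{lem:irreducible}(b) then gives topological irreducibility of the embedded walk on $\R^+$: for every $x\in\R^+$ and open $U\subset\R^+$ there is $k$ with $\Prob[\,\overline{\!X}_k^x\in U\,]=\Prob[\,X_{\lb(k)}^x\in U\,]>0$. I transfer this to the original walk by decomposing over the (a.s. finite) value of the ladder epoch: writing $\Prob[\,X_{\lb(k)}^x\in U\,]=\sum_{m\ge 1}\Prob[\,\lb(k)=m,\;X_m^x\in U\,]$, where $\{\lb(k)=m\}$ and $\{X_m^x\in U\}$ are both measurable with respect to $B_1,\dots,B_m$, positivity of the sum forces some deterministic $m$ with $\Prob[\,X_m^x\in U\,]>0$. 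Hence the original reflected random walk is topologically irreducible on $\R^+$.

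For part (b) the key observation is the excursion identity. As already noted above, no reflection occurs strictly between two consecutive ladder epochs, so for $\lb(k)\le n<\lb(k+1)$ one has $X_n^x = X_{\lb(k)}^x - (S_n - S_{\lb(k)}) = X_{\lb(k)}^x + (S_{\lb(k)} - S_n)$; since $S_n\le S_{\lb(k)}$ on this block with equality only at $n=\lb(k)$, the block minimum of $X_n^x$ is attained at the ladder epoch itself. Taking infima over successive blocks yields the pathwise identity $\liminf_n X_n^x = \liminf_k X_{\lb(k)}^x$. Now both walks are locally contractive by Proposition \ref{pro:reflcont} and topologically irreducible on $\R^+$ (by part (a) and Lemma \ref{lem:irreducible}(b)), so the dichotomy of Lemma \ref{lem:transient} shows that each is recurrent precisely when it is not transient, i.e. precisely when its trajectory does not tend to $+\infty$, i.e. precisely when the corresponding $\liminf$ is finite almost surely. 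Because the two liminfs coincide, the two recurrence properties are equivalent.

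The one genuinely non-formal point, and the main obstacle, is the verification that the ladder-height law $\overline\mu$ is non-lattice, which is what licenses the use of Lemma \ref{lem:irreducible} for the embedded walk in part (a). This is the classical renewal-theoretic fact that the ascending ladder heights of a non-lattice walk are again non-lattice; I would either cite it or check it by noting that a lattice $\overline\mu$ would confine every ladder height $S_{\lb(k)}$ to some $\kappa\Z$, which is incompatible with $\supp(\mu)$ generating $\R$ under the standing non-lattice hypothesis \eqref{eq:nonlattice}. Everything else — the transfer argument in (a) and the liminf comparison in (b) — is routine once the excursion identity and the local-contractivity dichotomy are in hand.
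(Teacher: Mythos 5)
Your proof is correct and follows essentially the same route as the paper: part (a) is obtained by applying Lemma \ref{lem:irreducible} to the embedded walk driven by $\overline\mu$ and transferring positivity back to the original walk (the paper simply declares this ``clear''), and part (b) is the paper's own argument, namely the block inequality $X_n^x \ge X_{\lb(k)}^x$ for $\lb(k)\le n<\lb(k+1)$ combined with the transience/recurrence dichotomy for locally contractive SDS. The one point you flag as an obstacle --- checking that $\overline\mu$ is non-lattice --- is a detail the paper leaves implicit; it is the classical fact that ladder heights of a non-lattice walk are non-lattice, and it follows at once from the Wiener--Hopf identity $1-\wh\mu(t)=\bigl(1-\wh{\mu_+}(t)\bigr)\bigl(1-\wh{\mu_-}(t)\bigr)$ introduced later in the same section, since $\wh{\overline\mu}(t)=1$ for some $t\ne 0$ would force $\wh\mu(t)=1$ and hence $\supp(\mu)\subset\kappa\cdot\Z$, contradicting \eqref{eq:nonlattice}.
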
 

\begin{proof} Statement (a) is clear.  

Since both processes are locally contractive, each of the
two processes is transient if and only if it tends to $+\infty$ almost surely:
If $\lim_n X_n^x = \infty$ then clearly also $\lim_k X_{\lb(k)}^x = \infty$ a.s.
Conversely, suppose that $\lim_k \overline{\!X}_k^x \to \infty$ a.s.
If $\lb(k) \le n < \lb(k+1)$ then $X_n^x \ge X_{\lb(k)}^x$.
(Here, $k$ is random, depending on $n$ and $\omega \in \Omega$, and when
$n \to \infty$ then $k \to \infty$ a.s.) Therefore, also 
$\lim_n X_n^x = \infty$ a.s., so that (b) is also true.
\end{proof}

We can now deduce the following.

\begin{thm}\label{thm:sqrt} Suppose that \eqref{eq:nonlattice} is verified and 
that $\Prob[B_1 < 0]>0$. 
Then reflected random walk  $(X_n^x)$ is (topologically) recurrent on 
$\Ll = \R^+$, if
\begin{quote} 
\emph{(a)} $\;\Ex(B_1^-) < \Ex(B_1^+)$ and 
$\Ex\bigl(\sqrt{B_1^+}\,\bigr) < \infty\,,$ or if\\
\emph{(b)} $\;0 < \Ex(B_1^-) = \Ex(B_1^+)$ and 
$\Ex\Bigl(\sqrt{B_1^+}^{\,3}\Bigr) < \infty\,$. 
\end{quote}
\end{thm}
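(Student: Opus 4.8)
The plan is to reduce the two-sided problem to the non-negative case already settled in Theorem \ref{thm:refl-recurr}, by passing to the embedded reflected random walk along the ascending ladder epochs. First I would invoke Lemma \ref{lem:infty}: hypothesis (a) is exactly case (a) of that lemma ($\Ex(B_1^-) < \Ex(B_1^+) \le \infty$) and hypothesis (b) is case (b), so in either situation $\limsup_n S_n = \infty$ almost surely and the ladder epochs $\lb(k)$ are all finite. By Corollary \ref{cor:irreducible}, the original reflected random walk is then recurrent on $\Ll = \R^+$ if and only if the embedded reflected random walk $\overline{\!X}_k^x = X_{\lb(k)}^x$ is recurrent. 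The latter is itself a reflected random walk on $\R^+$, now driven by the \emph{non-negative} i.i.d. ladder heights $\overline B_k = S_{\lb(k)} - S_{\lb(k-1)}$ with common law $\overline\mu$, for which $\sup\supp(\overline\mu) = \infty$. Hence Theorem \ref{thm:refl-recurr} applies to the embedded walk, and by its recurrence criterion \eqref{cond2} it suffices to prove
$$
\Ex\bigl(\sqrt{\overline B_1}\,\bigr) = \Ex\bigl(\sqrt{S_{\lb(1)}}\,\bigr) < \infty .
$$
Along the way one must also check that $\overline\mu$ satisfies the non-lattice hypothesis \eqref{eq:nonlattice}; this is a standard property of the ascending ladder heights of a non-lattice walk.

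Everything therefore comes down to a \emph{moment transfer}: controlling the half-moment of the first ascending ladder height $S_{\lb(1)}$ in terms of a moment of $B_1^+$. This is where the two hypotheses, and their differing exponents, enter. In case (a) the walk has strictly positive (possibly infinite) drift, and the relevant classical fact from fluctuation theory is that $\Ex\bigl(S_{\lb(1)}^{\,p}\bigr) < \infty \iff \Ex\bigl((B_1^+)^{p}\bigr) < \infty$; taking $p = 1/2$ converts the hypothesis $\Ex\bigl(\sqrt{B_1^+}\bigr) < \infty$ into the desired bound. In case (b) the walk is centered, $\Ex(B_1) = 0$, and the analogous relation carries an extra unit of exponent, $\Ex\bigl(S_{\lb(1)}^{\,p}\bigr) < \infty \iff \Ex\bigl((B_1^+)^{p+1}\bigr) < \infty$; with $p = 1/2$ this is exactly the hypothesis $\Ex\bigl(\sqrt{B_1^+}^{\,3}\bigr) = \Ex\bigl((B_1^+)^{3/2}\bigr) < \infty$. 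The matching of the exponents $1/2$ and $3/2$ is precisely the signature of these two regimes.

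The main obstacle is the moment transfer itself. In the subcase of (a) where $\Ex(B_1^+) < \infty$ (finite positive drift) it is painless: there $\Ex(\lb(1)) < \infty$, so Wald's identity gives $\Ex(S_{\lb(1)}) = \Ex(B_1)\,\Ex(\lb(1)) < \infty$, whence $\Ex\bigl(\sqrt{S_{\lb(1)}}\bigr) < \infty$ a fortiori, and the assumption $\Ex\bigl(\sqrt{B_1^+}\bigr) < \infty$ is only genuinely needed when $\Ex(B_1^+) = \infty$. The delicate situations are thus the infinite-mean subcase of (a) and the centered case (b) (where indeed $\Ex(\lb(1)) = \infty$). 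Here I would either cite the classical ladder-height moment estimates or argue directly. A direct argument decomposes
$$
\Ex\bigl(\sqrt{S_{\lb(1)}}\,\bigr) = \sum_{n\ge 1}
\Ex\Bigl(\sqrt{S_{n-1}+B_n}\,\,;\;
S_1,\dots,S_{n-1} < 0,\; S_{n-1}+B_n \ge 0\Bigr),
$$
and exploits the independence of $B_n$ from the pre-ladder path together with the pathwise inequality $0 \le S_{\lb(1)} \le B_{\lb(1)}^+$: the up-crossing is forced by a large positive increment, so on the event that the walk sits at a negative level $S_{n-1}=s$ and is lifted above $0$ one has $\sqrt{S_{n-1}+B_n} \le \sqrt{B_n^+}$ with $B_n^+ \ge |s|$.

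Writing $g(t) = \Ex\bigl(\sqrt{B^+}\,;\, B^+ \ge t\bigr)$, which decreases to $0$ since $\Ex\bigl(\sqrt{B^+}\bigr)<\infty$, this bounds the sum by $\sum_{n}\Ex\bigl(g(|S_{n-1}|)\,;\,S_1,\dots,S_{n-1}<0\bigr)$. Summing the contribution of the typical negative level $|S_{n-1}| \approx \sqrt{n}$ against the staying-negative probability $\sim n^{-1/2}$ produces exactly the extra exponent of case (b), while a positive drift localizes the crossing time and yields case (a). The subtlety — and the only real work — is that $B_{\lb(1)}$ is sampled at a crossing time and is therefore size-biased, so the naive bound $\Ex\bigl(\sqrt{S_{\lb(1)}}\bigr) \le \Ex\bigl(\sqrt{B_{\lb(1)}^+}\bigr)$ does not directly reduce to a moment of $B_1^+$; the renewal-type counting above is what restores the correct increment law. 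Once $\Ex\bigl(\sqrt{\overline B_1}\bigr) < \infty$ is secured, the recurrence of the embedded walk, and hence of $(X_n^x)$ on $\R^+$, follows verbatim from Theorem \ref{thm:refl-recurr} via Corollary \ref{cor:irreducible}.
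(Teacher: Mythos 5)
Your proposal is correct and follows essentially the same route as the paper: reduce to the embedded reflected random walk along the (non-strict) ascending ladder epochs via Lemma \ref{lem:infty} and Corollary \ref{cor:irreducible}, then verify $\Ex\bigl(\sqrt{\,\overline{B}_1}\,\bigr)<\infty$ so that condition \eqref{cond2} of Theorem \ref{thm:refl-recurr} applies. The moment transfer in the paper is exactly the one you invoke — in case (a) subadditivity of the square root plus Wald's identity together with $\Ex\bigl(\lb(1)\bigr)<\infty$ (obtained by truncating $B_n$ when $\Ex(B_1^+)=\infty$), and in case (b) the Chow--Lai ladder-height moment theorem — so your supplementary ``direct argument'' is only a heuristic version of the latter and is not needed.
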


\begin{proof}
We show that in each case the assumptions imply that 
$\Ex\bigl(\sqrt{\,\overline{B}_1}\bigr) < \infty$.
Then we can apply Theorem \ref{thm:refl-recurr} to deduce recurrence of 
$(\overline{\!X}_k^x)$.
This in turn yields recurrence of $(X_n^x)$ by Corollary \ref{cor:irreducible}.

\smallskip

(a) Under the first set of assumptions,
$$
\begin{aligned}
\Ex\Bigl(\sqrt{\overline{B}_1}\Bigr) 
&= \Ex\Bigl(\sqrt{B_1+\ldots+B_{\lb(1)}^{\,} }\,\Bigr)
\le \Ex\Bigl(\sqrt{B_1^+ +\ldots+B_{\lb(1)}^+}\,\Bigr)\\
&\le \Ex\Bigl(\sqrt{B_1^+}+\ldots+\sqrt{B_{\lb(1)}^+}\,\Bigr) 
= \Ex\Bigl(\sqrt{B_1^+}\,\Bigr) \cdot\Ex\bigl(\lb(1)\bigr) 
\end{aligned}
$$
by Wald's identity. Thus, we now are left with proving
$\Ex\bigl(\lb(1)\bigr) < \infty\,$. If $\Ex(B_1^+) < \infty$, then
$\Ex(|B_1|) < \infty$ and $\Ex(B_1) > 0$ by assumption,
and in this case it is well known that $\Ex\bigl(\lb(1)\bigr) < \infty\,$;
see e.g. \cite[Thm. 2 in \S XII.2, p. 396-397]{Fe}. 
If  $\Ex(B_1^+) = \infty$ then there is $M > 0$ such that
$B_n^{(M)} = \min\{ B_n\,, M\}$ (which has finite first moment)
satisfies $\Ex(B_n^{(M)}) = \Ex(B_1^{(M)})> 0\,$. The first increasing
ladder epoch $\lb^{(M)}(1)$ associated with 
$S_n^{(M)} = B_1^{(M)} + \ldots + B_n^{(M)}$ has finite expectation by
what we just said, and $\lb(1) \le\lb^{(M)}(1)$. Thus, $\lb(1)$
is integrable.

\smallskip

(b) If the $B_n$ are centered, non-zero and 
$\Ex\bigl((B_1^+)^{1+a}\bigr)< \infty\,,$ where $a > 0$, then 
$\Ex\bigl((\overline{B}_1)^a \bigr) < \infty\,$, as was shown by 
{\sc Chow and Lai~\cite{ChLa}}. In our case, $a=1/2$.
\end{proof}

We conclude our remarks on reflected random walk by discussing sharpness of 
the sufficient recurrence conditions
$\Ex\Bigl(\sqrt{B_1^+}^{\,3}\Bigr) < \infty$ in the centered case,
resp. $\Ex\bigl(\sqrt{B_1}\bigr) < \infty$ in the case when $B_1 \ge 0$.

\begin{exa}\label{ex:symm}
Define a symmetric probability measure $\mu$ on $\R$ by 
$$
\mu(dx)=\frac{dx}{(1+|x|)^{1+a}}\,,
$$
where $a >0$ and $c$ is the proper normalizing constant (and $dx$ is
Lebesgue measure). Then it is well known and quite easy to prove via Fourier
analysis that the associated symmetric random walk $S_n$ on $\R$ 
is recurrent if and only if $a \ge 1$. 
By Lemma \ref{lem:symmetric}, the associated 
reflected random walk is also recurrent, but when $1 \le a \le 3/2$ then
condition (b) of Theorem \ref{thm:sqrt} does not hold. 
\end{exa} 

Nevertheless, we can also show that in general, the sufficient condition 
$\Ex\Bigl(\sqrt{\,\overline{B}_1\,}\Bigr) < \infty$ for
recurrence of reflected random walk with non-negative increments 
$\overline{B}_n$ is very close to being sharp. (We write $\overline{B}_n$
because we shall represent this as an embedded random walk in the next 
example.)

\begin{pro}\label{pro:nsymm}
Let $\mu_0$ be a probability measure on $\R^+$ which has a density 
$\phi_0(x)$
with respect to Lebesgue measure that is decreasing and satisfies
$$
\phi(x) \sim c \,(\log x)^b \big/ x^{3/2}\,,
\quad\text{as}\;x\to \infty\,,
$$
where $b > 1/2$ and $c > 0$.
Then the associated reflected random walk on $\R^+$ is transient.
\end{pro}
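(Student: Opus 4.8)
The plan is to combine the dichotomy available for locally contractive systems with an excessive (superharmonic) test function. By Proposition \ref{pro:reflcont} the reflected walk $(X_n^x)$ is locally contractive, so by Lemma \ref{lem:transient} it is either recurrent or transient, and the event $[d(X_n^x,x)\to\infty]$ has the same probability, $0$ or $1$, for every starting point. It therefore suffices to exhibit one point from which the walk avoids a compact interval with positive probability. I would get this from a bounded, non-negative, non-increasing function $h$ on $\R^+$ with $h(x)\to 0$ as $x\to\infty$ that is \emph{excessive off} $[0,R]$, i.e. $Ph(x)\le h(x)$ for all $x>R$. Writing $\tau_R=\inf\{n\ge 1:X_n^x\le R\}$, the stopped process $h\bigl(X_{n\wedge\tau_R}^{x}\bigr)$ is then a bounded non-negative supermartingale, and letting $n\to\infty$ (bounded convergence, using $h\ge h(R)>0$ on $[0,R]$) gives $h(R)\,\Prob_x[\tau_R<\infty]\le h(x)$. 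Taking $h(x)=(x\vee R)^{-\theta}$ with $0<\theta<1$, capped on $[0,R]$ to stay bounded, this reads $\Prob_x[\tau_R<\infty]\le (R/x)^{\theta}<1$ for every $x>R$, and by the $0$--$1$ law the walk is transient. Note the cap can only decrease $Ph$, so it makes the excessivity inequality easier rather than harder to establish.

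It remains to verify $Ph(x)\le h(x)$ for large $x$, which is where the tail enters. Set $\overline F(x)=\int_x^\infty\phi_0$ and $m(x)=\int_0^x t\,\phi_0(t)\,dt$; from $\phi_0(t)\sim c(\log t)^b t^{-3/2}$ one reads off $\overline F(x)\sim 2c(\log x)^b x^{-1/2}$ and $m(x)\sim 2c(\log x)^b x^{1/2}$, so $m(x)\sim x\,\overline F(x)$. Splitting the transition operator into a descent and a reflection part,
$$
Ph(x)-h(x)=\int_0^x\bigl[h(x-b)-h(x)\bigr]\phi_0(b)\,db
+\int_0^\infty\bigl[h(b)-h(x)\bigr]\phi_0(x+b)\,db ,
$$
the second integral contributes the favourable leading term $-h(x)\overline F(x)$, while the first (descent) integral is non-negative. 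Because $m(x)\sim x\overline F(x)$, these two contributions are of the same order, and the clean way to compare them is to rescale $b=xv$ and use the monotonicity of $\phi_0$ to dominate the rescaled integrands. For the trial function $h(x)=x^{-\theta}$ this yields
$$
Ph(x)-h(x)\;\sim\; c\,x^{-\theta-1/2}(\log x)^b\,J(\theta),
\qquad
J(\theta)=\int_0^\infty\bigl(|1-v|^{-\theta}-1\bigr)v^{-3/2}\,dv ,
$$
the integral being convergent for $0<\theta<1$; so, to leading order, the sign of $Ph(x)-h(x)$ is governed by the constant $J(\theta)$.

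The hard part is precisely that this leading constant $J(\theta)$ is $b$-independent and does not by itself close the inequality: at the forced scaling the descent and reflection contributions cancel to leading order, so transience has to be extracted from the next term in the expansion $(\log(xv))^b=(\log x+\log v)^b = (\log x)^b + b(\log x)^{b-1}\log v+\cdots$. The coefficient of the correction is $\int_0^\infty\bigl(|1-v|^{-\theta}-1\bigr)v^{-3/2}\log v\,dv$, and it is through this second-order, genuinely $b$-dependent term that the exponent finally matters. The remaining work is therefore to choose $\theta$ at the marginal value together with a logarithmic refinement of the test function, e.g. $h(x)=x^{-\theta}(\log x)^{-\gamma}$, so that this correction is strictly negative, and then to justify the expansion uniformly enough—again via the monotonicity and the precise asymptotics of $\phi_0$, with dominated convergence controlling the contributions of $v$ near $0$, near $1$, and near $\infty$—to conclude $Ph(x)\le h(x)$ for all large $x$. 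The assumption $b>1/2$ is exactly the threshold at which this $b$-dependent correction becomes negative, and hence is what the construction of the excessive function $h$ requires.
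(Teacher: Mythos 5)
Your overall skeleton---the dichotomy from Lemma \ref{lem:transient} together with a capped, decreasing test function $h$ that is excessive off $[0,R]$, giving $\Prob_x[\tau_R<\infty]\le h(x)/h(R)<1$ and hence transience---is sound. The proof collapses, however, at the analytic step, and not in the place where you expect things to be merely ``hard'': your claim that for $h(x)=x^{-\theta}$ the descent and reflection contributions ``cancel to leading order'', i.e.\ that $J(\theta)$ vanishes at some marginal $\theta$, is false. The constant can be computed in closed form: splitting at $v=1$, substituting $v=1/u$ on $[1,\infty)$, recognizing Beta integrals and using the reflection formula $\Gamma(\tfrac12-\theta)\Gamma(\tfrac12+\theta)=\pi/\cos(\pi\theta)$, one finds
$$
J(\theta)=\int_0^\infty\bigl(|1-v|^{-\theta}-1\bigr)\,v^{-3/2}\,dv
=\frac{2}{\sqrt{\pi}}\;\Gamma(1-\theta)\,\Gamma\bigl(\theta+\tfrac12\bigr)\,
\bigl(1-\cos(\pi\theta)\bigr)\;>\;0
$$
for \emph{every} $\theta\in(0,1)$; for instance $J(1/2)=2$. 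Consequently $Ph(x)-h(x)\sim c\,J(\theta)\,x^{-\theta-1/2}(\log x)^b>0$ for large $x$: your test function is strictly \emph{sub}harmonic near infinity, for every $\theta$ and every $b$, and the supermartingale argument cannot even start. The refinement $h(x)=x^{-\theta}(\log x)^{-\gamma}$ does not help, because it merely multiplies the same strictly positive leading term by $(\log x)^{-\gamma}$; the sign is unchanged.

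This failure is structural, not a matter of sharper estimates. By Theorem \ref{thm:refl-recurr}, condition \eqref{cond3}, tails $1-F_{\mu_0}(x)\sim 2c\,x^{-1/2}(\log x)^{-a}$ with $a>1/2$ give \emph{recurrence}, while the present proposition asserts \emph{transience} for tails $\sim 2c\,x^{-1/2}(\log x)^{b}$ with $b>1/2$: both behaviours occur with the same power $x^{-1/2}$, and the sign of $Ph-h$ for a power-law $h$ is governed by that power alone, the logarithmic factor of the tail entering only as a harmless prefactor. Hence no power-type test function can decide this question. The cancellation you were hoping for does occur, but for purely logarithmic test functions: one has $\int_0^\infty \log|1-v|\,v^{-3/2}\,dv=0$, so something like $h(x)=(\log x)^{1-2b}$ (which decreases to $0$ precisely because $b>1/2$) is the natural candidate---but then the entire computation must be pushed to second order with uniform error control, and none of that is in your text. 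The paper sidesteps all of this: by Lemma \ref{lem:ladder} (Wiener--Hopf) it constructs a symmetric law $\mu=\mu_0+\check\mu_0-\mu_0*\check\mu_0$ on $\R$ whose ascending ladder-height law is $\mu_0$; by Lemma \ref{lem:symmetric} the reflected walk for $\mu$ is $|S_n|$, whose transience follows from the Chung--Fuchs criterion, since $1-\wh\mu(t)\sim |A|^2\,|t|\,(\log|t|)^{2b}$ makes $1/\bigl(1-\wh\mu(t)\bigr)$ integrable at $0$ exactly because $2b>1$; and Corollary \ref{cor:irreducible}(b) transfers transience to the embedded ladder walk, which is the reflected walk driven by $\mu_0$.
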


Note that $\mu_0$  has finite moment of order $\frac12 - \ep$ for 
every $\ep > 0$, while the moment of order $\frac12$ is infinite. 

\smallskip

The proof needs some preparation. Let $(B_n)$ be i.i.d. random variables
with values in $\R$ that have finite first moment and are non-constant
and centered, and let $\mu$ be their common distribution. 

The first \emph{strictly ascending} and 
\emph{strictly descending ladder epochs} of the random walk
$S_n=B_1 + \ldots + B_n$ are 
$$
\tb_+(1)  = \inf \{ n > 0 : S_n > 0 \} \AND
\tb_-(1)  = \inf \{ n > 0 : S_n < 0 \}\,,
$$
respectively. They are almost surely finite. Let $\mu_+$ be the distribution
of $S_{\tb_+(1)}$ and $\mu_-$ the distribution of $S_{\tb_-(1)}$, and --
as above -- $\overline{\mu}$ the distribution of 
$\overline{B}_1 = S_{\lb(1)}\,$.
We denote the characteristic function associated with any probability measure
$\sigma$ on $\R$ by $\wh\sigma(t)\,$, $t \in \R$. Then, following 
{\sc Feller~\cite[(3.11) in \S XII.3]{Fe}}, \emph{Wiener-Hopf-factorization}
tells us that
$$
\begin{gathered}
\mu = \overline{\mu} + \mu_- - \overline{\mu}*\mu_-
\;\AND\;
\overline{\mu} = u\cdot \de_0 + (1-u)\cdot\mu_+\ ,\\
\text{where}\quad
u = \overline{\mu}(0) = 
\sum_{n=1}^{\infty} \Prob[S_1 < 0\,, \ldots, S_{n-1} < 0\,,\; S_n =0] < 1\,. 
\end{gathered}
$$
Here $*$ is convolution. Note that when $\mu$ is absolutely continuous
(i.e., absolutely continuous with respect to Lebesgue measure) then $u = 0$,
so that 
\begin{equation}\label{eq:WH1}
\overline{\mu} = \mu_+ \AND \mu =  \mu_+ + \mu_- - \mu_+*\mu_-\,.
\end{equation}

\begin{lem}\label{lem:ladder}
Let $\mu_0$ be a probability measure on $\R^+$ which has a decreasing density
$\phi_0(x)$ with respect to Lebesgue measure.
Then there is an absolutely continuous symmetric probability measure 
$\mu$ on $\R$ such that that the associated first (non-strictly) ascending 
ladder random variable has distribution $\mu_0$.
\end{lem}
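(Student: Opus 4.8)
The plan is to invert the Wiener--Hopf factorization \eqref{eq:WH1}: prescribe the ascending ladder law to be $\mu_0$ and let symmetry determine everything else. Denote by $\check\mu_0$ the reflection of $\mu_0$, i.e.\ $\check\mu_0(B)=\mu_0(-B)$, a probability measure carried by $(-\infty,0)$. For a symmetric walk one has $\mu_-=\check\mu_+$ (because $S_{\tb_-(1)}\overset{d}{=}-S_{\tb_+(1)}$), so if the ascending ladder law is to be $\mu_0$, then necessarily $\mu_-=\check\mu_0$, and \eqref{eq:WH1} forces
\begin{equation*}
\mu = \mu_0 + \check\mu_0 - \mu_0 * \check\mu_0 .
\end{equation*}
I would take this as the \emph{definition} of $\mu$ and then verify that it has the required properties. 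The easy facts come first: $\mu$ has total mass $1+1-1=1$; since convolution is commutative, $\check\mu=\mu$, so $\mu$ is symmetric; and $\mu$ is absolutely continuous because each of the three summands is.

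The crux is that $\mu$ is a genuine nonnegative measure, and this is the single place where the hypothesis that $\phi_0$ is \emph{decreasing} is used -- and exactly suffices. Writing $h$ for the density of $\mu$ and using $\phi_0\equiv0$ on $(-\infty,0)$, one finds for $x>0$
\begin{equation*}
h(x) = \phi_0(x) - \int_0^{\infty} \phi_0(x+z)\,\phi_0(z)\,dz ,
\end{equation*}
where the integral is the density at $x$ of the autocorrelation $\mu_0*\check\mu_0$. Because $\phi_0$ is decreasing, $\phi_0(x+z)\le\phi_0(x)$ for every $z\ge0$, so the integral is at most $\phi_0(x)\int_0^{\infty}\phi_0(z)\,dz=\phi_0(x)$; hence $h(x)\ge0$ for $x>0$, and $h\ge0$ everywhere by symmetry of $h$. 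Thus $\mu$ is an absolutely continuous symmetric probability measure.

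It remains to identify its ladder distribution. Being symmetric and nondegenerate, the walk $(S_n)$ driven by $\mu$ oscillates, so both ladder epochs are almost surely finite, $\mu_+$ and $\mu_-$ are honest probability measures, and absolute continuity removes any atom at $0$; by \eqref{eq:WH1} the non-strictly ascending ladder law then equals $\mu_+$. Passing to characteristic functions, the definition of $\mu$ says precisely
\begin{equation*}
1-\wh\mu(t) = \bigl(1-\wh\mu_0(t)\bigr)\bigl(1-\wh{\check\mu_0}(t)\bigr),
\end{equation*}
which is a Wiener--Hopf factorization of $1-\wh\mu$ into a factor coming from the measure $\mu_0$ on $(0,\infty)$ and one from $\check\mu_0$ on $(-\infty,0)$. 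By the uniqueness of this factorization (Feller \cite{Fe}, \S XII.3) these factors must be the ladder contributions, so $\mu_+=\mu_0$ and therefore $\overline\mu=\mu_0$, as required.

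The main obstacle is the nonnegativity of $h$: once the monotonicity of $\phi_0$ is converted into the pointwise bound $\phi_0(x+z)\le\phi_0(x)$, the autocorrelation estimate closes immediately, and everything else is the bookkeeping of masses and symmetry together with the uniqueness of the Wiener--Hopf factorization already in use in this section.
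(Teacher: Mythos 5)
Your proposal is correct and takes essentially the same route as the paper's own proof: define $\mu = \mu_0 + \check\mu_0 - \mu_0*\check\mu_0$ by inverting \eqref{eq:WH1}, use the monotonicity of $\phi_0$ to verify non-negativity of the density, observe that the symmetric non-degenerate walk oscillates so that both ladder variables are well defined, and conclude via the uniqueness theorem for the Wiener--Hopf factorization. The only difference is cosmetic: you write out explicitly the autocorrelation bound $\int_0^\infty \phi_0(x+z)\phi_0(z)\,dz \le \phi_0(x)$ that the paper dismisses as ``straightforward to check.''
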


\begin{proof}
If $\mu_0$ is the law of the first strictly ascending  ladder random 
variable associated with some absolutely continuous, symmetric
measure $\mu$, then by \eqref{eq:WH1} we must have $\mu_+ = \mu_0$ and 
$\mu_- = \check\mu_0\,$, the reflection of $\mu_0$ at $0$, and
\begin{equation}\label{eq:mudef}
\mu = \mu_0 + \check \mu_0  - \mu_0*\check\mu_0\,.
\end{equation}
We \emph{define} $\mu$ in this way.
The monotonicity assumption on $\mu_0$ implies that $\mu$ is a probability 
measure: indeed, by the monotonicity assumption it is straightforward to 
check that the function
$\phi = \phi_0 + \check \phi_0 - \phi_0 * \check \phi_0$ 
is non-negative; this is the density of $\mu$.

The measure $\mu$ of \eqref{eq:mudef} is non-degenerate and symmetric. 
If it induces 
a recurrent random walk $(S_n)$, then the ascending and descending ladder
epochs are a.s. finite. If $(S_n)$ is transient, then $|S_n| \to \infty$ 
almost surely, but it cannot be $\Prob[S_n \to \infty] > 0$ since
in that case this probaility had to be 1 by Kolmogorov's 0-1-law, while 
symmetry would yield 
$\Prob[S_n \to -\infty] = \Prob[S_n \to \infty] \le 1/2$. 
Therefore $\liminf S_n = -\infty$ and $\limsup S_n = +\infty$ almost
surely, a well-known fact, see e.g. \cite[Thm. 1 in \S XII.2, p. 395]{Fe}. 
Consequently, the ascending and descending ladder 
epochs are again a.s. finite. Therefore the probability measures 
$\mu_+$ and $\mu_-=\check\mu_+$ (the laws of $S_{\tb_\pm(1)}$) are well defined.
By the uniqueness theorem of Wiener-Hopf-factorization
\cite[Thm. 1 in \S XII.3, p. 401]{Fe}, it follows that 
$\mu_- = \check\mu_0$
and that the distribution of the first (non-strictly) ascending ladder random 
variable is $\overline{\mu} =\mu_0\,$.
\end{proof}
 
\begin{proof}[Proof of Proposition \ref{pro:nsymm}]  
Let $\mu$ be the symmetric measure associated with $\mu_0$
according to \eqref{eq:mudef} in Lemma \ref{lem:ladder}. 
Then its characteristic function
$\wh\mu(t)$ is non-negative real. A well-known
criterion says that the random walk $S_n$ associated with $\mu$ is transient
if and only if (the real part of) $1\big/\bigl(1-\wh\mu(t)\bigr)$ is integrable
in a neighbourhood of $0$. 
Returning to $\mu_0=\mu_+\,$, it is a standard exercise 
(see  \cite[Ex. 12 in Ch. XVII, Section 12]{Fe})
to show that there is $A \in \C\,$, $A \ne 0$
such that its characteristic function satisfies 
$$
\wh{\mu_0}(t)= 1+A\,\sqrt{t}\,(\log t)^b\,\bigl(1+o(t)\bigr)
\quad \text{as} \; t \to 0\,.
$$
By \eqref{eq:WH1},
$$
1 - \wh\mu(t) = 
\bigl(1 - \wh{\mu_+}(t)\bigr)\bigl(1 - \wh{\mu_-}(t)\bigr)\,. 
$$
We deduce    
$$
\wh{\mu}(t)=  1- |A|^2 |t|(\log |t|)^{2b}\,
%
%
\bigl(1+o(t)\bigr)
\quad \text{as} \; t \to 0\,.
$$
The function $1\big/\bigl(1-\wh\mu(t)\bigr)$ is integrable near $0$. 
By Lemma \ref{lem:symmetric}, 
the associated reflected random walk is transient. But then also 
the embedded reflected random walk associated with $S_{\lb(n)}$ is
transient by Corollary \ref{cor:irreducible}. This is the reflected 
random walk governed by~$\mu_0\,$.
\end{proof}

\newpage

\addcontentsline{toc}{section}{\\[-8pt] \hspace*{.5cm}PART II. Stochastic dynamical systems 
induced by Lipschitz mappings}
\begin{center}
{\large \bf PART II. Stochastic dynamical systems 
induced by Lipschitz mappings}
\end{center}

\section{The contractive case, and recurrence in the log-centered case}
\label{sec:lip}

We now consider the situation when the i.i.d. random mappings
$F_n:\Xx \to \Xx$ belong to the semigroup $\Lp \subset \Gp$ of
Lipschitz mappings. Recall our notation $\lp(f)$ for the Lipschitz constant
of $f \in \Lp$. 
We assume that
\begin{equation}\label{eq:assume1}
\Prob[\lp(F_n) > 0] = 1\AND \Prob[\lp(F_n) < 1] > 0\,.
\end{equation} 
In this situation, the real random variables 
\begin{equation}\label{eq:lip-an-bn}
A_n = \lp(F_n) \AND B_n = d\bigl(F_n(o),o\bigr)
\end{equation}
play an important role. Indeed, let $(X_n^x)$ be the SDS starting at
$x \in \Xx$ which is associated with the sequence $(F_n)$, and for any
starting point $y \ge 0$,
let $(Y_n^y)$ the affine SDS on $\R^+$ associated with $(A_n,B_n)$
according to \eqref{eq:affine}. Then 
\begin{equation}\label{eq:dominate}
d(X_n^x,o) \le Y_n^{|x|}\,,\quad\text{where }\; |x| = d(x,o).
\end{equation}
Thus, we can use the results of Section \ref{sec:affine}. 
First of all, Propositions \ref{pro:furst}, resp. \ref{pro:contract} 
yield the following.

\begin{cor}\label{cor:contract} Given the random i.i.d. Lipschitz mappings
$F_n\,$, let $A_n$ and $B_n$ be as in 
\eqref{eq:lip-an-bn}. 

If $\Ex(\log^+ A_n) < \infty$ and 
$-\infty \le \Ex (\log A_n) < 0$
then the SDS $(X_n^x)$ generated by the $F_n$ is strongly contractive on $\Xx$. 

If in addition
$\Ex(\log^+ B_n) < \infty$ then the SDS has a unique 
invariant probability measure $\nu$ on $\Xx$,  it is (positive) recurrent on 
$\Ll = \supp(\nu)$, and the time shift on the tracetory space $\Xx^{\N_0}$
is ergodic with respect to the probability measure $\Prob_{\nu}\,$.
\end{cor}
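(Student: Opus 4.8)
The plan is to read everything off two facts about compositions of Lipschitz maps: the Lipschitz constant of $F_n\circ\cdots\circ F_1$ is at most $\prod_{i=1}^n A_i$, and this product tends to $0$ almost surely. Indeed, since $\Ex(\log^+A_n)<\infty$ and $\Ex(\log A_n)<0$, the strong law of large numbers — in the truncated form $\log A_i^{(M)}=\max\{\log A_i,-M\}$ that also handles $\Ex(\log A_n)=-\infty$ — gives $\frac1n\sum_{i=1}^n\log A_i\to \Ex(\log A_1)<0$ a.s., so $\prod_{i=1}^n A_i\to 0$, in fact geometrically. For \emph{strong contractivity} this is immediate: for all $x,y\in\Xx$,
$$
d(X_n^x,X_n^y)\le \lp(F_n\circ\cdots\circ F_1)\,d(x,y)\le\Bigl(\prod_{i=1}^n A_i\Bigr)d(x,y)\to 0\quad\text{a.s.,}
$$
and since the random factor does not depend on $y$, the convergence holds simultaneously for all $y$, which is Definition \ref{def:loccont}(i) (cf. also Remark \ref{rmk:contr}).

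For the second part I would produce the invariant probability measure through Furstenberg's principle (Proposition \ref{pro:furst}), i.e. by showing that the right process $R_n^x=F_1\circ\cdots\circ F_n(x)$ converges a.s.\ to a limit independent of $x$. Writing $R_{n+1}^x=(F_1\circ\cdots\circ F_n)(F_{n+1}(x))$ and using $\lp(F_1\circ\cdots\circ F_n)\le\prod_{i=1}^n A_i$ together with the triangle inequality $d(F_{n+1}(x),x)\le (A_{n+1}+1)|x|+B_{n+1}$ (where $|x|=d(x,o)$), I get
$$
d(R_{n+1}^x,R_n^x)\le\Bigl(\prod_{i=1}^n A_i\Bigr)\bigl((A_{n+1}+1)|x|+B_{n+1}\bigr).
$$
The terms carrying $|x|$ are summable by the geometric decay of $\prod_{i=1}^n A_i$, while $\sum_n\bigl(\prod_{i=1}^n A_i\bigr)B_{n+1}=\sum_{k\ge 2}A_1\cdots A_{k-1}B_k$ is, up to its first term, the series defining $Z$ in \eqref{eq:affine}; under the additional hypothesis $\Ex(\log^+B_n)<\infty$ this converges absolutely a.s., which is exactly the statement established in the proof of Proposition \ref{pro:contract}. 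Hence $(R_n^x)$ is a.s.\ Cauchy, and completeness of the proper space $\Xx$ gives a limit $Z$; the bound $d(R_n^x,R_n^y)\le\bigl(\prod_{i=1}^n A_i\bigr)d(x,y)\to 0$ shows $Z$ is independent of the starting point. Proposition \ref{pro:furst} then makes the law $\nu$ of $Z$ the unique invariant probability measure of $(X_n^x)$.

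The remaining assertions follow from the machinery of \S\ref{sec:benda}. Strong contractivity entails local contractivity, so Lemma \ref{lem:transient} yields the transient/conservative alternative, and transience is ruled out by the finiteness of $\nu$: if $d(X_n^x,x)\to\infty$ a.s.\ then $\Prob[X_n^x\in U]\to0$ for every relatively compact $U$ and every $x$, and dominated convergence in $\nu(U)=\nu P^n(U)=\int\Prob[X_n^x\in U]\,d\nu(x)$ would force $\nu(U)=0$, contradicting $\nu(\Xx)=1$ (the argument used for Proposition \ref{pro:contract}). Thus the SDS is recurrent, Lemma \ref{lem:exc-inv} identifies $\Ll=\supp(\nu)$, Corollary \ref{cor:returntime} gives positive recurrence since $\nu(\Ll)=1<\infty$, and Theorem \ref{thm:uniquemeasure} provides ergodicity of the shift on $\Xx^{\N_0}$ with respect to $\Prob_\nu$.

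The only genuinely delicate point is the a.s.\ absolute convergence of $\sum_k A_1\cdots A_{k-1}B_k$, where the three moment conditions are all consumed (via $\frac1n\log^+B_n\to0$ and the geometric decay of the products). Since this is precisely the convergence already carried out for Proposition \ref{pro:contract}, the difficulty is inherited rather than new, and everything else is bookkeeping with the contraction estimate and citations to \S\ref{sec:benda}.
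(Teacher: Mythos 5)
Your proof is correct, but it takes a genuinely different route from the paper's own two-line argument for the second half. The paper, having noted (exactly as you do) that strong contractivity is immediate from $d(X_n^x,X_n^y)\le A_1\cdots A_n\,d(x,y)\to 0$, constructs nothing on $\Xx$ itself: it invokes the domination \eqref{eq:dominate}, $d(X_n^x,o)\le Y_n^{|x|}$, and transfers positive recurrence wholesale from the affine recursion $(Y_n^{|x|})$, which is already covered by Proposition \ref{pro:contract}. You instead run the Furstenberg--Letac argument directly in the metric space: the Cauchy estimate
$$
d(R_{n+1}^x,R_n^x)\le\Bigl(\prod_{i=1}^n A_i\Bigr)\bigl((A_{n+1}+1)|x|+B_{n+1}\bigr),
$$
summability of the perpetuity-type series, completeness of the proper space $\Xx$, then Proposition \ref{pro:furst}, and finally the dominated-convergence exclusion of transience (the same device the paper uses inside the proof of Proposition \ref{pro:contract}). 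What your route buys: it is self-contained --- it never uses \eqref{eq:dominate} nor the affine positive recurrence --- and it produces the invariant \emph{probability} measure by construction, so finiteness of $\nu$ (hence positive recurrence via Corollary \ref{cor:returntime}) is automatic rather than inherited; this in fact fills in details that the paper's terse ``along with $(Y_n^{|x|})$ also $(X_n^x)$ is positive recurrent'' leaves to the reader, since transferring positive recurrence from a dominating process to the dominated one still requires a tightness-plus-uniqueness step. What the paper's route buys: brevity, since the only genuinely analytic ingredient (a.s.\ convergence of $\sum_k A_1\cdots A_{k-1}B_k$) is recycled from the affine case rather than redone in $\Xx$. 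The endgames coincide: Lemma \ref{lem:exc-inv} identifies $\Ll=\supp(\nu)$, Corollary \ref{cor:returntime} gives positive recurrence, and Theorem \ref{thm:uniquemeasure} gives ergodicity of the shift with respect to $\Prob_\nu$.
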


\begin{proof} Strong contractivity is obvious. When
$\Ex(\log^+ B_n) < \infty$, \eqref{eq:dominate} tells us that along
with $(Y_n^{|x|})$ also $(X_n^x)$ is positive recurrent. 
\end{proof}

The interesting and much harder case is the one where 
$\log A_n$ is integrable and centered, that is, 
$\Ex(\log A_n) = 0$. The assumptions of Proposition \ref{pro:contract},
applied to $A_n$ and $B_n$ of \eqref{eq:lip-an-bn}, will in general not
imply that our SDS is locally contractive.

\begin{rmks}\label{rmk:hypotheses}
(a) In the log-centered case, we can apply Proposition \ref{pro:centered} to 
$(Y_n^{|x|})$. Among its hypotheses, also need that
\begin{equation}\label{eq:nofix}
\Prob[A_ny + B_n = y] <1 \quad\text{for all}\; y \in \R\,.
\end{equation} 
A sufficient condition for this is that
$$
\Prob[F_n(x) = x] < 1\; \text{for every}\; x \in \Xx\,.
$$
Indeed, when $y=0$, then $\Prob[A_ny + B_n = y] <1$ is the same as 
$\Prob\bigl(F_n(o)=o\bigr) < 1$ from \eqref{eq:assume1}.
If $y \ne 0$ then observe that $A_n-1$ assumes both positive and negative
values with positive probability, so that the requirement is again met.

When the assumptions of  Proposition \ref{pro:centered} hold for the random
variables $(A_n,B_n)$ of \eqref{eq:lip-an-bn}, 
the affine SDS $(Y_n^{|x|})$ on $\R$ is locally contractive and recurrent on its
limit set $\Ll_{\R}\,$, which is contained in $\R^+$ by construction.
Note that it depends on the reference point $o \in \Xx$ through the
definition of $B_n\,$.
\\[5pt]
(b) In view of our assumptions \eqref{eq:assume1}, 
we can always modify the measure 
$\wt \mu$ on $\Lp$ to obtain a new one, say $\wt\mu'$, which has the same support
and satisfies
$$
- \infty < \int_{\Lp} \log \lp(f)\, d\wt\mu'(f) < 0\,.
$$
Then $\wt\mu'$ gives rise to a strongly contractive SDS. Let $\Ll$ be its limit
set. Remark \ref{rem:non-contractive} tells us that also our original SDS
governed by $\wt\mu$ is topologically irreducible on $\Ll$ and that it 
evolves within $\Ll$ when started in a point of $\Ll$. This set is given by
Corollary \ref{cor:support}. We may assume that the reference point $o$ 
belongs to~$\Ll$. 
\end{rmks}

In the sequel, we shall write
$$
A_{m,m} = 1 \AND A_{m,n} = A_{m+1} \cdots  A_{n-1}A_n \quad (n > m)\,. 
$$

\begin{thm}\label{thm:recur} If in addition to \eqref{eq:assume1} and 
\eqref{eq:nofix}, one has
\begin{equation}\label{eq:assume2}
\Ex (\log A_n) = 0\,,\quad \Ex(|\log A_n|^2) < \infty\,,\AND
\Ex\bigl((\log^+  |B_n|)^{2+\ep}\bigr) < \infty
\end{equation} 
for some $\ep > 0$, then the SDS is topologically recurrent on the set $\Ll$ of 
Corollary \ref{cor:support}. Moreover,
for every $x \in \Xx$ (and not just $\in \Ll$) and every open set $U \subset \Xx$
that intersects $\Ll$,
$$
\Prob[X_n^x \in U \;\text{for infinitely many}\; n] = 1.
$$
\end{thm}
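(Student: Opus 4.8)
The plan is to obtain the ``infinitely often'' statement by combining two ingredients: the domination \eqref{eq:dominate} of $d(X_n^x,o)$ by the affine recursion, which (via Proposition \ref{pro:centered}) forces the SDS back into a \emph{fixed} compact ball infinitely often, and the topological irreducibility on $\Ll$ furnished by Remarks \ref{rmk:hypotheses}(b) and \ref{rem:non-contractive}, which lets me spread these returns to an arbitrary open set meeting $\Ll$. Note that no local contractivity of $(X_n^x)$ itself is assumed or used here.

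First I would produce recurrence to a compact set. Under \eqref{eq:assume1}, \eqref{eq:nofix} and \eqref{eq:assume2} the hypotheses of Proposition \ref{pro:centered} hold for the pair $(A_n,B_n)$ of \eqref{eq:lip-an-bn} (recall that $B_n\ge 0$), so the dominating affine SDS $(Y_n^y)$ is locally contractive and null recurrent on its limit set $\Ll_{\R}\subset\R^+$. Picking a bounded open interval $U'$ meeting $\Ll_{\R}$, recurrence of $(Y_n^y)$ gives $\Prob[Y_n^{|x|}\in U' \text{ for infinitely many } n]=1$ for every $x$, hence there is $M$ with $\Prob[Y_n^{|x|}<M \text{ i.o.}]=1$. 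Then \eqref{eq:dominate} yields
$$
\Prob[X_n^x \in K \;\text{for infinitely many}\; n] = 1 \quad\text{for every}\; x \in \Xx,
$$
where $K=\overline\Bb(M)$ is compact. This is already more than conservativity: it is recurrence to the one fixed set $K$.

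The second, more delicate step is a \emph{uniform} accessibility estimate from $K$ to an arbitrary open $U$ with $U\cap\Ll\neq\emptyset$. By Remarks \ref{rmk:hypotheses}(b) and \ref{rem:non-contractive}, the SDS governed by $\wt\mu$ is topologically irreducible on $\Ll$, so for every $y\in\Xx$ there is $n_y$ with $P^{n_y}(y,U)>0$. Since $P$ is Fellerian and $U$ is open, $x\mapsto P^{n_y}(x,U)$ is lower semicontinuous (approximate $\uno_U$ from below by bounded continuous functions and use monotone convergence), hence bounded below by a positive constant on a neighbourhood of $y$. As $K$ is compact, finitely many such neighbourhoods cover it, giving $\ell\in\N$ and $\al>0$ with
$$
\Prob\Bigl[\textstyle\bigcup_{j=1}^{\ell}\{X_j^y\in U\}\Bigr]\ge \al \quad\text{for every}\; y\in K.
$$

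Finally I would splice the two steps together by a Borel--Cantelli argument in the spirit of Lemma \ref{lem:repeat}. Since $K$ is visited infinitely often, define successive a.s.\ finite stopping times $\rho_1<\rho_2<\cdots$ at which the chain lies in $K$, spaced at least $\ell$ apart; at each $\rho_k$ the strong Markov property and the uniform estimate give conditional probability at least $\al$ of entering $U$ within the next $\ell$ steps, using disjoint time windows. The geometric decay bound $\Prob(\text{no success among trials}\; k+1,\dots,m)\le(1-\al)^{m-k}$ then shows that infinitely many trials succeed almost surely, whence $\Prob[X_n^x\in U \text{ for infinitely many } n]=1$; as $x\in\Xx$ and the open $U$ meeting $\Ll$ were arbitrary, this is exactly topological recurrence on $\Ll$. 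The hard part will be the uniform accessibility estimate: upgrading pointwise irreducibility to a lower bound that is uniform over the whole recurrent compact set $K$, which is precisely what the Feller property together with compactness of $K$ deliver and what makes the concluding Borel--Cantelli step run.
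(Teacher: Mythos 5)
Your proposal is correct, but it takes a genuinely different route from the paper's proof. The paper works with the (non-strictly) descending ladder epochs $\ld(k)$ of the multiplicative random walk $(A_{0,n})$: the induced SDS $\bar X_k^x = X_{\ld(k)}^x$ is generated by i.i.d.\ Lipschitz maps whose Lipschitz/displacement bounds $(\bar A_k,\bar B_k)$ have negative log-drift and finite log-moments by Elie's estimates \cite[Lemma 5.49]{E2}; Corollary \ref{cor:contract} then makes this induced SDS strongly contractive and positive recurrent on its limit set $\bar{\,\Ll}\subset\Ll$, and the conclusion is spread from $\bar{\,\Ll}$ to all of $\Ll$ through the exhaustion $\Ll=\bigl(\bigcup_m \Ll_m\bigr)^-$, where $\Ll_m=\bigcup\{f(\Ll_{m-1}):f\in\supp(\wt\mu)\}$, by induction with Lemma \ref{lem:repeat}. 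You instead extract infinitely many returns to a single fixed compact ball $K$ from the domination \eqref{eq:dominate} combined with Proposition \ref{pro:centered} (whose recurrence part rests on the same lemma of Elie), and you replace the paper's exhaustion by a soft Feller argument: lower semicontinuity of $x\mapsto P^n(x,U)$ for open $U$ upgrades the pointwise irreducibility of Remark \ref{rem:non-contractive}(i) to an accessibility bound that is uniform on $K$, after which a conditional Borel--Cantelli argument (the same mechanism as Lemma \ref{lem:repeat}) concludes. Both proofs consume the same deep input, namely the recurrence theory of the critical affine recursion; yours packages it as a black box and is more generic (it would apply to any Feller chain having a.s.\ infinitely many returns to one fixed compact set together with topological irreducibility towards $\Ll$), while the paper's ladder-epoch construction produces a strongly contractive induced system, an object of independent use. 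One small point you should write out: for $y\in K\cap U$, Remark \ref{rem:non-contractive}(i) as literally stated is already fulfilled by the trivial visit at time $0$, whereas your trial windows need $P^{n_y}(y,U)>0$ for some $n_y\ge 1$; this follows from the support argument behind that remark, since the auxiliary recurrent SDS of Remark \ref{rmk:hypotheses}(b) visits $U$ at arbitrarily large times and the set of tuples $(g_1,\dots,g_n)$ with $g_n\circ\dots\circ g_1(y)\in U$ is open in $\Gp^n$, hence of positive $\wt\mu^{\otimes n}$-measure for some $n\ge 1$.
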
 

\begin{proof}
The (non-strictly) \emph{descending} ladder epochs are
$$
\ld(0)  = 0\,,\quad  \ld(k+1) = \inf \{ n > \ld(k) : A_{0,n} \le  A_{0, \ld(k)} \}
$$ 
Since $(A_{0,n})$ is a recurrent multiplicative random walk on $\R^+_*$, these
epochs are stopping times with i.i.d. increments.  
The induced SDS is $(\bar X_k^x)_{k \ge 0}\,$, where 
$\bar X_k^x = X_{\ld(k)}^x\,$. It is also generated by random
i.i.d. Lipschitz mappings, namely 
$$
\bar F_k = F_{\ld(k)} \circ  F_{\ld(k)-1} \circ \dots \circ F_{\ld(k-1)+1}\,,
\quad k \ge 1\,.
$$
With the same stopping times, we also consider the induced affine recursion
given by $\bar Y_k^{|x|} = Y_{\ld(k)}^{|x|}\,$. It is generated by the i.i.d.
pairs  $(\bar A_k,\bar B_k)_{k \ge 1}\,$, where 
$$
\bar A_k = A_{\ld(k-1),\, \ld(k)} \AND 
\bar B_k = \sum_{j=\ld(k-1)+1}^{\ld(k)} |B|_j\, A_{j,\,\ld(k)}\,.
$$
It is known \cite[Lemma 5.49]{E2} that under our assumptions, 
$\Ex(\log^+ \bar A_k) < \infty$, $\Ex(\log \bar A_k) < 0$ and 
$\Ex(\log^+  \bar B_k) < \infty$.
Returning to $(\bar X_k^x)$, we have
$\lp(\bar F_k) \le \bar A_k$ and $d\bigl( \bar F_k(o),o\bigr) \le \bar B_k$.
Corollary \ref{cor:contract} applies, and the induced SDS is strongly contractive.
It has a unique invariant probability measure $\bar \nu$, and it is (positive)
recurrent on $\bar{\,\Ll} = \supp(\bar \nu)$. Moreover, for every starting point 
$x \in \Xx$ and each open set $U \subset \Xx$ that intersects $\bar{\,\Ll}$,
we get that almost surely, $(\bar X_k^x)$ visits $U$ infinitely often.

In view of the fact that the original SDS is topologically irreducible on
$\Ll$, we have $\bar{\,\Ll} \subset \Ll$. We now define  a sequence
of subsets of $\Ll$ by 
$$
\Ll_0 = \bar{\,\Ll} \AND 
\Ll_m = 
{\textstyle\bigcup} \{ f(\Ll_{m-1}) : f \in \supp(\wt\mu) \}.
$$   
Then 
the closure of
$\bigcup_m \Ll_m$ is a subset of $\Ll$ that is mapped into itself by every
$f \in \supp(\wt \mu)$. Corollary \ref{cor:support} yields that 
$$
\Ll = \bigl({\textstyle\bigcup_m} \Ll_m \bigr)^{-}.
$$  
We now show by induction on $m$ that for every starting point $x \in \Xx$ and
every open set $U$ that intersects $\Ll_m\,$,
$$
\Prob[X_n^x \in U \;\text{for infinitely many}\; n] = 1,
$$ 
and this will conclude the proof.

For $m=0$, the statement is true.  Suppose it is true for $m-1$.
Given an open set $U$ that intersects $\Ll_m$, we can find an open,
relatively compact set $V$ that intersects $\Ll_{m-1}$ such that
$\wt\mu(\{ f \in \Lp : f(V) \subset U \} = \alpha > 0$.

By the induction hypothesis, $(X_n^x)$ visits $U$ infinitely often with
probability $1$.
We can now apply Lemma \ref{lem:repeat} with $\ell=2$, $U_0=U$ and
$U_1=V$ to conclude that also $V$ is visited infinitely often with
probability $1$.
\end{proof} 

\begin{lem}\label{lem:inv-exist} \emph{(a)} Under the assumptions 
\eqref{eq:assume1}, every invariant Radon measure $\nu$ satisfies 
$\Ll\subset \supp(\nu)$.
\\[5pt]
\emph{(b)} If in addition to \eqref{eq:assume1}, one has \eqref{eq:nofix} and
\eqref{eq:assume2}, then the SDS possesses an invariant Radon measure
$\nu$ with $\supp(\nu) = \Ll$. Furthermore, the transition operator $P$
is a conservative contraction of $L^1(\Xx,\nu)$ for every invariant measure
$\nu$.
\end{lem}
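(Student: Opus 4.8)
\emph{Part (a).} Let $\nu$ be any invariant Radon measure. I would first note that $\supp(\nu)$ is non-empty, closed and forward invariant: exactly as in the last paragraph of the proof of Lemma \ref{lem:exc-inv}, the identity $\nu P = \nu$ together with continuity of the maps in $\supp(\wt\mu)$ gives $f\bigl(\supp(\nu)\bigr) \subset \supp(\nu)$ for every $f \in \supp(\wt\mu)$. Since $\Ll$ is, by Corollary \ref{cor:support}, the smallest non-empty closed set enjoying this invariance property, it follows at once that $\Ll \subset \supp(\nu)$. Only $\nu P = \nu$ is used here, so assumption \eqref{eq:assume1} — which makes $\Ll$ available via Remark \ref{rmk:hypotheses}(b) — suffices.

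\emph{Part (b): existence and support.} The natural construction is the occupation measure of the descending ladder chain $\bar X_k = X_{\ld(k)}$ of Theorem \ref{thm:recur}, which is strongly contractive and positive recurrent with invariant probability $\bar\nu$, where $\supp(\bar\nu) = \bar{\,\Ll}$. Starting the SDS from $X_0 \sim \bar\nu$ (independent of the driving sequence), I would set
$$
\nu(g) = \Ex_{\bar\nu}\Bigl[\,\sum_{n=0}^{\ld(1)-1} g(X_n)\,\Bigr], \qquad g \ge 0.
$$
A routine telescoping, using that $\{\ld(1) > n\}$ is measurable with respect to $F_1,\dots,F_n$, gives $\nu(Pg) - \nu(g) = \Ex_{\bar\nu}\bigl(g(X_{\ld(1)})\bigr) - \Ex_{\bar\nu}\bigl(g(X_0)\bigr)$, which vanishes because $X_{\ld(1)} = \bar X_1$ and $X_0 = \bar X_0$ are both $\bar\nu$-distributed; hence $\nu$ is invariant. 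Since $\bar{\,\Ll} \subset \Ll$ and $\Ll$ is forward invariant, the entire excursion stays in $\Ll$, so $\supp(\nu) \subset \Ll$, while the reverse inclusion is Part (a); thus $\supp(\nu) = \Ll$. (Mere existence of \emph{some} invariant Radon measure also follows by the argument of Proposition \ref{pro:invmeasure}, which uses only the divergence $\sum_k P^k\psi(x) = \infty$ supplied by Theorem \ref{thm:recur} and Lin's theorem \cite{Li}, and not local contractivity; but that route does not control the support.)

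\emph{Main obstacle.} The delicate point is that $\nu$ be Radon, i.e. $\nu(K) < \infty$ for compact $K$: since $\log A_{0,n}$ is centred, $\Ex\bigl(\ld(1)\bigr) = \infty$, so one cannot bound the number of excursion steps. Here I would invoke the domination \eqref{eq:dominate} to get $\nu(K) \le \Ex_{\bar\nu}\bigl[\#\{0 \le n < \ld(1): Y_n \in [0,R]\}\bigr]$ for $K \subset \bar\Bb(R)$, and then estimate the expected number of visits of the affine recursion $Y_n$ to a bounded set before the first descending ladder epoch of $\log A_{0,n}$. This is precisely where the moment hypotheses \eqref{eq:assume2} are consumed, through the null-recurrence estimates of \cite{Br}, the ladder-increment bounds of \cite[Lemma 5.49]{E2}, and the fluctuation asymptotics $\Prob[\log A_{0,j} > 0,\ j \le n] \asymp n^{-1/2}$ for the centred walk. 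I expect this finiteness estimate to be the main technical content.

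\emph{Part (b): conservativity.} For any invariant Radon measure $\nu$, the identity $\nu P = \nu$ makes $P$ a positive contraction of $L^1(\Xx,\nu)$, since $\int |Pf|\,d\nu \le \int P|f|\,d\nu = \int |f|\,d\nu$. To see it is conservative I would use the Hopf decomposition $\Xx = C \cup D$ (see \cite{Rev}): on the dissipative part $D$ one has $\sum_n P^n h < \infty$ $\nu$-almost everywhere for every $h \in L^1_+(\Xx,\nu)$. Taking $h = \uno_U$ with $U$ relatively compact, open and meeting $\Ll$ — so that $\uno_U \in L^1(\nu)$ because $\nu$ is Radon — Theorem \ref{thm:recur} gives $\sum_n P^n\uno_U(x) = \Ex\bigl(\#\{n: X_n^x \in U\}\bigr) = \infty$ for \emph{every} $x \in \Xx$; hence $\nu(D) = 0$ and $P$ is conservative. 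As this argument does not depend on the choice of $\nu$, it yields the claim for every invariant Radon measure.
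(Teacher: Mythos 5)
Your part (a) and your conservativity argument are both correct and essentially the paper's own: (a) is the forward-invariance of $\supp(\nu)$ under every $f\in\supp(\wt\mu)$ combined with the minimality of $\Ll$ from Corollary \ref{cor:support} (via Remark \ref{rmk:hypotheses}(b)), and conservativity is the Hopf/Revuz argument fed by the divergence of $\sum_n P^n\uno_U(x)$ for every $x$, which Theorem \ref{thm:recur} supplies.

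The gap is in the existence of an invariant \emph{Radon} measure with $\supp(\nu)=\Ll$. Your excursion construction $\nu(g)=\Ex_{\bar\nu}\bigl[\sum_{n=0}^{\ld(1)-1}g(X_n)\bigr]$ does yield an invariant measure supported in $\Ll$ (the telescoping identity is fine), but, as you yourself flag, you do not establish that it is locally finite: since $\log A_{0,n}$ is centred, $\Ex\bigl(\ld(1)\bigr)=\infty$, and the finiteness of $\Ex_{\bar\nu}\bigl[\#\{n<\ld(1): X_n\in K\}\bigr]$ for compact $K$ is exactly the hard analytic point of your route. You only gesture at the estimates that would be needed and say you ``expect'' them to close the argument; nothing is proved there, and without the Radon property the constructed object does not satisfy the lemma. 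Moreover, this obstacle is self-inflicted: your parenthetical dismissal of the Lin route (``that route does not control the support'') is mistaken, and it is precisely where the paper's proof lives. One applies Theorem \ref{thm:recur} and Lin's theorem \cite[Thm.~5.1]{Li} \emph{with $\Ll$ itself as the state space}: $\Ll$ is closed in $\Xx$, hence proper in the induced metric, it is forward invariant under every $f\in\supp(\wt\mu)$, and the restricted SDS is still recurrent; Lin's theorem then produces an invariant Radon measure on $\Ll$, which, extended by zero, is invariant for the SDS on $\Xx$, has support contained in $\Ll$ by construction, and support equal to $\Ll$ by part (a). So the missing ingredient is a one-line restriction trick, not the delicate excursion estimate you left open; as it stands, your existence proof is incomplete.
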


\begin{proof} (a) Let $\nu$ be invariant.
The argument at the end of the proof of Lemma 
\ref{lem:exc-inv} shows that $f\bigl(\supp(\nu)\bigr) \subset \supp(\nu)$
for all $f \in \supp(\wt \mu)$. As explained in Remark \ref{rmk:hypotheses}(b),
Corollary \ref{cor:support} applies here and yields statement (a).
\\[5pt]
(b) Theorem \ref{thm:recur} yields conservativity. Indeed, let
$\Bb(r)$ be a ball that intersects $\Ll$.  For every starting point 
$x \in \Xx$, the SDS $(X_n^x)$ visits $\Bb(r)$ infinitely often with 
probability $1$. 
We can choose $\varphi \in \Ccal_c^+(\Xx)$ such that $\varphi \ge 1$ on $\Bb(r)$. 
Then 
$$
\sum_{k=1}^{\infty} P^k\varphi(x) = \infty \quad \text{for every}\; x\in \Xx, 
$$
The existence of an invariant Radon measure follows once more from 
\cite[Thm. 5.1]{Li}, and conservativity of $P$ on $L^1(\Xx,\nu)$ follows, see e.g.
\cite[Thm. 5.3]{Rev}.
If right from the start we consider the whole process only on $\Ll$ with the
induced metric, then we obtain an invariant measure $\nu$ with 
$\supp(\nu) = \Ll$. 
\end{proof}

Note that unless we know that the SDS is locally contractive, we cannot argue
right away that every invariant measure must be supported exactly by $\Ll$.
The assumptions \eqref{eq:assume1} \& \eqref{eq:assume2} will in general not imply
local contractivity, as we shall see below. Thus, the question of uniqueness of
the invariant measure is more subtle. For a sufficient condition that requires 
a more restrictive 
(Harris type) notion of irreducibility, see  
\cite[Def. 5.4 \& Thm. 5.5]{Li}.
\section{Hyperbolic extension}\label{sec:hyp}
In order to get closer to answering the uniqueness question in a more ``topological''
spirit, we also want to control the Lipschitz constants $A_n$. 
We shall need to distinguish between two cases.
\\[5pt]
\noindent
{\bf A. Non-lattice case}
\\[3pt]
If the random variables $\,\log A_n\,$ are non-lattice, i.e., there is no
$\kappa > 0$ such that $\,\log A_n \in \kappa\cdot\Z$ almost surely, then
we consider the extended SDS 
\begin{equation}\label{eq:Xntilde}
\wh X_n^{x,a}= (X_n^x,A_n A_{n-1} \cdots A_1 a) 
\end{equation}
on the extended space $\wh \Xx = \Xx \times \R^+_*$, with initial point 
$(x,a) \in \wh\Xx$. 
We also extend $\nu$ to a Radon measure $\lambda = \lambda_{\nu}$ on $\wh \Xx$ by
\begin{equation}\label{eq:lambdaA}
\int_{\wh \Xx} \varphi(x,a) \,d\lambda(x,a) = 
\int_{\Xx} \int_{\R} \varphi(x,e^u) \, d\nu(x)\,du\,.
\end{equation}
This is the product of $\nu$ with the multiplicative Haar measure on $\R^+_*$.
\\[5pt]
{\bf B. Lattice case}
\\[3pt]
Otherwise, there is a maximal $\kappa > 0$ such that 
$\log A_n \in \kappa\cdot\Z$ almost surely. Then we consider again the extended
SDS \eqref{eq:Xntilde}, but now the extended space is 
$\wh \Xx = \Xx \times \exp(\kappa \cdot\Z)$, where of course 
$\exp(\kappa\cdot \Z) = \{ e^{\kappa\, m}: m \in \Z \}$. 
The initial point $(x,a)$ now has to be such that also $a \in \exp(\kappa \Z)$.
In this case, we define $\lambda$ by
\begin{equation}\label{eq:lambdaB}
\int_{\wh \Xx} \varphi(x,a) \,d\lambda(x,a) = 
\int_{\Xx} \sum_{m \in \Z} \varphi(x,e^{\kappa m}) \, d\nu(x)\,.
\end{equation}

In both cases, it is straightforward to verify that $\lambda$ is an invariant
Radon measure for the extended SDS on $\wh \Xx$.


\medskip


Consider the hyperbolic upper half plane $\HH \subset \C$ with
the Poincar\'e metric 
$$
\theta(z,w) = \log \frac{|z-\bar w| + |z-w|}{|z-\bar w| - |z-w|}\,,
$$
where $z, w \in \HH$ and $\bar w$ is the complex conjugate of $w$.
We use it to define a ``hyperbolic'' metric on $\wh X$ by
\begin{equation}\label{eq:hyp}
\begin{aligned}
\hat d\bigl((x,a),(y,b)\bigr) 
&= \theta\bigl(\im a, d(x,y) + \im b\bigr)\\  
&= \log \frac{\sqrt{d(x,y)^2 + (a+b)^2} +\sqrt{d(x,y)^2 + (a-b)^2}}
 {\sqrt{d(x,y)^2 + (a+b)^2} - \sqrt{d(x,y)^2 + (a-b)^2}}\,.
\end{aligned}
\end{equation} 
It is a good exercise, using the specific properties of $\theta$, to verify
that this is indeed a metric. The metric space $(\wh \Xx, \hat d)$
is again proper, and for any $a > 0$, the embedding $\Xx \to \wh \Xx\,$,
$x \mapsto (x,a)$, is a homeomorphism.

\begin{lem}\label{lem:lift-f}
Let $f : \Xx \to \Xx$ be a Lipschitz mapping with Lipschitz
constant $\lp(f) > 0$. Then the mapping $\hat f: \wh \Xx \to \wh \Xx$,
defined by 
$$
\hat f(x,a) = \bigl( f(x),\lp(f)a\bigr)
$$
is a contraction of $(\wh \Xx, \hat d)$ with Lipschitz constant $1$.
\end{lem}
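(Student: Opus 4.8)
The plan is to reduce the desired inequality $\hat d(\hat f(x,a),\hat f(y,b)) \le \hat d((x,a),(y,b))$ to two elementary facts about the Poincar\'e metric $\theta$ on $\HH$. Writing $L = \lp(f)$, $s = d(x,y)$ and $s' = d(f(x),f(y))$, the Lipschitz property gives $0 \le s' \le Ls$, and by the definition of $\hat d$ in \eqref{eq:hyp} the two sides are $\theta(\im La,\, s' + \im Lb)$ and $\theta(\im a,\, s + \im b)$. Thus it suffices to prove $\theta(\im La,\, s'+\im Lb) \le \theta(\im a,\, s+\im b)$.

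First I would observe that the dilation $z \mapsto Lz$ is a $\theta$-isometry of $\HH$: since $L > 0$ is real, $|Lz - \overline{Lw}| = L\,|z - \bar w|$ and $|Lz - Lw| = L\,|z-w|$, so the quotient defining $\theta$ in \eqref{eq:hyp} is unchanged. Applying this to the pair $\im a$, $s + \im b$ gives $\theta(\im La,\, Ls + \im Lb) = \theta\bigl(L\cdot\im a,\, L(s+\im b)\bigr) = \theta(\im a,\, s + \im b)$. This ``undoes'' the scaling of the height coordinate produced by $\hat f$, leaving a comparison between $s' + \im Lb$ and $Ls + \im Lb$, two points of equal height $Lb$ differing only in their horizontal coordinate.

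Second, I would use that for fixed $\alpha,\beta > 0$ the function $t \mapsto \theta(\im\alpha,\, t + \im\beta)$ is nondecreasing on $t \ge 0$. The cleanest route is the standard identity $\cosh\theta(z,w) = 1 + |z-w|^2 \big/ (2\,\mathrm{Im}\,z\,\mathrm{Im}\,w)$, which for $z = \im\alpha$, $w = t + \im\beta$ yields $\cosh\theta = 1 + (t^2 + (\beta-\alpha)^2)\big/(2\alpha\beta)$; this increases in $t^2$, so $\theta$ increases with $|t|$. (Alternatively the same monotonicity can be read off directly from \eqref{eq:hyp}.) Taking $\alpha = La$, $\beta = Lb$ and using $0 \le s' \le Ls$ gives $\theta(\im La,\, s'+\im Lb) \le \theta(\im La,\, Ls+\im Lb)$, which combined with the first step closes the inequality. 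Finally, choosing $x = y$ (so $s = s' = 0$) turns the dilation invariance into an equality, showing the Lipschitz constant is exactly $1$.

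The only genuine conceptual point — everything else being routine — is to separate the two effects of $\hat f$: it scales the vertical coordinate by $L$, which is a hyperbolic isometry and hence distance-preserving, while it replaces the horizontal separation $Ls$ by the genuinely smaller $s'$, a distance-decreasing move by horizontal monotonicity. The dilation invariance is precisely the device that isolates the second, contracting, effect; once that decomposition is seen, the monotonicity of $\theta$ in the horizontal displacement finishes the argument with no further work.
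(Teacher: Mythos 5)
Your proof is correct, and for the main contraction inequality it follows the paper's own route exactly: rewrite both sides via the definition of $\hat d$, use $d\bigl(f(x),f(y)\bigr) \le \lp(f)\,d(x,y)$ together with monotonicity of $t \mapsto \theta(\im\alpha,\, t+\im\beta)$ on $t \ge 0$, and then invoke dilation invariance of the hyperbolic metric to cancel the factor $\lp(f)$. In fact you are more explicit than the paper, which performs the monotonicity step silently inside a chain of (in)equalities, whereas you justify it via $\cosh\theta(z,w) = 1 + |z-w|^2\big/\bigl(2\,\mathrm{Im}\,z\,\mathrm{Im}\,w\bigr)$. The one place where you genuinely differ is the claim that the Lipschitz constant is exactly $1$: the paper chooses, for each $\ep > 0$, points $x,y$ with $d\bigl(f(x),f(y)\bigr) \ge (1-\ep)\lp(f)\,d(x,y)$, obtains $\hat d\bigl(\hat f(x,a),\hat f(y,b)\bigr) \ge \theta\bigl(\im a,\,(1-\ep)d(x,y)+\im b\bigr)$, and lets $\ep \to 0$; you instead take $x = y$ and $a \ne b$, on which $\hat f$ acts as the dilation $z \mapsto \lp(f)z$ and hence preserves $\hat d$ exactly. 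Your argument is shorter and avoids the limiting step, at no cost --- vertical pairs already witness $\lp(\hat f) \ge 1$ --- while the paper's version additionally shows that pairs nearly realizing the Lipschitz constant of $f$ remain nearly extremal for $\hat f$.
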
 

\begin{proof} We have by the dilation invariance of the hyperbolic metric
$$
\begin{aligned}
\tilde d\Bigl(\hat f(x,a),\hat f(y,b)\Bigr)
&= \theta\Bigl(\im \lp(f)a\,,\, d\bigl(f(x),f(y)\bigr) + \im \lp(f)b\Bigr)
\le \theta\Bigl(\im \lp(f)a\,,\, \lp(f)d(x,y) + \im \lp(f)b\Bigr)\\
&= \theta\Bigl(\im a\,,\, d(x,y) + \im b\Bigr)
= \wh d\Bigl((x,a),(y,b)\Bigr)\,.
\end{aligned}
$$
Thus, $\lp(\hat f) \le 1$. Furthermore, if $\ep > 0$ and $x,y \in \Xx$
are such that $d\bigl(f(x),f(y)\bigr) \ge (1-\ep)\lp(f)d(x,y)$ then 
we obtain in the same way that
$$
\hat d\Bigl(\tilde f(x,a),\tilde f(y,b)\Bigr)
\ge \theta\Bigl(\im a, (1-\ep) d(x,y) + \im b\Bigr)\,.
$$
when $\ep \to 0$, the right hand side tends to 
$\hat d\bigl((x,a),(y,b)\bigr)$. Hence $\lp(\hat f)=1$.
\end{proof}

Thus, with the sequence $(F_n)$, we associate the sequence $(\wh F_n)$
of i.i.d. Lipschitz contractions of $\wh \Xx$ with Lipschitz constants $1$.
The associated SDS on $\wh\Xx$ is $(\wh X_n^{x,a})$, as defined in
\eqref{eq:Xntilde}. From Lemma \ref{lem:transient}, which is true for any SDS of 
contractions, we get the following, where $o \in \Xx$ and $\hat o = (o,1)$.
\begin{cor}\label{cor:transient} 
$
\Prob\bigl[\hat d\bigl(\wh X_n^{x,a}, \hat o\bigr) \to \infty\bigr] 
\in \{0,1\}\,,
$
and the value is the same for all \hbox{$(x,a) \in \wh X$.}
\end{cor}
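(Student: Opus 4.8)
The plan is to invoke Lemma~\ref{lem:transient} in the form that holds for an arbitrary SDS of contractions. As noted at the start of \S\ref{sec:contract}, for such an SDS on a proper metric space the $0$--$1$ alternative requires no appeal to local contractivity: it is a consequence of properness together with the non-expansiveness inequality for the mutual distance of two trajectories. By Lemma~\ref{lem:lift-f} each $\wh F_n$ is a contraction of $(\wh\Xx,\hat d)$ with $\lp(\wh F_n)\le 1$, and $(\wh\Xx,\hat d)$ is proper; thus that version of Lemma~\ref{lem:transient} applies directly to the extended SDS $(\wh X_n^{x,a})$, and its conclusion is exactly the assertion of the corollary.

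Spelling out how the argument specializes: for a point $\hat p\in\wh\Xx$ write $\wh X_{m,n}^{\hat p}$ for the extended process equal to $\hat p$ at time $m$ and driven by $\wh F_{m+1},\dots,\wh F_n$ afterwards, in analogy with \eqref{eq:Xmn}. The contraction property yields
$$
\hat d\bigl(\wh X_{m,n}^{\hat p},\wh X_{m,n}^{\hat q}\bigr)\le\hat d(\hat p,\hat q)
\quad\text{for all }\;n\ge m,\;\hat p,\hat q\in\wh\Xx,
$$
so the hyperbolic distance between any two trajectories is non-increasing. Fix $m$ and put $\hat v=\wh X_m^{x,a}$, so that $\wh X_n^{x,a}=\wh X_{m,n}^{\hat v}$ for $n\ge m$. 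Since
$\hat d\bigl(\wh X_{m,n}^{\hat v},\wh X_{m,n}^{\hat o}\bigr)\le\hat d(\hat v,\hat o)<\infty$,
the trajectory $(\wh X_n^{x,a})_{n\ge0}$ admits a bounded subsequence if and only if $(\wh X_{m,n}^{\hat o})_{n>m}$ does. By properness of $\wh\Xx$ the existence of a bounded subsequence is equivalent to accumulation in $\wh\Xx$, and $(\wh X_{m,n}^{\hat o})_{n>m}$ depends only on $F_{m+1},F_{m+2},\dots$. As $m$ is arbitrary, the event that $(\wh X_n^{x,a})$ accumulates in $\wh\Xx$ is a tail event of the i.i.d. sequence $(F_n)$, hence has probability $0$ or $1$ by Kolmogorov's $0$--$1$ law; on its complement one has $\hat d(\wh X_n^{x,a},\hat o)\to\infty$.

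Independence of the initial point is obtained by applying the same non-expansiveness inequality at time $m=0$: for any two starting points, $\hat d(\wh X_n^{x,a},\wh X_n^{y,b})\le\hat d\bigl((x,a),(y,b)\bigr)$, so one trajectory accumulates exactly when the other does, and the common value of the probability is the same for all $(x,a)\in\wh\Xx$. The only step needing attention is the passage from non-expansiveness to the tail property, i.e. replacing the role played by local contractivity in the proof of Lemma~\ref{lem:transient} by properness together with monotonicity of the mutual distance; but this is precisely the substitution already justified in \S\ref{sec:contract}, so no genuine difficulty remains.
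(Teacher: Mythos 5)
Your proof is correct and follows the paper's own route: the paper likewise deduces the corollary by combining Lemma~\ref{lem:lift-f} with the observation (made at the start of \S\ref{sec:contract}) that Lemma~\ref{lem:transient} holds for any SDS of contractions on a proper space, via properness and the non-expansiveness of the mutual distance. Your write-up merely spells out in detail the tail-event argument that the paper leaves implicit in that citation.
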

\section{Transient extended SDS}\label{sec:trans-ext}
We first consider the situation when $(\wh X_n^{x,a})$ is transient, i.e., 
the probability in Corollary \ref{cor:transient} is $=1$.
We shall use the comparison \eqref{eq:dominate}
of $(X_n^x)$ with the affine stochastic recursion $(Y_n^{|x|})$. Recall
that $|x| = d(o,x)$ and that $B_n \ge 0$. 
The hyperbolic extension $(\wh Y_n^{|x|,a})$ of $(Y_n^{|x|})$ 
is a random walk on the hyperbolic upper half plane. It can be also seen as a
random walk on the affine group of all mappings $g_{a,b}(z) = az+b$. 
Under the non-degeneracy assumptions of Proposition \ref{pro:centered},
this random walk is well-known to be transient.

\begin{lem}\label{lem:hitting}
Assume that \eqref{eq:assume1}, \eqref{eq:nofix} and \eqref{eq:assume2} hold.

Then for every sufficiently large $r > 0$ and every $s >1$ there are 
$\alpha= \alpha_{r,s}$ and $\delta = \delta_{r,s}> 0$ such that, setting 
$K_{r,s} = [0\,,\,r] \times [1/s\,,\,s]$
and $Q_{r,\alpha} = [0\,,\,r] \times [\alpha\,,\,\infty)$, 
one has for the affine recursion that
$$ 
\Prob[\wh Y_n^{y,a} \in K_{r,s} \; \text{for some}\; n \ge 1] \ge \delta
\quad \text{for all} \; (y,a) \in Q_{r,\alpha}\,.
$$
\end{lem}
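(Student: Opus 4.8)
The plan is to control the two coordinates of $\wh Y_n^{y,a}=(Y_n^y,\,A_{0,n}a)$ separately and then combine, working along the descending ladder epochs $\ld(k)$ of the multiplicative walk $A_{0,n}$ that were introduced in the proof of Theorem~\ref{thm:recur}. Recall that at these epochs the embedded recursion $\bar Y_k^y=Y_{\ld(k)}^y$ is driven by the i.i.d.\ pairs $(\bar A_k,\bar B_k)$ with $\bar A_k=A_{\ld(k-1),\ld(k)}\le 1$, $\Ex(\log\bar A_k)<0$ and $\Ex(\log^+\bar B_k)<\infty$, so that by Corollary~\ref{cor:contract} it is strongly contractive. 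The second coordinate at these times is $A_{0,\ld(k)}a$, and $\log A_{0,\ld(k)}=\sum_{j=1}^k\log\bar A_j$ is a random walk with i.i.d.\ steps of finite, strictly negative mean $\Ex(\log\bar A_1)$.

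\emph{Second coordinate.} Put $\kappa=\inf\{k\ge 0: A_{0,\ld(k)}a\le s\}$, the first ladder index at which the second coordinate drops to $\le s$; since $\log A_{0,\ld(k)}\to-\infty$ this is a.s.\ finite. First I would invoke the renewal theorem for the first passage of the negative-drift walk $\log A_{0,\ld(k)}$ below the level $\log(s/a)$: the descending steps $-\log\bar A_k>0$ have finite mean, so as $a\to\infty$ the overshoot $\log s-\log a-\log A_{0,\ld(\kappa)}$ converges in law to the (proper) stationary residual distribution, which charges $[0,2\log s]$ because $s>1$. Hence there are $\al>0$ and $c_1>0$ with $\Prob[A_{0,\ld(\kappa)}a\ge 1/s]\ge c_1$ for all $a\ge\al$; on this event the second coordinate lies in $[1/s,s]$.

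\emph{First coordinate.} At the same index one has $\bar Y_\kappa^y=A_{0,\ld(\kappa)}\,y+\bar Y_\kappa^0\le (s/a)\,r+\bar Y_\kappa^0$, so enlarging $\al$ makes the initial term $\le 1$ and it suffices to bound $\bar Y_\kappa^0$. Here I would prove a tightness estimate $\sup_a\Prob[\bar Y_\kappa^0>r-1]\le\ep(r)$ with $\ep(r)\to 0$: conditioning on the multiplicative data $\mathcal A=\sigma(\bar A_k:k\ge1)$ fixes $\kappa$ and writes $\bar Y_\kappa^0=\sum_{j=1}^{\kappa}\bar B_j\prod_{i=j+1}^{\kappa}\bar A_i$ with coefficients $\le 1$; reading the products backwards from $\kappa$ and using $\tfrac1m\sum_{i}\log\bar A_i\to\Ex(\log\bar A_1)<0$ shows that $\sum_{j=1}^{\kappa}\prod_{i=j+1}^{\kappa}\bar A_i$ is dominated, in distribution and uniformly in $a$, by the a.s.\ convergent perpetuity $\sum_{m\ge0}\prod_{i=1}^{m}\bar A_i'$. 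This yields the required uniform tightness of $\bar Y_\kappa^0$ (the weak, logarithmic moment on $\bar B_k$ is exactly what makes the perpetuity summable).

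\emph{Combination.} Writing $E_1=\{A_{0,\ld(\kappa)}a\in[1/s,s]\}$ and $E_2=\{\bar Y_\kappa^y\le r\}$, the ladder time $\ld(\kappa)$ realises the hitting event, so $\Prob[\wh Y_n^{y,a}\in K_{r,s}\text{ for some }n]\ge\Prob[E_1\cap E_2]\ge\Prob[E_1]-\Prob[E_2^c]\ge c_1-\ep(r)$. Choosing $r$ so large that $\ep(r)\le c_1/2$ gives the claim with $\de=c_1/2$, uniformly over $(y,a)\in Q_{r,\al}$. The main obstacle is precisely the first-coordinate step: the passage index $\kappa$ is determined by the $\bar A_k$ while $\bar Y_\kappa^0$ also involves the $\bar B_k$, and within a block $\bar A_k$ and $\bar B_k$ are dependent, so the bound cannot simply be read off from uniform tightness of $\bar Y_k^0$ at deterministic $k$; conditioning on $\mathcal A$ together with the perpetuity domination is what circumvents this, and it is essentially the estimate underlying the recurrence statement of Proposition~\ref{pro:centered}.
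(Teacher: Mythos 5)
Your route is genuinely different from the paper's. The paper never touches ladder epochs: it works with the potential kernel $\mathcal U\varphi(y,a)=\sum_n\Ex\bigl(\varphi(\wh Y_n^{y,a})\bigr)$ of the transient affine-group random walk, quotes the renewal theorem of \cite[Thm.~2.2]{BBE} (namely $\mathcal U\varphi(y,a)\to\int\varphi\,d\lambda$ as $a\to\infty$, uniformly for $y$ in compacta) to get a lower bound $\mathcal U\varphi\ge c_{r,s}$ on $Q_{r,\alpha}$, and converts this into a hitting-probability bound via the standard inequality $\mathcal U\uno_{K_{r,s}}\le M_{r,s}\,\Prob[\tau<\infty]$. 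Your ladder-epoch/overshoot scheme is a plausible hands-on substitute, and your second-coordinate step is essentially sound. The genuine gap is in the first-coordinate step. The assertion that, after conditioning on $\mathcal A$, the backward-read sum $\sum_{j=1}^{\kappa}\prod_{i=j+1}^{\kappa}\bar A_i$ (and with it $\bar Y_\kappa^0$) is ``dominated in distribution, uniformly in $a$, by the a.s.\ convergent perpetuity'' is precisely what has to be proved, and the law of large numbers for $\tfrac1m\sum_i\log\bar A_i$ does not prove it: $\kappa$ is a first-passage time of the very walk built from the $\bar A_i$, so on $[\kappa=k]$ the increments read backwards from $k$ are \emph{not} distributed like an unconditioned i.i.d.\ sequence; conditioning on $\mathcal A$ freezes this bias, it does not remove it. The honest implementation of your idea is a time-reversal (duality) argument: reversing the i.i.d.\ pairs $(\bar A_j,\bar B_j)_{j\le k}$ maps $\bar Y_k^0$ to the partial sum $\bar R_k^0$ of the genuine perpetuity and maps $[\kappa=k]$ to a path event of the reversed walk. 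But for the monotone walk $S_k=\log A_{0,\ld(k)}$ that event is $[S_k\in(S_1-x,\,-x]]$ (with $x=\log(a/s)$), and these events are \emph{not disjoint} in $k$, so what the reversal yields is
$$
\Prob[\bar Y_\kappa^0>t]\;\le\;\Ex\bigl(N(x)\,\uno_{[R_\infty>t]}\bigr),
\qquad N(x)=\#\{k\ge1:\,S_k\in(S_1-x,\,-x]\}\,,
$$
and not the plain bound $\Prob[R_\infty>t]$. One must then show that the visit counts $N(x)$ are uniformly integrable in the level $x$ (a renewal estimate, resting on $\Ex|\log\bar A_1|<\infty$, hence on the second-moment hypothesis in \eqref{eq:assume2}) before letting $t=r-1\to\infty$. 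None of this is in your outline; it is the mathematical core of the lemma, and it is exactly the work the paper avoids by citing \cite{BBE}.

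A smaller defect concerns quantifiers: you choose $r$ so large that $\ep(r)\le c_1/2$, but $c_1=c_1(s)$ comes from the overshoot theorem and tends to $0$ as $s\downarrow 1$, so your threshold on $r$ depends on $s$. The lemma claims one threshold for $r$ valid simultaneously for every $s>1$ (the paper gets this because its constant $c_{r,s}=\lambda(K_{r-1,(s+1)/2})/2$ is positive for every $s>1$ once $\nu([0,r-1])>0$). This mismatch is harmless for the applications in Lemma \ref{lem:rept} and Theorem \ref{thm:lip-trans}, where a single choice of $s$ per $r$ suffices, but as written your argument establishes a strictly weaker statement than the one asserted.
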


\begin{proof} In this proof only, we write $\nu$ for the invariant Radon measure
associated with $(Y_n^{|x|})$. It existence is guaranteed by Proposition
\ref{pro:centered}. Let $\lambda = \lambda_{\nu}$ be its 
hyperbolic extension according to \eqref{eq:lambdaA}, resp. \eqref{eq:lambdaB}. 
We normalize $\nu$, and consequently $\lambda$, so that $\nu$ is the measure which 
is denoted $m(f)$ in \cite[p. 482]{BBE}.

The random walk $(\wh Y_n^{y,a})$ on the affine group (parametrized by 
$\R^+_* \times \R$) evolves on $\R^+_* \times \R^+$, when $y \ge 0$. 
By \cite{BBE}, its potential kernel
$$
\mathcal U\varphi(y,a) = \sum_{n=0}^{\infty} \Ex\bigl(\varphi(\wh Y_n^{y,a})\bigr)\,,
\quad \varphi \in \Ccal_c\bigl(\R^+_* \times \R^+\bigr)),
$$ 
is finite and weakly compact as a family of Radon measures that are parametrized
by $(y,a)$. Furthermore \cite[Thm. 2.2]{BBE}, 
$$
\lim_{a \to \infty}
\mathcal U\varphi(y,a) = \int \varphi\,d\lambda,
$$ 
and convergence is uniform when $y$ remains in a compact set.
We fix $r > 1$ large enough so that $\nu([0\,,\,r']) > 0$, where $r' = r-1$,
and let $s > 1$ be arbitrary. We set $s' = (s+1)/2$ and
$c_{r,s} = \lambda(K_{r',s'})/2$, which is strictly positive, and choose
$\varphi \in  \Ccal_c^+\bigl(\R^+_* \times \R^+\bigr)$ so that
$\uno_{K_{r',s'}} \le \varphi \le \uno_{K_{r,s}}\,$. By the above, there is 
$\alpha = \alpha_{r,s} > 0$ such that $\mathcal U\varphi(y,a) \ge c_{r,s}$ for all 
$(y,a) \in Q_{r,\alpha}\,$. Given any starting point $(y,a)$, let 
$$
\tau = \inf \{ n \ge 1: \wh Y_n^{y,a} \in K_{r,s} \}\,.
$$
We know that
$$
M_{r,s} = \sup \mathcal U\uno_{K_{r,s}} < \infty.
$$
Let $(y,a) \in Q_{r,\alpha}\,$. Just for the purpose of this proof, 
we consider the hitting distribution $\sigma_{(y,a)}$ on  $K_{r,s}$ defined by 
$\sigma_{(y,a)}(B) = \Prob[\tau < \infty\,,\; \wh Y_{\tau}^{y,a} \in B]$.
Then by the Markov property, 
$$
\begin{aligned}
\mathcal U\uno_{K_{r,s}}(y,a)
&= \Ex \Bigl(\sum_{n=0}^{\infty}\uno_{K_{r,s}}(\wh Y_n^{y,a})\Bigr)
= \Ex \Bigl(\uno_{[\tau < \infty]} 
\sum_{n=\tau}^{\infty}\uno_{K_{r,s}}(\wh Y_n^{y,a})\Bigr)\\
&= \int_{K_{r,s}} \Ex \Bigl(
\sum_{n=0}^{+\infty}\uno_{K_{r,s}}(\wh Y_n^{z,b})\Bigr) \,d\sigma_{(y,a)}(z,b)\\
&\le M_{r,s}\,\sigma_{(y,a)}(K_{r,s}) =M_{r,s}\, \Prob_{(y,a)}[\tau < \infty],
\end{aligned} 
$$
where the index $(y,a)$ indicates the starting point.
Therefore we can set $\delta=M_{r,s}/c_{r,s}\,$, and 
$\Prob_{(y,a)}[\tau < \infty] \ge \delta$ for all
$(y,a) \in Q_{r,\alpha}\,$.
\end{proof}

Let $\overline \Bb(r)$ be the closed ball in $\Xx$ with center $0$ and radius $r$.
Set $\Bb_{r,s} = \overline \Bb(r)\times [1/s\,,\,s]$ and 
$\Cq_{r,\alpha} = \overline \Bb(r)\times [\alpha\,,\,\infty)$.

\begin{lem}\label{lem:rept}
Assume that \eqref{eq:assume1}, \eqref{eq:nofix}  and \eqref{eq:assume2} hold 
and that $(\wh X_n^{x,a})$ is transient.
Then for every sufficiently large $r > 0$, there is $\alpha > 0$ such that
$$
\Prob[\wh X_n^{x,a} \in \Cq_{r,\alpha} \; 
\text{for infinitely many}\,\; n] = 0 \quad\text{for all }\;(x,a) \in \wh X. 
$$
\end{lem}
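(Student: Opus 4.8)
The plan is to transfer the uniform hitting estimate of Lemma~\ref{lem:hitting} from the dominating affine recursion to the extended SDS, and then to play it off against transience of $(\wh X_n^{x,a})$. First I would fix once and for all some $s>1$ (say $s=2$), take $r$ large enough that Lemma~\ref{lem:hitting} applies, and let $\alpha=\alpha_{r,s}$ and $\delta=\delta_{r,s}$ be the constants it provides; after enlarging $\alpha$ if necessary I may assume $\alpha>s$, so that $\Cq_{r,\alpha}$ and the set $\Bb_{r,s}=\overline\Bb(r)\times[1/s\,,\,s]$ are disjoint. The crucial preliminary step is the following \emph{uniform return estimate}: from any visit to the cusp $\Cq_{r,\alpha}$, the extended SDS reaches $\Bb_{r,s}$ at a strictly later time with probability at least $\delta$. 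To prove it, let $\sigma$ be a stopping time with $\wh X_\sigma^{x,a}\in\Cq_{r,\alpha}$ on $[\sigma<\infty]$ and write $(v,c)=\wh X_\sigma^{x,a}$, so that $d(v,o)\le r$ and $c\ge\alpha$, i.e.\ $(|v|,c)\in Q_{r,\alpha}$ with $|v|=d(v,o)$. By the strong Markov property the post-$\sigma$ trajectory is a fresh copy of the SDS started at $v$, coupled through the common increments $(A_n,B_n)$ with the affine recursion started at $|v|$; note that by \eqref{eq:Xntilde} the height coordinates of $\wh X$ and of $\wh Y$ evolve identically. The pathwise domination \eqref{eq:dominate} gives $d(X_n^v,o)\le Y_n^{|v|}$, so whenever $\wh Y$ enters $K_{r,s}=[0\,,\,r]\times[1/s\,,\,s]$ the process $\wh X$ lies in $\Bb_{r,s}$ at that same instant. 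Thus the $\wh Y$-hitting event is contained in the $\wh X$-hitting event, and Lemma~\ref{lem:hitting} yields, with $(\mathcal F_n)$ the natural filtration of $(F_n)$,
\[
\Prob\bigl[\wh X_n^{x,a}\in\Bb_{r,s}\ \text{for some}\ n>\sigma \,\big|\, \mathcal F_\sigma\bigr]\ge\delta \quad\text{on}\ [\sigma<\infty].
\]

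Next I would record that, since $\Xx$ is proper and $[1/s\,,\,s]$ is compact in $\R^+_*$, the set $\Bb_{r,s}$ is compact in $(\wh\Xx,\hat d)$ and hence of bounded $\hat d$-distance to $\hat o$; in the transient situation Corollary~\ref{cor:transient} gives $\hat d(\wh X_n^{x,a},\hat o)\to\infty$ almost surely, so $\Prob[\wh X_n^{x,a}\in\Bb_{r,s}\ \text{infinitely often}]=0$ for every starting point. To conclude, set $V=[\wh X_n^{x,a}\in\Cq_{r,\alpha}\ \text{i.o.}]$ and let $\tau_1<\tau_2<\cdots$ be the successive visit times to $\Cq_{r,\alpha}$, all finite on $V$. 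Define
\[
C_k=[\tau_k<\infty]\cap[\wh X_n^{x,a}\in\Bb_{r,s}\ \text{for some}\ n>\tau_k].
\]
Applying the return estimate at $\sigma=\tau_k$ gives $\Prob(C_k)\ge\delta\,\Prob[\tau_k<\infty]\ge\delta\,\Prob(V)$. The events $C_k$ are decreasing, because $\tau_{k+1}<\infty$ forces $\tau_k<\infty$ and a visit to $\Bb_{r,s}$ after $\tau_{k+1}$ is also one after $\tau_k$; moreover $\bigcap_k C_k=V\cap[\wh X_n^{x,a}\in\Bb_{r,s}\ \text{i.o.}]\subset[\Bb_{r,s}\ \text{i.o.}]$, which is null by the preceding step. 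Continuity from above then gives $\delta\,\Prob(V)\le\lim_k\Prob(C_k)=\Prob(\bigcap_k C_k)=0$, hence $\Prob(V)=0$, which is exactly the assertion of the lemma.

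The step I expect to be the main obstacle is the last one: the naive idea of treating the returns to $\Bb_{r,s}$ after successive cusp visits as independent trials with success probability $\delta$ fails, since ``return after $\tau_k$'' refers to hitting $\Bb_{r,s}$ at \emph{some} unbounded future time and these events are strongly dependent — indeed nested. The device that bypasses any Borel--Cantelli independence is precisely to turn this nesting into an asset: the decreasing events $C_k$ all carry probability $\ge\delta\,\Prob(V)$, yet their intersection sits inside $[\Bb_{r,s}\ \text{i.o.}]$, which transience renders null. A secondary technical point, handled in the return estimate, is the passage from the deterministic starting points required by Lemma~\ref{lem:hitting} to the random restart point $\wh X_{\tau_k}^{x,a}$; this is taken care of by the strong Markov property together with the pathwise coupling of $\wh X$ and $\wh Y$ through their shared increments and the identity of their height coordinates.
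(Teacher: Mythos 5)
Your proof is correct, and its skeleton is the one the paper also uses: fix $s$, take $\alpha,\delta$ from Lemma \ref{lem:hitting}, transfer hits of $K_{r,s}$ by the dominating affine recursion to hits of $\Bb_{r,s}$ by the extended SDS via \eqref{eq:dominate} and the fact that the height coordinates in \eqref{eq:Xntilde} coincide, and then play this off against transience. Where you genuinely diverge is the concluding step, and your version is simpler. The paper, concerned with exactly the dependence problem you flag at the end, first \emph{truncates} the hitting estimate: for each $(c,a)\in Q_{r,\alpha}$ it picks a finite horizon $N_{c,a}$ within which the hitting probability is still $\ge\delta/2$, then builds modified stopping times that only count cusp visits occurring after the previous horizon has elapsed, and chains the strong Markov property over these disjoint windows to obtain the geometric bound $\Prob(\Lambda_{j,k})\le(1-\delta/2)^{k-1}$, finally intersecting with the events $\Omega_j$ (``never in $\Bb_{r,s}$ from time $j$ on''). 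Your nested-events device bypasses all of this bookkeeping: the $C_k$ are decreasing, each has probability $\ge\delta\,\Prob(V)$ by the untruncated return estimate (which is legitimate, since the strong Markov property applied at $\tau_k$ together with the pathwise domination needs no finite horizon), and
$$
\bigcap_k C_k \subset [\wh X_n^{x,a}\in\Bb_{r,s}\ \text{infinitely often}]\,,
$$
which is null by transience, so continuity from above forces $\Prob(V)=0$. What your route forgoes is only the quantitative geometric decay of the avoidance probabilities, which the paper never uses elsewhere; what it buys is the elimination of the horizons $N_{c,a}$ and the waiting-time construction. Both arguments rest on the same two external inputs (Lemma \ref{lem:hitting} and the fact that a transient process visits the compact set $\Bb_{r,s}$ only finitely often), so your proof is a genuine streamlining of this step. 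One cosmetic remark: the disjointness $\alpha>s$ you arrange at the outset is harmless but never actually needed.
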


\begin{proof}
Let 
$$
\Lambda = \Lambda^{x,a} = \{\omega \in \Omega : \wh X_n^{x,a}(\omega) 
\in \Cq_{r,\alpha} \; \text{for infinitely many}\; n\}.
$$ 
Given $r$ sufficiently large so that Lemma \ref{lem:hitting} applies,
choose $s > 1$ and let $\alpha$ and $\delta > 0$ be as in that lemma.
For each $(c,a) \in Q_{r,\alpha}$ there is an 
index $N_{c,a} \in \N$ such that
\begin{equation}\label{eq:hitting}
\Prob[\wh Y_n^{y,a} \in K_{r,s} \; \text{for some $n$ with}\; 1 \le n \le N_{c,a}] 
\ge \delta/2.
\end{equation}
If $(c,a) \notin Q_{r,\alpha}$ then we set $N^{c,a}=0$.
Since $\Bb_{r,s}$ is compact, the transience assumption yields that
$\Prob\bigl(\bigcup_{j=2}^{\infty} \Omega_j\bigr) = 1$, where
$$
\Omega_j= \Omega_j^{x,a}
= \{\omega \in \Omega : \wh X_n^{x,a}(\omega) \notin \Bb_{r,s} \;\text{for every} 
\; n \ge j \}.
$$
Thus, we need to show that $\Prob(\Lambda \cap \Omega_j) = 0$ for every $j \ge 2$.
We define a sequence of stopping times $\tau_k=\tau_k^{x,a}$ and (when $\tau_k <
\infty$) associated pairs $(x_k,a_k) =\wh X_{\tau_k}^{x,a}$ by
$$
\begin{aligned}
\tau_1 &= \inf \{ n > N^{|x|,a}: \wh X_n^{x,a} \in \Cq_{r,\alpha}\} \AND\\
\tau_{k+1} 
&= \begin{cases}
 \inf \{ n > \tau_k + N^{|x_k|,a_k}: \wh X_n^{x,a} \in \Cq_{r,\alpha}\}\,,
 &\text{if} \; \tau_k < \infty\,,\\ 
 \infty\,,&\text{if} \; \tau_k = \infty\,.
 \end{cases}
\end{aligned} 
$$
Unless explained separately, we always use $\tau_k = \tau_k^{x,a}$.
Note that $\omega \in \Lambda$ if and only if $\tau_k(\omega) < \infty$
for all $k$. Therefore
$$
\Lambda \cap \Omega_j = \bigcap_{k \ge j} \Lambda_{j,k}\,,\quad
\text{where}\quad \Lambda_{j,k} = [ \tau_k < \infty\,,\;  
\wh X_n^{x,a} \notin \Bb_{r,s}\; \text{for all $n$ with}\; j \le n \le \tau_k ].
$$
We have $\Lambda_{j,k} \subset \Lambda_{j,k-1}\,$.
Next, note that 
$$
\text{if} \quad \wh X_n^{x,a}(\omega) \notin \Bb_{r,s} \quad
\text{then} \quad \wh Y_n^{|x|,a}(\omega) \notin K_{r,s}\,. 
$$
This follows from \eqref{eq:dominate}. 

We have that 
$\wh X_{\tau_{k-1}}^{x,a} \in \Cq_{r,\alpha}$ for $k \ge 2$.
Just for the purpose of the next lines of the proof, we introduce the measure 
$\sigma$ on $\Cq_{r,\alpha}$ given by $\sigma(\wh B) 
= \Prob\bigl(\Lambda_{j,k-1} \cap [\wh X_{\tau_{k-1}}^{x,a} \in \wh B]\bigr)$,
where $\wh B \subset \Cq_{r,\alpha}$ is a Borel set.
Then, using the strong Markov property and \eqref{eq:hitting}, 
$$
\begin{aligned}
\Prob(\Lambda_{j,k}) &= \Prob\Bigl( [\tau_k < \infty\,,\;  
\wh X_n^{x,a} \notin \Bb_{r,s}\; \text{for all $n$ with}\; 
\tau_{k-1} < n \le \tau_k] \cap \Lambda_{j,k-1}] \Bigr)\\
&= \int_{\Cq_{r,\alpha}}
\Prob[\tau_1^{y,b} < \infty\,,\; \wh X_n^{y,b} \notin \Bb_{r,s}\; 
\text{for all $n$ with}\;0 < n \le \tau_1^{y,b}\,]\,d\sigma(y,b)\\
&\le \int_{\Cq_{r,\alpha}}
\Prob[\tau_1^{y,b} < \infty\,,\; \wh Y_n^{|y|,b} \notin K_{r,s}\; 
\text{for all $n$ with}\;0 < n \le N^{|y|,b}\,]\,\,d\sigma(y,b)\\
&\le \int_{\Cq_{r,\alpha}} (1-\delta/2)\,\,d\sigma(y,b) =
(1-\delta/2)\,\Prob(\Lambda_{j,k-1})\,.  
\end{aligned}
$$
We continue recursively downwards until we reach $k=2$ (since $k=1$ is excluded
unless $(x,a) \in \Cq_{r,\alpha}$). Thus, $\Prob(\Lambda_{j,k}) 
\le (1-\delta/2)^{k-1}$,
and as $k \to \infty$, we get $\Prob(\Lambda \cap \Omega_j) = 0$, as required.
\end{proof}

\begin{thm}\label{thm:lip-trans} 
Given the random i.i.d. Lipschitz mappings $F_n\,$, let $A_n$ and $B_n$ be as in 
\eqref{eq:lip-an-bn}. Suppose that \eqref{eq:assume1}, \eqref{eq:nofix}  
and \eqref{eq:assume2} hold, and that
$\Prob\bigl[\hat d\bigl(\wh X_n^{x,a}, \hat o\bigr) \to \infty\bigr]=1$.
Then the SDS induced by the $F_n$ on $\Xx$ is locally contractive.

In particular, it has an invariant Radon 
measure $\nu$ that is unique up to multiplication with constants. 

Also, the shift $T$ on  
$\bigl(\Xx^{\N_0}, \Bf(\Xx^{\N_0}), \Prob_{\nu}\bigr)$ 
is ergodic, where $\Prob_{\nu}$ is the measure on $\wh \Xx^{\N_0}$. 
associated with $\nu$.
\end{thm}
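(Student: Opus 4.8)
The plan is to reduce the entire statement to the single assertion that the SDS $(X_n^x)$ on $\Xx$ is \emph{locally contractive}. Once that is established, recurrence on $\Ll$ is furnished by Theorem \ref{thm:recur}, whose hypotheses are precisely \eqref{eq:assume1}, \eqref{eq:nofix} and \eqref{eq:assume2}; we are then in the situation of a recurrent, locally contractive SDS, so Proposition \ref{pro:invmeasure} supplies an invariant Radon measure $\nu$, and Theorem \ref{thm:uniquemeasure} yields simultaneously its uniqueness up to constants and the ergodicity of the shift $T$ with respect to $\Prob_{\nu}$. Thus the transience hypothesis on the extended SDS enters \emph{only} through the proof of local contractivity.

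To prove local contractivity, fix $x \in \Xx$ and run the lifted process from the points $(x,1)$ and $(y,1)$. By Lemma \ref{lem:lift-f} each $\wh F_n$ is a contraction of $(\wh\Xx,\hat d)$, so $\hat d\bigl(\wh X_n^{x,1},\wh X_n^{y,1}\bigr)$ is non-increasing in $n$. The crucial feature is that both lifted trajectories carry the \emph{same} second coordinate $s_n = A_{0,n} = A_n\cdots A_1$, so, writing $D_n = d(X_n^x,X_n^y)$, formula \eqref{eq:hyp} specializes to
$$
\hat d\bigl(\wh X_n^{x,1},\wh X_n^{y,1}\bigr)
= \log\frac{\sqrt{D_n^2 + 4s_n^2}+D_n}{\sqrt{D_n^2 + 4s_n^2}-D_n} = g(D_n/s_n),
$$
where $g$ is a continuous increasing bijection of $[0,\infty)$ onto itself. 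Since $g$ is increasing while the left-hand side is non-increasing, the ratio $D_n/s_n$ is non-increasing, whence $D_n \le d(x,y)\,s_n$ for all $n$. Consequently, in order to get $D_n\cdot\uno_K(X_n^x)\to 0$ for a compact $K \subset \overline\Bb(r)$, it suffices to show that $s_n \to 0$ along the (random) set of times $n$ at which $X_n^x \in \overline\Bb(r)$.

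This last claim is the main obstacle, and it is exactly where transience and Lemma \ref{lem:rept} must be combined. Suppose $s_n$ does not tend to $0$ along $\{n : X_n^x \in \overline\Bb(r)\}$; then there are $\ep_0 > 0$ and infinitely many such $n$ with $s_n \ge \ep_0$. For each finite $M$ the set $\overline\Bb(r)\times[\ep_0,M]$ is $\hat d$-bounded, hence compact by properness of $\wh\Xx$, so transience ($\hat d \to \infty$) forces all but finitely many of these times to satisfy $s_n > M$; thus $s_n \to \infty$ along this subsequence. But then $\wh X_n^{x,1}\in\Cq_{r,\alpha}$ for all large such $n$, with $\alpha$ as in Lemma \ref{lem:rept}, contradicting the conclusion of that lemma that $\Cq_{r,\alpha}$ is visited only finitely often. (Lemma \ref{lem:rept} is genuinely indispensable here, since $\Cq_{r,\alpha}$ is \emph{not} compact and so transience alone does not control visits to it.) Hence, almost surely, $s_n \to 0$ along $\{n : X_n^x \in \overline\Bb(r)\}$; intersecting the corresponding full-probability events over the countably many $r \in \N$ produces a single event of probability $1$ on which $D_n\cdot\uno_K(X_n^x) \le d(x,y)\,s_n\cdot\uno_{\overline\Bb(r)}(X_n^x)\to 0$ simultaneously for every $y \in \Xx$ and every compact $K$. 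This is precisely local contractivity, and the remaining assertions follow from Theorems \ref{thm:recur} and \ref{thm:uniquemeasure} as explained above.
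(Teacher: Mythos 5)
Your proof is correct and follows essentially the same route as the paper: both establish local contractivity by showing $A_{0,n}\to 0$ along the visit times of $X_n^x$ to $\overline\Bb(r)$, using transience to rule out infinitely many visits to the compact sets $\Bb_{r,s}$ and Lemma \ref{lem:rept} to rule out infinitely many visits to $\Cq_{r,\alpha}$, and then invoke Theorem \ref{thm:uniquemeasure} for uniqueness and ergodicity. The only (cosmetic) differences are that you argue by contradiction where the paper argues directly with arbitrary $s\ge\alpha$, and that you derive the inequality $d(X_n^x,X_n^y)\le A_{0,n}\,d(x,y)$ through the hyperbolic metric formula, whereas it follows at once from multiplicativity of Lipschitz constants under composition.
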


\begin{proof} 
Fix any starting point $(x,a)$ of the extended SDS. Let $r$ be sufficiently
large so that the last two lemmas apply, and such that
$$
\Prob[ X_n^x \in \overline \Bb(r) \; \text{for infinitely many}\,\,n] = 1.
$$
We claim that
\begin{equation}\label{eq:liploc} 
\lim_{n \to \infty} A_{0,n}\,\uno_{\overline \Bb(r)}(X_n^x) = 0 \quad
\text{almost surely.}
\end{equation}
We consider $\alpha$ associated with $r$ as in  Lemma \ref{lem:rept}. Then we choose
an arbitrary $s \ge \alpha$.
We know by transience of the extended SDS that 
$$
\Prob[ \wh X_n^{x,a} \in \Bb_{r,s} \; \text{for infinitely many}\,\,n] = 0.
$$
We combine this with Lemma \ref{lem:rept} and get
$$
\Prob[ \wh X_n^{x,a} \in \Bb_{r,s} \cup \Cq_{r,\alpha} \; 
\text{for infinitely many}\,\,n] = 0.
$$
Since $s \ge \alpha$, we have 
$\Bb_{r,s} \cup \Cq_{r,\alpha} = \overline \Bb(r) \times [1/s\,,\,\infty)$.

Thus,  if $\N(x,r)$ denotes the a.s. 
infinite random set of all $n$ for
which $X_n^x \in \overline \Bb(r)$, then for all but finitely many
$n \in \N(x,r)$, we have $A_{0,n} < 1/s$. This holds for every $s > \alpha$, and
we have proved \eqref{eq:liploc}.
We conclude that
$$
d(X_n^x\,,X_n^y)\, \uno_{\overline \Bb(r)}(X_n^x)  
\le A_{0,n} \,d(x,y)\,\uno_{\overline \Bb(r)}(X_n^x) \to 0 \quad\text{almost
surely.}
$$
Now that we have local contractivity, the remaining statements follow
from Theorem \ref{thm:uniquemeasure}.
\end{proof}
\section{Conservative extended SDS}\label{sec:cons-ext}
Now we assume to be in the conservative case, i.e., the probability in 
Corollary \ref{cor:transient} is $=0$. We  start with an invariant measure 
$\nu$ for the SDS  on $\Xx$. 
If \eqref{eq:assume1},\eqref{eq:nofix}   \& \eqref{eq:assume2} hold,
its existence is guaranteed by Lemma \ref{lem:inv-exist}. Then we extend
$\nu$ to the measure $\lambda =\lambda_{\nu}$ on $\wh \Xx$ of
\eqref{eq:lambdaA}, resp. \eqref{eq:lambdaB}.

We can realize the extended SDS, starting at $(x,a) \in \wh \Xx$, on the space 
$$
\bigl(\wh \Xx^{\N_0}, \Bf(\wh \Xx^{\N_0}), \Prob_{x,a}\bigr),
$$
where $\Bf(\wh \Xx^{\N_0})$ is the product Borel $\sigma$-algebra,
and $\Prob_{x,a}$ is the image of the measure $\Prob$ under the mapping
$$
\Omega \to \wh \Xx^{\N_0}\,,\quad 
\omega \mapsto \bigl( \wh X_n^{x,a}(\omega)\bigr)_{n \ge 0}\,.
$$
Then we consider the Radon measure on $\wh \Xx^{\N_0}$ defined by
$$
\Prob_{\lambda} = \int_{\wh \Xx} \Prob_{x,a}\,d\lambda(x,a).
$$
The integral with respect to $\Prob_{\lambda}$ is denoted $\Ex_{\lambda}\,$.
We write $\wh T$ for the time shift on $\wh \Xx^{\N_0}$. Since $\lambda$
is invariant for the extended SDS, $\wh T$ is a contraction of 
$L^1(\wh \Xx^{\N_0}, \Prob_{\lambda})$. Also, in this section, 
$\If$ stands for the $\sigma$-algebra of the
$\wh T$-invariant sets in $\Bf(\wh \Xx^{\N_0})$.
As before, any function $\varphi: \wh \Xx^{\ell} \to \R$ is extended to 
$\wh \Xx^{\N_0}$ by setting $\varphi(\mathbf{x},\mathbf{a}) 
= \varphi\bigl((x_0,a_0), \dots, (x_{\ell-1}, a_{\ell-1})\bigr)$,
if $(\mathbf{x},\mathbf{a}) = \bigl((x_n,a_n)\bigr)_{n \ge 0}\,$.
In analogy with \eqref{eq:Xmn}, we define 
$$
\wh X_{m,n}^{x,a} = \bigl(X_{m,n}^x\,,\, A_{m,n}a\bigr)\quad (n \ge m)\,. 
$$
We now set for $n \ge m$ and $\varphi: \wh \Xx^{\N_0} \to \R$
$$
S_{m,n}^{x,a}\varphi(\omega) = \sum_{k=m}^{n} 
\varphi\Bigl(\bigl(\wh X_{m,k}^{x,a}(\omega)\bigr)_{k \ge m}\Bigr) 
$$
and in particular $S_n^{x,a}\varphi(\omega) = S_{0,n}^{x,a}\varphi(\omega)$. 
Consider the sets 
\begin{equation}\label{eq:Omegar}
\Omega_r= \bigl\{ \omega \in \Omega: 
\liminf \hat d\bigl(\wh X_n^{\hat o}(\omega), \hat o\bigr) \le r \bigr\}\;\; 
(r \in\N) \AND \Omega_{\infty} = \bigcup_r  \Omega_r\,.
\end{equation}
By our assumption of conservativity, $\Prob(\Omega_{\infty})=1$.
For $r \in \N$, write 
$\wh \Bb(r)$ for the \emph{closed} ball in $(\wh\Xx,\hat d)$ with center $\hat o$
and radius $r$.
Then for every $\omega \in \Omega_r$ and $s \in \N_0\,$, the set
$\{ n : \wh X_n^{x,a}(\omega) \in \wh \Bb(r+s)\;\text{for all}\; 
(x,a) \in \wh \Bb(s)\}
$
is infinite. For each $r$, set 
$\psi_r(x,a) = \max \bigl\{ 1 - \hat d\bigl((x,a),\wh \Bb(r)\bigr)\,,\,0\}$.
Then $\psi_r \in \Ccal_c^+(\wh\Xx)$ satisfies 
\begin{equation}\label{eq:psi-prop}
\begin{gathered} 
\uno_{\wh \Bb(r+1)} \ge \psi_r \ge \uno_{\wh \Bb(r)}\,,\\
|\psi(x,a) - \psi(y,b)| \le \hat d\bigl((x,a),(y,b)\bigr)\; \text{on}\; \wh\Xx\,,
\AND\\
S_n^{x,a} \psi_{r+s}(\omega) \to \infty \quad \text{for all}\; 
\omega \in \Omega_r\,,\; (x,a) \in \wh \Bb(s)\,.
\end{gathered}
\end{equation}
Then we can find a decreasing sequence of numbers $c_r > 0$ such that
$\sum_r c_r \max \psi_{r+2} < \infty$ and the functions 
\begin{equation}\label{eq:PhiPsi}
\Phi = \sum_r c_r\,\psi_{r+2}\AND \Psi = \sum_r c_r\,\psi_r
\end{equation} 
are in  $L^1(\wh\Xx,\lambda)$ and thus (there extensions to $\Xx^{\N_0}$) in
$L^1(\wh\Xx^{\N_0},\Prob_{\lambda})$. They will be used below several times.
Both are continuous and strictly positive on $\wh\Xx$, and by construction,
$$
\sum_n \Psi\bigl(\wh X_n^{x,a}(\omega)\bigr) = \infty
\quad \text{for all}\; \omega \in \Omega_{\infty}\;
\text{and}\; (x,a) \in \wh \Xx\,.
$$
We have obtained the following.

\begin{lem}\label{lem:cons} When the extended SDS is conservative, 
$\wh T$ is conservative.
\end{lem}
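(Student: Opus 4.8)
The plan is to follow verbatim the strategy of Lemma~\ref{lem:conservative}: I will produce a strictly positive function in $L^1(\wh\Xx^{\N_0},\Prob_{\lambda})$ whose orbital sums under $\wh T$ diverge $\Prob_{\lambda}$-almost everywhere, and then quote the standard conservativity criterion \cite[Thm.~5.3]{Rev}. The essential point is that all the analytic work has already been invested in building the function $\Psi$ of \eqref{eq:PhiPsi}; what remains is pure bookkeeping.

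First I would record that the extension of $\Psi$ to $\wh\Xx^{\N_0}$, namely $\Psi(\mathbf{x},\mathbf{a})=\Psi(x_0,a_0)$, is strictly positive (immediate since $\Psi>0$ on $\wh\Xx$) and lies in $L^1(\wh\Xx^{\N_0},\Prob_{\lambda})$. For the latter, since $\wh X_0^{x,a}=(x,a)$,
$$
\int_{\wh\Xx^{\N_0}}\Psi\,d\Prob_{\lambda}
=\int_{\wh\Xx}\Ex\bigl(\Psi(\wh X_0^{x,a})\bigr)\,d\lambda(x,a)
=\int_{\wh\Xx}\Psi\,d\lambda<\infty,
$$
because $\Psi\in L^1(\wh\Xx,\lambda)$ by construction.

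Next I would transfer the divergence statement established just above the lemma to the shift dynamics. Since $\Psi(\wh T^n(\mathbf{x},\mathbf{a}))=\Psi(x_n,a_n)$ and $\Prob_{\lambda}=\int_{\wh\Xx}\Prob_{x,a}\,d\lambda(x,a)$ is, by definition, the law of the coordinate process obtained by sampling $(x,a)$ according to $\lambda$ and running the extended SDS, a $\Prob_{\lambda}$-typical trajectory satisfies
$$
\sum_n\Psi\bigl(\wh T^n(\mathbf{x},\mathbf{a})\bigr)
=\sum_n\Psi\bigl(\wh X_n^{x,a}(\omega)\bigr).
$$
By conservativity of the extended SDS we have $\Prob(\Omega_{\infty})=1$, and by the construction preceding the lemma the right-hand sum equals $\infty$ for every $\omega\in\Omega_{\infty}$ and every $(x,a)\in\wh\Xx$. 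Hence the left-hand sum is infinite for $\Prob_{\lambda}$-almost every $(\mathbf{x},\mathbf{a})$.

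Finally, $\wh T$ is a contraction of $L^1(\wh\Xx^{\N_0},\Prob_{\lambda})$, and I have exhibited a strictly positive integrable function whose $\wh T$-orbital sums diverge almost everywhere; conservativity of $\wh T$ then follows from \cite[Thm.~5.3]{Rev}, exactly as in Lemma~\ref{lem:conservative}. I do not expect a genuine obstacle: the only thing requiring care is the passage from pathwise divergence along SDS trajectories to the shift on the product space, and this is routine precisely because $\Psi$ depends only on the zeroth coordinate. The real difficulty---arranging for a single integrable $\Psi$ whose sums diverge on all of $\Omega_{\infty}$---was already overcome in \eqref{eq:psi-prop}--\eqref{eq:PhiPsi}.
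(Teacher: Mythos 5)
Your proof is correct and follows essentially the same route as the paper: the paper's own (implicit) proof of Lemma \ref{lem:cons} consists precisely of the construction of $\Psi$ in \eqref{eq:psi-prop}--\eqref{eq:PhiPsi}, the observation that $\sum_n \Psi\bigl(\wh X_n^{x,a}(\omega)\bigr)=\infty$ on $\Omega_\infty$ (which has full measure by conservativity of the extended SDS), and the conservativity criterion of \cite[Thm.~5.3]{Rev} exactly as in Lemma \ref{lem:conservative}. Your additional bookkeeping (integrability of the extension of $\Psi$ via $\Prob_\lambda=\int\Prob_{x,a}\,d\lambda$, and the identification of shift-orbital sums with SDS-orbital sums) is exactly the content the paper leaves implicit.
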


Next, for any $\varphi\in L^1(\wh \Xx^{\N_0},\Prob_{\lambda})$, consider the function
$\vb_{\varphi} = \Ex_{\lambda}(\varphi\,|\,\If)/\Ex_{\lambda}(\Psi\,|\,\If)$ on
$\wh\Xx^{\N_0}$. A priori, the quotient of conditional expectations 
is defined only $\Prob_{\lambda}$-almost everywhere,
and we consider a representative which is always finite. 
We turn this into the family of finite positive random variables
$$
V_{\varphi}^{x,a}(\omega) = 
\vb_{\varphi}\Bigl(\bigl(\wh X^{x,a}_n(\omega)\bigr)_{n \ge 0}\Bigr),\quad
(x,a) \in \wh\Xx.
$$
\begin{lem}\label{lem:ratio} In the conservative case, let 
$\tau : \Omega \to \N$ be any a.s. finite
random time. Then, on the set where $\tau(\omega) < \infty$,
for every $\varphi\in L^1(\wh \Xx^{\N_0},\Prob_{\lambda})$, 
$$
\lim_{n \to \infty} \frac{S_n^{x,a}\varphi-S_{\tau}^{x,a}\varphi}
{S_n^{x,a}\Psi-S_{\tau}^{x,a}\Psi} 
= V_{\varphi}^{x,a} \quad
\Prob\text{-almost surely}\,,\;\text{for}\; 
\lambda\text{-almost every}\; (x,a) \in \wh \Xx.
$$
\end{lem}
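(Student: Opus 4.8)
The plan is to deduce this from the Chacon--Ornstein ratio ergodic theorem applied to the conservative shift $\wh T$ on $\bigl(\wh\Xx^{\N_0},\Prob_{\lambda}\bigr)$, exactly as in the proof of Theorem \ref{thm:uniquemeasure}; conservativity of $\wh T$ is the content of Lemma \ref{lem:cons}. I would first establish the statement for the trivial time $\tau\equiv 0$, and then reduce the general case to it by absorbing the finite correction terms $S_{\tau}^{x,a}\varphi$ and $S_{\tau}^{x,a}\Psi$ into the limit.

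For the case $\tau\equiv 0$, recall that the function $\Psi$ of \eqref{eq:PhiPsi} lies in $L^1(\wh\Xx,\lambda)$, hence in $L^1(\wh\Xx^{\N_0},\Prob_{\lambda})$, is strictly positive, and by its construction (the display preceding Lemma \ref{lem:cons}) satisfies $\sum_n \Psi(\wh T^n\,\cdot\,)=\infty$ $\;\Prob_{\lambda}$-almost everywhere; in particular $\Ex_{\lambda}(\Psi\mid\If)>0$ almost everywhere. Thus the hypotheses of \cite{ChOr}, \cite[Thm.~3.3]{Rev} hold, and for every $\varphi\in L^1(\wh\Xx^{\N_0},\Prob_{\lambda})$ one obtains
$$
\lim_{n\to\infty}\frac{\sum_{k=0}^n \varphi(\wh T^k\xb)}{\sum_{k=0}^n \Psi(\wh T^k\xb)}
= \frac{\Ex_{\lambda}(\varphi\mid\If)}{\Ex_{\lambda}(\Psi\mid\If)}(\xb)
= \vb_{\varphi}(\xb)\quad \Prob_{\lambda}\text{-a.e. }\xb .
$$
Since $\Prob_{\lambda}=\int_{\wh\Xx}\Prob_{x,a}\,d\lambda(x,a)$ and $\Prob_{x,a}$ is the law of the trajectory $\bigl(\wh X_n^{x,a}\bigr)_{n\ge 0}$, this $\Prob_{\lambda}$-a.e. statement disintegrates into the assertion that, for $\lambda$-almost every $(x,a)$ and $\Prob$-almost surely,
$$
\lim_{n\to\infty}\frac{S_n^{x,a}\varphi}{S_n^{x,a}\Psi}=V_{\varphi}^{x,a},
$$
using that $V_{\varphi}^{x,a}(\omega)=\vb_{\varphi}\bigl((\wh X_n^{x,a}(\omega))_n\bigr)$ by definition. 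This is precisely the claimed limit when $\tau\equiv 0$.

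To pass to an arbitrary a.s.\ finite $\tau$, fix $\omega$ on $\{\tau<\infty\}$. Then $S_{\tau}^{x,a}\varphi(\omega)$ and $S_{\tau}^{x,a}\Psi(\omega)$ are finite sums of $\Prob$-a.s.\ finite terms (for $\lambda$-a.e.\ starting point, as $\varphi,\Psi\in L^1$), hence finite numbers independent of $n$. Because $\Psi>0$ and the system is conservative, $S_n^{x,a}\Psi\to\infty$. Writing
$$
\frac{S_n^{x,a}\varphi-S_{\tau}^{x,a}\varphi}{S_n^{x,a}\Psi-S_{\tau}^{x,a}\Psi}
=\frac{\dfrac{S_n^{x,a}\varphi}{S_n^{x,a}\Psi}-\dfrac{S_{\tau}^{x,a}\varphi}{S_n^{x,a}\Psi}}
{1-\dfrac{S_{\tau}^{x,a}\Psi}{S_n^{x,a}\Psi}},
$$
the two correction quotients tend to $0$ while the leading quotient tends to $V_{\varphi}^{x,a}$ by the case $\tau\equiv 0$, so the whole expression converges to $V_{\varphi}^{x,a}$, as required.

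The argument is essentially bookkeeping once the ratio ergodic theorem is available; the two points that need care are the disintegration of $\Prob_{\lambda}$ over $\lambda$, which converts the a.e.\ conclusion into the correct ``$\Prob$-a.s.\ for $\lambda$-a.e.\ $(x,a)$'' form, and the observation that the exceptional $\Prob$-null set may be chosen independently of $\tau$. This last point is exactly what makes the statement hold for an \emph{arbitrary} finite random time, and not merely for a fixed deterministic one: $\tau$ enters only through the finite corrections $S_{\tau}^{x,a}\varphi$ and $S_{\tau}^{x,a}\Psi$, which are washed out by the divergence $S_n^{x,a}\Psi\to\infty$ guaranteed by conservativity.
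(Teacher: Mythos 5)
Your proof is correct and follows essentially the same route as the paper's: both rest on the Chacon--Ornstein theorem (giving the $\tau\equiv 0$ case $\Prob$-a.s.\ for $\lambda$-a.e.\ $(x,a)$, on a null set not depending on $\tau$) together with the divergence $S_n^{x,a}\Psi\to\infty$ on $\Omega_\infty$, and then absorb the finite corrections $S_\tau^{x,a}\varphi$, $S_\tau^{x,a}\Psi$ by an elementary algebraic identity -- yours is just the paper's identity solved the other way around. The extra care you take in checking the Chacon--Ornstein hypotheses and the disintegration of $\Prob_\lambda$ over $\lambda$ is sound but does not change the argument.
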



\begin{proof} We know that 
$S_n^{x,a}\Psi(\omega)  \to \infty$ for all $\omega \in \Omega_{\infty}$. 
Once more by the Chacon-Ornstein theorem, 
$S_n^{x,a}\varphi/S_n^{x,a}\Psi \to V_{\varphi}^{x,a}$
almost surely on $\Omega_{\infty}\,$, for $\lambda$-almost every 
$(x,a) \in \wh \Xx$. 
Furthermore,  both 
$S_{\tau}^{x,a}\varphi/S_n^{x,a}\Psi$ and 
$S_{\tau}^{x,a}\Psi/S_n^{x,a}\Psi$ tend
to $0$ on $\Omega_{\infty}\,$, as $n \to \infty\,$. When $n > \tau$,
$$
\frac{S_n^{x,a}\varphi}{S_n^{x,a}\Psi} = 
\underbrace{\frac{S_{\tau}^{x,a}\varphi}{S_n^{x,a}\Psi}}_{\displaystyle
 \to 0 \;\text{a.s.}}
+ \biggl( 1 - \underbrace{\frac{S_{\tau}^{x,a}\Psi}{S_n^{x,a}\Psi}}_{\displaystyle 
 \to 0\;\text{a.s.}}\biggr)
\frac{S_n^{x,a}\varphi-S_{\tau}^{x,a}\varphi}
{S_n^{x,a}\Psi-S_{\tau}^{x,a}\Psi}\,. 
$$
The statement follows.
\end{proof}

When the extended SDS is conservative, 
we do not see how to
involve local contractivity, but we can provide a reasonable additional 
assumption which will yield 
uniqueness of the invariant Radon measure. We set
\begin{equation}\label{eq:Zn} 
D_n(x,y) = \frac{d(X_n^x,X_n^y)}{A_1\cdots A_n}\,.
\end{equation}
(Compare with the proof of Theorem \ref{thm:contractive}, which corresponds to 
$A_n \equiv 1$.)
The assumption is 
\begin{equation}\label{eq:assume3}
\Prob[D_n(x,y) \to 0] =1 \quad \text{for all}\; x, y \in \Xx.
\end{equation}

\begin{rmk}\label{rmk:inside}
If we set $D_{m,n}(x,y) = d(X_{m,n}^x,X_{m,n}^y)/A_{m,n}$ 
then \eqref{eq:assume3} implies that
$$
\Prob\left[\lim_{n \to \infty} D_{m,n}(x,y) = 0 \; \text{for all}\; 
x, y \in \Xx\,,\; m \in \N\right] 
= 1.
$$
Indeed, let $\Xx_0$ be a countable, dense subset of $\Xx$. Then \eqref{eq:assume3} 
implies that
$$
\Prob\left[\lim_{n \to \infty} D_{m,n}(x,y) = 0 \; \text{for all}\; 
x, y \in \Xx_0\,,\; m \in \N\right] 
= 1.
$$
Let $\Omega_0$ be the subset of $\Omega_{\infty}$ where this holds. 

Note that $D_{m,n}(x,y) \le d(x,y)$.
Given arbitrary $x, y \in \Xx$ and $x_0, y_0 \in \Xx_0\,$, we get on~$\Omega_0$
$$
D_{m,n}(x,y) \le D_{m,n}(x_0,y_0) + d(x,x_0) + d(y,y_0)\,,
$$
and the statement follows.\qed
\end{rmk}

In the next lemma, we give a condition for \eqref{eq:assume3}. It will be 
useful, in \S \ref{sec:ref-aff}.

\begin{lem}\label{lem:Dnto0}
In the case when the extended SDS is conservative,
suppose that for every $\ep > 0$ and $r \in \N$ there is $k$ such that 
$\Prob[D_k(x,y) < \ep \;\text{for all}\; x,y \in \Bb(r)] > 0$.
Then \eqref{eq:assume3} holds.
\end{lem}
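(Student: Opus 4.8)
The plan is to mimic the structure of the proof of Theorem \ref{thm:contractive} (which is the case $A_n\equiv 1$), but to draw on conservativity of the \emph{extended} SDS rather than of $(X_n^x)$ itself, precisely so as to control the weights $A_{0,n}=A_1\cdots A_n$ that now sit in the denominator of $D_n$. First I would record that $D_n(x,y)$ is non-increasing: since $d(X_{n+1}^x,X_{n+1}^y)\le A_{n+1}\,d(X_n^x,X_n^y)$, dividing by $A_{0,n+1}$ gives $D_{n+1}(x,y)\le D_n(x,y)$, while $0\le D_n(x,y)\le D_0(x,y)=d(x,y)$. Hence $D_\infty(x,y)=\lim_n D_n(x,y)$ exists and is bounded; set $w(x,y)=\Ex\bigl(D_\infty(x,y)\bigr)$. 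Since \eqref{eq:assume3} is exactly $D_\infty(x,y)=0$ a.s., it suffices to prove $\Prob[D_\infty(x,y)\ge\ep]=0$ for every fixed $x,y$ and every $\ep>0$. The analogue of the martingale identity \eqref{eq:martingale} comes from writing, for fixed $m$, $D_\infty(x,y)=A_{0,m}^{-1}D_{m,\infty}\bigl(X_m^x,X_m^y\bigr)$ with $D_{m,n}$ as in Remark \ref{rmk:inside}; as $A_{0,m},X_m^x,X_m^y$ are measurable with respect to $F_1,\dots,F_m$, while $D_{m,\infty}(u,v)$ is independent of them and equidistributed with $D_\infty(u,v)$, one obtains
$$
\Ex\bigl(D_\infty(x,y)\mid F_1,\dots,F_m\bigr)=\frac{w(X_m^x,X_m^y)}{A_{0,m}}\,.
$$
The left side is a bounded martingale converging a.s. to $D_\infty(x,y)$, so $w(X_m^x,X_m^y)/A_{0,m}\to D_\infty(x,y)$ almost surely.

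Next I would convert the contractivity hypothesis into a strict decrease of $w$ on balls, just as in step (ii) of Theorem \ref{thm:contractive}. Fix $r\in\N$ and a threshold $\eta>0$; applying the hypothesis with $\eta/2$ in place of $\ep$ and with radius $r$ produces $k$ and an event $\Gamma$ with $p=\Prob(\Gamma)>0$ on which $D_k(u,v)<\eta/2$ for all $u,v\in\Bb(r)$. Using $D_\infty\le D_k\le D_0=d$ and splitting the expectation over $\Gamma$ and $\Gamma^c$, for all $u,v\in\Bb(r)$ with $d(u,v)\ge\eta$ one gets
$$
w(u,v)\le p\,\tfrac{\eta}{2}+(1-p)\,d(u,v)\le d(u,v)-\tfrac{p}{2}\,\eta=:d(u,v)-\delta\,,
$$
with $\delta>0$.

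The heart of the matter is feeding in conservativity of the extended SDS, and here the purpose of the hyperbolic metric \eqref{eq:hyp} is that a $\hat d$-ball around $\hat o=(o,1)$ bounds \emph{both} coordinates at once: $\wh X_n^{o,1}=(X_n^o,A_{0,n})\in\wh\Bb(R)$ forces $d(X_n^o,o)\le\sinh R$ and $A_{0,n}\in[e^{-R},e^{R}]$, because the hyperbolic ball of radius $R$ about $i$ is the Euclidean disc centred at $i\cosh R$ of radius $\sinh R$. By Corollary \ref{cor:transient}, conservativity gives $\Prob\bigl(\bigcup_R E_R\bigr)=1$, where $E_R$ is the event that $\wh X_n^{o,1}\in\wh\Bb(R)$ for infinitely many $n$, so it suffices to show $\Prob(\Lambda\cap E_R)=0$ for each $R$, where $\Lambda=[D_\infty(x,y)\ge\ep]$. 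Fix $R$, put $c_1=e^{-R}$, $c_2=e^{R}$, and use $d(X_n^x,X_n^o)\le A_{0,n}|x|\le c_2|x|$ (from $\lp(F_n\circ\cdots\circ F_1)\le A_{0,n}$, cf. \eqref{eq:dominate}) to see that at each of the infinitely many return times $n$ on $E_R$ both $X_n^x$ and $X_n^y$ lie in a fixed ball $\Bb(r)$ with $r$ depending only on $R,x,y$. Apply the previous estimate with this $r$ and $\eta=\ep\,c_1$. On $\Lambda$ one has $D_n(x,y)\ge\ep$, i.e. $d(X_n^x,X_n^y)\ge\ep\,A_{0,n}$, so along the return times $d(X_n^x,X_n^y)\ge\ep\,c_1=\eta$; hence $w(X_n^x,X_n^y)\le d(X_n^x,X_n^y)-\delta$ there, and dividing by $A_{0,n}\le c_2$,
$$
\frac{w(X_n^x,X_n^y)}{A_{0,n}}\le D_n(x,y)-\frac{\delta}{A_{0,n}}\le D_n(x,y)-\frac{\delta}{c_2}\,.
$$
Letting $n\to\infty$ along these times and combining the martingale limit with $D_n(x,y)\to D_\infty(x,y)$ yields $D_\infty(x,y)\le D_\infty(x,y)-\delta/c_2$ on $\Lambda\cap E_R$, which is impossible; thus $\Prob(\Lambda\cap E_R)=0$. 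Summing over the countably many $R$, and then over $\ep=1/j$, gives $D_\infty(x,y)=0$ a.s., which is \eqref{eq:assume3}.

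The main obstacle, and the only genuinely new point relative to Theorem \ref{thm:contractive}, is that the weights $A_{0,n}$ must be kept bounded away from both $0$ and $\infty$ along a \emph{single} sequence of times at which $X_n^x$ and $X_n^y$ are also bounded; plain conservativity of $(X_n^x)$ would not supply this, and it is exactly the $\hat d$-ball $\wh\Bb(R)$ that couples the two controls. The care therefore lies in performing the decomposition over $R$ \emph{before} invoking the contractivity hypothesis — so that $\eta=\ep\,e^{-R}$ and $\delta$ are honest constants — and in the elementary geometry of the hyperbolic balls.
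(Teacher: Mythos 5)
Your proof is correct and follows essentially the same route as the paper's own proof: you establish the martingale identity $w(X_m^x,X_m^y)/A_{0,m}\to D_\infty(x,y)$ (this is exactly the paper's \eqref{eq:martingale2}, whose derivation the paper only sketches by reference to Theorem \ref{thm:contractive}), use conservativity of the extended SDS to produce infinitely many times at which both the positions $X_n^x,X_n^y$ and the products $A_{0,n}$ are confined to a fixed compact window, and then contradict the martingale limit via the strict-decrease estimate $w(u,v)\le d(u,v)-\delta$. Your choice of threshold $\eta=\ep e^{-R}$ is in fact slightly more careful than the paper's (which applies the decrease estimate at distance $\ep$ although the event only guarantees $d(X_n^x,X_n^y)\ge \ep A_{0,n}\ge\ep/r$), but this is a refinement of detail within the same argument, not a different approach.
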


\begin{proof}
We set $D_{\infty}(x,y)= \lim_n D_n(x,y)$ and 
$w(x,y) = \Ex\bigl(D_{\infty}(x,y)\bigr)$. 
A straightforward adaptation of the argument used in the proof  of Theorem 
\ref{thm:contractive} yields that
\begin{equation}\label{eq:martingale2}
\lim_{m \to \infty} \frac{w(X_m^x\,,X_m^y)}{A_1 \cdots A_m} = D_{\infty}(x,y)
\quad\text{almost surely.}
\end{equation}
Again, we claim that $\Pr[D_{\infty}(x,y) \ge \ep] = 0$. 
By conservativity, it is sufficient to show that
$\Pr(\Lambda_r)=0$ for every $r \in  \N$, where
$$
\Lambda_r = \bigcap_{m \ge k} \bigcup_{n \ge m} 
[\wh X_n^x\,,\; \wh X_n^y \in \Bb(r) \times [1/r\,,\,r]\,,\;D_n(x,y) \ge \ep]\,.
$$
By assumption, there is $k$ such that the event 
$\Gamma_{k,r} = [D_k(x,y) < \ep/2 \;\text{for all}\; x,y \in \Bb(r)]$ 
satisfies $\Pr(\Gamma_{k,r})>0$. 

We now continue as in the proof of Theorem \ref{thm:contractive}, and find that
for all $u, v \in \Bb(r)$ with $d(u,v) \ge \ep$,
$$
w(u,v) \le d(u,v) - \delta\,,\quad\text{where} \quad \delta 
= \Pr(\Gamma_{k,r})\cdot (\ep/2) > 0.
$$
This yields that on $\Lambda_r\,$, almost surely we have infinitely many $n \ge k$ 
for which 
$w(X_n^x,X_n^y) \le d(X_n^x,X_n^y) - \delta$ and $A_1 \cdots A_n \le r$,
that is,
$$
\frac{w(X_n^x,X_n^y)}{A_1 \cdots A_n} \le D_n(x,y) - \frac{\delta}{r}
\quad \text{infinitely often.}
$$
Letting $n \to \infty$, we get $D_{\infty}(x,y) < D_{\infty}(x,y)$ almost
surely on $\Lambda_r\,$, so that indeed $\Prob(\Lambda_r)=0$. 
\end{proof}

We now elaborate the main technical prerequisite for handling the case when
the extended SDS in conservative. 
Some care may be in place to have a clear 
picture regarding the dependencies of sets on which various ``almost everywhere'' 
statements hold. Let $\varphi \in L^1(\wh \Xx^{\N_0},\Prob_{\lambda})$.
Let $\Omega_0$ be as in Remark \ref{rmk:inside}.
For $\lambda$-almost every $(x,a) \in \wh\Xx$, there is a set 
$\Omega^{x,a}_{\varphi} \subset \Omega_0$ with 
$\Prob(\Omega^{x,a}_{\varphi})=1$, such that 
$$
\frac{S_n^{x,a}\varphi(\omega)}{S_n^{x,a}\Psi(\omega)} \to V_{\varphi}^{x,a}(\omega)
$$
for every $\omega \in \Omega^{x,a}_{\varphi}$. For the remaining $(x,a) \in \wh\Xx$,
we set $\Omega^{x,a}_{\varphi}=\emptyset$.

\begin{pro}\label{pro:delta} In the case when the extended SDS is conservative, 
assume \eqref{eq:assume3}.
Let $\varphi \in\Ccal_c^+(\wh\Xx^{\ell})$ with $\ell \ge 1$. 
Then for every $\ep >0$ there is $\delta = \delta(\ep,\varphi) > 0$ with the 
following property. 

For all $(x,a), (y,b) \in \wh \Xx$ and any 
a.s. finite random time $\tau : \Omega \to \N_0$, one has on the set of all 
$\omega \in \Omega^{x,a}_{\Phi}$  with $\tau (\omega) < \infty$ and 
$\bigl|\log\bigl(A_{0,\tau}(\omega)a/b\bigr)\bigr| < \delta$ that
$$
\limsup_{n \to \infty} \left| \frac{S_n^{x,a}\varphi}{S_n^{x,a}\Psi}
                           - \frac{S_{\tau,n}^{y,b}\varphi}{S_{\tau,n}^{y,b}\Psi}
			    \right|
\le \ep \, W^{x,a}\,,
$$
where $W^{x,a}=V_{\Phi}^{x,a} + 1$.
\end{pro}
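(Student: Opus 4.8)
The plan is to exploit that, for $n\ge\tau$, the two trajectories feeding the two ratios are driven by the \emph{same} maps $F_{\tau+1},F_{\tau+2},\dots$ and merge in the hyperbolic metric, so that the Chacon--Ornstein ratios they produce are forced to almost agree. First I would record the pathwise cocycle identity $\wh X_n^{x,a}=\wh X_{\tau,n}^{\,X_\tau^x,\,A_{0,\tau}a}$ for every $n\ge\tau$, which uses only $X_n^x=X_{\tau,n}^{X_\tau^x}$ and $A_{0,n}=A_{\tau,n}A_{0,\tau}$. Thus the tail of $S_n^{x,a}$ and the whole of $S_{\tau,n}^{y,b}$ are sums along processes started at $(X_\tau^x,A_{0,\tau}a)$ and at $(y,b)$ respectively, both run by the post-$\tau$ maps.

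The key geometric step is the merging estimate. Using \eqref{eq:hyp}, the dilation invariance of $\theta$ (factoring out $A_{\tau,n}$), and $d(X_{\tau,n}^{X_\tau^x},X_{\tau,n}^y)=A_{\tau,n}\,D_{\tau,n}(X_\tau^x,y)$, one obtains
$$
\hat d\bigl(\wh X_n^{x,a},\wh X_{\tau,n}^{y,b}\bigr)
=\theta\bigl(\im A_{0,\tau}a,\;D_{\tau,n}(X_\tau^x,y)+\im b\bigr)
\xrightarrow[n\to\infty]{}\bigl|\log\!\bigl(A_{0,\tau}a/b\bigr)\bigr|,
$$
the limit being the hyperbolic distance of $\im A_{0,\tau}a$ and $\im b$ on the imaginary axis. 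Here \eqref{eq:assume3} enters through Remark \ref{rmk:inside}: on $\Omega_0$ the convergence $D_{m,n}\to 0$ holds simultaneously for all $m$ and all pairs of points, so the random arguments $\tau(\omega)$ and $X_\tau^x(\omega)$ are admissible. Consequently, if $|\log(A_{0,\tau}a/b)|<\delta$ and $\delta\le\eta$, then for all large $n$ the two points, and likewise the $\ell$ consecutive shifted pairs ($\ell$ being fixed), lie within hyperbolic distance $<\eta$.

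Next I would convert merging into domination by $\Phi$. The two-level gap between $\Phi=\sum_r c_r\psi_{r+2}$ and $\Psi=\sum_r c_r\psi_r$ in \eqref{eq:PhiPsi} is tailored precisely for this: since each $\psi_r$ is $1$-Lipschitz \eqref{eq:psi-prop} with $\psi_r\le\uno_{\wh\Bb(r+1)}$, whenever $\hat d(p,p')\le\eta\le1$ only the indices with $p\in\wh\Bb(r+2)$ survive in $\Psi(p)-\Psi(p')$, giving $|\Psi(p)-\Psi(p')|\le\hat d(p,p')\,\Phi(p)\le\eta\,\Phi(p)$; and, $\varphi$ being uniformly continuous with support in some $\wh\Bb(r_0)$, $|\varphi(p)-\varphi(p')|\le(\ep_0/c_{r_0})\,\Phi(p)$ for the modulus $\ep_0$ attached to $\eta$ (the difference is nonzero only when $p\in\wh\Bb(r_0+1)$, where $\Phi(p)\ge c_{r_0}$). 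Summing over $k$, the finitely many early, not-yet-merged terms contribute an $n$-independent bound, killed upon division by $b_n:=S_n^{x,a}\Psi\to\infty$, so that $|S_n^{x,a}\varphi-S_{\tau,n}^{y,b}\varphi|\le(\ep_0/c_{r_0})\,S_n^{x,a}\Phi+O(1)$ and $|S_n^{x,a}\Psi-S_{\tau,n}^{y,b}\Psi|\le\eta\,S_n^{x,a}\Phi+O(1)$.

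Finally I would assemble the ratios. Writing $a_n,b_n,a_n',b_n'$ for $S_n^{x,a}\varphi,\,S_n^{x,a}\Psi,\,S_{\tau,n}^{y,b}\varphi,\,S_{\tau,n}^{y,b}\Psi$, the elementary identity $\frac{a_n}{b_n}-\frac{a_n'}{b_n'}=\frac{a_n-a_n'}{b_n}+\frac{a_n'}{b_n'}\cdot\frac{b_n'-b_n}{b_n}$, combined with $S_n^{x,a}\Phi/b_n\to V_\Phi^{x,a}$ and $a_n/b_n\to V_\varphi^{x,a}$ on (a full-measure subset of) $\Omega_\Phi^{x,a}$ by Lemma \ref{lem:ratio} and Chacon--Ornstein, bounds $\limsup_n$ of the difference by a fixed multiple of $(\ep_0+\eta)\,V_\Phi^{x,a}$ together with the factor $a_n'/b_n'$, which is squeezed near $V_\varphi^{x,a}$ and controlled through $V_\varphi^{x,a}\le C_\varphi V_\Phi^{x,a}$. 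Choosing $\eta,\ep_0$ (hence $\delta$) small in terms of $\ep$ and $\varphi$ then yields $\ep\,W^{x,a}$ with $W^{x,a}=V_\Phi^{x,a}+1$, the summand $1$ providing the margin to absorb the $a_n'/b_n'$ factor and the $O(1)$ remainders. I expect this last bookkeeping to be the main obstacle: the difference of the two \emph{denominators} is controlled only by $\Phi$, not by $\Psi$, so obtaining a clean bound that is linear in $V_\Phi^{x,a}$ rests entirely on the calibrated gap between $\Phi$ and $\Psi$ (via the choice of the $c_r$) and on keeping the ratio $S_n^{x,a}\Phi/S_n^{x,a}\Psi$ in hand throughout.
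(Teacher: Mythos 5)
Your overall route is the paper's own: the cocycle identity, the merging estimate
$\hat d\bigl(\wh X_n^{x,a},\wh X_{\tau,n}^{y,b}\bigr)=\theta\bigl(\im A_{0,\tau}a,\,D_{\tau,n}(X_\tau^x,y)+\im b\bigr)$
controlled by \eqref{eq:assume3} through Remark \ref{rmk:inside}, the domination of the summand differences by $\Phi$ using the Lipschitz property \eqref{eq:psi-prop} and the two-index gap in \eqref{eq:PhiPsi}, the disposal of the finitely many pre-merging terms by dividing by $S_n^{x,a}\Psi\to\infty$, and the decomposition of the difference of ratios into a numerator term plus a factor times a denominator term. All of these steps are correct as you state them, and they coincide with the paper's proof up to cosmetic choices (the paper dominates the $\varphi$-differences by $\Psi$ via $\psi_{r_0+1}$ rather than by $\Phi$).

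The genuine gap is exactly the point you flag as ``the main obstacle'' and do not resolve: the control of the factor $S_{\tau,n}^{y,b}\varphi\big/S_{\tau,n}^{y,b}\Psi$ (your $a_n'/b_n'$). Neither of your two suggestions works. No ergodic theorem gives convergence of this ratio: the process behind it starts at an \emph{arbitrary} $(y,b)\in\wh\Xx$ (not just $\lambda$-a.e.\ $(y,b)$) and is driven by $F_{\tau+1},F_{\tau+2},\dots$ for an arbitrary a.s.\ finite random time $\tau$, so Chacon--Ornstein does not apply to it; and the squeeze you could extract from your own estimates, roughly $a_n'/b_n'\le\bigl(a_n/b_n+|a_n-a_n'|/b_n\bigr)\big/\bigl(1-|b_n-b_n'|/b_n\bigr)$, requires $\eta\,V_\Phi^{x,a}(\omega)<1$, which no $\omega$-independent choice of $\delta$ can guarantee, since $V_\Phi^{x,a}$ is an unbounded random variable. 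Alternatively, bounding the factor by $C_\varphi V_\Phi^{x,a}$, as your inequality $V_\varphi^{x,a}\le C_\varphi V_\Phi^{x,a}$ suggests, makes the second term of order $\eta\,(V_\Phi^{x,a})^2$, which is quadratic in $V_\Phi^{x,a}$ and hence not dominated by $\ep\,(V_\Phi^{x,a}+1)$ for any fixed $\delta$. The fix is deterministic and one line, and it is what the paper does: since $\Psi$ is continuous and strictly positive while $\varphi$ is compactly supported, there is $C=C_\varphi>0$ with $\varphi\le C\,\Psi$ pointwise, hence $S_{\tau,n}^{y,b}\varphi\le C\,S_{\tau,n}^{y,b}\Psi$ for every $n$ and every $\omega$; with the factor bounded by the constant $C$, your estimate on the denominator term gives a limsup at most $C\cdot\eta\,V_\Phi^{x,a}$, and choosing $\eta,\ep_0$ in terms of $\ep$, $C$ and $c_{r_0}$ yields the asserted linear bound. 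Once this is in place you should also drop the appeal to $a_n/b_n\to V_\varphi^{x,a}$: that convergence holds only on $\Omega^{x,a}_\varphi$, an a priori different full-measure set, so invoking it degrades the conclusion from ``everywhere on $\Omega^{x,a}_\Phi$'' (which is what the statement asserts and what the later applications with specific random times rely on) to ``almost everywhere on it''; the repaired argument needs nothing beyond the definition of $\Omega^{x,a}_\Phi$, Remark \ref{rmk:inside}, and $S_n^{x,a}\Psi\to\infty$.
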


\begin{proof} 
Recall that $\Phi$, $\Psi$, $\varphi$ and $\psi_r$ are also considered
as functions on $\Xx^{\N_0}$ via their extensions defined above.

Since $\Psi$ is continuous and $> 0$, there is $C = C_{\varphi} > 0$
such that $\varphi \le C\cdot \Psi$. 
Also, there is some $r_0 \in \N$ such that the projection of $\supp(\varphi)$ 
onto the first coordinate in $\wh\Xx$ (i.e., the one with index $0$) is contained 
in $\wh\Bb(r_0)$.
We let $\ep' = \min\{ \ep/2, \ep/(2C), c_{r_0+1}\ep/2, 1 \}$, where $c_{r_0+1}$ 
comes from the definition \eqref{eq:PhiPsi} of $\Phi$ and $\Psi$.
Since $\varphi$ is uniformly continuous, there is $\delta > 0$ with
$2\delta \le \ep'$ such that 
$$
\begin{gathered}
\bigl|\varphi\bigl((x_0,a_0), \dots,(x_{\ell-1},a_{\ell-1})\bigr) 
- \varphi\bigl((y_0,b_0), \dots,(y_{\ell-1},b_{\ell-1})\bigr)\bigr| \le \ep'
\\
\text{whenever}\quad
\hat d\bigl((x_j,a_j),(y_j,b_j)\bigr) < 2\delta\,,\; j=0, \dots,\ell-1. 
\end{gathered}
$$
We write
$$ 
\left| \frac{S_n^{x,a}\varphi}{S_n^{x,a}\Psi}
- \frac{S_{\tau,n}^{y,b}\varphi}{S_{\tau,n}^{y,b}\Psi} \right|
\le \underbrace{\frac{|S_n^{x,a}\varphi-S_{\tau,n}^{y,b}\varphi|}{S_n^{x,a}\Psi}}_{
 \displaystyle \text{Term 1}} + 
\underbrace{\frac{S_{\tau,n}^{y,b}\varphi}{S_{\tau,n}^{y,b}\Psi}}_{
\displaystyle \le C_{\varphi}} 
\,\underbrace{\frac{|S_n^{x,a}\Psi-S_{\tau,n}^{y,b}\Psi|}{S_n^{x,a}\Psi}}_{
 \displaystyle \text{Term 2}}\,.
$$
We consider the random element $z = X_{\tau}^x\,$, so that $X_n^x=X_{\tau,n}^z$. 
Using the dilation invariance of hyperbolic metric,
$$
\begin{aligned}
\hat d(\wh X_n^{x,a}\,,\wh  X_{\tau,n}^{y,b}) 
&= \theta\bigl(\im A_{0,n}a\,,\, d(X_{\tau,n}^z\,,X_{\tau,n}^y) +  
\im A_{\tau,n}b\bigr)\\
&= \theta\bigl(\im A_{0,\tau}a\,, D_{\tau,n}(z,y) +  \im b\bigr)
\le |\log(A_{0,\tau}a/b)| +  D_{\tau,n}(z,y) +  \im b\bigr)
\,.
\end{aligned}
$$
By \eqref{eq:assume3}, for $\omega \in \Omega^{x,a}_{\Phi}$  with 
$\tau (\omega) < \infty$ there is a finite $\sigma(\omega) \ge \tau(\omega)$
in $\N$  such that $\theta\bigl(\im a, D_{\tau,n}(z,y) +  \im a\bigr) < \delta$ 
for all $n \ge \sigma(\omega)$. 
In the sequel, we assume that our $\omega \in \Omega^{x,a}_{\Phi}$ also satisfies 
$\bigl|\log\bigl(A_{0,\tau}(\omega)a/b\bigr)\bigr| < \delta$.

Now, we first bound the $\limsup$ of Term 1 by $\ep/2$.
If $n \ge \sigma$ and $|A_{0,\tau}(\omega)a/b| < \delta$, then we obtain
that 
$$
\bigl|
\varphi\bigl(\wh X_n^{x,a}\,, \wh X_{n+1}^{x,a}\,,\dots, 
\wh X_{n+\ell-1}^{x,a}\bigr) -
\varphi\bigl(\wh X_{\tau,n}^{y,b}\,, \wh X_{\tau,n+1}^{y,b}\,,\dots, 
\wh X_{\tau,n+\ell-1}^{y,b}\bigr) \bigr| < \ep' \le \ep/2.
$$
Suppose in addition that at least one of the two values
$\varphi\bigl(\wh X_n^{x,a}, \wh X_{n+1}^{x,a}\,,\dots, \wh X_{n+\ell-1}^{x,a}\bigr)$
or 
$\varphi\bigl(\wh X_{\tau,n}^{y,b}\,, \wh X_{\tau,n+1}^{y,b}\,,\dots, 
\wh X_{\tau,n+\ell-1}^{y,b}\bigr)$
is positive. Then at least one of 
$\wh X_n^{x,a}$ or $\wh X_{\tau,n}^{y,b}$ belongs to $\wh \Bb(r_0)$, and by the 
above  (since $\delta < 1$) both belong to $\wh \Bb(r_0+1)$. Thus, for 
$n \ge \sigma$,
$$
\begin{aligned}
\bigl|
\varphi\bigl(\wh X_n^{x,a}\,, \wh X_{n+1}^{x,a}\,,\dots, 
\wh X_{n+\ell-1}^{x,a}\bigr) -
\varphi\bigl(\wh X_{\tau,n}^{y,b}\,, \wh X_{\tau,n+1}^{y,b}\,,\dots, 
\wh X_{\tau,n+\ell-1}^{y,b}\bigr) \bigr|
&\le \ep'\,\psi_{r_0+1}\bigl(\wh X_n^{x,a}\bigr)\\
&\le (\ep/2) \,\Psi\bigl(\wh X_n^{x,a}\bigr).
\end{aligned}
$$
We get 
$$
\frac{\bigl|\bigl(S_n^{x,a}\varphi- S_{\sigma}^{x,a}\varphi\bigr)-
\bigl(S_{\tau,n}^{y,b}\varphi - S_{\tau,\sigma}^{y,b}\varphi\bigr)\bigr|}
{S_n^{x,a}\Psi- S_{\sigma}^{x,a}\Psi}
 \le \ep/2.
$$
Since $S_n^{x,a}\Psi \to \infty$ almost surely, when passing to the $\limsup$,
we can omit all terms in the last inequality that contain a $\sigma$; see
Lemma \ref{lem:ratio}.
This yields the bound on the $\limsup$ of Term 1.

Next, we bound the $\limsup$ of Term 2 by $\ep/2$. We start in the same
way as above, replacing $\varphi$ with an arbitrary one among the functions
$\psi_r$ and replacing $\ell$ with $1$. Using the specific properties 
\eqref{eq:psi-prop} of $\psi_r$
(in particular, Lipschitz continuity with constant $1$), and replacing
$\wh \Bb(r_0)$ with $\wh \Bb(r+1) = \supp(\psi_r)$, we arrive at the inequality
$$
\bigl|\psi_r\bigl(\wh X_n^{x,a}\bigr) -\psi_r\bigl(\wh X_{\tau,n}^{y,b}\bigr) \bigr|
\le \frac{\ep}{2C}\,\psi_{r+2}\bigl(\wh X_n^{x,a}\bigr).
$$
It holds for all $n \ge \sigma$, with probability $1$. We deduce  
$$
\bigl|\Psi\bigl(\wh X_n^{x,a}\bigr) -\Psi\bigl(\wh X_{\tau,n}^{y,b}\bigr) \bigr|
\le \frac{\ep}{2C}\,\Phi\bigl(\wh X_n^{x,a}\bigr)
$$
and 
$$
\frac{\bigl|\bigl(S_n^{x,a}\Psi- S_{\sigma}^{x,a}\Psi\bigr)-
\bigl(S_{\tau,n}^{y,b}\Psi - S_{\tau,\sigma}^{y,b}\Psi\bigr)\bigr|}
{S_n^{x,a}\Psi- S_{\sigma}^{x,a}\Psi}
\le \frac{\ep}{2C}\,\frac{S_n^{x,a}\Phi- S_{\sigma}^{x,a}\Phi}
{S_n^{x,a}\Psi- S_{\sigma}^{x,a}\Psi}
$$
Passing to the lim sup as above, and using the Chacon-Ornstein theorem here,
we get that the $\limsup$ of Term 2 is bounded almost
surely by $\frac{\ep}{2C}\, V_{\Phi}^{x,a}$.
\end{proof}

In the sequel, when we sloppily say ``for almost every $a > 0$'', we shall mean
``for Lebesgue-almost every $a > 0$'' in the non-lattice case,
resp. ``for every $a = e^{-\kappa m}$ ($m \in \Z$)'' in the lattice case.

\begin{cor}\label{cor:ally} Let $\varphi \in\Ccal_c^+(\wh\Xx^{\ell})$ as
above. For almost every $a > 0$, there is
a set $\Omega^a_{\varphi} \subset \Omega_0$ with
$\Prob(\Omega^a_{\varphi}) = 1$ such that for all $x,y \in \Xx$, 
$$
V^{x,a}_{\varphi} = V^{y,a}_{\varphi} =:V^a_{\varphi}.
$$
\end{cor}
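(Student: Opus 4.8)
The plan is to obtain the statement as the special case $\tau \equiv 0$, $b = a$ of Proposition \ref{pro:delta}. With this choice the side condition $|\log(A_{0,\tau}a/b)| < \delta$ becomes $|\log 1| = 0 < \delta$, which is automatic, so that for a \emph{good} first starting point the proposition holds on a single full-measure event and for \emph{all} second starting points $y$ at once. This uniformity in $y$ is precisely what will upgrade the ``almost every starting point'' provided by the ergodic theorem to the ``every starting point'' demanded here.

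First I would fix the vertical coordinate. By the Chacon--Ornstein theorem, exactly as in Lemma \ref{lem:ratio}, the ratios $S_n^{x,a}\varphi/S_n^{x,a}\Psi$ and $S_n^{x,a}\Phi/S_n^{x,a}\Psi$ converge $\Prob$-almost surely for $\lambda$-almost every $(x,a)$. Writing $G$ for the set of $(x,a) \in \wh\Xx$ with $\Prob(\Omega^{x,a}_\varphi) = \Prob(\Omega^{x,a}_\Phi) = 1$, we have $\lambda(\wh\Xx \setminus G)=0$. Since $\lambda$ is the product of $\nu$ with Haar measure on $\R^+_*$ (resp. counting measure on $\exp(\kappa\Z)$ in the lattice case), Fubini's theorem gives that for almost every $a$ the slice $G_a = \{x : (x,a) \in G\}$ has full $\nu$-measure; as $\supp(\nu) = \Ll \neq \emptyset$, it is non-empty, and I would fix a reference point $x_0 = x_0(a) \in G_a$.

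Next, for such an $a$ I would set $\Omega^a_\varphi = \Omega^{x_0,a}_\varphi \cap \Omega^{x_0,a}_\Phi \subset \Omega_0$, a full-measure event on which $S_n^{x_0,a}\varphi/S_n^{x_0,a}\Psi \to V^{x_0,a}_\varphi$ and $W^{x_0,a}=V^{x_0,a}_\Phi+1<\infty$. Applying Proposition \ref{pro:delta} with first point $(x_0,a)$, arbitrary second point $(y,a)$, $\tau \equiv 0$ and $b=a$, the vacuous side condition means that for every $\ep>0$ the bound holds on all of $\Omega^{x_0,a}_\Phi$, hence on $\Omega^a_\varphi$, simultaneously for every $y$:
$$
\limsup_{n\to\infty}\left|\frac{S_n^{x_0,a}\varphi}{S_n^{x_0,a}\Psi}-\frac{S_n^{y,a}\varphi}{S_n^{y,a}\Psi}\right|\le \ep\,W^{x_0,a}.
$$
Letting $\ep \downarrow 0$ (the event $\Omega^{x_0,a}_\Phi$ does not depend on $\ep$, and $W^{x_0,a}$ is finite), and using that the first ratio converges to $V^{x_0,a}_\varphi$, I would conclude that on $\Omega^a_\varphi$ the limit $\lim_n S_n^{y,a}\varphi/S_n^{y,a}\Psi = V^{x_0,a}_\varphi$ exists for \emph{every} $y$ and is independent of $y$. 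Setting $V^a_\varphi = V^{x_0,a}_\varphi$ then yields $V^{x,a}_\varphi = V^a_\varphi = V^{y,a}_\varphi$ for all $x,y$.

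The main obstacle is conceptual rather than computational: the ergodic theorem only controls the ratio for $\lambda$-almost every $(x,a)$, whereas the corollary asserts equality of the limits for \emph{all} $x,y$. The decisive idea is therefore to freeze one good reference point $x_0$ and exploit that the estimate of Proposition \ref{pro:delta} is uniform over the second starting point; the hyperbolic contraction hypothesis \eqref{eq:assume3}, which forces $\hat d(\wh X_n^{x_0,a},\wh X_n^{y,a}) = \theta(\im a, D_n(x_0,y)+\im a)\to 0$, is what makes that uniform comparison legitimate. A minor point to check is that $\tau\equiv 0$ qualifies as an a.s.\ finite $\N_0$-valued time and that the full-measure event $\Omega^a_\varphi$ can be chosen independent of both $\ep$ and $y$, which is immediate once the side condition in Proposition \ref{pro:delta} is automatically met.
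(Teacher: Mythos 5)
Your proposal is correct and follows essentially the same route as the paper's own proof: pick, for almost every $a$, a reference point $x_a$ with $\Prob(\Omega^{x_a,a}_{\varphi})=\Prob(\Omega^{x_a,a}_{\Phi})=1$ (the paper leaves the Fubini step implicit), then apply Proposition \ref{pro:delta} with $\tau=0$, $b=a$ and arbitrary $y$, noting the side condition is vacuous, and let $\ep\downarrow 0$ on the fixed full-measure event $\Omega^{x_a,a}_{\varphi}\cap\Omega^{x_a,a}_{\Phi}$. Your write-up only adds explicit detail (the product structure of $\lambda$, the finiteness of $W^{x_a,a}$, the independence of the event from $\ep$ and $y$) to exactly the argument the authors give.
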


\begin{proof} 
For almost every $a$, there is at least one 
$x_a \in \Xx$ such that $\Prob(\Omega^{x_a,a}_{\varphi})=1$. We can apply
Proposition \ref{pro:delta} with arbitrary $y \in \Xx$, 
$b=a$ and $\tau = 0$. Then we are allowed to take any $\ep > 0$ and get that 
$V^{x,a}_{\varphi} = V^{y,a}_{\varphi}$ on 
$\Omega^{x_a,a}_{\varphi} \cap  \Omega^{x_a,a}_{\Phi}\,$.
\end{proof} 

\begin{pro}\label{pro:constant}
Suppose that \eqref{eq:assume1}, \eqref{eq:nofix},  \eqref{eq:assume2} and 
\eqref{eq:assume3} hold, and that the extended SDS is conservative.
Let $\varphi \in\Ccal_c^+(\wh\Xx^{\ell})$, as above. Then for almost every 
$a > 0$, the random variable $V_{\varphi}^a$ is almost surely constant
(depending on $\varphi$ and -- so far -- on $a$). 
\end{pro}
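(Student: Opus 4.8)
The plan is to fix a value $a>0$ from the full-measure set provided by Corollary \ref{cor:ally} together with a reference base point $x\in\Xx$, so that on a set $\Omega^a_\varphi$ of full probability the random variable $V:=V^a_\varphi=V^{x,a}_\varphi$ is well defined, finite, independent of the base point, and is a measurable function of the i.i.d. sequence $(F_n)_{n\ge1}$ alone. Throughout, $\theta$ denotes the shift on $\Omega$ given by $F_j\circ\theta^m=F_{j+m}$, and $\mathcal F_m=\sigma(F_1,\dots,F_m)$. I would first record that, by \eqref{eq:assume2}, the walk $\log A_{0,n}=\sum_{j=1}^n\log A_j$ is centered with finite variance, hence recurrent \cite{ChFu}; so for every $\delta>0$ it returns to $(-\delta,\delta)$ (to $0$, in the lattice case) infinitely often, almost surely.

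The core step is to extract an approximate shift-invariance of $V$ from Proposition \ref{pro:delta}. Given $\ep>0$, let $\delta=\delta(\ep,\varphi)$ be as in that proposition and, for each fixed $m$, introduce the almost surely finite stopping time $\tau=\inf\{n\ge m:\ |\log A_{0,n}|<\delta\}$, which satisfies $\tau\ge m$. I would then apply Proposition \ref{pro:delta} with this $\tau$ and with $(y,b)=(x,a)$: on $\{\tau<\infty\}$ the required hypothesis $|\log(A_{0,\tau}a/b)|=|\log A_{0,\tau}|<\delta$ holds by the very definition of $\tau$, so almost surely
$$
\limsup_{n\to\infty}\left|\frac{S_n^{x,a}\varphi}{S_n^{x,a}\Psi}
-\frac{S_{\tau,n}^{x,a}\varphi}{S_{\tau,n}^{x,a}\Psi}\right|\le \ep\,W^{x,a}\,,
\qquad W^{x,a}=V^{x,a}_\Phi+1\,.
$$
The first ratio converges to $V$; and since $S_{\tau,n}^{x,a}\varphi(\omega)=\bigl(S_{n-\tau}^{x,a}\varphi\bigr)(\theta^\tau\omega)$, with the shifted sequence landing in the good set almost surely by the strong Markov property, the second ratio converges to $V\circ\theta^\tau$. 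This yields $|V-V\circ\theta^\tau|\le\ep\,W^{x,a}$ almost surely, with $W^{x,a}$ finite almost surely.

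Finally I would upgrade this to genuine independence and invoke Kolmogorov's zero-one law. By the strong Markov property for i.i.d. sequences, $(F_{\tau+1},F_{\tau+2},\dots)$ has the same law as $(F_n)_{n\ge1}$ and is independent of $\mathcal F_\tau\supseteq\mathcal F_m$; hence $V\circ\theta^\tau$ is equidistributed with $V$ and independent of $\mathcal F_m$. For bounded $L$-Lipschitz $g$ and $\Lambda\in\mathcal F_m$,
$$
\bigl|\Ex\bigl(g(V)\uno_\Lambda\bigr)-\Ex\bigl(g(V)\bigr)\Prob(\Lambda)\bigr|
=\bigl|\Ex\bigl(g(V)\uno_\Lambda\bigr)-\Ex\bigl(g(V\circ\theta^\tau)\uno_\Lambda\bigr)\bigr|
\le\Ex\bigl(\min\{2\|g\|_\infty,\,L\,\ep\,W^{x,a}\}\bigr)\,,
$$
and letting $\ep\to0$ the right-hand side tends to $0$ by dominated convergence, so $V$ is independent of $\mathcal F_m$. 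As this holds for every $m$, $V$ is independent of $\bigvee_m\mathcal F_m=\sigma(F_n:n\ge1)$, with respect to which it is measurable; being independent of itself, $V=V^a_\varphi$ is almost surely constant.

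I expect the middle step to be the main obstacle: one must apply Proposition \ref{pro:delta} with a genuinely random time, check that pairing $b=a$ with the recurrence-based stopping time $\tau$ forces the hypothesis $|\log(A_{0,\tau}a/b)|<\delta$, and correctly identify the limit of the shifted ratio as $V\circ\theta^\tau$. The accompanying measure-theoretic bookkeeping — that $V\circ\theta^\tau$ is both equidistributed with $V$ and independent of $\mathcal F_m$ — is precisely what converts the $\ep$-approximate invariance into true independence and thereby into the zero-one conclusion.
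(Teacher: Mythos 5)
Your proof is correct and follows essentially the same route as the paper's: you exploit recurrence of the centered walk $(\log A_{0,n})$ to produce a.s.\ finite stopping times $\tau\ge m$ with $|\log A_{0,\tau}|<\delta$, apply Proposition \ref{pro:delta} with $(y,b)=(x,a)$, and conclude with Kolmogorov's 0--1 law, exactly as in the paper (which takes the $m$-th return time of $(A_{0,n})$ to $(e^{-\delta_s},e^{\delta_s})$ where you take the first entrance time after $m$). In fact your handling of the independence step is more careful than the paper's: the paper asserts that the finite-$n$ ratios $U_{n,m,s}=S_{\tau,n}^{x,a}\varphi\big/S_{\tau,n}^{x,a}\Psi$ are themselves independent of $F_1,\dots,F_m$ (literally questionable, since $\tau$ depends on all of $F_1,\dots,F_{\tau}$), whereas you first identify the $n\to\infty$ limit as $V\circ\theta^{\tau}$ via the strong Markov property --- at which level independence of $\mathcal{F}_m$ and equidistribution with $V$ do hold --- and then upgrade the $\ep$-approximate identity to exact independence using bounded Lipschitz test functions and dominated convergence.
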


\begin{proof} 
Let $a$ be such that $\Pr(\Omega^a_{\varphi})=1$, and choose $x=x_a$ as
in the proof of Corollary \ref{cor:ally}.

For $s \in \N$, let $\ep_s = 1/s$ and $\delta_s = \delta(\ep_s\,,\varphi)$ according
to  Proposition \ref{pro:delta}. By our assumptions, $(A_{0,n})_{n \ge 1}$
is a topologically recurrent random walk on $\R^+_*$, starting at $1$.
Choose $m \in \N$ and let $\tau_{m,s}$ be the $m$-th return time to the
interval $(e^{-\delta_s}\,,\,e^{\delta_s})$. For every $m$ and $s$, 
this is an almost surely finite stopping time, and we can find 
$\bar\Omega^a_{\varphi} \subset \Omega^a_{\varphi} \cap \Omega^{x,a}_{\Phi}$ with 
$\Prob(\bar\Omega^a_{\varphi}) = 1$ such that all $\tau_{m,s}$ are finite on 
that set.

We now apply Proposition \ref{pro:delta} with $(y,b)=(x,a)$ and $\tau = \tau_{m,s}$.
Then 
$$
\limsup_{n \to \infty} \biggl| V^a\varphi 
- \underbrace{\frac{S_{\tau,n}^{x,a}\varphi}{S_{\tau,n}^{x,a}\Psi} }_{
\displaystyle =: U_{n,m,s}}\!\!\biggr|
\le \frac{1}{s} \, W^{x,a}\,.
$$
Since our stopping time satisfies $\tau_{m,s} \ge m$, the random variable 
$U_{n,m,s}$  (depending also on $\varphi$ and $(x,a)$) is
independent of the basic random mappings $F_1, \dots, F_m\,$. (Recall that
the $F_k$ that appear in $S_{\tau,n}^{x,a}$ are such that $k\ge \tau+1$.)
We get
$$
\lim_{s \to \infty} \limsup_{n \to \infty} |V^a\varphi - U_{n,m,s}| = 0
$$
on $\bar\Omega^a_{\varphi}$. Therefore also $V^a\varphi$ is independent
of $F_1, \dots, F_m\,$. This holds for every $m$. By Kolmogorov's
0-1-law, $V^a\varphi$ is almost surely constant.

Note that in the lattice case, the proof simplifies, because we can just
take the first return times of $A_{0,n}$ to $1$.
\end{proof}

\begin{thm}\label{thm:divide} 
Given the random i.i.d. Lipschitz mappings $F_n\,$, let $A_n$ and $B_n$ be as in 
\eqref{eq:lip-an-bn}. Suppose that besides \eqref{eq:assume1} and
\eqref{eq:nofix} [non-degeneracy] and \eqref{eq:assume2} [moment conditions],
 also 
\eqref{eq:assume3} holds, and that
$\Prob\bigl[\hat d\bigl(\wh X_n^{x,a}, \hat o\bigr) \to \infty\bigr]=0$.
Then the SDS induced by the $F_n$ on $\Xx$ has an invariant Radon 
measure $\nu$ that is unique up to multiplication with constants. 

Also, the shift $\wh T$ on 
$\bigl(\wh \Xx^{\N_0}, \Bf(\wh \Xx^{\N_0}), \Prob_{\lambda}\bigr)$ 
is ergodic, where $\lambda$ is the extension of $\nu$ to $\wh \Xx$ and
$\Prob_{\lambda}$ the associated measure on $\wh \Xx^{\N_0}$. 
\end{thm}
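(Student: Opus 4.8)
The plan is to transport the argument of Theorem \ref{thm:uniquemeasure} to the hyperbolic extension and the shift $\wh T$. By Lemma \ref{lem:cons}, $\wh T$ is conservative on $\bigl(\wh\Xx^{\N_0}, \Bf(\wh\Xx^{\N_0}), \Prob_\lambda\bigr)$, so the Chacon--Ornstein theorem applies with the strictly positive reference function $\Psi \in L^1(\wh\Xx^{\N_0},\Prob_\lambda)$ of \eqref{eq:PhiPsi}: for every cylinder function $\varphi \in \Ccal_c^+(\wh\Xx^{\ell})$ the ratio $\sum_{k=0}^n \varphi\circ\wh T^k \big/ \sum_{k=0}^n \Psi\circ\wh T^k$ converges $\Prob_\lambda$-almost everywhere to $\vb_{\varphi} = \Ex_\lambda(\varphi\,|\,\If)/\Ex_\lambda(\Psi\,|\,\If)$, realized along trajectories as the family $V_{\varphi}^{x,a}$. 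As in Theorem \ref{thm:uniquemeasure} it suffices to treat such cylinder functions, and to obtain ergodicity I would show that $\vb_{\varphi}$ is $\Prob_\lambda$-almost everywhere equal to a constant for each $\varphi$: fixing $\Psi$, this forces $\Ex_\lambda(\varphi\,|\,\If) = c_\varphi\,\Ex_\lambda(\Psi\,|\,\If)$, and testing against the indicator of an arbitrary $\wh T$-invariant set (using $\Ex_\lambda(\Psi\,|\,\If) > 0$) collapses $\If$ to the trivial $\sigma$-algebra. Corollary \ref{cor:ally} already removes the dependence on the starting point, $V_{\varphi}^{x,a} = V_{\varphi}^{a}$ for almost every $a$, and Proposition \ref{pro:constant} shows that for almost every $a$ this $V_{\varphi}^{a}$ is almost surely a constant $c_\varphi(a)$.

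The heart of the matter, and the step I expect to be the main obstacle, is to remove the remaining dependence of $c_\varphi(a)$ on the level $a$. Here I would exploit that the multiplicative walk $(A_{0,n})$ is recurrent on $\R^+_*$, since its logarithm is centered with finite variance by \eqref{eq:assume2}. Fix two admissible levels $a, a'$. Given $\ep > 0$, let $\delta = \delta(\ep,\varphi)$ be as in Proposition \ref{pro:delta} and let $\tau$ be the first time $n$ with $\bigl|\log(A_{0,n}a/a')\bigr| < \delta$; recurrence makes $\tau$ almost surely finite. Applying Proposition \ref{pro:delta} with starting points $(x,a)$ and $(y,b)=(x,a')$ and this $\tau$ yields $\limsup_n \bigl| S_n^{x,a}\varphi/S_n^{x,a}\Psi - S_{\tau,n}^{x,a'}\varphi/S_{\tau,n}^{x,a'}\Psi \bigr| \le \ep\, W^{x,a}$. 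The first ratio tends to $c_\varphi(a)$, while by the strong Markov property the freshly restarted ratio $S_{\tau,n}^{x,a'}\varphi/S_{\tau,n}^{x,a'}\Psi$ has the distribution of the ratio limit from $(x,a')$, hence tends almost surely to the constant $c_\varphi(a')$. Letting $\ep \to 0$ gives $c_\varphi(a) = c_\varphi(a')$, so $\vb_{\varphi} \equiv c_\varphi$ is $\Prob_\lambda$-almost everywhere constant, and ergodicity of $\wh T$ follows.

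Finally I would deduce uniqueness from the fact that the constant $c_\varphi$ is defined pathwise, independently of the reference measure. Given any invariant Radon measure $\nu'$ on $\Xx$ with extension $\lambda'$ (via \eqref{eq:lambdaA} or \eqref{eq:lambdaB}), conservativity of $\wh T$ is a property of the dynamics, so $\wh T$ is again conservative for $\Prob_{\lambda'}$ and the same pathwise limits give $\sum_{k=0}^n \varphi\circ\wh T^k\big/\sum_{k=0}^n \Psi\circ\wh T^k \to c_\varphi$ for $\Prob_{\lambda'}$-almost every trajectory as well, because $\lambda'$ uses the same Haar component in the level variable. The Chacon--Ornstein theorem applied to $\Prob_{\lambda'}$ then yields $\int \varphi\,d\lambda' = c_\varphi \int \Psi\,d\lambda'$ for all $\varphi \in \Ccal_c^+(\wh\Xx)$; since the same identity holds for $\lambda$, we get $\int \varphi\,d\lambda' = \bigl(\int \Psi\,d\lambda'/\int \Psi\,d\lambda\bigr)\int \varphi\,d\lambda$ for all such $\varphi$, whence $\lambda' = C\lambda$ and, by the product structure of the extension, $\nu' = C\nu$. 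Existence of $\nu$ being guaranteed by Lemma \ref{lem:inv-exist}, this proves that the invariant Radon measure on $\Xx$ is unique up to multiplication with constants and completes the proof.
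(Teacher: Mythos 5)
Your proof is correct and follows the same architecture as the paper's: conservativity (Lemma \ref{lem:cons}) plus Chacon--Ornstein, reduction to cylinder functions, Corollary \ref{cor:ally} and Proposition \ref{pro:constant}, and then removal of the dependence on the level $a$ by combining recurrence of the multiplicative walk $(A_{0,n})$ with Proposition \ref{pro:delta}. The one place where you genuinely deviate is that level-bridging step: the paper first uses $\wh T$-invariance of $\vb_{\varphi}$ (so that $V^a_{\varphi} = V^{A_{0,n}a}_{\varphi}$ a.s.), then lets $(A_{0,n}a)$ return to a neighbourhood of one fixed admissible level $a_0$ -- treating the lattice and non-lattice cases separately -- and finally invokes a countable dense set of levels; you instead bridge two arbitrary admissible levels $a, a'$ directly with the stopping time $\tau = \inf\{ n : |\log(A_{0,n}a/a')| < \delta \}$ and identify the restarted ratio through the strong Markov property (the restarted limit has the law of the constant $c_\varphi(a')$, hence equals it a.s.). This is a clean variant: it handles lattice and non-lattice uniformly and makes explicit what the paper compresses into ``Proposition \ref{pro:delta} yields \dots''. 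Your uniqueness paragraph likewise spells out what the paper leaves implicit (``It follows that \dots''), and its logic is right: the ratio limits hold from \emph{every} starting point whose level is admissible, and any extension $\lambda'$ has the same Haar component, so the limits hold $\Prob_{\lambda'}$-a.e. One technical point should be added there: the reference function $\Psi$ of \eqref{eq:PhiPsi} is constructed to lie in $L^1(\wh\Xx,\lambda)$, but nothing guarantees $\Psi \in L^1(\wh\Xx,\lambda')$ for a second invariant extension $\lambda'$, and both the conservativity criterion and the Chacon--Ornstein theorem relative to $\Prob_{\lambda'}$ require an $L^1$ reference function. The fix is harmless: re-choose the constants $c_r$ so that $\sum_r c_r\,(\lambda+\lambda')(\psi_{r+2}) < \infty$ and rerun your argument with the resulting $\Psi$; the pathwise constants $c_\varphi$ then depend on this new $\Psi$, but the conclusion $\lambda' = C\lambda$, hence $\nu' = C\nu$, survives unchanged.
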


\begin{proof} 
Let $\varphi \in\Ccal_c^+(\wh\Xx^{\ell})$. Recall that the function
$\vb_{\varphi} = \Ex_{\lambda}(\varphi\,|\,\If)/\Ex_{\lambda}(\Psi\,|\,\If)$ on
$\wh\Xx^{\N_0}$ is $\wh T$-invariant. For the random variables
$V^{x,a}_{\varphi} = V^a_{\varphi}$, this means that for almost every $a>0$, 
$$
V^a_{\varphi} = V^{A_{0,n}a}_{\varphi} \quad \Pr\text{-almost surely for all}\; n\,.
$$ 
By Proposition \ref{pro:constant}, these random variables are constant
on a set $\bar \Omega_{\varphi}^a \subset\Omega_{\varphi}^a$ with 
$\Pr(\bar \Omega_{\varphi}^a) = 1$. 
Fix one $a_0 > 0$ for which this holds. 

In the lattice case, since we have chosen the maximal $\kappa$ for which 
$\log A_n \in \kappa\cdot \Z$ a.s., the associated centered random walk 
$\log A_{0,n}$ is recurrent on $\kappa\cdot \Z\,$: for every starting point 
$a \in \exp(\kappa\cdot\Z)$, we have that $(A_{0,n}a)_{n \ge 0}$ visits $a_0$
almost surely. We infer that $V^a_{\varphi} = V^{a_0}_{\varphi}$ 
$\Prob$-almost surely for every $a \in\exp(\kappa\cdot\Z)$.   

In the non-lattice case, the multiplicative random walk $(A_{0,n}a)_{n \ge 0}$
starting at any $a > 0$ is topologically recurrent on $\R^+_*$. This means that
for every $a > 0$, with probability $1$ there is a random sequence $(n_k)_{k \ge 0}$
such that $A_{0,n_k}a \to a_0$ as $k \to \infty$.  Proposition \ref{pro:delta}
yields that $V^a_{\varphi} = V^{a_0}_{\varphi}$ on a set 
$\wt\Omega_{\varphi}^a \subset \Omega_{\varphi}^{a_0}$ with probability $1$.

Now let $\{a_k : k \in \N\}$ be dense in $\R^+_*$ and such that 
$\Pr(\wt \Omega_{\varphi}^{a_k}) = 1$ for all $\N$. Using 
Proposition \ref{pro:delta} once more, we get that for every $a > 0$,
$V^a_{\varphi} = V^{a_k}_{\varphi}= V^{a_0}_{\varphi}$ on 
$\bigcap_k \wt \Omega_{\varphi}^{a_k}$. 

We conclude that $\vb_{\varphi}$ is constant $\Prob_{\lambda}$-almost surely.

This is true for any $\varphi \in\Ccal_c^+(\wh\Xx^{\ell})$. Therefore $\wh T$
is ergodic. It follows that up to multiplication with constants,
$\lambda$ is the unique invariant measure on $\wh \Xx$
for the extended SDS, so that $\nu$ is the unique invariant measure on $\Xx$
for the original SDS. By Lemma \ref{lem:inv-exist}(b), $\supp(\nu) = \Ll$. 
\end{proof}

We remark that by projecting, also the shift $T$ on  
$\bigl(\Xx^{\N_0}, \Bf(\Xx^{\N_0}), \Prob_{\nu}\bigr)$ is ergodic. 

\section{The reflected affine stochastic recursion}\label{sec:ref-aff}

We finally consider in detail the SDS of \eqref{eq:aff-ref}. 
Thus, $F_n(x)=|A_nx - B_n|$, so that $\lp(F_n)=A_n$ and 
$d\bigl(F_n(0),0\bigr) = |B_n|$.
We assume \eqref{eq:assume1}.  

In the case when $\Ex(\log A_n) < 0$, we can once more apply 
Propositions \ref{pro:furst}, resp. \ref{pro:contract}, and Corollary
\ref{cor:contract}. 

\begin{cor}\label{cor:negdrift} 
If $\Ex(\log^+ A_n) < \infty$ and $-\infty \le \Ex (\log A_n) < 0$ then the 
reflected affine stochastic recursion is strongly contractive on  $\R^+$. 

If in addition
$\Ex(\log^+ |B_n|) < \infty$ then it has a unique invariant probability 
measure $\nu$ on $\R^+$, and it is (positive) recurrent on 
$\Ll = \supp(\nu)$.
\end{cor}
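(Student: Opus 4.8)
The plan is to obtain this as a direct specialization of Corollary \ref{cor:contract} to the space $\Xx = \R^+$ with reference point $o = 0$. First I would record the two structural facts already noted at the start of this section: the map $F_n(x) = |A_n x - B_n|$ is Lipschitz on $\R^+$ with Lipschitz constant $\lp(F_n) = A_n$, and $d\bigl(F_n(0),0\bigr) = |B_n|$. The bound $\lp(F_n) \le A_n$ holds because $x \mapsto A_n x - B_n$ is $A_n$-Lipschitz while $t \mapsto |t|$ is $1$-Lipschitz; equality holds since the slope $A_n$ is attained on the region where $A_n x - B_n > 0$ (using $A_n > 0$ from \eqref{eq:assume1}). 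Thus the random variables playing the roles of $A_n$ and $B_n$ in \eqref{eq:lip-an-bn} are exactly our $A_n$ and $|B_n|$.

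With this identification the moment hypotheses match verbatim: $\Ex(\log^+ A_n) < \infty$ together with $-\infty \le \Ex(\log A_n) < 0$ is precisely the contractivity assumption of Corollary \ref{cor:contract}, while $\Ex(\log^+ |B_n|) < \infty$ is its additional integrability condition, the general-theory displacement $B_n = d\bigl(F_n(o),o\bigr)$ being our $|B_n|$. Strong contractivity is then immediate: the composition bound $d(X_n^x, X_n^y) \le A_1 \cdots A_n\, d(x,y)$ combined with the strong law of large numbers — valid since $\Ex(\log A_n)$ is well defined in $[-\infty, 0)$ once $\Ex(\log^+ A_n) < \infty$ — yields $A_1 \cdots A_n \to 0$ almost surely, whence $\sup_{y \in K} d(X_n^x, X_n^y) \to 0$ almost surely for every compact $K \subset \R^+$.

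For the second assertion I would invoke the domination \eqref{eq:dominate}, which here reads $X_n^x \le Y_n^x$ for the affine recursion $Y_n^x = A_n Y_{n-1}^x + |B_n|$ driven by the pair $(A_n, |B_n|)$. Proposition \ref{pro:contract} applied to this affine recursion gives positive recurrence, and \eqref{eq:dominate} transfers it to $(X_n^x)$, exactly as in the proof of Corollary \ref{cor:contract}: since $(Y_n^x)$ stays in compacts infinitely often, so does $(X_n^x)$, hence $(X_n^x)$ cannot be transient and is recurrent. Existence and uniqueness of the invariant probability measure $\nu$, together with the identification $\Ll = \supp(\nu)$, then follow from Corollary \ref{cor:contract}.

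There is no genuine obstacle here; the proof is a bookkeeping reduction. The only point demanding a line of care is the exact identification of the Lipschitz constant $\lp(F_n) = A_n$ (so that the hypotheses are imposed on the correct random variable) and of the reference-point displacement $d\bigl(F_n(0),0\bigr) = |B_n|$ (so that the additional moment condition is the right one). Once these are in place, every conclusion is an immediate citation of Corollary \ref{cor:contract}.
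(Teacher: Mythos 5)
Your proposal is correct and is essentially the paper's own argument: the paper obtains Corollary \ref{cor:negdrift} precisely by specializing Corollary \ref{cor:contract} (together with Proposition \ref{pro:contract} and the domination \eqref{eq:dominate}) to $\Xx = \R^+$, $o=0$, using the identifications $\lp(F_n) = A_n$ and $d\bigl(F_n(0),0\bigr) = |B_n|$. Your extra care in verifying these identifications and in spelling out why strong contractivity follows from $A_1\cdots A_n \to 0$ a.s. only makes explicit what the paper leaves as ``obvious.''
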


\emph{From now on, we shall be interested in the case when 
$\log A_n$ is centered.}
\\[5pt]
For the time being, we shall only deal with the case when $B_n > 0$.
We can use Remark \ref{rem:non-contractive}; compare with the
arguments used after Corollary \ref{cor:contract}. Thus, the reflected
affine stochastic recursion is topologically irreducible on the set
$\Ll$ given by Corollary \ref{cor:support}. Here, we shall not investigate
the nature of $\Ll$ in detail. It may be unbounded or compact.

Since we have $\Xx = \R^+$, the extended space $\wh \Xx$ is just
the first quadrant with hyperbolic metric, and if $f(x) = |ax-b|$ then
$\hat f(x,y) = (|ax-b|, ay)$. We can apply Corollary \ref{cor:transient}
to the extended process.

\begin{pro}\label{pro:local}
Assume that \eqref{eq:assume1} and \eqref{eq:nofix} hold, 
$\Ex(|\log A_n|) < \infty\,$, 
$\Ex(\log A_n) = 0$, $B_n > 0$ almost surely, and $\Ex(\log^+ B_n) < \infty\,$.

\smallskip

If the extended process $(\wh X_n^{x,a})$ is 
conservative, then the normalized distances $D_n(x,y)$ of \eqref{eq:Zn} 
satisfy \eqref{eq:assume3}, that is,
$\Prob[d(Z_n^x,Z_n^y) \to 0] = 1$ for all $x, y \in \Xx$,
where $Z_n^x = X_n/A_{0,n}$.
\end{pro}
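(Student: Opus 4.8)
The plan is to verify the hypothesis of Lemma \ref{lem:Dnto0} and then invoke that lemma. That is, for given $\ep > 0$ and $r \in \N$ I must produce some $k$ with
$$
\Prob[\,D_k(x,y) < \ep \ \text{ for all }\ x,y \in \Bb(r)\,] > 0 .
$$
Since $o=0$ and $\Xx=\R^+$, we have $\Bb(r)\subseteq[0,r]$, so it suffices to control the image of $[0,r]$. First I would record the geometric meaning of the event. Writing $F_k\circ\dots\circ F_1$ for the composed map, each factor $f_{a,b}(x)=|ax-b|$ is piecewise affine with slopes $\pm a$, so the composition is piecewise affine with slopes $\pm A_{0,k}$ and hence has Lipschitz constant \emph{exactly} $A_{0,k}=A_1\cdots A_k$. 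Consequently $\sup_{x,y\in[0,r]}D_k(x,y)=\operatorname{diam}\bigl(F_k\circ\dots\circ F_1([0,r])\bigr)\big/A_{0,k}$, the \emph{normalized diameter} of the image. Thus it suffices to exhibit a positive-probability configuration of $F_1,\dots,F_k$ whose composition keeps the image of $[0,r]$ inside a fixed bounded set while $A_{0,k}$ is large.

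The engine of the construction is an expanding but folding map. Since $\Prob[A_n<1]>0$ by \eqref{eq:assume1} while $\Ex(\log A_n)=0$ with $\log A_n$ non-degenerate, one necessarily has $\Prob[A_n>1]>0$; pick $(a^*,b^*)$ in the support of $\mu$ with $1<a^*<2$ and $b^*>0$. A direct computation with $f_{a^*,b^*}(x)=|a^*x-b^*|$, whose fold point $b^*/a^*$ lies in $(0,b^*)$, shows $f_{a^*,b^*}([0,b^*])=[0,b^*]$: the left half maps onto $[0,b^*]$ and the right half onto $[0,(a^*-1)b^*]\subseteq[0,b^*]$. Hence $[0,b^*]$ is forward invariant, while each application multiplies the Lipschitz constant by $a^*>1$. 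This is the \textbf{pump}: iterating $f_{a^*,b^*}$ leaves the image inside $[0,b^*]$ (diameter $\le b^*$) yet drives $A_{0,k}\to\infty$.

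The construction then proceeds in three stages. In the \textbf{contraction} stage I iterate a contraction $f_{a',b'}$ with $a'<1$ (available by \eqref{eq:assume1}); its orbits converge to the attracting fixed point $b'/(1+a')$, so after finitely many steps the image of $[0,r]$ is a tiny interval sitting at a bounded point. In the \textbf{funneling} stage I apply finitely many further maps from $\supp(\wt\mu)$ to steer this small image into $[0,b^*]$ (using that the recursion is recurrent, Theorem \ref{thm:recur}, and that $f_{a^*,b^*}$ preserves $[0,b^*]$). Say this takes $N_0$ steps and set $P_0=A_{0,N_0}>0$. In the \textbf{pump} stage I apply $f_{a^*,b^*}$ a further $m$ times, so that at $k=N_0+m$ the image of $[0,r]$ lies in $[0,b^*]$ while $A_{0,k}=P_0\,(a^*)^m$, giving
$$
\sup_{x,y\in[0,r]}D_k(x,y)\;\le\;\frac{b^*}{P_0\,(a^*)^m}\;<\;\ep
$$
once $m$ is large. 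Because each prescribed map lies in $\supp(\wt\mu)$ and both the diameter and $A_{0,k}$ depend continuously on $F_1,\dots,F_k$, this strict inequality persists on a neighbourhood of the prescribed maps; by independence that neighbourhood has positive probability, which is exactly the hypothesis of Lemma \ref{lem:Dnto0}.

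The step I expect to be the main obstacle is securing the pump in full generality, namely when $\supp(A_n)$ carries no mass in $(1,2)$, so that no single $f_{a^*,b^*}$ with $1<a^*<2$ is available. In that case I would synthesize the pump as a short composition $g$ of support maps, combining an expansion $a\ge 2$ with contractions $a<1$ so that the net Lipschitz factor lands in $(1,2)$ — or in $(1,4)$, exploiting that such a composition folds \emph{twice} and can therefore contain a bounded interval even for larger expansion — and then exhibiting a bounded $g$-forward-invariant interval. Verifying that this composed, doubly folded map genuinely keeps a bounded interval invariant while still expanding, together with the positioning details of the funneling stage, is the technical heart; the pump mechanism itself and the reduction to Lemma \ref{lem:Dnto0} are routine.
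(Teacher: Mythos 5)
You reduce to Lemma \ref{lem:Dnto0} exactly as the paper does, and your three-stage outline would work \emph{if} the pump existed; but the gap you flag yourself is not a technicality to be deferred --- it is the actual content of the proposition. Your pump requires a support point $(a^*,b^*)$ with $1<a^*<2$, and under the stated hypotheses no such point need exist: the paper's own Example \ref{exa:less-simple} ($A_n\in\{3,1/3\}$ with equal probability, $B_n\equiv 1$) satisfies \eqref{eq:assume1}, \eqref{eq:nofix}, the centering and moment assumptions, yet every product of support values of $A_n$ is a power of $3$, so neither a single map nor any composition has net factor in $(1,2)$. Your fallback --- a composition with net factor in $(1,4)$ which ``folds twice and can therefore contain a bounded interval'' --- is asserted, not proved, and as a general principle it is false: whether a composed fold map admits a bounded forward-invariant interval depends on the \emph{positions} of its folds, i.e.\ on the ratios of the translation parts, not only on the net expansion. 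Concretely, $g=f_{1/2,b_1}\circ f_{6,b_2}$ has net factor $3\in(1,4)$ and is composed of two folding maps, but an invariant interval of the form $[0\,,\,C]$ forces $\max\bigl\{b_1,|b_2/2-b_1|\bigr\}\le C\le (b_2/2+b_1)/2$, which is possible only when $b_2/6\le b_1\le b_2/2$; and when $b_1<b_2/6$ one can check that $g$ has \emph{no} bounded forward-invariant interval at all (an interval avoiding all folds is expanded, one containing an outer fold has $0$ in its image, and one containing only the middle fold would need $c\ge b_2/4-b_1/2>b_2/6\ge$ its own right constraint). So the composition must be chosen adaptively in terms of all the parameters, and you give no recipe. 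A second, independent problem: your funneling stage invokes Theorem \ref{thm:recur}, whose hypotheses \eqref{eq:assume2} (second moment of $\log A_n$, $(2+\ep)$-moment of $\log^+ B_n$) are strictly stronger than those of Proposition \ref{pro:local}, which assumes only first moments; and even granting recurrence you would still need the basin of your pump's invariant interval to meet the attractor $\Ll$, which is not automatic.

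The paper's proof is designed precisely to avoid these issues, by never hunting for invariant intervals of the un-normalized maps. It observes that $Z_n^x=X_n^x/A_{0,n}$ satisfies the reflected-walk-type recursion $Z_n^x=|Z_{n-1}^x-c_n|$ with $c_n=B_n/A_{0,n}$, proves the deterministic inequality \eqref{eq:cj} (a composition of the maps $x\mapsto|x-c_j|$, $j=1,\dots,s$, sends all of $[0\,,\,c_1+\cdots+c_s]$ into $[0\,,\,\max_j c_j]$), and then builds a positive-probability event in two phases using only support points $a_1<1<a_2$: first $r$ steps with $A_k$ near $a_2$, after which $Z_r^x\in[0\,,\,M+M']$ for all $x\in[0\,,\,M]$ and every subsequent normalized increment is uniformly small; then $s$ steps whose partial products $A_{r,r+j}$ are held in a fixed window by a recursive choice of indices, so that each $c_{r+j}<\ep$ while $\sum_j c_{r+j}\ge M+M'$. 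Inequality \eqref{eq:cj} then forces $Z_N^x<\ep$ simultaneously for all $x\in[0\,,\,M]$, and Lemma \ref{lem:Dnto0} concludes. This mechanism requires no invariant interval, no funneling, no extra moments, and no constraint on where $\supp(A_n)$ sits; in effect it manufactures your pump in normalized coordinates, which is exactly the step your proposal leaves open.
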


\begin{proof}
We have the recursion $Z_0^x = x$ and $Z_n^x = | Z_{n-1}^x - B_n/A_{0,n}|$.
We start with a simple exercise whose proof we omit. Let $c_j > 0$ and
$f_j(x) = |x - c_j|$, $j=1, \dots, s$. 
Then 
\begin{equation}\label{eq:cj}
f_s \circ \dots \circ f_2 \circ f_1(x) \le \max \{ c_1\,, \dots, c_s\}
\quad \text{for all } \; x \in [0\,,\,c_1 + \dots + c_s]\,.
\end{equation}
We prove that for every $\ep > 0$ and $M > 0$ there is $N$ such that 
$$
\Prob(\Gamma_{M, N,\ep}) > 0\,,\quad\text{where}\quad 
\Gamma_{M,N,\ep} = [ D_N(x,y) < \ep  \; \text{for all $x,y$ with}\; 0 \le x, y\le M]\,.
$$
To show this, let $\mu$ be the probability measure on $\R^+_* \times \R^+_*$
governing our SDS, that is, $\Prob[(A_k,B_k) \in U] = \mu(U)$ for any
Borel set $U \subset  \R^+_* \times \R^+_*$.
By our assumptions, there are $(a_1,b_1), (a_2,b_2) \in \supp(\mu)$, 
such that $0 < a_1 < 1 < a_2$ and $b_1, b_2 > 0$. 
We choose $\Delta > 1$ such that $a_1\,\Delta< 1 < a_2/\Delta$
and $b_* =  \min\{b_1,b_2\}/\Delta> 0$, and
we set 
$b^* = \max\{b_1,b_2\}\,\Delta$. 

Let $r, s \in \N$. For $k=r+1, \dots, r+s$, we recursively
define indices $i(k) \in \{1,2\}$ by 
$$
i(r+1) = 1, \quad i(k+1) 
= \begin{cases} 1\,,&\text{if }\; a_{i(r+1)} \cdots a_{i(k)} \ge 1,\\
                2\,,&\text{if }\; a_{i(r+1)} \cdots a_{i(k)} < 1 .
  \end{cases}
$$
Therefore $a_1 \le a_{i(r+1)} \cdots a_{i(k)} \le a_2$ for all $k > r$.
We have 
$$
\begin{aligned}
\Prob[a_2/\Delta^{1/r} \le A_k \le a_2\,\Delta^{1/r}
\;\text{and}\; b_* \le B_k \le b^*] &> 0\,,\;\; k=1,\dots,r\,,\AND\\
\Prob[a_{i(k)}/\Delta^{1/s} \le A_k \le a_{i(k)}\,\Delta^{1/s}
\;\text{and}\; b_* \le B_k \le b^*] &> 0\,,\;\; k = r+1, \dots, r+s.
\end{aligned}
$$
Since the $(A_k,B_k)$ are i.i.d., we also get that with positive probability,
$$
\begin{aligned} 
\frac{a_2^k}{\Delta} \le A_{0,k} \le a_2^k\,\Delta&\quad  
\text{for }\; k=1,\cdots,r\,,\\
\frac{a_1}{\Delta} \le A_{r,r+j} \le a_2\,\Delta&\quad  
\text{for }\; j=1,\cdots,s\,,\\
b_* \le B_k \le b^*&\quad  
\text{for }\; k=1,\cdots,r+s\,,
\end{aligned}
$$
and thus, again with positive probability, 
\begin{equation}\label{eq:ineq}
\begin{gathered} 
\frac{B_k}{A_{0,k}} \le \frac{b^*\,\Delta^2}{a_2} \quad 
\text{for }\; k=1,\cdots,r \AND \\
\frac{b_*}{a_2^{r+1}\,\Delta^2} \le 
\underbrace{\frac{B_{r+j}}{A_{0,r+j}}}_{\displaystyle =:c_j} \le 
\frac{b^*\,\Delta^2}{a_1 a_2^r}\quad  \text{for }\;j=1,\cdots,s\,.
\end{gathered}
\end{equation}
We now set $M' = b^*\,\Delta^2/a_2$ and then choose $r$ and $s$ sufficiently 
large such that 
$$
\frac{b^*\,\Delta^2}{a_1 a_2^{r}} < \ep 
\AND  s\,\frac{b_*}{a_2^{r+1}\,\Delta^2} \ge M +  M'.
$$
We set $N = r+ s$ and let $\Gamma_{M, N,\ep}$ be the event on which the inequalities
\eqref{eq:ineq} hold. On $\Gamma_{M, N,\ep}\,$, we can use
\eqref{eq:cj} to get $Z_r^0 \le M'$. 
Since $D_n(x,y)$ is decreasing in $n$, we have for 
$x \in [0\,,\,M]$ that $|Z_r^x - Z_r^0| \le x \le M$ and thus 
$\xi = Z_r^x  \in [0\,,\,M + M']$.
Now we can apply \eqref{eq:cj} with $c_j$ as in 
\eqref{eq:ineq} and obtain $\max_j {c_j} < \ep$ and $c_1 + \dots c_s \ge M +M'$.
But for the associated mappings $f_1, \dots, f_s$ according to \eqref{eq:cj},
we have $Z_n^x = f_s \circ \dots \circ f_1(\xi)$.
We see that on the event $\Gamma_{M, N,\ep}$, one has $Z_n^x < \ep$ for all 
$x \in [0\,,\,M]$, whence $D_N(x,y) < \ep$ for all $x, y \in [0\,,\,M]$.

We can use Lemma \ref{lem:Dnto0} to conclude.
\end{proof}

Combining the last proposition with theorems \ref{thm:lip-trans} and
Theorem \ref{thm:divide}, we obtain the main result of this section.

\begin{thm}\label{thm:refl-aff} 
Consider the reflected affine stochastic recursion \eqref{eq:aff-ref} with
\hbox{$A_n\,, B_n > 0$.} Suppose 
\\[3pt]
\emph{(1)}
non-degeneracy: $\Prob[A_n = 1] < 1$ and $\Prob[A_n x + B_n = x] < 1$
for all $x \in \R$
\\[3pt]
\emph{(2)} moment conditions: $\Ex(|\log A_n|^2) < \infty$ and
$\Ex\bigl((\log^+ B_n)^{2+\ep}\bigr) < \infty$ for some $\ep > 0$
\\[3pt]
\emph{(3)} centered case: $\Ex(\log A_n) = 0$.
\\[3pt]
Then the SDS has a unique invariant Radon measure $\nu$ on $\R^+$,
it is topologically recurrent on $\Ll = \supp(\nu)$. 
The time shift on the trajectory space 
$\bigl((\R^+)^{\N_0}, \Prob_{\nu}\bigr)$ is ergodic.
\end{thm}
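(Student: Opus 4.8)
The plan is to recognize this theorem as the synthesis of the machinery built up in \S\ref{sec:lip}--\S\ref{sec:cons-ext}, applied to the concrete mappings $F_n(x)=|A_nx-B_n|$ on $\Xx=\R^+$ with reference point $o=0$. For these, $\lp(F_n)=A_n$ and $d\bigl(F_n(0),0\bigr)=B_n$ (as $B_n>0$), so the random variables of \eqref{eq:lip-an-bn} are exactly the given $A_n$ and $B_n$. The first task is to check that hypotheses (1)--(3) translate into \eqref{eq:assume1}, \eqref{eq:nofix} and \eqref{eq:assume2}. Condition \eqref{eq:nofix} is literally the second half of (1), and \eqref{eq:assume2} is the combination of (2) and (3) (using $B_n>0$, so $\log^+|B_n|=\log^+B_n$; note also that $\Ex(|\log A_n|^2)<\infty$ gives $\Ex(|\log A_n|)<\infty$). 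For \eqref{eq:assume1} I would observe that $\Prob[A_n>0]=1$ is automatic, while $\Prob[A_n<1]>0$ follows from $\Prob[A_n=1]<1$ together with $\Ex(\log A_n)=0$: a centered random variable that is not almost surely equal to its mean must fall strictly below its mean with positive probability, so $\log A_n<0$ with positive probability.

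Granting this, Theorem \ref{thm:recur} immediately gives topological recurrence on the set $\Ll$ of Corollary \ref{cor:support} (for every starting point, and every open set meeting $\Ll$ is visited infinitely often), and Lemma \ref{lem:inv-exist}(b) supplies an invariant Radon measure $\nu$ with $\supp(\nu)=\Ll$. What remains is uniqueness of $\nu$ and ergodicity of the shift, and for these I would invoke the transient/conservative dichotomy for the hyperbolic extension.

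The decisive step is to apply Corollary \ref{cor:transient} to the extended SDS $(\wh X_n^{x,a})$ on $\wh\Xx=\R^+\times\R^+_*$ (the first quadrant with the metric \eqref{eq:hyp}), generated by the Lipschitz-$1$ maps $\wh F_n$ of Lemma \ref{lem:lift-f}; it is either transient or conservative. In the transient case, Theorem \ref{thm:lip-trans} applies directly: the original SDS on $\R^+$ is locally contractive, whence $\nu$ is unique up to constants and the shift $T$ on $\bigl((\R^+)^{\N_0},\Prob_\nu\bigr)$ is ergodic. In the conservative case I would first feed the hypotheses into Proposition \ref{pro:local}, whose conclusion is precisely that the normalized distances $D_n(x,y)$ of \eqref{eq:Zn} satisfy \eqref{eq:assume3}; with \eqref{eq:assume1}, \eqref{eq:nofix}, \eqref{eq:assume2} and \eqref{eq:assume3} all in force, Theorem \ref{thm:divide} yields uniqueness of $\nu$ (via uniqueness of its extension $\lambda$ on $\wh\Xx$) and ergodicity of $\wh T$ on $\bigl(\wh\Xx^{\N_0},\Prob_\lambda\bigr)$.

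Finally, to obtain the stated ergodicity of $T$ over $\R^+$ rather than over $\wh\Xx$ in the conservative case, I would project: as remarked after Theorem \ref{thm:divide}, ergodicity of $\wh T$ descends to ergodicity of $T$ on $\bigl((\R^+)^{\N_0},\Prob_\nu\bigr)$ under the coordinate projection $\wh\Xx\to\R^+$. No single step is genuinely hard here, since the real analytic labor lives in Proposition \ref{pro:local} and in Theorem \ref{thm:divide}; the only point demanding care is the bookkeeping of the conservative case, namely ensuring that the $\lambda$ used in Theorem \ref{thm:divide} is the extension of the very $\nu$ produced by Theorem \ref{thm:recur} and Lemma \ref{lem:inv-exist}, and that uniqueness on $\wh\Xx$ indeed forces uniqueness on $\R^+$.
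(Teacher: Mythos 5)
Your proposal is correct and follows essentially the same route as the paper: the paper's proof is precisely the dichotomy of Corollary \ref{cor:transient} for the hyperbolic extension, resolved via Theorem \ref{thm:lip-trans} in the transient case and via Proposition \ref{pro:local} combined with Theorem \ref{thm:divide} (plus the projection remark) in the conservative case, with recurrence and existence of $\nu$ coming from Theorem \ref{thm:recur} and Lemma \ref{lem:inv-exist}. Your additional verification that hypotheses (1)--(3) imply \eqref{eq:assume1}, \eqref{eq:nofix} and \eqref{eq:assume2} (in particular that $\Prob[A_n<1]>0$ follows from centering plus non-degeneracy) is a useful piece of bookkeeping that the paper leaves implicit.
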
     

We now answer the additional question when there is an invariant
\emph{probability} measure, i.e., when $\nu(\Ll)< \infty$.

\begin{thm}\label{thm:finite-meas} In the situation of Theorem \ref{thm:refl-aff},
suppose also that $\Ex(|\log A_n|^{2+\ep}) < \infty$ and 
$\Prob[B_n \ge b] = 1$ for some $b > 0$.
Then we have $\nu(\Ll) < \infty$ if and only if the set $\Ll$ is bounded.
\end{thm}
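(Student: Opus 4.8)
\emph{The easy direction.} If $\Ll$ is bounded then, $\R^+$ being proper and $\Ll$ closed, $\Ll$ is compact; since $\nu$ is a Radon measure it is finite on compacta, so $\nu(\Ll)<\infty$. This uses none of the extra hypotheses. The whole content is therefore the converse: assuming $\Ll$ unbounded, show $\nu(\Ll)=\infty$. By Corollary \ref{cor:returntime} this is the same as showing that the recurrent SDS is \emph{null} recurrent, i.e.\ that the expected return time to some (hence any) relatively compact open $U$ meeting $\Ll$ is infinite.

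\emph{The main line of attack.} I would exploit the ladder structure already set up in the proof of Theorem \ref{thm:recur}. Recall the non-strict descending ladder epochs $\ld(k)$ of the centered multiplicative walk $A_{0,n}$ and the embedded SDS $\bar X_k = X_{\ld(k)}^x$, which by Corollary \ref{cor:contract} is \emph{positive} recurrent with a unique invariant \emph{probability} measure $\bar\nu$ on $\bar{\,\Ll}\subset\Ll$. Averaging the occupation of one excursion, set
\[
\tilde\nu(B) \;=\; \Ex_{\bar\nu}\Bigl[\, \textstyle\sum_{n=0}^{\ld(1)-1}\uno_B(X_n)\,\Bigr].
\]
A standard computation (the cycle/occupation formula for a chain observed along stopping times with i.i.d.\ increments) shows that $\tilde\nu$ is invariant for the SDS on $\R^+$ and that its total mass equals the expected excursion length, $\tilde\nu(\R^+)=\Ex_{\bar\nu}[\ld(1)]=\Ex[\ld(1)]$, the last equality because $\ld(1)$ depends only on $(A_n)$ and is independent of $\bar X_0$. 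Since $\log A_n$ is centered with finite variance (guaranteed by \eqref{eq:assume2}, a fortiori by the stronger moment assumed here), the Sparre Andersen / Spitzer asymptotics give $\Prob[\ld(1)>n]\asymp n^{-1/2}$, hence $\Ex[\ld(1)]=\infty$.

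\emph{Where unboundedness enters, and the main obstacle.} To pass from $\tilde\nu(\R^+)=\infty$ to $\nu(\Ll)=\infty$ I must know that $\tilde\nu$ is a genuine Radon (locally finite) measure: then, by the uniqueness up to a constant proved in Theorem \ref{thm:refl-aff}, $\tilde\nu=c\,\nu$ with $c\in(0\,,\,\infty)$, and $\nu(\Ll)=\infty$ follows. Local finiteness of $\tilde\nu$ is precisely the delicate point, and exactly where unboundedness of $\Ll$ is used. Concretely one must bound, for each compact $K$, the expected number of visits of $(X_n)$ to $K$ during a single super-excursion $0\le n<\ld(1)$. The difficulty is that, writing $Z_n=X_n/A_{0,n}$ as in Proposition \ref{pro:local} (so that $X_n=A_{0,n}Z_n$ with $A_{0,n}>1$ throughout the excursion), the factor $A_{0,n}$ and the contractive part $Z_n$ are strongly anti-correlated, so one cannot naively argue that large $A_{0,n}$ forces $X_n\notin K$. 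The hard part will be to use $B_n\ge b>0$ together with the sharper moment $\Ex(|\log A_n|^{2+\ep})<\infty$ to show that along an excursion the pair $\wh X_n=(X_n,A_{0,n})$ genuinely escapes to infinity in the hyperbolic metric, so that the occupation of any fixed compact set has finite expectation. As a consistency check, note that in the \emph{bounded} case the same $\tilde\nu$ charges the compact set $\Ll$ with infinite mass, so there $\tilde\nu$ is \emph{not} Radon and does not coincide with the $\nu$ of Lemma \ref{lem:inv-exist}; it is exactly the unbounded geometry that renders the occupation measure locally finite and forces the infinite total mass out onto the tail.
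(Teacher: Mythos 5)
Your easy direction is fine, and the reduction you propose (excursion occupation measure plus uniqueness of the invariant Radon measure) is legitimate in principle: the invariance of $\tilde\nu$, the identity $\tilde\nu(\R^+)=\Ex[\ld(1)]$, and $\Ex[\ld(1)]=\infty$ for the centered finite-variance walk are all correct. But the proposal has a genuine gap at its central step, and you say so yourself: the local finiteness (Radon property) of $\tilde\nu$ when $\Ll$ is unbounded is never proved, only announced as ``the hard part''. Without it the strategy collapses, because Theorem \ref{thm:refl-aff} gives uniqueness only within the class of invariant \emph{Radon} measures, so an invariant measure of infinite total mass that is not known to be locally finite says nothing about $\nu$. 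Your own consistency check (in the bounded case $\tilde\nu$ charges the compact set $\Ll$ with infinite mass) shows that local finiteness is not a formality: it is exactly equivalent to the geometric content of the theorem. Proving $\tilde\nu(K)<\infty$ for compact $K$ means bounding the expected number of visits of $X_n$ to $K$ during one ladder excursion $\{0\le n<\ld(1)\}$, i.e.\ controlling the joint behaviour of $(X_n,A_{0,n})$ along the excursion -- which, as you note, are strongly anti-correlated -- and neither of the two extra hypotheses ($\Ex(|\log A_n|^{2+\ep})<\infty$ and $\Prob[B_n\ge b]=1$) is actually used anywhere in your argument; they only appear in a description of what one ``will'' do.

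For comparison, the paper's route is different and complete. Its quantitative core is Proposition \ref{pro:ret}: for each $t$ there is $x(t)$ such that the expected hitting time of $[0\,,\,t)$ is infinite from every starting point $x\ge x(t)$. This is proved by comparison with the \emph{unreflected} recursion $Y_n^x=A_nY_{n-1}^x-B_n$ (which coincides with $X_n^x$ as long as it stays $\ge t$), rewriting $[Y_k^x\ge t]$ as $[\check R_k^0+t/A_{0,k}\le x]$, where $\check R_k^0$ is the right process of the \emph{inverse} affine maps, and invoking recurrence of the inverse affine recursion, so that $\sum_n\Prob[\check R_n^0\le u]=\infty$. This is precisely where the extra hypotheses enter: the inverse maps have $\check B_n=B_n/A_n$, so the $(2+\ep)$-moment of $|\log A_n|$ is needed for the inverse recursion to satisfy the recurrence conditions, and $B_n\ge b$ yields the uniform bound $\check R_k^0+t/A_{0,k}\le u(1+t/b)=x(t)$ on the event $[\check R_n^0\le u]$. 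Unboundedness of $\Ll$ is then used to route the stationary chain, with probability bounded below and without returning to $[0\,,\,t)$, to a point of $\Ll$ beyond $x(t)$; the strong Markov property plus Proposition \ref{pro:ret} make the expected return time to $[0\,,\,t)$ infinite, and Kac's formula (Corollary \ref{cor:returntime}) concludes. If you want to salvage your excursion approach, you must supply a proof of local finiteness of $\tilde\nu$; the paper's comparison argument suggests this would be no easier than proving Proposition \ref{pro:ret} directly.
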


The proof will be based on the next proposition, which may be of
interest in its own right.

\begin{pro}\label{pro:ret} 
For any $x, t \ge 0$, let
$$
\tau^{[0,t)}_x = \inf \{n \ge 1 : X_n^x < t \}
$$
be the time of the first visit in the interval $[0\,,\,t)$. Under the assumptions of 
Theorem \ref{thm:finite-meas}, there is $x(t) > 0$ such that
for all $x \ge x(t)$, one has 
$$
\Ex(\tau^{[0,t)}_x)=\infty.
$$
\end{pro}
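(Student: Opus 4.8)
The plan is to produce, for every $n$, an event $E_n\subset\{\tau^{[0,t)}_x>n\}$ with $\Prob(E_n)\ge c\,n^{-1/2}$ for a constant $c>0$ independent of $n$ (once $x$ is large), and then to conclude from $\Ex(\tau^{[0,t)}_x)=\sum_{n\ge 0}\Prob[\tau^{[0,t)}_x>n]\ge c\sum_{n\ge1}n^{-1/2}=\infty$.

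First I would record the elementary no-reflection identity. Put $\Sigma_n=\sum_{j=1}^n B_j/A_{0,j}$. One checks inductively that as long as no reflection has occurred up to time $n$ --- equivalently, as long as $\Sigma_n<x$, since $A_kX_{k-1}^x-B_k=A_{0,k}(x-\Sigma_k)$ whenever $X_{k-1}^x=A_{0,k-1}(x-\Sigma_{k-1})$, and $\Sigma_k$ is increasing --- one has
\[ X_k^x=A_{0,k}\,(x-\Sigma_k),\qquad 1\le k\le n. \]
The relevant random walk is $\log A_{0,n}=\sum_{j\le n}\log A_j$, which by (3) is centered and, by the strengthened hypothesis $\Ex(|\log A_n|^{2+\ep})<\infty$, has finite variance. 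Now set
\[ E_n=\{\log A_{0,k}\ge 0\ \text{for all }1\le k\le n\}\cap\{\Sigma_n\le x-t\}. \]
On $E_n$ we have $\Sigma_k\le\Sigma_n\le x-t<x$ for every $k\le n$, so no reflection has taken place and the identity applies; since $A_{0,k}=e^{\log A_{0,k}}\ge1$ and $x-\Sigma_k\ge t$, it follows that $X_k^x\ge t$ for all $k\le n$, i.e. $E_n\subset\{\tau^{[0,t)}_x>n\}$.

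It remains to bound $\Prob(E_n)$ from below. Two inputs combine. On one hand, the walk $\log A_{0,n}$ is non-degenerate by (1), so the Spitzer--Sparre-Andersen asymptotics for centered, finite-variance walks give $\Prob[\log A_{0,k}\ge0,\ 1\le k\le n]\sim c_0\,n^{-1/2}$ with $c_0>0$. On the other hand, I claim the complementary bound
\[ \Prob\bigl[\Sigma_n>x-t,\ \log A_{0,k}\ge0\ \forall k\le n\bigr]\le g(x-t)\,n^{-1/2}, \]
with $g(S)\to0$ as $S\to\infty$, uniformly in $n$. Granting this, $\Prob(E_n)\ge\bigl(c_0-g(x-t)\bigr)n^{-1/2}\ge\tfrac{c_0}{2}\,n^{-1/2}$ as soon as $x-t$ is so large that $g(x-t)\le c_0/2$; this fixes the threshold $x(t)$ and completes the proof via the first paragraph.

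The uniform estimate for the ``bad part'' is the step I expect to be the main obstacle. Its content is that, conditioned to stay non-negative, the walk $\log A_{0,j}$ is typically of order $\sqrt{j}$, so that $1/A_{0,j}=e^{-\log A_{0,j}}$ is summable and $\Sigma_n$ stays bounded in probability uniformly in $n$, the only large --- but merely logarithmically large --- factors $B_j$ being tamed by $\Ex((\log^+ B_j)^{2+\ep})<\infty$. To make this quantitative I would split $\Sigma_n=\Sigma_{j_0}+\sum_{j_0<j\le n}B_j/A_{0,j}$ at a fixed level $j_0$ and treat the two pieces by conditioning on the first $j_0$ (resp. first $j$) steps and using the uniform ballot bound $\Prob_y[\log A_{0,k}\ge0,\ k\le m]\le C(1+y)\,m^{-1/2}$ for the walk started at $y$ and killed below $0$. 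The first piece contributes at most $g_1(S,j_0)\,n^{-1/2}$ with $g_1(S,j_0)\to0$ as $S\to\infty$ for each fixed $j_0$, because $\Sigma_{j_0}$ is a.s. finite; the second piece is handled by a Markov inequality together with the summability of $\sum_j\Ex\bigl[(B_j/A_{0,j})(1+\log^+A_{0,j})\,;\,\log A_{0,k}\ge0\ \forall k\le j\bigr]$, which follows from the conditioned-walk asymptotics and the tail bound on $\log B_j$, and whose tail tends to $0$ as $j_0\to\infty$. Letting first $S\to\infty$ and then $j_0\to\infty$ yields $g(S)\to0$. The remaining bookkeeping --- the possible dependence within each pair $(A_j,B_j)$ and the lattice versus non-lattice dichotomy for $\log A_{0,n}$ --- is routine.
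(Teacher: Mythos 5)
Your first step --- the no-reflection identity $X_k^x=A_{0,k}(x-\Sigma_k)$ and the inclusion $E_n\subset[\tau^{[0,t)}_x>n]$ --- is correct, and it coincides with the paper's starting point \eqref{eq:expli}. The gap lies in your proof of the key uniform estimate, specifically in the moment computation for the ``second piece''. You apply Markov's inequality, resting on the summability of $\sum_j\Ex\bigl[(B_j/A_{0,j})(1+\log^+A_{0,j})\,;\,\log A_{0,k}\ge0\ \forall k\le j\bigr]$. But the hypotheses of Theorem \ref{thm:finite-meas} give only \emph{logarithmic} moments of $B_n$: the condition $\Ex\bigl((\log^+B_n)^{2+\ep}\bigr)<\infty$ permits tails $\Prob[B_n>u]\asymp(\log u)^{-(2+\ep)}$, for which $\Ex(B_n)=+\infty$. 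Since $(A_j,B_j)$ is independent of $A_1,\dots,A_{j-1}$ and your conditioning event constrains only the $A$'s, every term of your series is then infinite (this is already clear in the admissible special case where $B_j$ is independent of $A_j$, with $\Ex(B_j)=\infty$), so the claimed summability is false and the Markov step collapses. The estimate you are after is in fact true, but repairing your argument requires truncating $B_j$ at stretched-exponential levels $e^{j^\alpha}$ with $1/(2+\ep)<\alpha<1/2$ (Borel--Cantelli with the log-moment) and then proving lower-deviation/local estimates for the random walk $\log A_{0,n}$ conditioned to stay non-negative, showing that it rarely falls below $j^\alpha$. This is precisely the fluctuation-theoretic core of the critical-case results of Babillot--Bougerol--Elie and Brofferio; it is not routine bookkeeping.

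It is worth seeing how the paper sidesteps all of this. From the same identity, it bounds $\Prob[\tau^{[0,t)}_x>n]\ge\Prob[\check R_k^0+t/A_{0,k}\le x,\ k=1,\dots,n]$, where $\check R_k^0=\Sigma_k$, and then recognizes $\check R_n^0$ as the \emph{right} (backward) process of the affine recursion driven by the inverse maps $\check F_n(y)=y/A_n+B_n/A_n$. These satisfy the same centering and moment conditions, so by Proposition \ref{pro:centered} the corresponding Markov chain $\check Y_n^0$ is topologically recurrent; since $\check R_n^0$ and $\check Y_n^0$ have the same distribution for each fixed $n$, recurrence yields $\sum_n\Prob[\check R_n^0\le u]=\sum_n\Prob[\check Y_n^0\le u]=\infty$ for suitable $u$. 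The hypothesis $B_n\ge b$ (which you never used) then controls the term $t/A_{0,k}$ via $t/A_{0,k}=(B_k/A_{0,k})(t/B_k)\le\check R_n^0\,t/b$, so that on $[\check R_n^0\le u]$ one has $\check R_k^0+t/A_{0,k}\le u(1+t/b)=:x(t)$ for all $k\le n$; summing over $n$ finishes the proof. Thus the paper outsources the hard conditioned-walk analysis to the cited recurrence result, and note also that your target lower bound $c\,n^{-1/2}$ for $\Prob[\tau^{[0,t)}_x>n]$ is stronger than necessary --- mere divergence of the series suffices, and the duality trick delivers exactly that.
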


\begin{proof}
Consider the affine recursion without reflection $Y_n^x = A_nY_{n-1}^x - B_n$. 
If $Y_k^x \ge t$ for $k=1, \dots, n$ then 
$X_k^x = Y_k^x$ for those $k$, and then we have $\tau^{[0,t)}_x > n$. That is,
$$
\Prob[\tau^{[0,t)}_x > n] \ge \Prob[Y_k^x \ge t\,,\;k=1, \dots, n].
$$
We have 
\begin{equation}\label{eq:expli}
Y_k^x \ge t \iff 
\underbrace{\sum_{j=1}^k \frac{B_j}{A_{0,j}}}_{\textstyle \check R_k^0} 
+ \frac{t}{A_{0,k}} \le x\,.  
\end{equation}
Now consider the affine stochastic recursion generated by the inverses
of the affine mappings $F_n(x) = A_n x - B_n$. These are
$$
\check F_n(y) = \check A_n y + \check B_n\,,\quad\text{where}\quad
\check A_n = 1/A_n \AND \check B_n = B_n/A_n\,.
$$
They satisfy moment conditions of the same order as $A_n$, resp. 
$B_n\,$, so that the associated affine recursion $(\check Y_n^y)$
is recurrent on the support of its unique invariant measure. Thus,
there is $u > 0$ (sufficiently large) such that
$\Prob[\check Y_n^y \le u \;\text{infinitely often}] = 1$
for any starting point $y$.
The right process induced by the $\check F_n$ is 
$\check R_n^y = \check F_1 \circ \dots \circ \check F_n(y)$. 
It is not a Markov
chain, but $\check R_n^y$ has the same distribution as $\check Y_n^y\,$.
In particular, $\check R_k^0$ appears above in \eqref{eq:expli},
and 
$$
\sum_n \Prob[\check R_n^0 \le u] = \sum_n \Prob[\check Y_n^0 \le u]
=\infty.
$$
Now, if $\check R_n^0 \le u$, then for $k=1, \dots, n$,
$$
\check R_k^0 + \frac{t}{A_{0,k}} \le \check R_n^0 
+\underbrace{\frac{B_k}{A_{0,k}}}_{\displaystyle \le \check R_n^0}\frac{t}{B_k}  
\le u(1+t/b) =: x(t).
$$
If $x \ge x(t)$ then we see that 
$$ 
\Prob[\check R_n^0 \le u] \le \Prob[Y_k^x \ge t\,,\;k=1, \dots, n].
$$
Therefore 
$$
\sum_n \Prob[\tau^{[0,t)}_x > n] \ge \sum_n \Prob[\check R_n^0 \le u]\,,
$$ 
and the statement follows.
\end{proof}

\begin{proof}[Proof of Theorem \ref{thm:finite-meas}] Suppose that $\Ll$ is unbounded. We use the distinction between
positive and null recurrence as in Corollary \ref{cor:returntime}.
We fix a suitable $t > 0$ such that the interval $[0\,,\,t)$ intersects 
$\Ll$. We consider the probability measure 
$\nu_t = \frac{1}{\nu([0,t))} \nu|_{[0,t)}$ and the SDS
$(X_n^{\nu_t})$ with initial distribution $\nu_t\,.$ We shall show that 
its return time $\tau^{[0\,,\,t)}$ to $[0\,,\,t)$ has infinite 
expectation. Then $\nu$ cannot be finite. 

We know that there is $u \in \Ll$ with $u > x(t)\,$, with $x(t)$ as in
Proposition \ref{pro:ret}. We let $U$ be an open interval that
contains $u$ and does not intersect $[0\,,\,t]$. We apply Theorem 
\ref{thm:recur} to a starting point $x_0 \in [0\,,\,t)\cap \Ll$. 
There is $m$ such that $\Prob[X_m^{x_0} \in U]>0$. This means that there
are $f_1\,,\dots, f_m \in \supp(\wt \mu)$ such that 
$f_m \circ \dots \circ f_1(x_0) \in U$. 
(Each $f_j$ is of the form $f_k(x) = |a_j x - b_j|$.)
There must be a maximal $k < m$
for which $x_k = f_k \circ \dots \circ f_1(0) \in [0\,,\,t]$. 
Note that $x_j \in \Ll$ for all $j$ by Corollary \ref{cor:support},
compare with Remark \ref{rmk:hypotheses}(b).

We now may assume without loss of generality that $k=0$.
Therefore we can find neighbourhoods (open intervals) 
$U_0, U_1, \dots, U_{m-1}, U_m=U$ of the respective $x_j$
such that $U_0 \subset [0\,,\,t)$, while $U_j \cap [0\,,\,t) = \emptyset$
for $j >0$, and 
$$
\wt\mu(\{ f : f(U_{j-1}) \subset U_j \}) > 0\,,\quad j=k+1, \dots, m.
$$  
This translates into
$$
\Prob(\Lambda_x) \ge \alpha > 0\quad \text{for all}\; x \in U_0\,,\quad 
\text{where}\quad
\Lambda_x = [X_j^x \in U_j\,,\; j = 1, \dots, m].
$$ 
So we can now consider the SDS starting at $x \in U_0$, leaving
$(0\,,\,t]$ at the first step, and reaching some $y \in U$ in $m$ steps.
After that, it takes $\tau^{[0,t)}_y$ steps to return to $(0\,,\,t]$.
We formalize this, and remember that $U_j \cap \Ll \ne \emptyset$
for every $j$. Just for the purpose of the next lines, we consider
the measure $\sigma_x(B) = \Prob(\Lambda_x \cap [X_m^x \in B])$, 
where $x \in U_0$. It is concentrated on $U$ with 
$\sigma_x(U) \ge \alpha$, and
$$
\Ex(\tau^{[0,t)}) \ge \int_{U_0} 
\Ex(\tau^{[0,t)}_x \cdot \uno_{\Lambda_x}) \,d\nu_t(x)
\ge   
\int_{U_0} \underbrace{\Bigl(\int_U \bigl( m + \Ex(\tau^{[0,t)}_y )\bigr)
d\sigma_x(y) \Bigr)}_{\displaystyle = \infty\;
\text{by Proposition \ref{pro:ret}}} \,d\nu_t(x) = \infty.
$$
Therefore $\nu$ must have infinite mass.
\end{proof}

We now discuss an example.

\begin{exa}\label{exa:simple}
We let $0 < p < 1$ and
$$
A_n = \begin{cases} 2 \;&\text{ with probability }\; p\,, \\
1/2 \;& \text{ with probability }\; q=1-p\,,
\end{cases}
    \qquad 
B_n=1 \; \text{ always.}  
$$
Thus, we randomly iterate the transformations $f_1(x) = |2x-1|$ and 
$f_{-1}(x) = |x/2-1|$. In other words, $F_n(x) = |2^{\epb_n}x - 1$,
where $(\epb_n)_{n \ge 1}$ is a sequence of i.i.d. $\pm 1$-valued
random variables with $\Prob[\epb_n=1] = p$ and $\Prob[\epb_n=-1] = q$.

Keeping in mind Remark \ref{rmk:hypotheses}(b), we now determine $\Ll$
as the smallest non-empty closed set which satisfies
$f_{\pm 1}(\Ll) \subset \Ll$. First of all, we see that each of the two 
functions maps the interval $[0\,,\,1]$ into itself. Thus, we must have 
$\Ll \subset [0\,,\,1]$. 

Let $\alpha = \max \Ll$. Then $\alpha \ge 2/3$, because $2/3 \in \Ll$
as the attracting fixed point of $f_{-1}\,$.
We must have 
$(1 + \alpha)/2 = f_{-1}\circ f_1 \circ f_{-1}(\alpha)\in \Ll$,
whence it is $\le \alpha$. Therefore $\alpha = 1$. We get that
$1 \in \Ll$. The set of all iterates
of $1$ under $f_{\pm 1}$ is
$$
\{ f_{i_1} \circ \dots \circ f_{i_n}(1) : n \ge 0\,,
\; i_j = \pm 1 \} = \D\,, \quad\text{where}\quad
\D = \Z[\tfrac12] \cap [0\,,\,1]\,,
$$
and  $\Z[\frac12]$ stands for the dyadic rationals, i.e., 
rationals whose denominator is a power of $2$.
Since $\D$ is dense, $\Ll = [0\,,\,1]$. 
\\[5pt]
\underline{Contractive case} ($p < 1/2$). We can apply Corollary 
\ref{cor:negdrift} and get a unique invariant probability measure
$\nu$, which is supported on $[0\,,\,1]$. 
\\[5pt]
\underline{Log-centered case} ($p = 1/2$).
Since $\Ll$ is compact, the extended SDS is clearly conservative.
In particular, $D_n(x,y) \to 0$ almost surely for all $x,y$.
We now undertake an additional effort to clarify that the SDS
is \emph{not} locally contractive.

For the symmetric random walk $S_n = \epb_1 +\dots + \epb_n$ on 
$\Z$, let $M_n = \max\{0, S_1\,,\dots, S_n\}$. Now consider our SDS
$(X_n^x)_{n\ge 0}$ with $x \in [0\,,\,1]$. It is an instructive
exercise to prove the following by induction on $n$. 

\begin{lem}\label{lem:iterates} The map $x \mapsto X_n^x$ is continuous
and piecewise affine and continuous on $[0\,,\,1]$, and
there are random variables $\deb \in \{ -1,1\}$ and $C_j = C_{j,M_n} \in \Z[\frac12]$
such that
$$
X_n^x = (-1)^j\,\deb\,2^{S_n} x + C_j \quad \text{on } \;
I_{j,M_n}\,,\quad \text{where }\;
I_{j,k} = [(j-1)2^{-k}\,,\,j2^{-k}]\,,\; j = 1, \dots, 2^k\,.
$$
In particular, the images of each of the intervals $I_{j,M_n}$ under 
$x \mapsto X_n^x$ coincide and have the form
$$
[(L_n-1)/2^{M_n - S_n}\,, L_n /2^{M_n - S_n}]\,,
$$ 
where $L_n$ is an integer random variable with 
$1 \le L_n \le 2^{M_n - S_n}$. 
\end{lem}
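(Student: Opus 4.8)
The plan is to prove the statement by induction on $n$, tracking at each step three data: the common slope magnitude $2^{S_n}$, the dyadic partition $\{I_{j,M_n}\}_{j=1}^{2^{M_n}}$, and the common image interval. For $n=0$ we have $X_0^x = x$, so $S_0 = M_0 = 0$, the partition is the single interval $I_{1,0} = [0\,,\,1]$, and the formula holds with $\deb = -1$, $C_1 = 0$ and $L_0 = 1$. For the inductive step I would write $X_{n+1}^x = F_{n+1}(X_n^x) = |2^{\epb_{n+1}} X_n^x - 1|$ and analyze how the elementary map $y \mapsto |2^{\epb_{n+1}} y - 1|$ acts on the common image interval $J = [(L_n-1)2^{S_n - M_n}\,,\, L_n 2^{S_n - M_n}] \subseteq [0\,,\,1]$, then pull this action back through the affine pieces of $X_n^x$.

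The first (easy) case is $\epb_{n+1} = -1$. Here $y/2 - 1 \le -1/2 < 0$ for every $y \in [0\,,\,1]$, so the absolute value is inactive and $F_{n+1}(y) = 1 - y/2$ is affine and decreasing on $J$; in particular there is no folding. Consequently $M_{n+1} = M_n$, each slope is multiplied by $-1/2$, giving magnitude $2^{S_n - 1} = 2^{S_{n+1}}$ and preserving the alternation in $j$ (after flipping $\deb$), and the image $J$ is sent to another grid interval. A direct computation gives $L_{n+1} = 2^{M_{n+1} - S_{n+1}} - L_n + 1$, which I would check lies in the required range using $1 \le L_n \le 2^{M_n - S_n}$.

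The second case $\epb_{n+1} = +1$ uses the tent map $g(y) = |2y-1|$, which folds at $y = 1/2$. The decisive observation is that the fold matters only when $1/2$ lies in the interior of $J$. When $S_n < M_n$ the grid unit $2^{S_n-M_n}$ is $\le 1/2$ and $1/2 = 2^{M_n - S_n - 1}\cdot 2^{S_n-M_n}$ is itself a grid point; since $J$ is a single grid cell, $1/2$ is then an endpoint of $J$ or lies outside it, never in the interior, so $g$ is affine on $J$ with slope $\pm 2$, $M_{n+1} = M_n$, each slope becomes $\pm 2^{S_{n+1}}$, and one reads off the new image and $L_{n+1}$ as before. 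Only when $S_n = M_n$ --- so that the grid unit is $1$ and $J = [0\,,\,1]$ --- does $1/2$ fall in the interior of $J$: then the preimage of $1/2$ splits each $I_{j,M_n}$ into two halves of length $2^{-(M_n+1)} = 2^{-M_{n+1}}$, namely the intervals $I_{j',M_{n+1}}$, on each of which $g \circ X_n^x$ is affine with slope $\pm 2 \cdot (\pm 2^{M_n}) = \pm 2^{M_{n+1}} = \pm 2^{S_{n+1}}$ and image $[0\,,\,1]$, so $L_{n+1} = 1$. In both sub-cases this matches $M_{n+1} = \max\{M_n, S_{n+1}\}$ and the doubling of the number of pieces precisely when $S_n = M_n$.

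Continuity and the piecewise-affine property are automatic, since $F_{n+1}$ and $X_n^x$ are continuous and piecewise affine. The only real work --- and the step I expect to be most error-prone rather than genuinely hard --- is the bookkeeping: verifying that the sign pattern stays of the form $(-1)^j \deb$ after each operation (in particular tracking how the two halves in the folding case interleave with the global alternation), that the constants $C_j$ remain in $\Z[\tfrac12]$, and that the expression for $L_{n+1}$ stays within $1 \le L_{n+1} \le 2^{M_{n+1}-S_{n+1}}$. All of these reduce to elementary computations with dyadic rationals once the case analysis above is in place.
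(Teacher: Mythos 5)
Your proposal is correct and takes essentially the same approach as the paper: the paper gives no written proof at all, stating only that the lemma is ``an instructive exercise to prove \dots by induction on $n$'', and your induction carries out exactly that exercise. The bookkeeping you flag as the delicate part does go through as you describe — the key observation that the fold of $y \mapsto |2y-1|$ is active precisely when $S_n = M_n$ (so that the common image is all of $[0\,,\,1]$ and the partition doubles, with the new slopes automatically alternating as $(-1)^{j'}$), and is harmless when $S_n < M_n$ because $1/2$ is then a grid point, is precisely what makes the induction close.
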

Recall the \emph{strictly ascending ladder epochs} of the random walk
$(S_n)$, 
$$
\tb(0)  = 0 \AND \tb(k+1)  = \inf \{ n > \tb(k) : S_n > S_{\tb(k)} \}\,.
$$
They are all a.s. finite, and $S_{\tb(k)} = M_{\tb(k)} = k$. 
By Lemma \ref{lem:iterates}, the image of each interval $I_{j,k}$ is the whole
of $[0\,,\,1]$. From this and the specific form that $x \mapsto X_n^x$ has to
take, one sees that the only two choices for the mapping 
$x \mapsto X_{\tb(k)}^x$ are
$$
X_{\tb(k)}^x = f_1^{(k)}(x) \quad\text{or}\quad
X_{\tb(k)}^x = 1- f_1^{(k)}(x)\,,
$$
where $f^{(k)}$ denotes the $k$-th iterate of the function $f$.
Therefore, considering the fixed points $x_0= 1$ and $y_0 = 1/3$ of $f_1$,
we get
$$
|X_{\tb(k)}^{x_0} - X_{\tb(k)}^{y_0}| = 2/3 \quad \text{for all}\; k.
$$
Thus, we do not have local contractivity.
\\[5pt]
\underline{Expanding case} ($p > 1/2$).
Since $\Ll$ is compact, the SDS is conservative for any value of $p$,
so that there are always invariant probability measures. We show that
in the expanding case, there are infinitely many mutually singular
ones.
Fix $r$, an odd prime or $r=1$, and define 
$$
\D_r = \left\{ \frac{k}{r\,2^n} : k, n \in \N_0\,,\; 
               k \le r\,2^n\,,\; \lcd(k,r\,2^n) = 1\right\}\,.
$$	          
(Note that we must have $0 < k < r\,2^n$ when $r > 1$.)
Then it is easy to verify that $f_{\pm 1}(\D_r) \subset \D_r\,$.
Thus, when we start at a point $x \in \D_r\,$, then
$(X_n^x)$ can be seen as a Markov chain on the denumerable state space
$\D_r\,$. Let $p(x,y)= \Prob[X_1^x = y]$ denote its transition matrix. 
It is not hard to verify that it is irreducible (all states communicate), although
we do not really need this. 
We partition 
$\D_r = \bigcup_n \D_{r,n}\,$, where $\D_{r,n}$ consists of all $\frac{k}{r\,2^n}$
as above with the specific value of $n$. If $n \ge 1$, then we see
that for each $x \in \D_{r,n}\,$, we have that
$$
p(x,\D_{r,m}) = \sum_{y \in \D_{r,m}} p(x,y) 
= \begin{cases} p\,,&\text{if}\; m = n-1\,,\\
                q\,,&\text{if}\; m = n+1\,,\\
                0\,,&\text{otherwise.}\\
  \end{cases}
$$
A similar identity for $x \in \D_{r,0}$ does not hold, so that we cannot
define the factor chain on $\N_0\,$. Nevertheless, since each $\D_{r,n}$ is finite, 
we can use comparison with the birth-and-death chain on $\N_0$ with transition
probabilities $\bar p(n,n+1) = q$ and $\bar p(n,n-1) = p$ for $n \ge 1$.
(We do not need to specify the outgoing probabilities at $0$.) 
Thus, our Markov chain on $\D_r$ is positive recurrent when $p > 1/2$,
null recurrent when $p=1/2$, and transient when $p < 1/2$.
In particular, when $p > 1/2$, it has a unique invariant probability
measure $\nu_r$ on the countable set $\D_r\,$. Since it is a probability
measure, we can lift it to a Borel measure on $[0\,,\,1]$ by setting
$\nu_r(B) = \sum_{x \in \D_r \cap B} \nu_r(x)$. 
Thus, each $\nu_r$ is also an invariant probability measure for the
(``topological'') SDS on $[0\,,\,1]$, and all the $\nu_r$ are pairwise mutually 
singular.
\end{exa}

\begin{rmk}\label{rmk:dyadic}
Regarding the last example, we underline that the respective discrete,
denumerable Markov chains on $\D_r$ have precisely the opposite behaviour of
the SDS on $[0\,,\,1]$: the Markov chain is transient precisely when the
SDS is strongly contractive (and positive recurrent), and it is null recurrent
precisely when the SDS is weakly, but not strongly contractive (and null-recurrent).
But this fact should not be surprising. Indeed, let us compare this with
the affine stochastic recursion $Y_n^x = 2^{L_n} x + B_n$, where $(L_n,B_n)$
are $2$-dimensional i.i.d. random variables with $L_n \in \Z$ and 
$B_n \in \Z[\frac12]$. If the starting point $x$ is also a
dyadic rational, then we can consider $(Y_n^x)$ as an SDS both on $\R$ with
Euclidean distance and on the field $\Q_2$ of dyadic numbers with the distance 
induced by the dyadic norm. Under the usual moment conditions, this SDS is 
transient on 
$\R$ precisely when it is strongly contractive on $\Q_2\,$, and weakly (but not
strongly) contractive on $\R$ precisely when it has the same property on
$\Q_2\,$.  
\end{rmk}

In conclusion, we briefly touch another example, considering only 
the log-centered case.

\begin{exa}\label{exa:less-simple}
We let $0 < p < 1$ and
$$
A_n = \begin{cases} 3 \;&\text{ with probability }\; 1/2\,, \\
1/3 \;& \text{ with probability }\; 1/2\,,
\end{cases}
    \qquad 
B_n=1 \; \text{ always.}  
$$
This time, we randomly iterate  $g_1(x) = |3x-1|$ and 
$g_{-1}(x) = |x/3-1|$.
A brief discussion shows that the limit set must be unbounded:
suppose that $\alpha = \sup \Ll < \infty$.
Then we must have $g_{i_n} \circ \dots \circ g_{i_1}(\alpha) 
\in \Ll$ for any choice of $n$ and $i_j \in \{-1,1\}$
($j=1, \dots, n$). But for any $\alpha$ we can find some choice where 
$g_{i_n} \circ \dots \circ g_{i_1}(\alpha) > \alpha$, a
contradiction. 

Thus, the invariant Radon measure has infinite mass.
\end{exa}

A more detailed study of these and similar classes of reflected
affine stochastic recursions are planned to be the subject of 
future work.

\end{document}